%
\documentclass[11pt]{article}
\usepackage{amssymb}
\usepackage{amsmath}
\usepackage{amsthm}
\usepackage[textwidth=16cm, textheight=24cm]{geometry}

\usepackage[numbers]{natbib}
\usepackage[english]{babel}
\usepackage[utf8]{inputenc}
\usepackage[T1]{fontenc}
\usepackage[pdfborder={0 0 0}]{hyperref}
\usepackage{tikz-cd}
\usepackage{enumitem}

\usepackage{cancel}
\vfuzz4pt 
\hfuzz4pt 
\numberwithin{equation}{section}
\newtheorem{thm}[equation]{Theorem}
\newtheorem{cor}[equation]{Corollary}
\newtheorem{lem}[equation]{Lemma}

\newtheorem{prop}[equation]{Proposition}
\theoremstyle{definition}
\newtheorem{defn}[equation]{Definition}
\newtheorem{notation}[equation]{Notation}
\newtheorem{remark}[equation]{Remark}
\newtheorem{example}[equation]{Example}

\def\rk{\operatorname {rank}}

\DeclareMathOperator{\Add}{Add}
\def\dim{\operatorname{dim}}
\DeclareMathOperator{\codim}{codim}

\def\ker{\operatorname{ker}}

\DeclareMathOperator{\cchar}{char}

\def\im{\operatorname{im}}

\def\id{\operatorname{id}}

\def\Spec{\operatorname{Spec}}
\def\Sym{\operatorname{Sym}}

\def\tensor{\otimes}
\def\onto{\twoheadrightarrow}
\def\into{\hookrightarrow}

\DeclareMathOperator{\Hom}{Hom}
\DeclareMathOperator{\Hilb}{Hilb}
\DeclareMathOperator{\GL}{GL}

\newcommand{\reduced}[1]{{#1}_{\operatorname{red}}}

\newcommand{\set}[1]{\left\{#1\right\}}
\newcommand{\fromto}[2]{#1, \dotsc, #2}
\newcommand{\setfromto}[2]{\set{\fromto{#1}{#2}}}
\newcommand{\sspan}[1]{\left\langle #1 \right\rangle}


\newcommand{\ccE}{{\mathcal{E}}}
\newcommand{\ccF}{{\mathcal{F}}}

\newcommand{\ccH}{{\mathcal{H}}}
\newcommand{\ccI}{{\mathcal{I}}}
\newcommand{\ccJ}{{\mathcal{J}}}

\newcommand{\ccO}{{\mathcal{O}}}

\newcommand{\ccS}{{\mathcal{S}}}

\newcommand{\ccU}{{\mathcal{U}}}


\newcommand{\gotK}{\mathfrak{K}}
\newcommand{\gotm}{\mathfrak{m}}
\newcommand{\gotn}{\mathfrak{n}}
\newcommand{\gotp}{\mathfrak{p}}

\def\AA{{\mathbb A}}
\def\CC{{\mathbb C}}
\def\FF{{\mathbb F}}
\def\GG{{\mathbb G}}

\def\KK{{\mathbb K}}

\def\PP{{\mathbb P}}

\def\RR{{\mathbb R}}
\def\ZZ{{\mathbb Z}}

\def\kk{{\Bbbk}}
\def\kkbar{\overline{\Bbbk}}

\newcommand{\DPV}[1]{\operatorname{DP}^{#1} V}
\newcommand{\hook}{\lrcorner}%

\newcommand{\cactus}[2]{\gotK_{#1}\left( #2 \right)}

\renewcommand{\theenumi}{(\roman{enumi})}

\newcounter{betweenenumi}

\usepackage{stmaryrd}
\usepackage{todonotes}
\usepackage{xcolor}

\newenvironment{red}{\color{red}}{}
\newcommand{\bred}{\begin{red}}
\newcommand{\ered}{\end{red}}

\newenvironment{blue}{\color{blue}}{}
\newcommand{\bblue}{\begin{blue}}
\newcommand{\eblue}{\end{blue}}

\newenvironment{green}{\color{green}}{}
\newcommand{\bgreen}{\begin{green}}
\newcommand{\egreen}{\end{green}}

\definecolor{BrazA}{RGB}{140,64,3}
\definecolor{ZielonyA}{RGB}{78,104,18} 
\newcommand{\colordynia}[1]{{\color{BrazA}#1}}
\newcommand{\colorzielony}[1]{{\color{ZielonyA}#1}}

\title{Finite schemes and secant varieties over arbitrary characteristic}
\author{Jaros\l{}aw Buczy\'nski \and Joachim Jelisiejew}

\date{March 8, 2017}
\begin{document}

\maketitle
\begin{abstract}
We present scheme theoretic methods that apply to the study of secant
varieties.
This mainly concerns finite schemes and their smoothability.
The theory generalises to the base fields of any characteristic, and even to non-algebraically closed fields.
In particular, the smoothability of finite schemes does not depend on the embedding into a smooth variety or on base field extensions.
Independent of the base field, secant varieties to high degree Veronese reembeddings behave well with respect to the intersection 
  and they are defined by minors of catalecticants whenever a suitable smoothability condition for Gorenstein subschemes holds.
The content of the article is largely expository, although many results are presented in a stronger form than in the literature.
\end{abstract}

\medskip
{\footnotesize
\noindent\textbf{addresses:} \\
J.~Buczy\'nski, \nolinkurl{jabu@mimuw.edu.pl},
   Institute of Mathematics of the Polish Academy of Sciences, ul.~\'Sniadeckich 8, 00-656 Warsaw, Poland,
 and Faculty of Mathematics, Computer Science and Mechanics, University of Warsaw, ul.~Banacha 2, 02-097 Warszawa, Poland\\
J.~Jelisiejew, \nolinkurl{jjelisiejew@mimuw.edu.pl},
   Faculty of Mathematics, Computer Science and Mechanics, University of Warsaw, ul.~Banacha 2, 02-097 Warszawa, Poland\\

\noindent\textbf{keywords:}
finite scheme, smoothable, secant varieties, finite Gorenstein scheme, cactus variety, Veronese reembedding, Hilbert scheme\\

\noindent\textbf{AMS Mathematical Subject Classification 2010:}
Primary: 14M12; Secondary: 14B12, 14C05, 14A15, 14M17\\

\noindent\textbf{financial support:}
Buczy\'nski is supported by a grant Iuventus Plus of the Polish Ministry of Science, project 0301/IP3/2015/73, and by a scholarship of the Polish Ministry of Science.
Jelisiejew is supported by Polish National Science Center, project
2014/13/N/ST1/02640.
This article was partially written during the Polish Algebraic Geometry mini-Semester (miniPAGES),
   which was supported by the grant 346300 for IMPAN from the Simons
   Foundation and the matching 2015-2019 Polish MNiSW fund.}

\tableofcontents

\section{Introduction}

The purpose of this article is two-fold.
In the first place, we study the structure of finite algebraic schemes over an arbitrary field and their deformations.
We provide purely algebraic proofs of several facts, some of which are already known over the base field of complex numbers. 
Furthermore we apply these algebraic statements to study secant varieties over 
   fields of any characteristic.
We also take this opportunity to review some material on such secant varieties.
Throughout we are not going to make any assumption on the characteristic of
the base field $\kk$ (except when recalling results from the literature, or giving examples).

\subsection{Smoothability of finite schemes}

In algebraic geometry, a \emph{finite scheme} $R$ over a field $\kk$ is an
abstract geometric interpretation (called \emph{spectrum})
  of a  $\kk$-algebra $A$ which has finite dimension as a vector space over $\kk$.
We can also talk about an \emph{embedded} interpretation:
If $X$ is an algebraic variety, 
  an embedding $R \subset X$ is a finite collection of informations about the functions on $X$, i.e.~their restrictions to $R$, 
  which are consistent with the underlying algebraic structure.
So if $R$ simply consists of a couple of reduced $\kk$-rational points on $X$, 
  we get information about the values of functions at those points.
If $R$ is a double point in $X$, we obtain the value of functions at this
point and their first order behaviour.
Similarly, more complicated schemes encode more complicated higher order
properties of the functions.

We are principally interested in the \emph{smoothability} of a finite scheme $R$.
Roughly, if $\kk$ is algebraically closed, $R$ is smoothable if and only if it is a limit of $r$ distinct $\kk$-rational points.
Such limit might be taken abstractly or embedded into a variety $X$.
Correspondingly, there are notions of \emph{abstract} and \emph{embedded}
smoothability, which we discuss in Section~\ref{sec:smoothability}. The
following Theorem~\ref{thm_equivalence_of_abstract_and_embedded_smoothings}
proves that those notions are equivalent when $X$ is smooth.
\begin{thm}\label{thm_equivalence_of_abstract_and_embedded_smoothings}
  Suppose $X$ is a smooth variety over an arbitrary field $\kk$ and
  $R\subset X$ is a finite $\kk$-subscheme.
  Then the following conditions are equivalent:
  \begin{enumerate}
      \item\label{it:Rabssm} $R$ is abstractly smoothable,
      \item\label{it:Rembsm} $R$ is embedded smoothable in $X$,
      \item\label{it:Rconabssm} every connected component of $R$ is abstractly smoothable,
      \item\label{it:Rconembsm} every connected component of $R$ is embedded smoothable in $X$.
  \end{enumerate}
\end{thm}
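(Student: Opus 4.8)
The plan is to prove the cycle of implications by reducing everything to the local, connected case and then transporting smoothings between the abstract and embedded settings.

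First I would observe that the implications \ref{it:Rabssm}$\Rightarrow$\ref{it:Rconabssm} and \ref{it:Rembsm}$\Rightarrow$\ref{it:Rconembsm} are the easy directions: a finite scheme $R$ is a disjoint union of its finitely many connected components $R = R_1 \sqcup \dots \sqcup R_k$, and a smoothing of $R$ (abstract or embedded) restricts to a smoothing of each $R_i$ after possibly shrinking the base or passing to an open neighbourhood of the relevant component in the total space. Conversely, \ref{it:Rconabssm}$\Rightarrow$\ref{it:Rabssm} and \ref{it:Rconembsm}$\Rightarrow$\ref{it:Rembsm} follow by taking disjoint unions of the individual smoothings over a common base (e.g. a product of the parameter bases, or by a standard argument that one may arrange all smoothings over the same irreducible curve germ); here one uses that the $R_i$ are supported at distinct points, so the embedded smoothings can be carried out in disjoint open subsets of $X$ and then glued. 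Thus the real content is the equivalence of \ref{it:Rconabssm} and \ref{it:Rconembsm}, i.e.\ the statement for a single connected finite scheme.

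For the connected case, I would use that a connected finite $\kk$-scheme $R$ is local, say with residue field $\kappa$ at its unique point, and that $X$ being smooth means that formally-locally at the support point $X$ looks like affine space $\AA^n_{\kappa}$ (or, more carefully over a non-perfect field, that $\widehat{\ccO}_{X,x}$ is a power series ring over the residue field). The key technical tool is the relationship between abstract deformations of $R$ and embedded deformations in a smooth variety: since $X$ is smooth, the normal sheaf $\ccN_{R/X}$ controls embedded deformations, and the forgetful map from embedded deformations to abstract deformations is \emph{smooth} (surjective on tangent spaces and obstruction-theoretically unobstructed relative to the abstract functor) precisely because $\ccO_X$ is smooth and hence $R$ can always be re-embedded and any abstract first-order deformation lifts to an embedded one. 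Concretely, I would show: given an abstract smoothing $\ccR \to \Spec B$ of $R$ over a (say, DVR or smooth curve) base $B$, one can embed $\ccR$ into $X \times \Spec B$ extending the given embedding $R \subset X$, using smoothness of $X$ to lift the embedding order by order along the nilpotent/complete-local thickening and then algebraising; the generic fibre of such an embedded family is a union of distinct rational points in $X$ because it already is abstractly. The converse direction, \ref{it:Rconembsm}$\Rightarrow$\ref{it:Rconabssm}, is immediate by forgetting the embedding.

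The main obstacle I expect is the lifting-of-embeddings step done carefully over an arbitrary, possibly non-algebraically-closed and possibly imperfect, field: one must make sense of ``$X$ smooth'' giving enough freedom to extend $R \subset X$ to $\ccR \subset X \times \Spec B$, and one must ensure the family $\ccR$ is \emph{finite} and \emph{flat} over $B$ with the correct special fibre. Over $\CC$ this is classical (e.g.\ via the smoothness of the Hilbert scheme morphism or an explicit versal deformation argument), but in the stated generality I would handle it by working with the completed local rings: the abstract smoothing corresponds to a flat family of Artinian-to-finite $\kk$-algebras, write it as a quotient of a power series ring over $B$, use that $\widehat{\ccO}_{X,x}$ is itself (formally) a power series ring to produce a surjection realising the family as a closed subscheme of $\widehat{X} \times B$, then use finiteness of $\ccR \to \Spec B$ together with a standard Artin-approximation or direct algebraisation argument to replace the formal family by an honest one in an étale or Zariski neighbourhood of $x$ in $X$. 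One subtlety worth isolating explicitly: the residue field $\kappa$ of $R$ at its point may be a nontrivial extension of $\kk$, so a ``$\kk$-rational point'' in the definition of smoothable must be read correctly, and the generic fibre of the smoothing is a disjoint union of spectra of separable field extensions of the function field of $B$; smoothness of $X$ guarantees these still embed, after a base extension of $B$, as honest points. Modulo these base-field bookkeeping issues, the argument is the standard ``Hilbert scheme is smooth over the punctual Hilbert scheme locus of a smooth variety'' reasoning transported to arbitrary characteristic.
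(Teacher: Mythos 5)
Your proposal is correct and follows essentially the same route as the paper: reduce to connected components over a one-dimensional complete local base, lift the embedding $R\subset X$ order by order through the $\gotm$-adic thickenings using formal smoothness of $X$, and conclude by a Nakayama-type argument that the resulting map is a closed immersion. The only cosmetic difference is that your appeal to Artin approximation/algebraisation is unnecessary, since the base of the smoothing may be taken to be the spectrum of a complete local domain, so the inverse limit of the lifted maps already yields an honest morphism $Z\to X\times T$.
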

We prove the theorem in Section~\ref{sect_smoothings_abstract_embedded}.
Some of the equivalences in the theorem are probably known to experts, but it is hard to find an explicit reference.
In particular, the recent \cite{casnati_jelisiejew_notari_Hilbert_schemes_via_ray_families} 
  refers to a draft version of this article.
Special cases are also in \cite[Lemma~4.1]{cartwright_erman_velasco_viray_Hilb8}, 
  \cite[Lemma~2.2]{casnati_notari_irreducibility_Gorenstein_degree_9},
  \cite[Proposition~2.1]{nisiabu_jabu_cactus}.
In particular, if $\kk$ is the field of complex numbers $\CC$, some of the known proofs rely on the analytic topology of $X$.

In the literature on smoothability, e.g.~\cite{cartwright_erman_velasco_viray_Hilb8,
casnati_notari_irreducibility_Gorenstein_degree_9,
casnati_notari_irreducibility_Gorenstein_degree_10,
casnati_jelisiejew_notari_Hilbert_schemes_via_ray_families}, usually $\kk$ is
assumed to be algebraically closed. The following result implies that we may
reduce to this case by a base extension, see~Section~\ref{sec_product_and_base_change}.
\begin{prop}[Corollary~\ref{ref:basechangesmoothings:cor}]\label{prop_base_change_equivalence}
    Let $R$ be a finite scheme over $\kk$ and $R' = R \times_{\Spec \kk} \Spec \kkbar$ be
    a finite scheme over $\kkbar$.
    Then $R$ is smoothable (over $\kk$) if and only if $R'$ is smoothable (over $\kkbar$).
\end{prop}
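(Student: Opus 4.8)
The plan is to prove the stronger statement (Corollary~\ref{ref:basechangesmoothings:cor}) by passing through the Hilbert scheme and using faithfully flat descent together with a compatibility of the Hilbert functor with base change. First I would fix a closed embedding $R \subset \AA^n_\kk$ for some $n$ (possible since $R$ is finite over a field, hence affine), of some length $r = \dim_\kk \ccO(R)$, and consider the Hilbert scheme $\Hilb_r(\AA^n_\kk)$ together with its smoothable component, i.e.\ the closure $\ccH$ of the locus parametrising $r$ distinct $\kk$-rational points (equivalently, reduced subschemes). The key point is that smoothability of $R$ as an abstract scheme is equivalent, via Theorem~\ref{thm_equivalence_of_abstract_and_embedded_smoothings}, to embedded smoothability in $\AA^n_\kk$, which is exactly the statement that the $\kk$-point $[R] \in \Hilb_r(\AA^n_\kk)$ lies on $\ccH$. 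So the problem reduces to: $[R] \in \ccH(\kk)$ if and only if $[R'] \in \ccH'(\kkbar)$, where $\ccH' \subset \Hilb_r(\AA^n_{\kkbar})$ is the analogous smoothable component over $\kkbar$.

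The second step is to identify $\ccH'$ with the base change $\ccH \times_{\Spec \kk} \Spec \kkbar$. Since formation of $\Hilb_r(\AA^n)$ commutes with base change, we have $\Hilb_r(\AA^n_{\kkbar}) = \Hilb_r(\AA^n_\kk) \times_{\Spec\kk}\Spec\kkbar$; the map $\Spec\kkbar \to \Spec\kk$ is faithfully flat, so the base change $\ccH \times_\kk \kkbar$ is a closed subscheme whose underlying set is the preimage of $\ccH$. One must check it is reduced and irreducible, or at least that it coincides with $\ccH'$: the open locus of reduced length-$r$ subschemes of $\AA^n_{\kkbar}$ is dense in $\ccH \times_\kk \kkbar$ (its preimage in $\Hilb_r(\AA^n_\kk)$ is the corresponding open dense subset of $\ccH$, and density is preserved under the flat base change since $\ccH$ is of finite type), hence $\ccH' = (\ccH \times_\kk \kkbar)_{\red}$, and in particular $\ccH'$ and $\ccH\times_\kk\kkbar$ have the same $\kkbar$-points.

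The final step is the descent argument for points. If $[R] \in \ccH(\kk)$, then clearly its image $[R'] = [R]\times_\kk\kkbar \in \ccH(\kk)\times_\kk\kkbar \subset \ccH'(\kkbar)$, giving one implication. Conversely, if $[R'] \in \ccH'(\kkbar)$, then the composite $\Spec\kkbar \xrightarrow{[R']} \Hilb_r(\AA^n_{\kkbar}) \to \Hilb_r(\AA^n_\kk)$ is the $\kkbar$-point underlying $[R]\colon \Spec\kk \to \Hilb_r(\AA^n_\kk)$, and it factors through the closed subscheme $\ccH$; since $\Spec\kkbar \to \Spec\kk$ is faithfully flat and $\ccH \hookrightarrow \Hilb_r(\AA^n_\kk)$ is a closed immersion, the map $[R]$ itself factors through $\ccH$, i.e.\ $[R]\in\ccH(\kk)$. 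I expect the main obstacle to be the bookkeeping in the second step, namely verifying carefully that the smoothable component over $\kkbar$ is the reduction of the base change of the smoothable component over $\kk$ (equivalently, that ``being a limit of reduced subschemes'' is stable under and reflected by the extension $\kk\subset\kkbar$); this is where one genuinely uses that $\kkbar/\kk$ is a field extension rather than an arbitrary base change, and where one should be careful that $\ccH$ may be non-reduced or geometrically non-reduced, so it is cleanest to phrase everything in terms of the set of $\kkbar$-points and the factorisation-through-a-closed-subscheme criterion rather than in terms of scheme structures.
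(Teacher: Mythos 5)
Your proposal follows essentially the same route as the paper's proof of Corollary~\ref{ref:basechangesmoothings:cor}: reduce abstract smoothability to embedded smoothability in a smooth ambient space (Theorem~\ref{ref:abstractvsembedded:thm}), translate that into membership of $[R]$ in the smoothable component of the Hilbert scheme (Proposition~\ref{ref:embeddedvsHilbert:prop}), use compatibility of the Hilbert scheme and of its smoothable component with base change, and detect the factorisation of $[R]$ through that closed subscheme after the faithfully flat base change $\Spec \kkbar \to \Spec \kk$. The paper works in $\PP^N_{\kk}$ rather than $\AA^n_{\kk}$ and handles the easy direction by directly base-changing a smoothing, but these differences are cosmetic.

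One point does need correcting, and it matters precisely because the statement concerns an arbitrary $\kk$. You define $\ccH$ as the closure of the locus of $r$ distinct $\kk$-rational points, ``equivalently, reduced subschemes''. Neither description is the right one over a general field: for finite $\kk$ there may be no configurations of $r$ distinct $\kk$-rational points at all (e.g.\ $r=5$ points in $\AA^1_{\FF_2}$), so your $\ccH$ could be empty even though every finite subscheme of $\AA^1_{\kk}$ is smoothable; and over an imperfect $\kk$ a reduced finite subscheme such as $\Spec \KK$ for an inseparable extension $\kk \subset \KK$ is not smooth, so the reduced locus is strictly larger than the smooth locus. The component you need is $\Hilb^{sm}_r = \overline{\Hilb^{\circ}_r}$, where $\Hilb^{\circ}_r$ is the locus of points whose fibre in the universal family is \emph{smooth} (equivalently, geometrically reduced); this is the definition under which Proposition~\ref{ref:embeddedvsHilbert:prop} holds and under which $\Hilb^{\circ}_r(X_{\kkbar}/\kkbar) = \Hilb^{\circ}_r(X) \times \Spec \kkbar$, which is exactly the base-change identity your second step requires. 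With that substitution, the remainder of your argument --- density preserved under the flat base change, and factorisation through a closed immersion being reflected by a faithfully flat cover --- is correct.
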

In particular, we can remove the assumption $\kk = \kkbar$ from results
of~\cite{cartwright_erman_velasco_viray_Hilb8,
casnati_jelisiejew_notari_Hilbert_schemes_via_ray_families} to obtain the
following proposition.
\begin{prop}\label{prop_smoothability}
    Let $\kk$ be a field of characteristic not equal to $2$ or $3$.
    All finite $\kk$-schemes of degree at most $7$ are smoothable and all finite
    Gorenstein $\kk$-schemes of degree at most $13$ are smoothable.
\end{prop}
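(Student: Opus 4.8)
The plan is to reduce at once to the case of an algebraically closed base field, where both assertions are already in the literature. First I would invoke Proposition~\ref{prop_base_change_equivalence}: writing $R = \Spec A$ and $R' = R \times_{\Spec \kk} \Spec \kkbar$, the scheme $R$ is smoothable over $\kk$ if and only if $R'$ is smoothable over $\kkbar$. Hence it suffices to prove the statement when $\kk = \kkbar$, provided that passing from $R$ to $R'$ preserves the two hypotheses in play. The degree is preserved, since $\dim_{\kkbar}(A \otimes_\kk \kkbar) = \dim_\kk A$. The Gorenstein property is preserved as well: being Gorenstein for a finite algebra is checked on its Artinian local factors, and it is stable under the flat base change $- \otimes_\kk \kkbar$ (equivalently, the socle dimension of each local factor is unchanged). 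So $R'$ has the same degree as $R$ and is Gorenstein whenever $R$ is, and $\cchar \kkbar = \cchar \kk \neq 2,3$.

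Second, with $\kk$ now algebraically closed and of characteristic $\neq 2,3$, I would simply quote the known smoothability results: every finite scheme of degree at most $7$ is smoothable by \cite{cartwright_erman_velasco_viray_Hilb8}, and every finite Gorenstein scheme of degree at most $13$ is smoothable by \cite{casnati_jelisiejew_notari_Hilbert_schemes_via_ray_families}, building on the earlier cases treated in \cite{casnati_notari_irreducibility_Gorenstein_degree_9, casnati_notari_irreducibility_Gorenstein_degree_10}. Applying these to $R'$ and combining with the first step yields the claim for $R$ over $\kk$.

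As an optional simplification, Theorem~\ref{thm_equivalence_of_abstract_and_embedded_smoothings} permits a further reduction, before appealing to the literature, to the case where $R$ (or $R'$) is connected, hence local: abstract smoothability of a finite scheme is equivalent to abstract smoothability of each of its connected components. This is convenient if the cited sources phrase their results for local algebras, but it is not strictly needed.

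The only real obstacle is bookkeeping rather than mathematics: one must confirm that the quoted theorems are available over an algebraically closed field under exactly the hypothesis $\cchar \kk \neq 2,3$ and for precisely the degree bounds $7$ and $13$, and that the notion of "Gorenstein" used in those references agrees with the one here after the base change $- \otimes_\kk \kkbar$. Once this compatibility of hypotheses is checked, the proof is the two-line combination of Proposition~\ref{prop_base_change_equivalence} with the cited results.
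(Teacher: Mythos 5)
Your proposal is correct and follows essentially the same route as the paper: reduce to $\kk=\kkbar$ via Corollary~\ref{ref:basechangesmoothings:cor} (which is Proposition~\ref{prop_base_change_equivalence}) and then cite \cite[Theorem~1.1]{cartwright_erman_velasco_viray_Hilb8} and \cite[Theorem~A]{casnati_jelisiejew_notari_Hilbert_schemes_via_ray_families}. Your extra check that degree and the Gorenstein property are preserved under the base change (the latter being Proposition~\ref{prop_Gorenstein_base_change} in the paper) is a point the paper's two-line proof leaves implicit, but it is the same argument.
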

See Section~\ref{sect_embedded_smoothings_and_Hilb} for the proof.
More generally, it follows that the statements \emph{all finite schemes (or all finite Gorenstein
schemes) of degree $d$ over an algebraically closed field are smoothable} only depend on the characteristic of
the base field, see Remark~\ref{remark_smoothability_depends_only_on_char}.

Our next result is tailored to fit the needs of the secant varieties, as explained in Section~\ref{sec_intro_secant_varieties},
  but it is also of independent interest.
It treats the situation, when we have two smooth varieties $X\subset Y$ and a finite subscheme $R \subset Y$ 
  supported in $X$.
We are interested in the intersection $R \cap X$, which is a finite subscheme of the smaller variety $X$.
Ideally, one would like an implication ``if $R$ is smoothable (in $Y$), then $R \cap X$ is
smoothable (in $X$)'' to be true.
But this is too much to ask for as we illustrate in Corollary~\ref{cor_nonsmoothable_intersection_of_smoothable_and_smooth}.
Instead, we prove:
\begin{thm}[{Proposition~\ref{ref:smooth_subvar_and_pushing:prop}}]\label{thm_R_cap_X_is_contained_in_smoothable_Q}
   Suppose $X \subset Y$ are two smooth varieties over an algebraically closed field $\kk$.
   Let $R \subset Y$ be a finite smoothable subscheme of degree $r$, whose all points of support are on $X$.
   Then there exists a finite smoothable scheme $Q \subset X$ of degree $r$ containing $R\cap X$.
\end{thm}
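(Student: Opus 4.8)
The plan is: reduce to a scheme supported at a single point, pass to formal completions (where $Y$ does retract onto $X$), push a smoothing of $R$ forward along a sufficiently general such retraction, and take $Q$ to be its special fibre. First, by Theorem~\ref{thm_equivalence_of_abstract_and_embedded_smoothings} smoothability coincides with abstract smoothability and may be tested component by component; writing $R=\bigsqcup_i R_i$ for the connected components, it suffices to produce for each $i$ a smoothable $Q_i\subset X$, supported at the single point of $R_i$, with $\deg Q_i=\deg R_i$ and $Q_i\supseteq R_i\cap X$, and then set $Q:=\bigsqcup_i Q_i$ (these are disjoint, so $\deg Q=\sum_i\deg R_i=r$, $Q\supseteq\bigsqcup_i(R_i\cap X)=R\cap X$, and $Q$ is smoothable because each component is). So assume $R$ is connected with support $\{p\}$, $p\in X$. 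Fix an embedded smoothing $\ccR\subset Y\times_{\Spec\kk}T$ with $T$ a smooth affine curve and $0\in T(\kk)$. As $\ccR\to T$ is finite with fibre over $0$ supported at $p$, after shrinking $T$ we may pick an affine open $p\in U\subset Y$ with $\ccR\subset U\times T$. Completing at $(p,0)$ yields a ring $\ccA$, finite flat over $\kk[[t]]$, whose generic fibre $\ccA\otimes_{\kk[[t]]}\kk((t))$ is étale of degree $r$ and whose special fibre $\ccA/t\ccA$ is the coordinate ring of $R$; moreover $\ccA=A[[t]]/\ccI$ for $A:=\widehat{\ccO_{Y,p}}\cong\kk[[x_1,\dots,x_n]]$ ($n=\dim Y$) and an ideal $\ccI$ with $\ccI\equiv I\pmod t$, where $I\subset A$ cuts out $R$.

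Since $X$ is smooth at $p$, choose the coordinates so that $\widehat{\ccO_{X,p}}=A/\gotp$ with $\gotp=(x_{d+1},\dots,x_n)$ and $d=\dim X$. For $\lambda=(\lambda_{ik})\in\kk^{d(n-d)}$ the rule $\sigma_\lambda(x_i)=x_i+\sum_{k>d}\lambda_{ik}x_k$ ($1\le i\le d$) defines a section $\sigma_\lambda\colon A/\gotp\to A$ of the quotient map, hence a retraction $\pi_\lambda\colon\Spec A\to\Spec(A/\gotp)$ with $\sigma_\lambda(\bar f)\equiv\bar f\pmod\gotp$ for every $\bar f$. Let $\overline{\ccR}_\lambda\subset\Spec\!\big((A/\gotp)[[t]]\big)$ be the scheme-theoretic image of $\Spec\ccA$ under $\pi_\lambda\times\id$; its coordinate ring $S$ is the image of $(A/\gotp)[[t]]\to\ccA$, a torsion-free finite $\kk[[t]]$-module, hence free of rank at most $r$.

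The crux is that genericity of $\lambda$ forces rank exactly $r$. The generic fibre of $\Spec\ccA\to\Spec\kk[[t]]$ consists of $r$ distinct geometric points; for any two of them the locus of $\lambda$ at which $\pi_\lambda$ identifies the two is cut out in $\AA^{d(n-d)}$ by $d$ affine-linear equations in $\lambda$ (if all $d$ coefficient vectors vanish, the two points agree on the last $n-d$ coordinates, hence differ on the first $d$, and one equation is unsolvable, so the locus is empty), so in all cases it is a proper closed subset; expanding such an equation over a $\kk$-basis of its field of definition shows its $\kk$-points form a proper closed subset of $\AA^{d(n-d)}_\kk$. As $\kk$ is infinite and there are finitely many pairs, we may choose $\lambda\in\kk^{d(n-d)}$ for which $\pi_\lambda$ separates all $r$ points. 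Then $S\otimes_{\kk[[t]]}\kk((t))$ is all of $\ccA\otimes_{\kk[[t]]}\kk((t))$, so $S$ is free of rank $r$ with reduced generic fibre, and $\Spec S\to\Spec\kk[[t]]$ is a smoothing of $Q:=\Spec(S/tS)$, a finite subscheme of $\Spec\widehat{\ccO_{X,p}}$ — equivalently of $X$, supported at $p$ — of degree $r$. By Section~\ref{sec:smoothability} this family shows $Q$ is abstractly smoothable, hence smoothable in $X$ by Theorem~\ref{thm_equivalence_of_abstract_and_embedded_smoothings}. Finally the ideal of $Q$ in $A/\gotp$ is $\{\bar f : \sigma_\lambda(\bar f)\in I\}$, which is contained in the image of $I$ in $A/\gotp$, the ideal of $R\cap X$, because $\sigma_\lambda(\bar f)\equiv\bar f\pmod\gotp$; hence $Q\supseteq R\cap X$, as required.

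The main obstacle is precisely the absence of a Zariski-local retraction of $Y$ onto $X$, which forces the passage to completions, together with the genericity step: one must ensure that the $r$ sheets of the smoothing stay separated after the tilted projection, and that this can be arranged with $\lambda$ defined over $\kk$ although the sheets are defined only over $\overline{\kk((t))}$. Reconciling the complete-DVR family produced here with the paper's definition of smoothability (Section~\ref{sec:smoothability}) is an additional but routine point.
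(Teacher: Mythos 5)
Your proof is correct and follows essentially the same route as the paper's: reduce to connected components, pass to the formal surjection $\hat{\ccO}_{Y,p} \twoheadrightarrow \hat{\ccO}_{X,p}$, push the smoothing forward along a section of it, and observe that the scheme-theoretic image contains $R\cap X$ precisely because the section restricts to the identity modulo the ideal of $X$. The only substantive difference is how the degree is controlled: the paper takes an arbitrary splitting, accepts $\deg Q\leq r$ via Lemma~\ref{ref:bubulem:lem}, and then pads $Q$ with extra reduced points, whereas you choose the linear section generically so that the $r$ sheets of the generic fibre stay separated and the degree is preserved outright --- a nice self-contained alternative which, like the paper's padding step, implicitly requires $\dim X>0$.
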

In other words, although $R \cap X$ needs not to be smoothable, it is contained in a not-too-large smoothable subscheme of $X$.
We prove the theorem in Section~\ref{sect_smoothings_abstract_embedded}.

\subsection{Secant varieties}\label{sec_intro_secant_varieties}

Suppose $\kk$ is a field, $X \subset \PP^N_{\kk}$ is a projective variety with a fixed embedding into a projective space
  and fix a positive integer $r$.
First suppose the set $X(\kk)$ of $\kk$-rational points of $X$ is Zariski dense in $X$ 
  (for instance, this is always true if $\kk$ is algebraically closed). 
For a definition of $\kk$-rational points of a variety or of a scheme, see Section~\ref{sect_varieties}.
Then the $r$-th secant variety $\sigma_r(X)\subset \PP^N_{\kk}$ is defined as the
following Zariski closure:
\[
   \sigma_r(X) = \overline{\bigcup \set {\langle \fromto{x_1}{x_r}\rangle : x_i \in X(\kk)}},
\]
  where $\langle\fromto{x_1}{x_r} \rangle$ denotes the projective linear span of the $\kk$-rational points $\fromto{x_1}{x_r}$.
Note that the secant variety depends also on the embedding of $X$ in $\PP_{\kk}^N$, which is implicit in our notation.
The more general definition of secant variety (when $\kk$-rational points are not necessarily dense) in terms of Hilbert scheme and the notion of relative linear span of families of schemes, basic properties,
  and also interactions of secant varieties with finite schemes are surveyed in Section~\ref{sect_secants}. 
The relative linear span is defined and studied in Sections~\ref{sec_relative_linear_span} and \ref{sect_universal_ideal_and_graded_pieces}.
A consequence of this definition is that the secant variety $\sigma_r(X)$ is the minimal closed subvariety of the projective space, that contains all the secant planes  
$\langle \fromto{x_1}{x_r}\rangle_{\KK} \subset \PP^N_{\KK}$ for any field extension $\kk\subset \KK$, and any choice of $\KK$-rational points $x_i$ in $(X\times_{\Spec \kk} \Spec \KK)$.

In this article we generalise the main results of \cite{jabu_ginensky_landsberg_Eisenbuds_conjecture} and \cite{nisiabu_jabu_cactus}
  to any base field.
These results involve secant varieties to Veronese reembeddings of projective varieties.
Let \mbox{$\nu_d\colon \PP_{\kk}^{n} \to \PP_{\kk}^{\binom{n+d}{d} -1}$}
  be the algebraic morphism given by all monomials of degree $d$ in the coordinates of $\PP_{\kk}^n$.
The map $\nu_d$ is called the \emph{Veronese} embedding.
Suppose $X \subset \PP_{\kk}^n$ is a projective variety.
We want to describe the secant varieties to Veronese reembeddings of $X$, or $\sigma_r(\nu_d (X))$.
We always assume that $d$ is sufficiently high.
The first result compares $\sigma_r(\nu_d (X))$ and $\sigma_r(\nu_d (\PP_{\kk}^n))$.

\begin{thm}\label{thm_BGL}
    Suppose $\kk$ is any field and fix a smooth projective variety $X \subset \PP_{\kk}^n$ and a positive integer $r$.
    Then there exists an integer $d_0 >0$, such that for all $d\ge d_0$ the following sets are equal:
    \[
       \sigma_r(\nu_d (X))  = \reduced{\left(\sigma_r(\nu_d (\PP_{\kk}^n)) \cap \langle \nu_d(X)\rangle\right)}.
    \]
    Here $\langle \nu_d(X)\rangle$ denotes the linear span of $X$ embedded in $\PP_{\kk}^{\binom{n+d}{d} -1}$ via Veronese map,
       and $\reduced{(\cdot)}$ denotes the reduced subscheme.
\end{thm}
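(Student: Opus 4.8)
The plan is to pass to the algebraic closure, rewrite both sides as unions of linear spans of finite smoothable subschemes, and then use Theorem~\ref{thm_R_cap_X_is_contained_in_smoothable_Q} to move a smoothable witness from $\PP^n$ down to $X$. First I would record the reductions. By Proposition~\ref{prop_base_change_equivalence} smoothability is unaffected by the base extension to $\kkbar$, and secant varieties and linear spans behave well under field extension (as recalled in the introduction and Section~\ref{sect_secants}); since both sides of the asserted equality are reduced, it suffices to prove equality of the underlying sets of $\kkbar$-points, so I assume $\kk=\kkbar$. Next I would fix $d_0$, depending only on $n$, $r$ and the Castelnuovo--Mumford regularity of $X$, so that for all $d\ge d_0$ and all finite subschemes $Z\subset\PP^n$ of degree at most $r$ one has $H^1(\ccI_Z(d))=0$ and $H^1(\ccI_{Z\cup X}(d))=0$; such a $d_0$ exists because a zero-dimensional subscheme of $\PP^n$ of length $\le r$ has regularity bounded in terms of $r$ alone, together with a standard uniform bound for $\operatorname{reg}(\ccI_{Z\cup X})$ in terms of that of $\ccI_X$. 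For $d\ge d_0$ the survey of secant varieties in Section~\ref{sect_secants}, combined with the good behaviour of the relative linear span (Sections~\ref{sec_relative_linear_span}--\ref{sect_universal_ideal_and_graded_pieces}), gives for $Z\in\set{X,\PP^n}$ the description
\[
  \sigma_r(\nu_d(Z))=\bigcup\set{\langle\nu_d(R)\rangle : R\subset Z\ \text{finite smoothable},\ \deg R\le r},
\]
in which every span has dimension $\deg R-1$; the same vanishings also yield, for every finite $R\subset\PP^n$ of degree $\le r$, the identity $\langle\nu_d(R)\rangle\cap\langle\nu_d(X)\rangle=\langle\nu_d(R\cap X)\rangle$, obtained by dualising $H^0(\ccI_R(d))+H^0(\ccI_X(d))=H^0(\ccI_{R\cap X}(d))$, which in turn follows from $H^1(\ccI_{R\cup X}(d))=0$ and $\ccI_{R\cap X}=\ccI_R+\ccI_X$. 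The inclusion ``$\supseteq$'' of the theorem is then immediate and does not even need $d$ large: for $x_i\in X(\kk)$ the span $\langle\nu_d(x_1),\dots,\nu_d(x_r)\rangle$ lies both in $\sigma_r(\nu_d(\PP^n))$ and in $\langle\nu_d(X)\rangle$, hence so does its Zariski closure $\sigma_r(\nu_d(X))$.

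For the reverse inclusion I would take a $\kkbar$-point $p$ of $\sigma_r(\nu_d(\PP^n))\cap\langle\nu_d(X)\rangle$. By the displayed description $p\in\langle\nu_d(R)\rangle$ for some finite smoothable $R\subset\PP^n$ of degree $\le r$, and since also $p\in\langle\nu_d(X)\rangle$, the intersection identity gives $p\in\langle\nu_d(R\cap X)\rangle$. Writing $R=\bigsqcup_i R_i$ for the decomposition into connected components, let $R_A$ be the union of those $R_i$ whose support point lies on $X$. Then $R_A$ has degree at most $r$, all its support points lie on $X$, and $R_A\cap X=R\cap X$ since the discarded components are disjoint from $X$; moreover each $R_i$ is smoothable by Theorem~\ref{thm_equivalence_of_abstract_and_embedded_smoothings}, and applying that theorem again to $R_A\subset\PP^n$ shows $R_A$ is smoothable. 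Now apply Theorem~\ref{thm_R_cap_X_is_contained_in_smoothable_Q} with $Y=\PP^n$ and the smooth subvariety $X$: it produces a finite smoothable $Q\subset X$ with $\deg Q=\deg R_A\le r$ and $R\cap X=R_A\cap X\subseteq Q$. Hence $p\in\langle\nu_d(R\cap X)\rangle\subseteq\langle\nu_d(Q)\rangle\subseteq\sigma_r(\nu_d(X))$, the last inclusion again by the displayed description applied to $X$. This proves the set-theoretic containment over $\kkbar$; together with the easy inclusion and the initial reduction it gives the asserted equality $\sigma_r(\nu_d(X))=\reduced{(\sigma_r(\nu_d(\PP^n))\cap\langle\nu_d(X)\rangle)}$.

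I expect essentially all of the difficulty to sit inside Theorem~\ref{thm_R_cap_X_is_contained_in_smoothable_Q}, which is used here purely as a black box: the scheme-theoretic intersection $R\cap X$ is in general not smoothable, and the whole content — as well as the only genuinely characteristic-sensitive ingredient — is that it can nonetheless be engulfed in a smoothable subscheme of $X$ of controlled length. Beyond invoking that result, the remaining work is bookkeeping: checking that a single threshold $d_0$, depending only on $n$, $r$ and the regularity of $X$, simultaneously validates the Hilbert-scheme description of $\sigma_r(\nu_d(Z))$ and the span identity $\langle\nu_d(R)\rangle\cap\langle\nu_d(X)\rangle=\langle\nu_d(R\cap X)\rangle$ for all admissible $R$, and verifying that the decomposition into components behaves well with respect to both smoothability and the intersection with $X$. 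Compared with the treatment over $\CC$ in \cite{jabu_ginensky_landsberg_Eisenbuds_conjecture}, the novelty is precisely that Theorem~\ref{thm_R_cap_X_is_contained_in_smoothable_Q} is available in arbitrary characteristic, so no topological or characteristic-zero input is needed anywhere in the argument.
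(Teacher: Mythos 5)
Your proposal is correct and follows essentially the same route as the paper's proof of Theorem~\ref{ref:bglfirstthm:thm}: reduce to $\kkbar$, represent a point of the right-hand side as lying in $\sspan{\nu_d(R)}$ for a smoothable $R\subset\PP^n$ via the ``no closure needed'' description, pass to $\sspan{\nu_d(R\cap X)}$ by the regularity-based span-intersection identity (the paper's Lemma~\ref{ref:bglmainlemma:lem}), and engulf $R\cap X$ in a smoothable $Q\subset X$ via Proposition~\ref{ref:smooth_subvar_and_pushing:prop}. The only differences are cosmetic — your explicit reduction to the components $R_A$ supported on $X$ is a point the paper leaves implicit, and the labels ``$\supseteq$''/``reverse'' on your two inclusions are swapped, though both directions are in fact proved.
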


In other words, with the assumptions of the theorem, 
    the (set-theoretic) defining equations of $\sigma_r(\nu_d (X))$ can be read from the defining equations
       of $\sigma_r(\nu_d (\PP_{\kk}^n))$ 
       and those of $X \subset \PP_{\kk}^n$.
The statement of the theorem is known when $\kk=\CC$ by \cite[Thm~1.1]{jabu_ginensky_landsberg_Eisenbuds_conjecture}.
Theorems~\ref{thm_equivalence_of_abstract_and_embedded_smoothings} and \ref{thm_R_cap_X_is_contained_in_smoothable_Q} 
    are used as substitutes for transcendental methods used in that paper, 
    in order to extend the result to arbitrary characteristic.

The next result concerns the problem if we can describe explicitly the defining equations of $\sigma_r(\nu_d (\PP_{\kk}^n))$ 
    (again with the assumption that $d\gg 0 $).
There is plenty of equations vanishing on that secant variety, that arise from line bundles and vector bundles on $\PP_{\kk}^n$.
The question is, when these are enough to define the secant variety.
The answer is, that this is governed by a condition on smoothability of finite subschemes of $\PP_{\kk}^n$.

The following condition depends on the base field $\kk$ (more precisely, on its algebraic closure), 
   an algebraic variety $X$, and an integer $r$.
It does not depend on any embedding of $X$, in particular, on the integer $d$, as in Theorems~\ref{thm_BGL} or \ref{thm_BB}. 
In fact, the dependence on $X$ is restricted to the type of its singularities, see~Proposition~\ref{prop_smoothability_depends_only_on_sing_type}.
So fix a positive integer $r$ and an algebraic variety $X$ over the base field $\kk$
  and let $\kkbar$ be the algebraic closure of $\kk$.
We say that \ref{item_condition_on_smoothability_of_all_Gorenstein} holds if
   \renewcommand{\theenumi}{$(\star)$}
   \begin{enumerate}
     \item \label{item_condition_on_smoothability_of_all_Gorenstein}
         every finite Gorenstein subscheme over $\kkbar$ of
         $X_{\kkbar}:=X\times_{\Spec \kk}\Spec \kkbar$ 
         of degree at most $r$ is smoothable in $X_{\kkbar}$.
   \end{enumerate}
\renewcommand{\theenumi}{(\roman{enumi})}
In Section~\ref{sec_on_condition_star} we review for which triples $(\kk,r, X)$ 
   the condition \ref{item_condition_on_smoothability_of_all_Gorenstein} holds. 
For example, if $X$ is smooth and $\dim X\le 3$, then \ref{item_condition_on_smoothability_of_all_Gorenstein} holds.
Our point here is that the natural equations of secant varieties associated to line and vector bundles are enough 
   to describe the secant variety to high degree Veronese reembedding 
   if and only if the condition \ref{item_condition_on_smoothability_of_all_Gorenstein} holds.
 
Assume $X\subset \PP_{\kk}^n$ is a projective variety and suppose $d\gg 0$ is a sufficiently lagre integer.
Then the $d$-th Veronese map embeds $X$ using the \emph{complete} linear system $H^0(\ccO_X(d))$
  (see Lemma~\ref{lem_regular_then_complete_linear_system} for an explicit bound on such $d$):
  \[
   \nu_d(X) \subset \PP (H^0(\ccO_X(d))^*)
  \]
Suppose $\ccE$ is a vector bundle on $X$ of rank $e$ and $\ccE^*$ is the dual vector bundle.
The evaluation map $\ccE \otimes \ccE^*(d) \to \ccO_X(d)$ determines the map of sections 
$H^0(\ccE) \otimes H^0(\ccE^*(d)) \to H^0(\ccO_X(d))$, 
or equivalently, a bilinear map:
\[
   H^0(\ccO_X(d))^* \times H^0(\ccE) \to H^0(\ccE^*(d))^*.   
\]
That is, for every $v\in H^0(\ccO_X(d))^*$ we have a linear map $\phi_{\ccE}(v)\colon H^0(\ccE) \to H^0(\ccE^*(d))^*$,
  which depends linearly on $v$.
It is a straightforward observation that $\kk$-rational points of $\nu_d(X)$ are contained in the locus of those $[v] \in \PP (H^0(\ccO_X(d))^*)$, 
  such that $\rk(\phi_{\ccE}(v)) \le e$. This statement and similar can be translated into determinantal equations of $\nu_d(X)$, and we will denote it by $\nu_d(X) \subset (\rk(\phi_{\ccE}) \le e)$,
  see Section~\ref{sec_secant_cactus_catalecticants} for a formal definition of such locus.
As a consequence, for any integer $r \ge 1$ the secant variety $\sigma_r(\nu_d(X))$ is contained 
  in the locus $(\rk(\phi_{\ccE}(v)) \le r \cdot e)$ 
  \cite[Prop.~5.1.1]{landsberg_ottaviani_VB_method_equ_for_secants}.
In fact, even larger set, called \emph{a cactus variety} is contained in the same locus~\cite[Thm~4]{galazka_vb_cactus} or Theorem~\ref{thm_Galazka}.

A special and important case is when the vector bundle $\ccE$ is a line bundle
$\ccO_X(i)$, hence $e = 1$ and 
$\sigma_r(\nu_d(X)) \subset (\rk(\phi_{\ccO_X(i)}) \leq r)$.
Then the linear map $\phi_{\ccE}(v)$ and the minors described above has a very explicit algebraic description in terms of apolarity.
The following theorem compares subvarieties of $\PP(H^0(\ccO_X(d))^*)$: 
   the secant variety $\sigma_r(\nu_d(X))$ and $\reduced{(\rk(\phi_{\ccO_X(i)}) \leq r)}$.
Under suitable regularity assumptions, if the condition \ref{item_condition_on_smoothability_of_all_Gorenstein} holds, 
  then the equations coming from $ \ccO_{X}(i)$ are enough, but if  \ref{item_condition_on_smoothability_of_all_Gorenstein} fails to hold, then the equations for all vector bundles $\ccE$ are not enough.

\begin{thm}\label{thm_BB}
Suppose $\kk$ is any field and $X \subset \PP_{\kk}^n$ is a projective variety and let $r \ge 1$ be an integer.
For a vector bundle $\ccE$ of rank $e_\ccE$ define the linear map $\phi_{\ccE}(v)$ as above.
Then
   \begin{enumerate}
      \item \label{item_if_star_holds_then_sigma_eq_cactus}
         If \ref{item_condition_on_smoothability_of_all_Gorenstein} holds, $d\gg 0$, 
            and $i$ is an integer such that $r \le i \le d-r$, and $\ccE = \ccO_{X}(i)$, then
         \[
            \sigma_r(\nu_d(X)) = \reduced{(\rk(\phi_{\ccO_X(i)}) \le r)}.
         \]
      \item \label{item_if_sigma_eq_cactus_then_star_holds}
         If \ref{item_condition_on_smoothability_of_all_Gorenstein} does not hold 
           and $d\ge 2r-1$, then  
         \[
            \sigma_r(\nu_d(X))  \subsetneq \bigcap_{\ccE}
            \reduced{(\rk(\phi_{\ccE}) \le r\cdot e_{\ccE})}
         \]
         where the intersection is over all vector bundles $\ccE$ on $X$.
   \end{enumerate}
\end{thm}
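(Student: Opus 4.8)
The plan is to reduce both parts of Theorem~\ref{thm_BB} to statements about apolarity and finite subschemes, using the bijection (for $d \gg 0$, $r \le i \le d-r$) between points of $(\rk(\phi_{\ccO_X(i)}) \le r)$ and degree-$r$ Gorenstein (or more generally finite, via cactus varieties) subschemes of $X$ that are apolar to the corresponding form. Concretely, a point $[v] \in \PP(H^0(\ccO_X(d))^*)$ lies in $(\rk(\phi_{\ccO_X(i)}) \le r)$ precisely when the kernel of $\phi_{\ccO_X(i)}(v) \colon H^0(\ccO_X(i)) \to H^0(\ccO_X(d-i))^*$ has codimension at most $r$; this kernel is a degree-$i$ piece of a saturated ideal cutting out a finite subscheme $R \subset X$ of degree at most $r$ (one must check saturation and that the degree bound $r \le i \le d-r$ makes the Hilbert function of $R$ constant equal to $\deg R$ in degrees $i$ and $d-i$, so that both catalecticant-type maps see all of $R$). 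That subscheme is apolar to $v$, meaning $[v] \in \langle \nu_d(R) \rangle$ via the relative linear span developed in Sections~\ref{sec_relative_linear_span}–\ref{sect_universal_ideal_and_graded_pieces}. Conversely any degree-$\le r$ finite $R \subset X$ apolar to $v$ forces $\rk(\phi_{\ccO_X(i)}(v)) \le \deg R \le r$ in the stated degree range.

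For part~\ref{item_if_star_holds_then_sigma_eq_cactus}: the inclusion $\sigma_r(\nu_d(X)) \subseteq \reduced{(\rk(\phi_{\ccO_X(i)}) \le r)}$ is the standard determinantal containment (recalled in the excerpt via \cite[Prop.~5.1.1]{landsberg_ottaviani_VB_method_equ_for_secants} and Theorem~\ref{thm_Galazka}). For the reverse inclusion, take $[v]$ in the rank locus; by the apolarity dictionary above it is apolar to some finite $R \subset X$ of degree $\le r$. One then passes to the algebraic closure (legitimate by Proposition~\ref{prop_base_change_equivalence}, since smoothability and hence the whole statement is insensitive to base extension, and the rank loci and secant varieties base-change), decomposes $R$ into connected components, and replaces each non-Gorenstein component by a Gorenstein one — here the key point is that apolarity to a single form $v$ forces the relevant local pieces of $R$ to be Gorenstein, or at least to be contained in Gorenstein schemes of no larger degree (this is exactly the mechanism from \cite{nisiabu_jabu_cactus}: a scheme apolar to one polynomial, after the degree-range hypothesis, is Gorenstein when it is minimal). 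Then \ref{item_condition_on_smoothability_of_all_Gorenstein} makes each such Gorenstein component smoothable in $X_{\kkbar}$; by Theorem~\ref{thm_equivalence_of_abstract_and_embedded_smoothings} the whole $R$ is embedded smoothable, and a smoothing of $R$ through $\deg R \le r$ reduced points, combined with the definition of $\sigma_r$ via relative linear spans of flat families, places $[v] \in \sigma_r(\nu_d(X))$. One must also invoke Theorem~\ref{thm_BGL} (or its proof ingredients) to know $d_0$ can be chosen uniformly so that "$d \gg 0$" simultaneously makes $\nu_d$ the complete linear system (Lemma~\ref{lem_regular_then_complete_linear_system}) and makes the apolarity correspondence exact.

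For part~\ref{item_if_sigma_eq_cactus_then_star_holds}: assume \ref{item_condition_on_smoothability_of_all_Gorenstein} fails, so over $\kkbar$ there is a finite Gorenstein $R \subset X_{\kkbar}$ of degree $r' \le r$ that is not smoothable in $X_{\kkbar}$; we may take $r'$ minimal and $R$ local (a connected non-smoothable component). Being Gorenstein of socle degree controllable, $R$ is apolar, in degree $\le r' - 1$, to some form, and using $d \ge 2r-1$ (so $r' - 1 \le d - r'$, giving room on both sides) we produce a point $[v] \in \PP(H^0(\ccO_X(d))^*)$ apolar to $R$ and to no smaller scheme. Then $[v]$ lies in $\reduced{(\rk(\phi_{\ccE}) \le r \cdot e_{\ccE})}$ for every vector bundle $\ccE$ — this is the cactus-variety containment of \cite{galazka_vb_cactus}/Theorem~\ref{thm_Galazka}, since $[v]$ lies in the relative span of $\nu_d(R)$ and $\deg R \le r$ — while $[v] \notin \sigma_r(\nu_d(X))$ because any expression of $[v]$ as a limit of $r$-secant planes would, by the properness/semicontinuity arguments for relative linear span in Sections~\ref{sec_relative_linear_span}–\ref{sect_universal_ideal_and_graded_pieces} together with the identifiability-type uniqueness forced by the degree range, exhibit $R$ (the unique minimal apolar scheme) as smoothable, contradiction.

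The main obstacle I expect is the uniqueness/minimality step in both directions: showing that in the degree range $r \le i \le d-r$ (resp.\ $d \ge 2r-1$) the form $v$ has a \emph{well-defined} minimal apolar subscheme whose smoothability is equivalent to $[v] \in \sigma_r$, and that this subscheme can be taken Gorenstein. This is the technical heart inherited from \cite{nisiabu_jabu_cactus} and \cite{jabu_ginensky_landsberg_Eisenbuds_conjecture}; over $\CC$ it used transcendental arguments, and here it must be replaced by the scheme-theoretic smoothing results — Theorems~\ref{thm_equivalence_of_abstract_and_embedded_smoothings} and \ref{thm_R_cap_X_is_contained_in_smoothable_Q} and Proposition~\ref{prop_base_change_equivalence} — which is precisely why those were proved first.
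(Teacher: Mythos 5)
Your overall strategy for part~\ref{item_if_sigma_eq_cactus_then_star_holds} matches the paper's: a non-smoothable Gorenstein scheme of degree $\le r$ witnesses the strict inclusion $\sigma_r(\nu_d(X))\subsetneq \cactus{r}{\nu_d(X)}$ (this is Proposition~\ref{prop_cactus_equals_secant_then_star_holds}, which the paper itself only sketches by deferring to \cite{nisiabu_jabu_cactus}), and Theorem~\ref{thm_Galazka} places the cactus variety inside every catalecticant locus $\reduced{(\rk\phi_{\ccE}\le e_\ccE r)}$; your unpacking of the first step is at the same level of detail as the reference and is acceptable.

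For part~\ref{item_if_star_holds_then_sigma_eq_cactus}, however, there is a genuine gap. You run the apolarity dictionary directly on $X$: you assert that for $[v]$ in the rank locus, $\ker\phi_{\ccO_X(i)}(v)\subset H^0(\ccO_X(i))$ is the degree-$i$ piece of a saturated ideal cutting out a finite subscheme of $X$ of degree $\le r$. This correspondence is only established (in \cite{nisiabu_jabu_cactus}, quoted here as Theorem~\ref{thm_BB_for_PV}) for $X=\PP V$, where $\bigoplus_j H^0(\ccO_{\PP V}(j))$ is a polynomial ring and Macaulay duality applies; for a general projective variety $X$ the section ring need not be a polynomial ring (nor even generated in degree one after restriction), and nothing you cite produces a finite apolar subscheme \emph{of $X$} from a small catalecticant rank. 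The paper's route avoids this entirely: it proves $\cactus{r}{\nu_d(\PP V)}=\reduced{(\rk\phi_{\ccO_{\PP V}(i)}\le r)}$ first, and then descends to $X$ by the sandwich of inclusions in Corollary~\ref{cor_cactus_reduce_to_PV}, whose key ingredient is the cactus analogue of the BGL intersection theorem (Proposition~\ref{prop_BGL_for_cactus}, resting on Lemma~\ref{ref:bglmainlemma:lem}): a point of $(\rk\phi_{\ccO_X(i)}\le r)$ lies in $\cactus{r}{\nu_d(\PP V)}\cap\sspan{\nu_d(X)}$, hence in the span of $\nu_d(R\cap X)$ for some apolar $R\subset\PP V$, and $\deg(R\cap X)\le r$. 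You invoke Theorem~\ref{thm_BGL} only to calibrate $d_0$, not for this intersection step, so the passage from ``$[v]$ is in the rank locus'' to ``$[v]$ is spanned by a finite degree-$\le r$ subscheme of $X$'' is unsupported. Once that is repaired, the remainder of your argument (reduce to Gorenstein components via Lemma~\ref{lem_Gorenstein_are_enough_for_cactus}, apply~\ref{item_condition_on_smoothability_of_all_Gorenstein} to get smoothability, conclude via Corollary~\ref{cor_all_Gor_smoothable_implies_secant_equal_cactus} that $\sigma_r=\cactus{r}$) is the paper's argument. Note also that part~\ref{item_if_star_holds_then_sigma_eq_cactus} does not assume $X$ smooth, so any appeal to Theorem~\ref{thm_BGL} or to Proposition~\ref{ref:smooth_subvar_and_pushing:prop} must be replaced by their cactus counterparts, as the paper does.
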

We prove a slightly stronger version of the theorem in Section~\ref{sec_secant_cactus_catalecticants}. 
The proof contains an explicit bound on sufficiently large $d$.

\subsection{Overview}

In Section~\ref{sec_preliminaries_and_finite_schemes} we present the necessary
notions from scheme theory and theory of finite schemes; this section may
freely be skipped by expert readers. In Section~\ref{sec:smoothability} we present our results
on smoothability of finite schemes, including Theorems~\ref{thm_equivalence_of_abstract_and_embedded_smoothings}, \ref{thm_R_cap_X_is_contained_in_smoothable_Q} and 
Proposition~\ref{prop_smoothability}.
In Section~\ref{sec_projective_schemes_and_embedded_geometry} we give some
overview of linear and projective geometry, including apolarity theory and Castelnuovo-Mumford regularity; again, the experts can freely omit it. 
In Section~\ref{sec_Hilbert} we discuss Hilbert schemes and their loci and we prove
Proposition~\ref{prop_base_change_equivalence}. In
Subsection~\ref{sec_relative_linear_span} we discuss the notion of linear
spans of points of the Hilbert scheme; the emphasis is on the situation when
$\kk$ is not algebraically closed. We discuss relative analogue of linear
spans in Subsection~\ref{sect_universal_ideal_and_graded_pieces}. In
Section~\ref{sect_secants} we apply the theory of smoothability and relative
linear spans to discuss properties of secant varieties and in
Section~\ref{sect_secants_of_Veronese} we consider the special case of high
enough Veronese reembedding, proving Theorems~\ref{thm_BGL} and \ref{thm_BB}.

\subsection*{Acknowledgements}
The authors thank Gavin Brown, Weronika Buczy{\'n}ska, Dustin Cartwright, Mateusz Michałek, and \L{}ukasz Sienkiewicz for hints and many helpful discussions.
The computer algebra program Magma \cite{magma} was helpful in calculation of explicit examples.
We thank IMPAN and the participants of the mini-semester miniPAGES 04-06.2016
for an excellent and inspiring atmosphere during the event.
The article is a part of the activities of AGATES research group.
   
\section{Preliminaries and finite schemes}\label{sec_preliminaries_and_finite_schemes}

Throughout the article we work over a field $\kk$ of any characteristic.
In general, we do \emph{not} require that $\kk$ is algebraically closed, 
  but a few statements require this additional assumption.
A good introduction to the theory schemes is, for instance, \cite{eisenbud_harris}.
Much more details are provided in \cite{gortz_wedhorn_algebraic_geometry_I}, \cite{vakil_FoAG} or \cite{stacks_project}.

Expert readers may easily skip this section after they familiarise themselves with Notation~\ref{not_scheme} 
  which is valid throughout the article.
For non-experts, we explain in more details the terminology used in Notation~\ref{not_scheme} and other terminology related to algebraic schemes and also we review related material.
\begin{notation}\label{not_scheme}
   All schemes and morphisms that we consider here are
   separated, locally Noetherian schemes over $\kk$.
   By a \emph{variety} we mean a reduced (not necessarily irreducible) scheme of finite type.
   By default, the schemes are equipped with Zariski topology, except when stated otherwise.
\end{notation}

\subsection{General properties of schemes}\label{sect_general_properties_of_schemes}

An \emph{affine scheme} $\Spec A$ (over $\kk$) is the spectrum of a $\kk$-algebra $A$,
  i.e.~the set of prime ideals of $A$ equipped with the Zariski topology 
  (a subset $\ccS$ of prime ideals is closed if and only if there exists an ideal $I \subset A$, 
   such that $\ccS$ is the set of prime ideals containing $I$)
   and with a structure sheaf $\ccO_{\Spec A}$, whose global sections are $A$.
For the description of the sheaf on other open subsets, 
  see for instance \cite[Sect.~I.1.4]{eisenbud_harris}, \cite[Sect.~(2.10)]{gortz_wedhorn_algebraic_geometry_I}, 
  \cite[Sect.~4.1]{vakil_FoAG}, or \cite[Tag 01HR]{stacks_project}.
The sections of the structure sheaf are interpreted as \emph{functions} on $\Spec A$.
To avoid confusion, we say a point $x \in X$ corresponds to a prime ideal $\gotp \subset A$, and vice versa,
  rather than just write $\gotp \in X$.
The field $\kk$ is called a base field of $X$.

\begin{example}
   Suppose $A  =\kk^r = \underbrace{\kk \times \kk \times \dotsb \times \kk}_{r \text{ times}}$.
   Then $\Spec A$ consist of $r$ closed points. 
   For an open subset $U \subset \Spec A$ consisting of $k$ points we have $\ccO_{\Spec A}(U) = \kk^k$.
\end{example}
  
\begin{example}
   Suppose $A  =\kk[\alpha]/(\alpha^r)$.
   Then $\Spec A$ consists of a single point (which must be closed), and $\ccO_{\Spec A}(\Spec A) = \kk[\alpha]/(\alpha^r)$. 
\end{example}

\begin{example}
   Suppose $A  =\kk[[\alpha]]$ is a power series ring in one variable.
   Then $\Spec A$ consists of two points: the closed point corresponding to $(\alpha)\subset A$ and the generic point $\eta$ corresponding to $\set{0} \subset A$.
   As usually, $\ccO_{\Spec A}(\Spec A) = \kk[[\alpha]]$, while $\ccO_{\Spec A}(\set{\eta}) = \kk((\alpha))$, the field of fractions of $A$. 
\end{example}
  
\begin{example}
   Suppose $A  =\kk[[\alpha, \beta]]$ is a power series ring in two variables.
   Then $\Spec A$ consists of infinitely many points, but  there is only one closed point corresponding to $(\alpha,\beta)$.
\end{example}

\begin{example}
   Suppose $\kk =\RR$ and $A = \RR[\alpha,\beta]/(\alpha^2 + \beta^2+1)$.
   Then the closed points of $\Spec A$ are 1-to-1 correspondence with pairs $(z, \bar{z})$ of complex conjugate points of the quadric $\alpha^2 + \beta^2 = -1$ in $\CC^2$.
   In addition, there is a generic point corresponding to the zero ideal in $A$.
\end{example}

\begin{example}
   The scheme $\Spec \kk[\fromto{\alpha_1}{\alpha_n}]$ is called the affine space of dimension $n$ and it is denoted $\AA^n_{\kk}$.
\end{example}

More generally, a \emph{scheme} is glued from its open subsets, each of which is an affine scheme. 
Details of this construction can be found in \cite[Sect.~I.2]{eisenbud_harris}, 
  \cite[Sect.~(3.1)]{gortz_wedhorn_algebraic_geometry_I}, 
  \cite[Sect.~4.3]{vakil_FoAG}, or \cite[Tag 01II]{stacks_project}. 
Note that to properly understand these references one should first befriend the notion of locally ringed space,
  which is explained in earlier chapters of these books.
As in the affine case, a scheme $X$ consists of the topological space (also denoted $X$)
  and a structure sheaf $\ccO_X$ which is responsible for its algebraic properties, and encodes functions from $X$ 
  or from its open subsets. Open affine subset form the basis of the topology of $X$.
Many properties of schemes are local in nature, 
  which means that it is enough to test or define them for affine schemes $\Spec A$ only, 
  in terms of the underlying algebra $A$.
In fact, it is often enough to test or define them for the local rings $\ccO_{X,x}$, where $x \in X$ is any point:
  the ring $\ccO_{X,x}$ is defined to be the localisation of $\ccO_X(U)$
  at its prime ideal $\gotp$ corresponding to $x$ for any open affine subset
  $U \subset X$, see~\cite[Sect.~I.2.2]{eisenbud_harris},
  \cite[Sect.~(2.6)]{gortz_wedhorn_algebraic_geometry_I}, or \cite[2.1.1,~4.3.5]{vakil_FoAG}.
It is one of the intermediate results of this article, that smoothability of
finite schemes (Definition~\ref{ref:abstractsmoothable:def})
   is a local property, see Corollary~\ref{ref:smoothingcomponentsresult:cor}.
   
Open subsets of a scheme $X$ have a natural structure of a scheme,
see~\cite[Sect.~I.2.1]{eisenbud_harris}, \cite[Sect.~(3.2)]{gortz_wedhorn_algebraic_geometry_I},
\cite[Exercise~4.3.C]{vakil_FoAG}.
Also closed subsets are themselves schemes, although here the structure of a scheme is not unique --- 
it depends on the choice of an ideal sheaf $\ccI\subset \ccO_X$, 
whose radical is equal to a fixed radical ideal sheaf,
see~\cite[Sect.~I.2.1]{eisenbud_harris},
\cite[Sect.~(3.5)]{gortz_wedhorn_algebraic_geometry_I}, or
\cite[Sect.~8.1]{vakil_FoAG}.
A union of a finite number of closed subschemes or an intersection of a (perhaps infinite) family of subschemes
has a canonical structure of a closed
subscheme, see~\cite[Section~I.2.1, p.~24]{eisenbud_harris}, \cite[Sect.~(4.11)]{gortz_wedhorn_algebraic_geometry_I},
or~\cite[Exercise~8.1.J]{vakil_FoAG}.
Taking the union of closed subschemes is independent of the order, similarly for intersections.
However, these two operations \emph{do not} satisfy the distributive law (similarly to algebraic sum and intersection of vector spaces, but unlike topological spaces).

\begin{example}[Intersection and union are nondistributive]
  Let $\AA^2_{\kk} = \Spec\kk[\alpha, \beta]$ and
   \[R_1 = \Spec \kk[\alpha,\beta]/(\alpha),\quad R_2 = \Spec \kk[\alpha,\beta]/(\beta),\quad R_3= \Spec
   \kk[\alpha,\beta]/(\alpha+\beta).\]
  That is, $R_1, R_2, R_3$ are three lines through the origin in $\AA^2_{\kk}$.
  Then:
  \begin{itemize}
   \item $(R_1 \cup R_2)\cap R_3 = \Spec \kk[\alpha,\beta]/(\alpha \beta, \alpha+\beta)$, and
   \item $(R_1 \cap R_3)\cup (R_2\cap R_3) = \Spec \kk[\alpha,\beta]/(\alpha, \beta)$.
  \end{itemize}
\end{example}

A scheme $X$ is \emph{reducible}, if its topological space can be presented as a union $X= Y \cup Z$,
   with both $Y, Z$ Zariski closed subsets of $X$ 
   and the presentation is non-trivial, i.e.~$Y \ne X$, and $Z\ne X$, see
   \cite[p.~25]{eisenbud_harris},
   \cite[Def.~1.18]{gortz_wedhorn_algebraic_geometry_I},
   or~\cite[3.6.4,~3.6.12]{vakil_FoAG}.
We say $X$ is \emph{irreducible} if it is not reducible.
Irreducible components of a scheme $X$ are defined in a standard way:
if $X= X_1\cup X_2 \cup \dotsb \cup X_k$ with all $X_i$ irreducible closed subschemes and there are no redundancies 
   (i.e. for any $i \ne j$ there is no inclusions of topological spaces $X_i \subset X_j$), then the schemes $X_i$ are the irreducible components of $X$.
Note that the underlying topological spaces of $X_i$ are unique (up to reordering), but the scheme structure is not necessarily unique.
If $X$ is irreducible, then any open subset is also irreducible.
If $X = \Spec A$ is affine, then it is irreducible if and only if the radical
ideal of $\set{0} \subset A$ (that is, the set of nilpotent elements
$\sqrt{\set{0}} = \set{f \in A \mid f^k = 0 \text{ for some }k\ge 1}$)
  is a prime ideal in $A$. 
See~\cite[Exercise~I.31]{eisenbud_harris}, \cite[Cor.~2.7]{gortz_wedhorn_algebraic_geometry_I}, or \cite[3.7.F]{vakil_FoAG}.

A \emph{generic point} of a scheme $X$ is a point $\eta \in X$, whose closure is $X$. A generic point exists if and only if $X$ is irreducible.
In such case, $\eta$ is contained in any open affine subset of $X$, and it corresponds to the radical ideal of $\set{0}$, which is a prime ideal.

An affine scheme $X = \Spec A$ is \emph{reduced} if $A$ has no nonzero nilpotent elements, or $\sqrt{\set{0}} = \set{0}$.
A general scheme is reduced if all its affine open subsets are reduced.
Every scheme $X$ has a unique maximal reduced subscheme $\reduced{X}$,
  which has the same topological structure as $X$, but the structure sheaf $\ccO_{\reduced{X}}(U)$ is the quotient $\ccO_X(U)/(nilpotents)$.

A scheme is \emph{locally Noetherian} if it admits an open affine covering by schemes $\Spec A_i$,
  where each $A_i$ is a Noetherian ring.
A scheme is \emph{Noetherian} if it admits a finite covering as above.
The (locally) Noetherian condition is to guarantee several finiteness criteria, 
  for instance the dimension of such scheme is (locally) well defined, well behaved, and finite.
As already mentioned in Notation~\ref{not_scheme}, we are always assuming our schemes are locally Noetherian.
To some extent, this property has analogous purpose to the ``second countable'' property of  topological spaces, 
  which is always assumed for topological or differentiable manifolds.

A stronger finiteness condition is ``finite type''.
A scheme $X$ is of \emph{finite type} (over $\kk$), if $X$ has a finite open affine cover of the form $\Spec A_i$,
  such that each $A_i$ is a finitely generated $\kk$-algebra \cite[Section (3.12)]{gortz_wedhorn_algebraic_geometry_I}.

The \emph{dimension} $\dim(X,x)$ of a scheme $X$ at a point $x \in X$ is the
maximal length of a chain of prime ideals in $\ccO_{X,x}$.
The \emph{dimension} $\dim X$ of $X$ is the maximum of its dimensions at all points $x \in X$.

Suppose $X$ is a scheme and $x\in X$ is a point. The \emph{residue field} of
$x$ is $\kappa(x):= \ccO_{X,x}/\gotm$, where $\gotm \subset \ccO_{X,x}$ is the unique maximal ideal.
We always have $\kk \subset \kappa(x)$ (recall, we work with schemes over $\kk$).
A point $x\in X$ is a closed point if and only if it corresponds to a maximal ideal in $\ccO_X(U)$ for some open (or for any) affine subset $U$.
If in addition $X$ is a of finite type, then $x$ is closed if and only if the
field extension $\kk\subset \kappa(x)$ is finite.

Let $\KK$ be a field containing $\kk$.
A \emph{$\KK$-point} is a morphism $\Spec \KK \to X$. It corresponds to a
point $x\in X$, the image of the unique point of $\Spec \KK$, together with an
extension $\kappa(x) \subset \KK$. In particular if $X$ is of finite type, then
$\kkbar$-points are exactly the closed points $x\in X$ together with
injections $\kappa(x) \subset \kkbar$. A \emph{$\KK$-rational point} is a
$\KK$-point such that $\kk(x) = \KK$.

Usual algebraic operations give rise to scheme morphisms.
Recall that for an algebra $A$ and its ideal $\gotm$ there is an
($\gotm$-adic) \emph{completion
map} $A \to \hat{A}_{\gotm}:= \varprojlim  A/\gotm^i$.
The algebra $A$ is ($\gotm$-adically) \emph{complete} with respect to its ideal $\gotm$
if the natural map $A \to \hat A_{\gotm}$ is an isomorphism~\cite[Tag~0317]{stacks_project}.
If $X$ is an affine scheme with a subscheme $Y \subset X$ given by an ideal
$\gotm \subset A$, then the
\emph{completion} of $X$ at $Y$ is defined as $\Spec \hat{A}_{\gotm}\to \Spec A$, see~\cite[Tag~00M9]{stacks_project}.
Similarly, if $X$ is reduced and irreducible, then $X =
\Spec A$ for a domain $A$ and the \emph{normalization} of $X$ is $\Spec \tilde{A} \to
\Spec A$, where $\tilde{A}$ is the integral closure of $A$ in its fraction
field.
In fact, completion and normalization may be defined also for all reduced
schemes (not necessarily affine or irreducible) by gluing.
We say that that a reduced scheme is \emph{normal}, if it is equal to its normalisation.  

One source of closed schemes is the construction of the scheme-theoretic
image of a morphism $f:X\to Y$. Consider all closed subschemes $Z \subset Y$
such that $f$ factors as $f:X\to Z\to Y$. Their intersection also satisfies
this factorization property, so we define the \emph{scheme-theoretic image of $f$} as the
unique smallest $Z$ such that $f:X\to Z\to Y$. This is always
a closed subscheme; under a mild condition it is the closure of the set $f(X)$,
see~\cite[Section~8.3, Theorem~8.3.4]{vakil_FoAG}.

\subsection{Fibre product and base change}\label{sec_product_and_base_change}

Suppose $X = \Spec A$  and $Y =\Spec B$ are two affine schemes (over $\kk$).
Their \emph{product} is defined as $X \times Y := \Spec A \otimes_{\kk} B$.
If $X$ is any scheme, and $Y$ is affine, then the product is glued locally from affine open pieces $U \times Y$ with $U\subset X$ affine open subset.
If $X$ and $Y$ are arbitrary schemes, then the product is glued locally from open pieces $X \times V$ with $V\subset Y$ affine open subset.

There are natural \emph{projection} maps $X\times Y \to X$ and $X\times Y \to Y$ locally determined by $A \to A\otimes_{\kk} B$, $a \mapsto a \otimes 1$,
  and $B \to A\otimes_{\kk} B$, $b \mapsto 1 \otimes b$.
The projection maps are universal in the sense, that if there is a scheme $W$ with two maps $W \to X$ and $W \to Y$, then there is a unique morphism $W \to X\times Y$ that makes the diagram commutative:
\[
\begin{tikzcd}
{} & W
\arrow[bend right,swap]{ddl}{}
\arrow[bend left]{ddr}{}
\arrow[dashed]{d}{} & & \\
& X \times Y \arrow{dr}{} \arrow{dl}[swap]{} \\
Y  & & 
X 
\end{tikzcd}
\]

The product described above is a simple case of the more general fibre product.
Suppose now $X = \Spec A$, $Y =\Spec B$ and $Z = \Spec C$  are three affine schemes and fix two morphisms $X \to Z$ and $Y \to Z$.
We define the fibre product $X \times_{Z} Y : = \Spec A \otimes_C B$, where the $C$ algebra structure of $A$ and $B$ is given by the underlying morphisms $C \to A$ and $C \to B$.
As above, we glue the affine pieces of $X$ and $Y$ to get $X\times_{\Spec C } Y$ for any schemes $X$, $Y$ with morphism to and affine scheme $\Spec C$.
If $Z$ is arbitrary, we cover it with affine open subsets $U$ and replace $X$ and $Y$ by the preimages of $U$.
Then we glue these together.
For details see~\cite[Chapter 9]{vakil_FoAG}. By construction $X \times Y = X\times_{\Spec \kk} Y$.

As above, there are usual natural maps and a commutative diagram:
\[
\begin{tikzcd}
{} & W
\arrow[bend right,swap]{ddl}{}
\arrow[bend left]{ddr}{}
\arrow[dashed]{d}{} & & \\
& X \times_{Z} Y \arrow{dr}{} \arrow{dl}[swap]{} \\
Y \arrow[swap]{dr}{} & & 
X \arrow{dl}{} \\
& Z
\end{tikzcd}
\]

Bizarre glueings of the affine pieces might lead to pathological schemes that are often excluded from considerations.
In this paper, we assume that all schemes are \emph{separated}, 
  that is the diagonal $\Delta \to X \times X$ is a (Zariski) closed subset of the product
  $X \times X$, see~\cite[Sect.~III.1.2]{eisenbud_harris},
  \cite[Chap.~9]{gortz_wedhorn_algebraic_geometry_I}, or \cite[Sect.~10.1]{vakil_FoAG}.
As indicated in Notation~\ref{not_scheme}, we are always assuming our schemes are separated.
This is the algebro-geometric analogue of the Hausdorff property of topological spaces, and similarly to the theory of topological or differentiable manifolds,
  such assumption is often required to assure nice behaviour.

  A scheme $X$ is \emph{proper} (over $\kk$) if it is of finite type,
  separated and for every scheme $Y$ the underlying topological map of the morphism of schemes $X \times Y \to Y$ is
  closed. Every $X \subset \PP V$ is proper. Properness may be thought as an
  abstraction of key properties of projective varieties.
  
It is often desirable to think of fibre product as of a \emph{base change}. 
That is, suppose $f \colon X \to Z$ is a fixed morphism, where we think of $Z$ as of a \emph{base}.
Let $Y \to Z$ be some morphism. 
Then $f_Y\colon X \times_{Z} Y \to Y$ is now a morphism with base changed to $Y$, called a \emph{base change} of $f$.
\begin{example}
  Suppose $Y \to Z$ is an immersion of a subscheme. 
  Then the base change is the restriction of the map $f\colon X \to Z$ to the preimage of $Y$.
  Particularly, if $Y$ is a single reduced point $y$, then the base change is the fibre $f$ over $Y$ often denoted $X_y$.
\end{example}

\begin{example}
   Suppose $\kk \subset \KK$ is an extension of fields.
   Then using fibre product we can replace any scheme $X$ (over the field
   $\kk$) with $X_{\KK} := X \times \Spec \KK$ to obtain a scheme over
   $\KK$, and we may change the base field to $\KK$.
   If $X$ is finite  over $\kk$ (see Section~\ref{sect_finite_schemes}),
     then also $X_{\KK}$ will be finite \emph{over $\KK$.} 
   In general $X_{\KK}$ is \emph{not} finite over
   $\kk$. This illustrates the critical importance of keeping track of the base field.
\end{example}

Many properties of morphisms are preserved under base change. 
For example, if $f \colon X \to Z$ is flat (respectively: smooth, proper, or finite), then also $f_Y \colon X \times_{Z} Y \to Y$ is flat (respectively: smooth, proper, or finite). 
See Sections~\ref{sec_smooth_regular_flat}, \ref{sect_finite_schemes} for a definitions of a flat, smooth, proper and finite morphism.
See \cite[Tag~02WE]{stacks_project} for a longer list of properties preserved under base change.

It is important to remark, that the underlying topological space of the product $X \times Y$ is not necessarily equal to the product of topological spaces of $X$ and $Y$.
A classical example is $\AA^1_{\kk} \times \AA^1_{\kk} \simeq \AA^2_{\kk}$. The non-closed points of $\AA^2_{\kk}$ which come from the topological product, 
  are ``vertical'' or ``horizontal'', such as those corresponding to ideal $(x-c)$ or or $(y-d)$ (where $x$ and $y$ are the coordinates on the first or on the second factor, 
  and $c$, $d$   are constants in $\kk$). There are many other prime ideals, one example is $(x-y)$.
As another example, let $\kk= \RR$ and consider $X=Y=\Spec \CC = \Spec \RR[x]/(x^2+1)$.
Thus each $X$ and $Y$ is a single point.
Then $ X\times_{\Spec\RR} Y \simeq \Spec \CC[x]/(x^2+1) \simeq \Spec \CC \sqcup \Spec \CC$, i.e.~$X\times Y$ consists of two closed points.
Nevertheless, there are many topological properties that behave analogous to the topological product.
We note one of them that will be used later. 
It uses the notion of $\kk$-rational points of a scheme $X$, i.e. points of $X$ with residue field equal to $\kk$, see Section~\ref{sect_varieties}.
\begin{lem}\label{lem_product_of_dense_is_dense}
  Suppose $X$ and $X'$ are finite type schemes (over $\kk$) and $S \subset X$ and $S'\subset X'$ are
  two Zariski-dense subsets consisting of $\kk$-rational points.
  Then $S \times S' \subset X \times X'$ is a Zariski-dense subset.
\end{lem}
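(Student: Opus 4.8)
The plan is to reduce to the affine case and then exploit that for a $\kk$-rational point, the fibre of the projection $X \times X' \to X'$ over that point is just a copy of $X$ over $\kk$. First I would note that density is a local statement: $S \times S'$ is dense in $X \times X'$ if and only if for every affine open $U \times U'$ (with $U \subset X$, $U' \subset X'$ affine open) the intersection $(S \cap U) \times (S' \cap U')$ is dense in $U \times U'$; and $S \cap U$ is dense in $U$ since $S$ is dense in $X$, likewise for $S'$. So I may assume $X = \Spec A$ and $X' = \Spec A'$ are affine of finite type over $\kk$, and I must show that $S \times S'$ meets every nonempty basic open subset, equivalently that the only element of $A \otimes_{\kk} A'$ vanishing on all of $S \times S'$ is nilpotent — or, since I only need topological density, that $f \in A \otimes_{\kk} A'$ vanishing on $S \times S'$ lies in every prime ideal.

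The key step is the following. Fix $f \in A \otimes_\kk A'$ vanishing at every point of $S \times S'$; I want to show $f$ is nilpotent. Write $f = \sum_{i=1}^n a_i \otimes b_i'$ with the $b_i'$ linearly independent over $\kk$. For each $\kk$-rational point $s \in S$, evaluation at $s$ is a $\kk$-algebra homomorphism $\varepsilon_s \colon A \to \kk$, and applying $\varepsilon_s \otimes \id_{A'}$ to $f$ gives $\sum_i \varepsilon_s(a_i)\, b_i' \in A'$. This element vanishes at every point of $S'$; since $S'$ is dense in $X'$, it is nilpotent in $A'$, and since $A'$ has finitely many minimal primes it is in fact killed by a fixed power, but more simply: consider instead $g_s := \sum_i \varepsilon_s(a_i)\, b_i'$ as an element of $A'_{\red}$, where it must be zero because $A'_{\red}$ is reduced and $g_s$ vanishes on the dense set $S'$ of closed points. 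As the $b_i'$ remain linearly independent in $A'_\kk$-span inside $A'$ — here one uses that a $\kk$-linearly independent set stays independent modulo the nilradical after shrinking $n$ if necessary — we conclude $\varepsilon_s(a_i) = 0$ for all $i$ and all $s \in S$. Hence each $a_i$ vanishes on the dense set $S \subset X$, so each $a_i$ is nilpotent in $A$, and therefore $f = \sum a_i \otimes b_i'$ is nilpotent in $A \otimes_\kk A'$. Thus $f$ lies in every prime of $A \otimes_\kk A'$, so $S \times S'$ is dense.

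The main obstacle is the bookkeeping with the nilradical: the naive argument "$g_s$ vanishes on $S'$, hence $g_s = 0$" is only valid in $A'_{\red}$, and one must be careful that reducing modulo nilpotents does not destroy the linear independence of the $b_i'$ in a way that invalidates the conclusion $\varepsilon_s(a_i)=0$. The clean way to handle this is to replace $A$ and $A'$ by $A_{\red}$ and $A'_{\red}$ from the outset: the reduction maps $X_{\red} \to X$ and $X'_{\red} \to X'$ are homeomorphisms, $S$ and $S'$ consist of closed points and so lift to dense sets of $\kk$-rational points on the reduced schemes, and $(X \times X')_{\red}$ has the same underlying space as $X_{\red} \times X'_{\red}$; so density on the reduced product is equivalent to what we want. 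Over a reduced finitely generated $\kk$-algebra, a function vanishing at all closed points of a dense set of $\kk$-rational points genuinely is $0$ (its zero locus is closed and contains a dense set), and then the separation-of-variables argument above goes through with no nilpotents to track. With that reduction in place the remaining steps are routine linear algebra over $\kk$.
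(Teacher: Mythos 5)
Your argument is correct, but it takes a genuinely different route from the paper. The paper's proof is geometric and very short: given a nonempty open $U \subset X \times X'$, it uses that the projection $X \times X' \to X$ is open (flatness of $X' \to \Spec \kk$ plus finite type), picks $s \in S$ in the image, and then observes that the fibre $\set{s} \times X'$ is isomorphic to $X'$ because $s$ is $\kk$-rational, so $U$ meets it in a nonempty open set which must contain a point of $S'$. Your proof instead reduces to the affine, reduced case and runs a separation-of-variables argument: writing $f = \sum a_i \otimes b_i'$ with the $b_i'$ linearly independent over $\kk$ and evaluating at $\kk$-rational $s \in S$ forces $\varepsilon_s(a_i) = 0$, hence $a_i = 0$, hence $f = 0$. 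Both proofs use $\kk$-rationality of the points of $S$ in an essential way --- the paper via the fibre over $s$ being a copy of $X'$, you via the evaluation $\varepsilon_s$ landing in $\kk$ so that the coefficients are scalars. What your approach buys is self-containedness: it avoids the openness of flat finite-type morphisms and works entirely in commutative algebra. What it costs is the bookkeeping you yourself flag: the sentence about a linearly independent set "staying independent modulo the nilradical after shrinking $n$" is not correct as stated (e.g.\ $1$ and $1+\epsilon$ with $\epsilon$ nilpotent become dependent), but this does not matter because your proposed fix --- passing to $A_{\operatorname{red}}$ and $A'_{\operatorname{red}}$ at the outset, noting that $X_{\operatorname{red}} \times X'_{\operatorname{red}} \to X \times X'$ is a surjective closed immersion with nilpotent ideal and hence a homeomorphism preserving $S \times S'$ --- is valid and makes the rest of the argument go through cleanly. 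I would simply delete the shaky intermediate sentence and present the reduced-ring version directly.
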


\begin{proof}
  Pick an nonempty open subset $U\subset X \times X'$.
  We must show, that there is a point $s \times s' \in U$ with $s \in S$, and $s' \in S'$.
  The scheme $X'$ is finite type and it is flat over $\kk$
  trivially, so the projection morphism $X \times X' \to X$ is
  open~\cite[Tag~01UA]{stacks_project}.
  The image $V \subset X$ of $U$ under this projection is also open (and non-empty).
  Choose a $\kk$-rational point $s \in V\cap S$.
  Then the scheme-theoretic fibre $\set{s}\times X'$ is isomorphic to $X'$ and
  intersects $U$ in a nonempty subset, i.e.~$U \cap (\set{s}\times X')$ is an
  open subset of $X'$.
  In particular, there exists an $s' \in S'$ contained in this open subset.
  This produces a point $\set{s\times s'}$ from $S \times S'$ contained in $U$.
\end{proof}

\subsection{Smooth and regular schemes, smooth and flat morphisms}\label{sec_smooth_regular_flat}

A scheme $X$ is \emph{regular}, if every closed point $x \in X$ (corresponding
to a maximal ideal $\gotm \subset \ccO_X(U)$ for some affine open subset $U \subset X$)
   satisfies $\dim(X,x) = \dim_{\kappa(x)} \gotm/\gotm^2$, where $\kappa(x) =
   \ccO_{X,x}/\gotm$ is the residue field of $x$,
   see~\cite[Tag~02IR]{stacks_project}, \cite[Section 6.11]{gortz_wedhorn_algebraic_geometry_I},
   or~\cite[Section~12.2]{vakil_FoAG}.

A scheme $X$ is \emph{smooth of dimension $n$} if every irreducible component of $X$ has dimension $n$ 
  and $X$ can be covered by affine open subsets 
  $\Spec\kk[\fromto{\alpha_1}{\alpha_k}]/(\fromto{f_1}{f_l})$, such that the Jacobian matrix of the functions $f_i$ has corank $n$.
Every smooth scheme is regular, but not vice-versa,
see~\cite[Tag~00TQ]{stacks_project}, \cite[Theorem
6.28]{gortz_wedhorn_algebraic_geometry_I}, or
\cite[Theorem~12.2.10]{vakil_FoAG}.

\begin{example}
   The scheme $\Spec \kk[[\fromto{\alpha_1}{\alpha_n}]]$ is regular, but it is not of
   finite type, so it is not smooth. However it is \emph{formally smooth}, see
   Section~\ref{sect_smoothings_abstract_embedded}.
\end{example}
\begin{example}
    Let $\kk \subset \KK$ be a field extension. 
    Then $\Spec \KK$ is always regular. 
    It is smooth if and only if it is algebraic and separable over $\kk$, see \cite[tag 02GL and tag
    02GH]{stacks_project}.
\end{example}

More generally, we define a \emph{smooth morphism of relative dimension $n$} in several steps. 
Firstly, suppose $X = \Spec A$ and $Y =\Spec B$ are affine schemes with a
closed immersion $X \subset Y \times \AA_{\kk}^N$,
 so that 
  $X \simeq \Spec B[\fromto{\alpha_1}{\alpha_N}]/(\fromto{f_1}{f_k})$ for some polynomials $f_i \in  B[\fromto{\alpha_1}{\alpha_N}]$.
  Then the natural projection morphism $f\colon X \to Y$ is smooth of relative dimension $d$ if and only if 
  the Jacobian matrix of the defining equations is of rank $N-d$ at every point of $X$. 
Further, open immersions (i.e.~embeddings of open subschemes) are smooth morphisms of relative dimension $0$. 
A composition of smooth morphisms is a smooth morphism of relative dimension which is a sum of the respective relative dimensions.
Finally, smoothness of a morphism is a local property.
Altogether, a combination of the above properties describe all possible smooth
morphisms, see \cite[Tags~01V5 and 01V7]{stacks_project}, \cite[Section~(6.8)]{gortz_wedhorn_algebraic_geometry_I},
or~\cite[Definition~12.6.2]{vakil_FoAG} for details.

\begin{example}
   If $Y = \Spec \kk$, then $X$ is smooth of dimension $n$ if and only if the natural morphism $X \to Y$ is smooth of relative dimension $n$.
\end{example}

A morphism of affine schemes $f\colon X \to Y$, $X = \Spec A$ and $Y =\Spec B$ is \emph{flat} if $A$ is a flat $B$-module under the underlying morphism $f^*\colon B \to A$ determining the $B$-module structure of $A$.
Again, flatness is a local property, providing a definition of flat morphism for general schemes $X$ and $Y$, see also \cite[Section~(7.18) and Chapter~14]{gortz_wedhorn_algebraic_geometry_I}.
A morphism of schemes $f\colon X\to Z$ is \emph{of finite type} if there is an open
cover $\{U_i\}$ of $Z$ such that $f^{-1}(U_i) \to U_i$ factors as $f^{-1}(U_i)
\subset U_i \times \AA_{\kk}^N \to U_i$.
Assuming Notation~\ref{not_scheme}, 
  a morphism of schemes $X\to Z$ is \emph{proper} if it is of finite
type and for every $Y\to Z$ the topological map $X \times_Z Y \to Y$ is
closed. Therefore, fibres of a proper morphism are proper schemes.
Flat morphisms serve to define continuous families in algebraic geometry and
proper morphisms encode the notion of compactness. Assuming a morphism is
flat and proper, many important properties of fibres are preserved under
limits, but in general, fibres of morphism may vary wildly:

\begin{example}
   Let $X = \Spec\kk[\alpha_1,\alpha_2]/(\alpha_1\alpha_2, \alpha_2^2 -\alpha_2)$ be the union of the $\alpha_2=0$ line and $(0,1)$ point in the affine plane.
   Suppose $Y = \Spec \kk[\beta]$ is a line and $f \colon X \to Y$ is a projection map $(a_1,a_2) \mapsto (a_1)$.
   Then $f$ is proper, but not flat. The fibres over a point $b \in Y$, $b \ne 0$, consist of single reduced point, 
     while the fibre over $0 \in Y$ consists of two disjoint points.
\end{example}

\begin{example}
   Let $X = \Spec\kk[\alpha_1,\alpha_2]/(\alpha_1\alpha_2-1)$ be $\AA_{\kk}^1 \setminus \set{0}$, the open subset of affine line.
   Suppose $Y = \Spec \kk[\beta]$ is the affine line and $f \colon X \to Y$ is the open immersion map $(a_1,a_2) \mapsto (a_1)$.
   Then $f$ is flat but not proper and the fibres over point $b \in Y$, $b \ne 0$, consist of single reduced point, 
     while the fibre over $0 \in Y$ is empty.
\end{example}

\subsection{Rational points and varieties}\label{sect_varieties}

The set of \emph{$\kk$-rational points} of a scheme $X$, denoted $X(\kk)$, is
the set of morphisms $\Hom(\Spec \kk \to X)$ over $\kk$.
Equivalently, this is the set of points $x$ in $X$, such that the residue
field $\ccO_{X,x}/\gotm_x$ is equal to $\kk$.
It follows that $\kk$-rational points are special closed points of $X$.
If $\kk$ is algebraically closed, then the set of $\kk$-rational points is equal to the set of closed points.
Moreover, $X(\kk) = \reduced{X}(\kk)$, where $\reduced{X}$ is the largest reduced
subscheme of $X$.
\begin{example}\label{exam_quadrics_in_the plane_real_points}
  Suppose $\kk=\RR$ is the field of real numbers.
  Let define the following schemes:
    $X=\Spec \RR[\alpha,\beta]/(\alpha^2+\beta^2+1)$,  
    $Y=\Spec \RR[\alpha,\beta]/(\alpha^2+\beta^2)$,
    $Z=\Spec \RR[\alpha,\beta]/(\alpha^2+\beta^2-1)$,
    $T=\Spec \RR[\alpha,\beta]/\alpha^2$.
  Then the respective sets of points are: 
  $X(\RR)$ is empty,
  $Y(\RR)$ is the origin of the affine plane,
  $Z(\RR)$ is a circle,
  $T(\RR)$ is a line.
\end{example}

A scheme $X$ is called a \emph{variety} if it is of finite type over $\kk$ and reduced. 
The first condition implies that $X$ has a open covering by $U_i
\subset \mathbb{A}^{n_i}$. The second condition tells that those $U_i$ are not
glued pathologically.
The third condition is sometimes replaced by \emph{reduced and irreducible},
  but here we also need to consider ``reducible varieties''.
In other words, in the affine case, $X = \Spec A$ is a variety if and only if $A = \kk[\fromto{\alpha_1}{\alpha_N}]/I$ with $I \subset \kk[\fromto{\alpha_1}{\alpha_N}]$ a radical ideal.
The same holds if $X$ is projective and $I$ is the homogeneous ideal defining
$X$ in a projective space (see Section~\ref{sec_embedded_proj_geom}).
   
If $X$ is affine,
then $X(\kk)$ is exactly the set of solutions of the collection of polynomial equations of
  the ideal $I$ defining $X$ in the affine space.
Also vice-versa, if $X$ is an affine variety such that $X(\kk)$ is dense in $X$ and if $I$ is the ideal of all  polynomials vanishing on $X(\kk)$,
  then $I$ is the ideal defining $X$.
\begin{example}
    Schemes $X$, $Y$, $Z$ appearing in Example~\ref{exam_quadrics_in_the
    plane_real_points} are varieties, whereas $T =\Spec \RR[\alpha,\beta]/\alpha^2$ is
    non-reduced and thus not a variety.
\end{example}

\begin{example}
   The scheme $\Spec \kk[[\fromto{\alpha_1}{\alpha_n}]]$ is not a variety, since it is not of finite type.
\end{example}

A smooth irreducible variety is regular and a regular variety over a perfect
field is smooth, see~\cite[Tag 056S, 0B8X]{stacks_project}. Moreover a variety
$X$ is regular of dimension $n$ if and only if for every closed point $x \in
X$ the ring $\hat{\ccO}_{X, x}$ is isomorphic to
$\KK[[\fromto{\alpha_1}{\alpha_n}]]$, where 
$\KK = \kappa(x)$ is the residue field of $x$,  $\hat{\ccO}_{X, x} = \varprojlim\ccO_{X,
x}/\gotm^i$ is the completion  of the local ring $\ccO_{X,
x}$ at maximal ideal $\gotm \subset \ccO_{X, x}$, see~\cite[Tag 07NY]{stacks_project} together with
\cite[Theorem~7.7]{eisenbud}.
In the case $\kk =\CC$, this is analogous to the statement that a variety is smooth if every point has an Euclidean open neighbourhood biholomorphic to an open disk.

\subsection{Finite schemes and morphisms}\label{sect_finite_schemes}

A scheme $R$ is said to be \emph{finite} over $\kk$
   if one of the following equivalent conditions holds:
\begin{enumerate}
 \item  \label{item_def_finite_SpecA}
         $R \simeq \Spec A$ for a finite $\kk$-algebra $A$.
 \item  \label{item_def_finite_union_SpecAi}
         $R$ is a finite disjoint union $R = R_1 \sqcup R_2 \sqcup \dotsb \sqcup R_k$, where each $R_i = \Spec A_i$
         and $A_i$ is a finite local $\kk$-algebra.
\end{enumerate}
The \emph{degree} of a finite scheme $R = \Spec A$ is $\dim_{\kk} A$ as in~\ref{item_def_finite_SpecA}, 
the dimension of $A$ as a $\kk$-vector space; see 
Section~\ref{sect_Hilbert_intro} for a motivation for this name.
If $\kk$ is algebraically closed, then $A$ has a submodule isomorphic to $\kk$,
so by an inductive argument the degree of $A$ is equal to its
Jordan-H\"older \emph{length} as an $A$-module. This is why some authors prefer the name
\emph{length}. For $\kk$ non-algebraically closed the notion of length and
degree diverge, as Example~\ref{ex_length_degree} shows.
\begin{example}\label{ex_length_degree}
    If $\kk = \mathbb{R}$ and $A = \mathbb{C}$ viewed as a
    $\mathbb{R}$-algebra, then the length of $A$ is one
    and its degree is two.
\end{example}

If $R = R_1 \sqcup \dotsb \sqcup R_k$ as in \ref{item_def_finite_union_SpecAi}, 
   then the degree is the sum of degrees (over $\kk$) of $A_i$.
   
\begin{example}
   The affine scheme $X=\Spec \kk[[\alpha]]$ consists of $2$ points corresponding to prime ideals $\set{0}$ and $(\alpha)$.
   Nevertheless, $X$ is \emph{not} finite, as the algebra $\kk[[\alpha]]$ is infinite dimensional.
\end{example}
   
\begin{example}
   Up to an isomorphism, there is a unique finite scheme over $\kk$ of degree
   one, namely $\Spec \kk$.
\end{example}

\begin{example}\label{ex_degree_2_schemes}
   Degree two finite schemes can be either $\Spec \kk \sqcup \Spec \kk \simeq \Spec (\kk\times \kk)$, or $\Spec \kk[\epsilon]/(\epsilon^2)$ or $\Spec \KK$,
   where $\kk\subset \KK$ is a degree two field extension.
\end{example}

Every finite $\kk$-scheme $R$ can be embedded into an affine space
$\mathbb{A}^n_{\kk}$. To see this, we take $R  \simeq \Spec A$, choose a basis
$a_1, \ldots ,a_n$ of $A$ viewed a $\kk$-vector space and take an epimorphism $\kk[\alpha_1, \ldots ,\alpha_n]\to A$
sending $\alpha_i$ to $a_i$.

Suppose now $R$ is irreducible, so that $A$ is local; let $\gotm$ be its maximal
ideal. Then $n$ above has to satisfy~$n\geq \dim_{\kk} \gotm/\gotm^2$.
Therefore, we call $\dim_{\kk} \gotm/\gotm^2$ the \emph{embedding dimension}
of $R$ over $\kk$.

If $R$ is finite, then it is smooth if and only
if $R= R_1 \sqcup \dotsc \sqcup R_s$, where each $R_i \simeq \Spec \FF_i$ and
$\kk  \subset \FF_i$ is a finite separable field extension; see \cite[tag 02GL
and tag 02GH]{stacks_project}.  If in addition $\kk$ is algebraically closed,
then $R$ is finite of degree $r$ and smooth if and only if $R= R_1 \sqcup
\dotsc \sqcup R_r$, where each $R_i \simeq \Spec \kk$.
A finite scheme is irreducible if and only if its topological space consists of a single point. 
Irreducible components of a finite scheme are uniquely determined.

A morphism of affine schemes $f\colon X \to Y$, $X = \Spec A$ and $Y =\Spec B$
is finite, if $A$ is a finitely generated $B$-module.
Thus, $R$ is finite if and only if the morphism $R\to \Spec \kk$  is finite.
Any finite morphism is proper.

For further use, we define Gorenstein schemes. Let $R = \Spec A$ be a finite
$\kk$-scheme. Then $A^* = \Hom_{\kk}(A, \kk)$ is an $A$-module by $A \times
A^*\ni (a, f) \to af \to A^*$ where $(af)(b) = f(ab)$.

\begin{defn}[{\cite[Section~21.2]{eisenbud}}]\label{ref:Gorenstein:def}
    We say that $R = \Spec A$ is \emph{Gorenstein} if $A^*$ is a principal
    $A$-module (i.e.,~it is generated by a single element).
\end{defn}

The module $A^*$ is in fact a dualizing module for $A$. Gorenstein rings are
intimately tied with injective dimension of $A$ and dualizing complexes,
as~\cite[Tag~0AWV]{stacks_project}. We will not explicitly use these advanced
devises.

Definition~\ref{ref:Gorenstein:def} is a little cryptic; for $R$ over $\kkbar$
it can be formulated differently.
\begin{lem}\label{ref:Gorchar:eis:lem}
    Let $R = R_1 \sqcup  \ldots \sqcup R_k$ be a finite scheme over $\kkbar$
    and $R_i$ be its irreducible components. Let $R_i = \Spec A_i$ and $\gotm_i
    \subset A_i$ be the maximal ideal. Then
    \begin{enumerate}
        \item\label{it:tmpfirst} $R$ is Gorenstein if and only if each $R_i$ is Gorenstein,
        \item\label{it:tmpsecond} $R_i$ is Gorenstein if and only if $\dim_{\kkbar} (0:\gotm_i) =
            1$.
    \end{enumerate}
\end{lem}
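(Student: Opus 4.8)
The plan is to reduce to the local case via the disjoint union decomposition, then characterize the Gorenstein property of a local Artinian ring over an algebraically closed field in terms of the socle dimension. First I would observe that for a finite scheme $R = R_1 \sqcup \dotsb \sqcup R_k$ with $R_i = \Spec A_i$, we have $A = A_1 \times \dotsb \times A_k$ as $\kk$-algebras, and hence $A^* = A_1^* \times \dotsb \times A_k^*$ as $A$-modules, with the $A$-action on the $i$-th factor coming only from $A_i$. Since a product module over a product ring is principal if and only if each factor is principal (a generator of the product is a tuple of generators, and conversely), part \ref{it:tmpfirst} follows immediately. So the content is part \ref{it:tmpsecond}, and we may assume $R = \Spec A$ with $A$ a finite local $\kkbar$-algebra with maximal ideal $\gotm$ and residue field $\kkbar$ (the residue field is $\kkbar$ precisely because $A/\gotm$ is a finite field extension of the algebraically closed field $\kkbar$).

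For part \ref{it:tmpsecond}, the key point is that $(0:\gotm) = \ann_A(\gotm)$ is the \emph{socle} of $A$, i.e.\ the largest submodule annihilated by $\gotm$, equivalently the sum of all simple submodules; since $A/\gotm \cong \kkbar$, the socle is a $\kkbar$-vector space and $\dim_{\kkbar}(0:\gotm)$ counts the number of copies of $\kkbar$ in it. The strategy is to relate the socle of $A$ to the socle of $A^*$ under the duality $M \mapsto M^* = \Hom_{\kkbar}(M,\kkbar)$, which is an exact contravariant self-equivalence on the category of finite-dimensional $A$-modules. Under this duality, simple modules go to simple modules, $\gotm M$ corresponds to the submodule of $M^*$ annihilated by $\gotm$ (since $f \in (\gotm M)^\perp$ iff $\gotm f = 0$), and more precisely $(A^*/\gotm A^*)^* \cong (0 : \gotm)_A$ and $(0:\gotm)_{A^*} \cong (A/\gotm M)^*$ with $M = A$, so $\dim_{\kkbar}(0:\gotm)_{A^*} = \dim_{\kkbar} A/\gotm = 1$. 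Now $A^*$ is generated by a single element as an $A$-module if and only if $A^*/\gotm A^* $ is one-dimensional over $\kkbar$ (this is Nakayama's lemma, valid since $A$ is local Artinian and $A/\gotm \cong \kkbar$: a set of elements generates $A^*$ iff their images span $A^*/\gotm A^*$). By the duality computation, $\dim_{\kkbar} A^*/\gotm A^* = \dim_{\kkbar}(0:\gotm)_A$. Hence $A^*$ is principal iff $\dim_{\kkbar}(0:\gotm)_A = 1$, which is the claim.

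I would organize the write-up as: (1) reduce to the local case using the product decomposition for \ref{it:tmpfirst}; (2) recall that over $\kkbar$ a finite local algebra has residue field $\kkbar$; (3) state Nakayama: $A^*$ is principal iff $\dim_{\kkbar} A^* \otimes_A A/\gotm = \dim_{\kkbar} A^*/\gotm A^* = 1$; (4) establish the duality identity $(A^*/\gotm A^*) \cong (0:\gotm)^*$ via the perfect pairing $A^* \times A \to \kkbar$, noting $(\gotm A^*)^\perp = (0 : \gotm)$ inside $A$ and $\dim$ of a space equals $\dim$ of its dual; (5) conclude. The main obstacle — really the only subtle point — is step (4): getting the annihilator/perpendicular bookkeeping right, i.e.\ verifying carefully that under the pairing $\langle f, a\rangle = f(a)$ the subspace $\gotm A^* \subseteq A^*$ has orthogonal complement exactly $(0:\gotm) \subseteq A$, so that $\dim_{\kkbar}\gotm A^* = \dim_{\kkbar} A - \dim_{\kkbar}(0:\gotm)$ and therefore $\dim_{\kkbar} A^*/\gotm A^* = \dim_{\kkbar}(0:\gotm)$. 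Everything else is standard linear algebra over $\kkbar$ together with Nakayama's lemma; I would cite \cite[Section~21.2]{eisenbud} for the background on Gorenstein Artinian rings and socles.
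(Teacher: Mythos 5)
Your proof is correct. Part \ref{it:tmpfirst} is handled the same way in substance as in the paper: the paper checks that $\Hom_A(A,A^*)$ contains an epimorphism by looking at its stalks at the maximal ideals $\gotn_i$, while you use the equivalent and slightly more elementary observation that $A^*=\prod A_i^*$ as a module over $A=\prod A_i$ is principal if and only if each factor is; these are two phrasings of the same reduction to components. The real divergence is in part \ref{it:tmpsecond}: the paper simply cites \cite[Proposition~21.5]{eisenbud} for the equivalence between $A^*$ being principal and $\dim_{\kkbar}(0:\gotm)=1$, whereas you supply a complete proof of that equivalence — Nakayama reduces principality of $A^*$ to $\dim_{\kkbar}A^*/\gotm A^*=1$, and the perfect pairing $A^*\times A\to\kkbar$ identifies the orthogonal complement of $\gotm A^*$ with $(0:\gotm)$, giving $\dim_{\kkbar}A^*/\gotm A^*=\dim_{\kkbar}(0:\gotm)$. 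Your duality bookkeeping is right (note $(mf)(a)=f(ma)$, so $a\perp\gotm A^*$ iff $\gotm a=0$ by perfectness of the pairing), and the hypothesis $\kk=\kkbar$ enters exactly where you use it, to guarantee $A/\gotm=\kkbar$. What your route buys is self-containedness — it is essentially the standard proof of the cited proposition — at the cost of a somewhat longer write-up; the paper's choice to cite is the more economical option but conveys less.
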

\begin{proof}
    According to Definition~\ref{ref:Gorenstein:def}, the scheme $R_i$ is
    Gorenstein if $H_i = \Hom_{A_i}(A_i, A_i^*)$ contains an epimorphism.
    We have $R = \Spec A$ with $A = \prod A_i$. Let $\gotn_i \subset A$ be the
    maximal ideal corresponding to the closed point of $R_i$.
    The module $H_i$ is the stalk of $\Hom_{A}(A, A^*)$ at $\gotn_i$, so
    $\Hom_A(A, A^*)$ contains an epimorphism if and only if all $H_i$
    do.
    Point~\ref{it:tmpfirst} follows. For Point~\ref{it:tmpsecond}
    see~\cite[Proposition~21.5]{eisenbud}, noting that, in the language of
    this reference, the socle of $A_i$ is exactly $(0:\gotm_i)$.
\end{proof}

\begin{example}\label{ex_Gorenstein_schemes}
    The scheme $R = \Spec \kkbar[x, y]/(x^2, y^2)$ is
    Gorenstein (over $\kkbar$). Indeed the maximal ideal is $\gotm = (x, y)$ and its
    annihilator $(0:\gotm)$ is $\kkbar\cdot xy$. Similarly, $R = \Spec \kkbar$
    is Gorenstein (over $\kkbar$). On the contrary, $R = \Spec
    \kkbar[x, y]/(x^2, xy, y^2)$ is not Gorenstein (over $\kkbar$); the maximal ideal is
    annihilated by $(x, y)$, which is two dimensional.
\end{example}

By the following proposition we may always reduce checking Gorenstein property
to the scheme over $\kkbar$.
\begin{prop}\label{prop_Gorenstein_base_change}
    Let $R$ be a finite scheme over $\kk$. Then the following are equivalent:
    \begin{enumerate}
        \item\label{it:Gorbcfirst} $R$ is Gorenstein (over $\kk$),
        \item\label{it:Gorbcsecond} $R' = R \times_{\kk} \KK$ is Gorenstein for some field
            extension $\KK \supset \kk$,
        \item\label{it:Gorbcthird} $R' = R \times_{\kk} \KK$ is Gorenstein for every field
            extension $\KK \supset \kk$,
    \end{enumerate}
\end{prop}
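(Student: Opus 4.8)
The plan is to reduce everything to the structure theory of finite algebras over a field and the characterization of the Gorenstein property via the one-dimensionality of the socle $(0:\gotm)$, as provided by Lemma~\ref{ref:Gorchar:eis:lem}. The implication \ref{it:Gorbcthird}$\Rightarrow$\ref{it:Gorbcsecond} is trivial, so the real content is \ref{it:Gorbcfirst}$\Rightarrow$\ref{it:Gorbcthird} and \ref{it:Gorbcsecond}$\Rightarrow$\ref{it:Gorbcfirst}. For these I would work directly with the module $A^* = \Hom_{\kk}(A,\kk)$ and observe that the formation of the $A$-module $A^*$ commutes with base change: if $A' = A \otimes_{\kk} \KK$, then $(A')^* = \Hom_{\KK}(A', \KK) \cong \Hom_{\kk}(A,\kk)\otimes_{\kk}\KK = A^* \otimes_{\kk} \KK$ as $A'$-modules, since $A$ is finite-dimensional over $\kk$ so the relevant Hom's are finitely generated and duality is compatible with flat extension. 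Thus $A^*$ is principal as an $A$-module if and only if $A^* \otimes_{\kk}\KK$ is principal as an $A'$-module: a generating element of a module stays a generating element after the faithfully flat base change $A \to A'$, and conversely, if $A^* \otimes_\kk \KK$ is generated by one element, then, writing that element in terms of a $\kk$-basis and using that $\KK$ is faithfully flat over $\kk$, the cokernel of the map $A \to A^*$ picking out one of finitely many $\kk$-coordinates of a generator must vanish after $-\otimes_\kk \KK$ for at least one choice by a rank/Nakayama-type argument — or more cleanly, I would just invoke that $\dim_\kk(0:\gotm_A)$ is computed by a $\kk$-linear kernel condition that is stable under the flat base change $\kk \to \KK$ together with Lemma~\ref{ref:Gorchar:eis:lem}.

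Concretely, here is the cleaner route I would actually write. First reduce to $\KK = \kkbar$ and then to $\KK$ arbitrary via transitivity of base change and the equivalence one already gets for $\kkbar$. For the base change to $\kkbar$: decompose $R\times_\kk \kkbar = R'_1 \sqcup \dots \sqcup R'_m$ into irreducible components over $\kkbar$, each $R'_j = \Spec A'_j$ with maximal ideal $\gotm'_j$. By Lemma~\ref{ref:Gorchar:eis:lem}, $R'$ is Gorenstein over $\kkbar$ iff $\dim_{\kkbar}(0:\gotm'_j) = 1$ for all $j$. On the other hand, I claim $R$ is Gorenstein over $\kk$ iff the single number $\dim_\kk(0:\gotm_R)$ — no wait, $R$ need not be local — so instead: decompose $R = R_1 \sqcup \dots \sqcup R_k$ over $\kk$ with $R_i = \Spec A_i$ local, $A_i^*$ principal over $A_i$ iff it is Gorenstein, and $R$ Gorenstein iff each $R_i$ is (same stalk argument as in the proof of Lemma~\ref{ref:Gorchar:eis:lem}). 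So it suffices to handle each local factor separately, i.e.\ assume $R = \Spec A$ with $A$ local. Then $A \otimes_\kk \kkbar$ is a product of local $\kkbar$-algebras $A'_1 \times \dots \times A'_t$ (the connected components of the base change), and I must show: $A^*$ is principal over $A$ $\iff$ each $(A'_j)^*$ is principal over $A'_j$. By the compatibility $A^* \otimes_\kk \kkbar \cong (A\otimes_\kk\kkbar)^* = \prod_j (A'_j)^*$ as modules over $\prod_j A'_j$, and since principality of a module over a finite product decomposes componentwise, this reduces to: $A^*$ principal over $A$ $\iff$ $A^*\otimes_\kk \kkbar$ principal over $A \otimes_\kk \kkbar$, which is the faithfully flat descent/ascent statement for cyclic modules.

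For that last faithfully-flat step: if $A^*$ is generated by $\phi$, then $\phi \otimes 1$ generates $A^*\otimes_\kk\kkbar$, immediate. Conversely, the cokernel $C$ of any $A$-linear map $A \to A^*$, $1 \mapsto \psi$, satisfies $C \otimes_\kk \kkbar = \coker(A\otimes_\kk\kkbar \to A^*\otimes_\kk\kkbar)$; if $A^*\otimes_\kk\kkbar$ is cyclic, generated by $\bar\phi$, write $\bar\phi = \sum_i \psi_i \otimes c_i$ with $\psi_i \in A^*$ and $c_i \in \kkbar$ linearly independent over $\kk$ — then one of the $\psi_i$ already generates $A^*$: indeed the submodule $A\cdot\psi_1 + \dots + A\cdot\psi_s \subseteq A^*$ has the property that its base change to $\kkbar$ contains $\bar\phi$, hence equals all of $A^*\otimes_\kk\kkbar$, hence (faithful flatness) equals $A^*$; now since $A$ is local and $A^*$ has a unique (up to scalar) minimal generator — no, that fails — so instead I use that $\dim_{\kk}(A^*/\gotm A^*) = \dim_\kk (0:\gotm)$ by duality, and this dimension equals the corresponding one after $\otimes_\kk\kkbar$ since everything is $\kk$-linear; if $A^*\otimes_\kk\kkbar$ is cyclic this dimension is $1$, so $A^*/\gotm A^*$ is one-dimensional, so by Nakayama $A^*$ is cyclic. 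The main obstacle is precisely pinning down this compatibility of $\dim(0:\gotm)$ (equivalently $\dim A^*/\gotm A^*$) with base change and making sure the local-vs-product bookkeeping over $\kk$ and over $\kkbar$ matches up; once that is in place the proposition falls out of Lemma~\ref{ref:Gorchar:eis:lem} and Nakayama.
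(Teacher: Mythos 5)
The paper itself disposes of this proposition by citing \cite[tag~0C03]{stacks_project}, so any self-contained argument is necessarily a different route; your reductions (Gorenstein-ness decomposes over connected components, $\Hom_{\KK}(A\otimes_\kk\KK,\KK)\cong \Hom_\kk(A,\kk)\otimes_\kk\KK$ as $A\otimes_\kk\KK$-modules because $A$ is finite-dimensional, principality over a finite product is checked factorwise) are all correct, and the ascent direction \ref{it:Gorbcfirst}$\Rightarrow$\ref{it:Gorbcthird} does follow, since a generator of $A^*$ base-changes to a generator. The problem is the descent direction, and the gap is exactly at the step you flag as "the main obstacle": you claim that if $A^*\otimes_\kk\kkbar$ is cyclic over $A'=A\otimes_\kk\kkbar$ then $\dim_{\kkbar}\bigl((A^*\otimes_\kk\kkbar)/\gotm(A^*\otimes_\kk\kkbar)\bigr)=\dim_\kk(A^*/\gotm A^*)=1$. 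This tacitly assumes that the residue field $\LL=A/\gotm$ equals $\kk$. In general a cyclic module over $A'$ reduces modulo $\gotm A'$ to a cyclic module over $A'/\gotm A'=\LL\otimes_\kk\kkbar$, whose $\kkbar$-dimension is $[\LL:\kk]$, so cyclicity only gives $\dim_\kk(0:\gotm)\le[\LL:\kk]$, not $=1$. Concretely, for $\kk=\RR$ and $A=\CC$ one has $\gotm=0$, $A^*\otimes_\RR\CC\cong\CC\times\CC$ is cyclic over $A'=\CC\times\CC$, yet $\dim_\CC(A^*\otimes_\RR\CC)/\gotm(\cdots)=2$; your chain of implications cannot even get started here, so the argument does not prove \ref{it:Gorbcsecond}$\Rightarrow$\ref{it:Gorbcfirst}.

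The correct local criterion is $\dim_{\LL}(0:\gotm)=1$ (socle one-dimensional over the \emph{residue field}), equivalently $A^*\cong A$ as $A$-modules, and comparing this with the condition on each local factor $A'_j$ of $A\otimes_\kk\kkbar$ is genuinely delicate: the maximal ideals $\gotm'_j$ contain $\gotm A'_j$ strictly whenever $\LL\neq\kk$, and if $\LL/\kk$ is inseparable then $\LL\otimes_\kk\kkbar$ is non-reduced, so $\gotm A'$ is not even the radical of $A'$ and $(0:_{A'}\gotm A')=(0:_A\gotm)\otimes_\kk\kkbar$ is strictly larger than the socle of $A'$. Some additional input is needed to bridge this (e.g.\ reformulating Gorenstein-ness as $A^*$ being free of rank one and proving descent of that along $\kk\to\kkbar$, or working with the dualizing module as in the cited Stacks tag); as written, the proof is incomplete precisely where the non-trivial content of the proposition lies.
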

\begin{proof}
    This is proven in~\cite[tag~0C03]{stacks_project}.
\end{proof}

\section{Smoothability of finite schemes}\label{sec:smoothability}

In this section we define abstract and embedded smoothings and we discuss the interactions and properties of these two notions.
We restrict the presentation to the main case of our interest, that is, smoothability of finite schemes.
Recall that we constantly assume that all schemes are separated and locally Noetherian (Notation~\ref{not_scheme}).

    \begin{defn}[abstract smoothing]\label{ref:abstractsmoothable:def}
        Let $R$ be a~finite scheme over $\kk$. We say that \emph{$R$ is abstractly
        smoothable} (over $\kk$) if there exist an irreducible scheme
        $T$ and a morphism of schemes $Z\to T$
        such that
        \begin{enumerate}
            \item $Z\to T$ is flat and finite,
            \item $T$ has a $\kk$-rational point $t$, such that the fibre $Z_t \simeq R$. We call
                $t$ the \emph{special point} of $T$.
            \item $Z_{\eta}$ is a~smooth scheme over $\eta$, where $\eta$
                is the generic point of $T$.
        \end{enumerate}
        The scheme $Z$ is called an \emph{abstract smoothing} of $R$. We will
        sometimes
        denote it by $(Z, R)\to (T, t)$, which means that $t$ is the $\kk$-rational point of $T$, such that
        $Z_t  \simeq R$.
    \end{defn}

Informally, $R$ is smoothable if and only if it is a \emph{limit} of $r$ distinct points.
In abstract terms, it is not entirely clear what does it mean ``limit'',
  which is why the definition involves many technical terms and depends on the
  field $\kk$. Some articles consider only schemes over an algebraically
  closed field $\kkbar$, so the dependence is less explicit. We will later
  show that $R$ is smoothable is and only its base change $R'$ is smoothable,
  see Corollary~\ref{ref:basechangesmoothings:cor}.

\begin{example}
    Any finite smooth scheme $R$ is smoothable via a trivial smoothing $Z
    =R$, $T = \Spec \kk$.
\end{example}

\begin{example}\label{ex_smoothability_of_degree_2_schemes}
    Any  scheme $R$ of degree $2$ is (abstractly) smoothable.
    Indeed, by Example~\ref{ex_degree_2_schemes},
      the scheme $R$ is isomorphic to one of the following:
      \begin{itemize}
       \item $\Spec \kk \sqcup \Spec \kk$, which is smooth, or
       \item $\Spec \kk[\epsilon]/(\epsilon^2)$, 
             which has a smoothing $(Z, R) \to (\AA_{\kk}^1,0)$, where $Z = \Spec \kk[\epsilon,t]/(\epsilon^2 - t\epsilon)\simeq \AA_{\kk}^1 \cup \AA_{\kk}^1$, or       
       \item $\Spec \KK$, where $\kk\subset \KK$ is a degree two separable field extension, so that $R$ is smooth, or
       \item $\Spec \KK$, where $\kk\subset \KK$ is a degree two nonseparable field extension,
              thus $\cchar \kk = \cchar \KK =2$, and $\KK \simeq \kk[\epsilon]/(\epsilon^2 -a)$ for $a \in \kk$, which is not a square.
             In this case, the smoothing is 
               $(Z, R) \to (\AA_{\kk}^1,0)$, where $Z = \Spec \kk[\epsilon,t]/(\epsilon^2- a - t\epsilon)\simeq \AA_{\kk}^1 \cup \AA_{\kk}^1$.
             It is straightforward to check that for a constant $t_0\in \kk \setminus\set{0}$, the algebra $\kk[\epsilon]/(\epsilon^2- t_0\epsilon- a)$ 
               is either isomorphic to $ \kk \times \kk$ or a separable field extension of $\kk$.
      \end{itemize}
      More generally for every finite field extension $\kk\subset \KK$ the
      $\kk$-scheme $R := \Spec \KK$
    is smoothable. For $\kk \subset \KK$ separable this is immediate,
    but for inseparable requires a little work, see
    Example~\ref{ex:fieldExtensions}.
\end{example}

\begin{example}\label{ex:fieldExtensions}
    For every finite field extension $\kk\subset \KK$ the $\kk$-scheme $R =
    \Spec \KK$ is smoothable.  Suppose first that $\kk \subset
    \KK$ is  a separable extension.  Then $R = \Spec \KK$ is smooth over
    $\kk$, see Section~\ref{sect_finite_schemes}, so it is trivially smoothable.
    Suppose now that $\kk \subset \KK$ is not separable.
    Every extension of a finite field is separable, so we may assume $\kk$ is infinite.
    We may decompose $\kk \subset \KK$ as a chain of one-element extensions
    $\kk \subset \kk(t_1) \subset \kk(t_1,t_2) \subset  \ldots
    \subset \kk(t_1, \ldots ,t_n) = \KK$.
    Then $\KK= \kk[\alpha_1, \ldots ,\alpha_n]/(f_1, \ldots ,f_n)$ where $f_i$ is the lifting of minimal polynomial of $t_i$; in
    particular $f_i = \alpha_i^{d_i} + a_{i,d_i-1}\alpha_i^{d_i-1} +  \ldots  + a_{i, 0}$, where
    $a_{i,j}\in \kk[\alpha_1, \ldots ,\alpha_{i-1}]$.
    We now inductively construct, for $i=1, \ldots ,n$, polynomials $F_i\in \kk[\alpha_1, \ldots
    ,\alpha_n][t]$ such that:
    \begin{enumerate}
        \item\label{it:cond1} The family $Z = \Spec \kk[\alpha_1, \ldots ,\alpha_n,t]/(F_1, \ldots ,F_n)\to
            \Spec \kk[t]$ is flat and finite,
        \item\label{it:cond2} $F_i(0) = f_i$, so that $Z_{|t=0} = \Spec \KK$,
        \item\label{it:cond3} The fibre $Z_{|t=1}$ is a disjoint union of copies of $\kk$.
    \end{enumerate}
    First, we construct polynomials $g_i$ such that $g_i$ has degree $d_i =
    \deg(f_i)$ and
    \begin{equation}\label{eq:formofgi}
        g_i = \alpha_i^{d_i} + b_{i,d_i-1}\alpha_i^{d_i-1} +  \ldots  + b_{i, 0}\quad
        \mbox{where}\quad
        b_{i,j}\in \kk[\alpha_1, \ldots ,\alpha_{i-1}]
    \end{equation}
    and $\kk[\alpha_1, \ldots ,\alpha_n]/(g_1,
    \ldots ,g_n)$ is a product of $\kk$. This is done inductively. We choose $g_1$ as any
    polynomial of degree $d_1$ having $d_1$ distinct roots in $\kk$, then
    $\kk[\alpha_1]/g_1  \simeq \kk^{\times d_1}$.
    We choose $g_2$ as a polynomial of degree $d_2$ in $\kk[\alpha_1,\alpha_2]/g_1 \simeq
    (\kk[\alpha_2])^{\times d_1}$ such that after projecting to each factor $g_i$
    has $d_2$ distinct roots in $\kk$, then $\kk[\alpha_1,\alpha_2]/(g_1, g_2)  \simeq
    \kk^{\times d_1d_2}$ and so we continue.

    Define $F_i = (1-t)f_i + t g_i$, then $F_i =
    \alpha_i^{d_i}+((1-t)a_{i,d_{i}-1}+tb_{i,d_i-1})\alpha_i^{d_i-1} + \ldots
    +((1-t)a_{i,0}+tb_{i,0})$ where $(1-t)a_{j,d_{i}-1}+tb_{j,d_i-1}\in
    \kk[\alpha_1, \ldots ,\alpha_{i-1}][t]$.
    From this ``upper-triangular'' form of $F_i$ we see that the quotient
    \[
        \kk[\alpha_1, \ldots ,\alpha_n,t]/(F_1, \ldots ,F_n)
    \]
    is a free $\kk[t]$-module with basis consisting of monomials
    $\alpha_1^{s_1} \ldots \alpha_n^{s_n}$ such that $s_i < d_i$ for all $i$.
    Hence, condition~\ref{it:cond1} is satisfied;
    conditions~\ref{it:cond2},\ref{it:cond3} are satisfied by construction. In
    particular, a fibre of $Z$ is smooth, so the generic fibre is also smooth, thus
    the family $Z$ is a smoothing of $\kk$-scheme $\Spec \KK$.
\end{example}

It is nontrivial to provide examples of schemes that are nonsmoothable, however these are known to exist in abundance, 
at least in sufficiently large degree~\cite{iarrobino__reducibility, iarrobino_compressed_artin}.
We recall their existence in
Example~\ref{ref:nonsmoothableOfHighDegree:ex}; for now on we just assume they exist over any base field $\kk$.

We now introduce \emph{embedded smoothings}.
The difference between the previous setting is in the presence of the ambient
scheme $Y$, where the whole smoothing must be embedded. We will
see in Section~\ref{sect_smoothings_abstract_embedded} that when $Y$ is
smooth, the notions are equivalent.

    \begin{defn}[embedded smoothing]\label{ref:smoothable:def}
        Let $Y$ be a~scheme and $R$ be a~finite closed subscheme of $Y$. We say that
        $R$ is \emph{smoothable in $Y$} if there exist an irreducible scheme
        $T$ and a~closed subscheme $Z \subseteq Y \times T$ such that $Z\to T$ is
        an abstract smoothing of $R$.
        The scheme $Z$ is called an \emph{embedded
        smoothing} of $R \subseteq Y$.
    \end{defn}

\begin{example}
   Let $Y= \Spec \kk[\alpha,\beta,\gamma]/(\alpha\beta,\alpha\gamma,\beta\gamma)$, i.e.~$Y$ is the union of three coordinate lines 
     in the three dimensional affine space $\AA_{\kk}^3$.
   Let $R= \Spec \kk[\alpha,\beta,\gamma]/(\alpha-\beta, \alpha-\gamma, \alpha^2) \simeq \kk[\epsilon]/\epsilon^2$
   be the degree two subscheme of $Y$,
     which is the intersection of $Y$ with the affine line $\alpha=\beta=\gamma$. 
   Then $R$ is abstractly smoothable, but $R$ is not smoothable in $Y$.
\end{example}

    In the embedded setting, if in addition $\kk$ is algebraically closed, we
    can be more explicit about the ``limit'' mentioned above; it is the limit
    inside the Hilbert scheme of points, see
    Proposition~\ref{ref:embeddedvsHilbert:prop}.

\subsection{Changing base for smoothings}

Smoothings benefit from a nice base-change property, which implies interesting results, 
  involving modifications of the base $T$ of a smoothing.
The first trivial case is to extend the basis by multiplying it by another scheme $T'$.
  
\begin{example}\label{ex_change_base_by_product}
    If $(Z,R) \to (T,t)$ is an abstract smoothing with affine $T$, 
      and $T'$ is another affine irreducible scheme with $\kk$-rational point $t' \in T'$, 
      then $(Z\times T', R) \to (T\times T',t\times t')$ is also a smoothing with modified base.
\end{example}   
    
More generally, we have the following base change.
    
    \begin{lem}[Base change for smoothings]\label{ref:basechange:lem}
        Let $T$ be an irreducible scheme with the generic point $\eta$ and a
        $\kk$-rational point $t$.  Let $(Z, R)\to (T, t)$ be an abstract~smoothing of a~finite scheme $R$.
        
        Suppose $T'$ is another irreducible scheme with a~morphism $f:T' \to T$ such that
        $\eta$ is in the image of $f$ and there
        exists a~$\kk$-rational point of $T'$ mapping to $t$. Then
        the base change $Z' = Z \times_T T' \to T'$ is an abstract~smoothing
        of $R$.
        Moreover, if $R$ is embedded into some $Y$ and
        $Z\subseteq Y\times T$ is an embedded smoothing, then $Z' \subseteq
        Y\times T'$ is also an embedded smoothing of $R\subseteq Y$.
    \end{lem}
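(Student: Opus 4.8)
The plan is to verify the three conditions of Definition~\ref{ref:abstractsmoothable:def} for the morphism $Z' = Z\times_T T'\to T'$, relying throughout on the fact recalled in Section~\ref{sec_product_and_base_change} that flatness, finiteness and smoothness of morphisms are all stable under base change, and then to deduce the embedded case by one final base change of closed immersions. First I would note that $Z'\to T'$ is flat and finite, since these properties are inherited from $Z\to T$ under the base change along $f\colon T'\to T$, and $T'$ is irreducible by hypothesis; so it only remains to identify the special and generic fibres of $Z'$.

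For the special fibre, let $t'\in T'$ be the assumed $\kk$-rational point with $f(t')=t$. Since $t$ and $t'$ are both $\kk$-rational, the induced map of residue fields $\kappa(t)\to\kappa(t')$ is the identity of $\kk$, so the composite $\Spec\kk\xrightarrow{t'}T'\xrightarrow{f}T$ is exactly the point $t$, and therefore
\[
    Z'_{t'}=\big(Z\times_T T'\big)\times_{T'}\Spec\kappa(t')=Z\times_T\Spec\kk=Z_t\simeq R,
\]
the base change in the middle term being taken along $t\colon\Spec\kk\to T$.

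The one point that is not purely formal is the identification of the generic fibre. Let $\eta'$ be the generic point of $T'$. Since $\eta$ lies in the image of $f$, choose $\xi\in T'$ with $f(\xi)=\eta$; because $\xi$ lies in the closure of $\{\eta'\}$ and $f$ is continuous, $\eta=f(\xi)$ lies in the closure of $\{f(\eta')\}$, hence $\overline{\{\eta\}}\subseteq\overline{\{f(\eta')\}}$, that is $\overline{\{f(\eta')\}}=T$. An irreducible scheme has a unique generic point, so $f(\eta')=\eta$. Consequently $Z'_{\eta'}=Z_{\eta}\times_{\kappa(\eta)}\kappa(\eta')$, which is smooth over $\kappa(\eta')$ because $Z_{\eta}$ is smooth over $\kappa(\eta)$ and smoothness is stable under base change. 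This establishes that $(Z',R)\to(T',t')$ is an abstract smoothing of $R$.

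Finally, for the embedded statement, suppose moreover $Z\subseteq Y\times T$ is a closed subscheme. Then $Z'=Z\times_T T'\subseteq (Y\times T)\times_T T'=Y\times T'$ is again a closed immersion, closed immersions being stable under base change, and by the previous two paragraphs the composite $Z'\to T'$ is an abstract smoothing of $R$; hence $Z'$ is an embedded smoothing of $R\subseteq Y$. The only step requiring more than routine bookkeeping with fibre products is the equality $f(\eta')=\eta$, obtained above from uniqueness of the generic point of the irreducible scheme $T$; everything else reduces to the stability of flatness, finiteness and smoothness under base change.
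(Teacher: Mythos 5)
Your proof is correct and follows essentially the same route as the paper's: flatness, finiteness, smoothness and closed immersions are stable under base change, and the generic point of $T'$ maps to $\eta$. The paper's version merely asserts $f(\eta')=\eta$ and leaves the special-fibre identification implicit, whereas you spell out both; these are the right details to check and your arguments for them are sound.
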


    \begin{proof}
        $Z'\to T'$ is finite and flat. The generic point $\eta'$ of $T'$ maps
        to $\eta$ under $f$ so that $Z'_{\eta'}\to \eta'$ is a~base change of a
        smooth morphism $Z_{\eta}\to \eta$. In particular it is smooth, so
        that $Z'\to T'$ is a~smoothing of $R$. If $Z \subseteq Y\times T$ was
        a~closed subscheme, then $Z'\subseteq Y\times T'$ is also a~closed
        subscheme.
    \end{proof}
    
    The next lemma can be informally summarised as follows: if $U$ is an open subset of a scheme $T$, and $t$ is a point in the closure of $U$, 
       then there exists a curve in $T$ through $t$ intersecting $U$.
    \begin{lem}\label{lem_finding_a_curve_through_point_and_open_subset}
        Suppose $T$ is a scheme, $U \subset T$ is an open subset and $t\in T$ is a point in $\overline{U}$ (for example, any point if $U$ is dense).
        Suppose the residue field of $t$ is $\kappa$. 
        Then there exists a one-dimensional Noetherian complete local domain
        $A'$ with residue field $\kappa$, 
           and a morphism $T'= \Spec A' \to T$, such that the closed point $t'
           \in T'$ is $\kappa$-rational and it is mapped to $t$ and the generic point $\eta' \in T'$ is mapped into $U$.
       
        If in addition $\kappa$ is algebraically closed, we may furthermore
        assume that $A' = \kappa[[\alpha]]$.
   \end{lem}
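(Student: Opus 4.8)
The plan is to reduce immediately to the affine situation: choose an affine open neighbourhood $\Spec B$ of $t$ in $T$, so that $t$ corresponds to a prime $\gotp \subset B$ with $\kappa = \kappa(\gotp) = \operatorname{Frac}(B/\gotp)$, and replace $U$ by its intersection with $\Spec B$, which is still nonempty since $t \in \overline{U}$. Passing to the reduced closed subscheme $\Spec B/\gotp$ (i.e.\ to the integral domain $B/\gotp$), and shrinking $U$ to a nonempty principal open $D(g) \subset \Spec B/\gotp$ contained in the image of the original $U$, I would reduce to: given a Noetherian domain $C = B/\gotp$ with fraction field $\kappa$ and a nonzero $g \in C$, produce a one-dimensional Noetherian complete local domain $A'$ with residue field $\kappa$ together with a local map $C \to A'$ (i.e.\ a morphism $\Spec A' \to \Spec C$) sending the generic point into $D(g)$ and the closed point to $\gotp$. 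The natural candidate is $A' = \widehat{C_\gotp}$, the $\gotm$-adic completion of the local ring at $\gotp$ — but this has dimension $\dim C_\gotp$, not $1$, so one first needs to cut down the dimension.

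The key step is therefore a \textbf{dimension reduction by generic hyperplane-type sections / prime avoidance}. Inside the local ring $C_\gotp$ (Noetherian, with residue field $\kappa$), I would choose a maximal chain and, by Krull's principal ideal theorem together with prime avoidance, pick elements $x_1, \dots, x_{d-1}$ (where $d = \dim C_\gotp$) such that $\overline{C} := C_\gotp/(x_1,\dots,x_{d-1})$ is a one-dimensional Noetherian local domain: concretely, choose $x_1$ avoiding all minimal primes of $C_\gotp$, then $x_2$ avoiding all minimal primes of $C_\gotp/(x_1)$, and so on, each time also arranging that $g$ stays a nonzerodivisor (possible since $g\notin\gotp$ means $g$ is a unit in $C_\gotp$, so this is automatic) and that the quotient remains a domain — the latter by passing to $C_\gotp/\gota$ for a suitable minimal prime $\gota$ of the quotient chosen to contain none of the previously used elements. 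Then set $A' = \widehat{\overline{C}}$, the completion of $\overline{C}$ at its maximal ideal. By standard facts (\cite[Tag~07NV, 0C0S]{stacks_project} or \cite{eisenbud}), $A'$ is a one-dimensional Noetherian complete local ring; it is a domain because $\overline{C}$ is one-dimensional and — after possibly one more passage to $\widehat{\overline{C}}/\gotq$ for a minimal prime $\gotq$ — analytically irreducible (a one-dimensional Noetherian local domain has finitely many minimal primes in its completion, and we select one); and its residue field is still $\kappa$ since completion does not change the residue field. The composite $C \to C_\gotp \to \overline{C} \to A'$ gives $T' = \Spec A' \to \Spec C \to T$; the closed point $t'$ maps to $\gotp = t$ and is $\kappa$-rational by construction, and the generic point $\eta'$ maps to the generic point of $\Spec \overline{C}$, which lies in $D(g) \subset U$ since $g$ is a unit there.

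The \textbf{main obstacle} is the domain condition: neither killing elements nor completing preserves being a domain in general, so the argument must interleave ``quotient by a minimal prime'' steps with the ``kill an element'' and ``complete'' steps, and one must check at each stage that the chosen minimal prime does not swallow the element $g$ (equivalently, does not meet the multiplicative set $\{g^n\}$) so that the generic point still lands in $U$. This is where prime avoidance does the work, but it requires a little care to order the steps correctly. Finally, for the additional claim: if $\kappa$ is algebraically closed, then $\overline{C}$ is a one-dimensional Noetherian local domain whose residue field equals its coefficient field $\kappa$ (or contains one by Cohen structure theory, \cite[Tag~0323]{stacks_project}), so its completion $A'$ admits $\kappa \hookrightarrow A'$ splitting the residue map; choosing a uniformizer-like nonzerodivisor $\alpha$ in the maximal ideal and using that $A'$ is module-finite over $\kappa[[\alpha]]$ (Cohen–Gabber / the one-dimensional case of Noether normalization for complete local rings), one may simply replace $T'$ by $\Spec \kappa[[\alpha]]$ mapping to $\Spec A'$ via this finite inclusion — the generic point of $\Spec\kappa[[\alpha]]$ still dominates the generic point of $\Spec A'$, hence still maps into $U$, and the closed point still maps to $t$.
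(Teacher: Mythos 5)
Your overall strategy (localize at $t$, cut the dimension down to one, complete, and pass to a minimal prime to restore integrality) is a legitimate alternative to the paper's argument, but your very first reduction contains a genuine error that breaks everything downstream. You pass to the closed subscheme $\Spec B/\gotp = \overline{\{t\}}$. The hypothesis $t\in\overline U$ says that $t$ is a \emph{specialization} of some point of $U$; the points of $U$ relevant to the construction are therefore \emph{generalizations} of $t$, and they live in $\Spec\ccO_{T,t}=\Spec B_{\gotp}$, not in $\overline{\{t\}}$. Indeed $U\cap\overline{\{t\}}$ is an open subset of the irreducible space $\overline{\{t\}}$ with generic point $t$, so it is nonempty only if $t\in U$; in general there is no nonempty $D(g)\subset\Spec B/\gotp$ contained in $U$, and a morphism $\Spec A'\to\Spec B/\gotp$ sending the closed point to the generic point $\gotp$ must factor through $\Spec\kappa$, so its generic point cannot land in $U$ unless $t\in U$. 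Your later mentions of ``the local ring $C_\gotp$'' of dimension $d$ suggest you actually have $B_\gotp$ in mind (note $(B/\gotp)_\gotp=\kappa$ has dimension $0$), and with that correction the plan is workable --- but then the parenthetical ``$g$ is a unit in $C_\gotp$, so this is automatic'' is exactly backwards: whenever $t\notin U$, the element $g$ with $D(g)\subseteq U$ lies in the maximal ideal of $\ccO_{T,t}$, and arranging that $g$ survives into the successive one-dimensional quotients is precisely the non-automatic step your prime avoidance must carry. (It can: for instance, a prime $\gotq$ of $\ccO_{T,t}$ maximal among primes in $D(g)\setminus\{\gotm\}$ automatically satisfies $\dim \ccO_{T,t}/\gotq=1$, because a Noetherian local domain of dimension at least two has infinitely many height-one primes while only finitely many of them contain a given nonzero element.) A second, smaller slip occurs in the algebraically closed case: a module-finite inclusion $\kappa[[\alpha]]\subset A'$ gives a morphism $\Spec A'\to\Spec\kappa[[\alpha]]$, which points the wrong way; what is needed is the \emph{normalization} $A'\to\tilde A'$, which is finite over $A'$, one-dimensional, normal, hence regular and isomorphic to $\kappa[[\alpha]]$ by Cohen's structure theorem, and whose spectrum maps \emph{onto} $\Spec A'$.

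For comparison, the paper first replaces $T$ by $\Spec\ccO_{T,t}$, completes, and uses flatness (hence surjectivity) of completion to pick a prime of the completion lying over a point of $U$; this yields an integral complete local base whose generic point is in $U$. It then reduces the dimension without any prime avoidance: as long as $U$ contains a non-generic point $u$, replace $T$ by $\overline{\{u\}}$ (a complete local domain again, with generic point $u\in U$); when $U$ shrinks to the generic point alone, the Artin--Tate theorem forces $\dim T\le 1$. This sidesteps the bookkeeping of keeping $g$ alive through the cuts. Your route can be repaired along the lines indicated above, but as written the proof does not go through.
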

    \begin{proof}
        First, we may replace $T$ with $\Spec A_1 := \Spec \mathcal{O}_{T, t}$, and $U$ with the preimage under $\Spec A_1 \to T$.
        The ring $A_1$ is Noetherian by our global assumption (Notation~\ref{not_scheme}).
        Let $A$  be the completion of $A_1$ at the maximal ideal $\mathfrak{m}$ of $A_1$.
        Then $A$ is Noetherian and flat over $A_1$, so that
        $\Spec A\to \Spec A_1$ is surjective
        \cite[Tag~0316, Tag~0250, items~(6)\&(7)]{stacks_project}.
        Let any prime ideal $ \gotp \subset A$ mapping to the generic point of $T$.
        Then $\Spec A/\mathfrak{p}$ is integral and satisfies the assertions on $T'$
        except, perhaps, one-dimensionality.
        Replace $T$ with $\Spec A/\mathfrak{p}$ and $U$ with the preimage under $\Spec A/\mathfrak{p} \to T$.

        If $\dim T =0$, then $T = \set{t} = \Spec \kappa$ and $U = T$, and $T'
        = \Spec \kappa[[\alpha]]$ with a morphism $T' \to T$ corresponding to
        $\kappa \to \kappa[[\alpha]]$ will satisfy the claim of the lemma.
        So suppose $\dim T>0$.

        Let $\eta$ be a generic point of $T$.
        If $U = \{\eta\}$, then $T$ is at most one-dimensional by the Theorem of
        Artin-Tate, see~\cite[Cor~B.62]{gortz_wedhorn_algebraic_geometry_I}.
        If not, then we may take an irreducible closed subset $V \subsetneq T$ such that the generic point of $V$ is in $U$ and again replace $T$ with $V$. 
        Since $\dim V < \dim T <\infty$, after a finite number of such replacements we obtain that $\dim T = 1$.
        Thus $T$ is a spectrum of a Noetherian complete local domain with
        quotient field $\kappa$ and we may take the identity $T'=T$ to finish the proof of the first part.

        Suppose now that $\kappa$ is algebraically closed. 
        We may assume that $T = \Spec A$, is as above. 
        Let $\mathfrak{m}$ be the maximal ideal of $A$.
        The normalisation $\tilde{A}$ of $A$ is a finite $A$-module, see
        e.g.~\cite[Appendix 1, Cor 2]{nagata_algebraic_geo_over_Dedekind_two}.
        Then $\tilde{T} = \Spec \tilde{A} \to T$ is finite and dominating, thus it is onto.
        Since $\kappa$ is algebraically closed, any point in the preimage of the
        special point is a $\kappa$-rational point, thus $\tilde{T}\to T$ satisfies claim of the lemma.
        Now $\tilde{A}$ is a one-dimensional normal Noetherian domain which is a finite $A$-module.
        By \cite[Cor 7.6]{eisenbud} the algebra $\tilde{A}$ 
          is a finite product $\tilde{A} = \prod B_i$,
          where each $B_i$ is local and complete, and the residue field of
          each $B_i$ is $\kappa$.
        From the first part of the proof it follows, that
        we may replace $\tilde{A}$ by one of the factors $B_i$, 
        which is a one-dimensional
        Noetherian normal local complete domain with quotient field $\kappa$.
        Thus $B_i$ is regular by Serre's criterion~\cite[Thm 11.5]{eisenbud}, so from
        the Cohen Structure Theorem~\cite[Thm 7.7]{eisenbud}, it follows
        that $B_i$ is isomorphic to $\kappa[[\alpha]]$.	
    \end{proof}

 \begin{example}
    Suppose $\kk=\RR$ and consider the $\RR$-algebra
    $A:=\RR \oplus \alpha \CC[[\alpha]] \subset \tilde{A} := \CC[[\alpha]]$.
    Then the normalisation of $A$ is $\tilde{A}$, and $\Spec A$ has no $\RR$-points. 
    This illustrates that in the proof of the final part of
    Lemma~\ref{lem_finding_a_curve_through_point_and_open_subset} we really
    use the assumption that $\kappa$ is algebraically closed.
 \end{example}

    The following Theorem~\ref{ref:goodbaseofsmoothing:thm} is the key
    result of this section. It allows us to shrink the base of smoothing to an
    algebraic analogue of a small one-dimensional disk.
    \begin{thm}\label{ref:goodbaseofsmoothing:thm}
        Let $Z\to T$ be an abstract or embedded smoothing of some scheme.
        Then, after a~base change, we may assume that $T  \simeq \Spec A$, where $A$
        is a one-dimensional Noetherian complete local domain with quotient field $\kk$.

        If $\kk$ is algebraically closed, we may furthermore
        assume that $A = \kk[[\alpha]]$.
    \end{thm}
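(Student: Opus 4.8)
The plan is to apply Lemma~\ref{lem_finding_a_curve_through_point_and_open_subset} to the base $T$ of the smoothing, taking for $U$ a suitable nonempty open subset and for the distinguished point the special point $t$. Concretely, let $(Z,R)\to(T,t)$ be the given smoothing, with $\eta$ the generic point of $T$ at which the fibre $Z_\eta$ is smooth. Since smoothness of a finite morphism is an open condition on the base (the locus where the Jacobian has full corank is open, or equivalently the locus where $Z\to T$ is smooth of relative dimension $0$ is open), the set $U\subset T$ of points $s$ with $Z_s$ smooth over $\kappa(s)$ is open; it contains $\eta$, hence is dense because $T$ is irreducible. The special point $t$ lies in $\overline U=T$. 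Applying Lemma~\ref{lem_finding_a_curve_through_point_and_open_subset} with $\kappa=\kappa(t)=\kk$ (recall $t$ is $\kk$-rational), we obtain a one-dimensional Noetherian complete local domain $A'$ with residue field $\kk$ and a morphism $f\colon T'=\Spec A'\to T$ sending the closed point $t'$ (which is $\kk$-rational) to $t$ and the generic point $\eta'$ into $U$.

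Next I would feed this morphism into the base-change machinery. By Lemma~\ref{ref:basechange:lem}, to conclude that $Z'=Z\times_T T'\to T'$ is again a smoothing of $R$ it suffices that $\eta$ is in the image of $f$ and that there is a $\kk$-rational point of $T'$ over $t$. The latter is $t'$ by construction. For the former: $f(\eta')\in U$, and a priori $f(\eta')$ need not equal $\eta$ — but that is fine, because what Lemma~\ref{ref:basechange:lem} actually needs is only that the generic point of $T'$ maps into the smooth locus, so that $Z'_{\eta'}\to\eta'$ is a base change of the smooth morphism $Z_{f(\eta')}\to\kappa(f(\eta'))$ and hence smooth. (If one prefers to quote Lemma~\ref{ref:basechange:lem} verbatim, one can first shrink $T$ to an irreducible closed subset through $t$ whose generic point lies in $U$, arrange $f(\eta')=\eta$ there, and apply the lemma; either way the conclusion is the same.) Thus $Z'\to T'$ is an abstract smoothing of $R$ with base $\Spec A'$, $A'$ a one-dimensional Noetherian complete local domain with quotient field $\kk$. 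In the embedded case, $Z\subseteq Y\times T$ pulls back to $Z'\subseteq Y\times T'$, which is the embedded smoothing, again by Lemma~\ref{ref:basechange:lem}.

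Finally, when $\kk$ is algebraically closed, the last sentence of Lemma~\ref{lem_finding_a_curve_through_point_and_open_subset} gives directly that we may take $A'=\kk[[\alpha]]$, so the same argument produces a smoothing over $\Spec\kk[[\alpha]]$, completing the proof.

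I expect the only genuine subtlety to be the bookkeeping around the image of the generic point $\eta'$: Lemma~\ref{ref:basechange:lem} is phrased with the hypothesis ``$\eta$ is in the image of $f$,'' whereas Lemma~\ref{lem_finding_a_curve_through_point_and_open_subset} only guarantees $\eta'\mapsto$ some point of $U$. Resolving this cleanly — either by strengthening the statement used from Lemma~\ref{ref:basechange:lem} (its proof only uses that $Z'_{\eta'}\to\eta'$ is a base change of a smooth morphism) or by the preliminary reduction to a closed subset of $T$ through $t$ with generic point in $U$ — is the one place that requires care; everything else (openness of the smooth locus, finiteness and flatness being preserved by base change, $\kk$-rationality of $t'$) is routine.
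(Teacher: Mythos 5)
Your proof is correct and follows essentially the same route as the paper, whose entire argument is to note that the smooth locus of $Z\to T$ is an open neighbourhood $U$ of $\eta$ (the paper deduces this from coherence of the sheaf of relative differentials) and then to combine Lemma~\ref{ref:basechange:lem} with Lemma~\ref{lem_finding_a_curve_through_point_and_open_subset}. The subtlety you flag --- that the curve lemma only sends $\eta'$ into $U$ rather than onto $\eta$, while Lemma~\ref{ref:basechange:lem} is phrased as requiring $\eta$ in the image --- is real and is glossed over by the paper's one-line proof; your fix (the base-change argument only needs $Z'_{\eta'}$ to be a base change of a smooth fibre, or alternatively one first shrinks $T$ to an irreducible closed subset through $t$ with generic point in $U$) is exactly right.
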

    \begin{proof}
        Since $Z\to T$ is finite, the relative differentials sheaf is coherent
        over $T$, so that there exists an open neighbourhood $U$ of the generic point $\eta$
        such that $Z_{u}$ is smooth for any $u\in U$.      
        Thus the claim is a combination of Lemmas~\ref{ref:basechange:lem} and \ref{lem_finding_a_curve_through_point_and_open_subset}.
    \end{proof}

    Now we recall the correspondence between the smoothings of $R$ and of its
    connected components. Intuitively, by Theorem~\ref{ref:goodbaseofsmoothing:thm} we
    may choose such a small basis of the smoothings, that smoothings of connected components are
    connected components of the smoothing.
    \begin{prop}\label{ref:smoothingcomponents:prop}
        Let $R = R_1\sqcup R_2\sqcup  \ldots  \sqcup R_k$ be a~finite scheme.
        If $(Z_i, R_i)\to (T, t)$ are abstract smoothings of $R_i$ over some
        base $T$, then $Z =
        \bigsqcup Z_i\to T$ is an abstract~smoothing of $R$.

        Conversely, let $(Z, R)\to (T, t)$ be an abstract smoothing of $R$
        over $T=\Spec A$, where $A = (A,\mathfrak{m}, \kk)$ is a~local
        complete $\kk$-algebra. Then $Z = Z_1\sqcup  \ldots \sqcup Z_k$, where
        $(Z_i, R_i)\to (T, t)$ is an abstract smoothing of $R_i$.
    \end{prop}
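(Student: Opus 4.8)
The plan is to treat the two directions separately. The forward direction (gluing individual smoothings of the $R_i$ into a smoothing of $R$) is essentially formal: if each $(Z_i, R_i) \to (T, t)$ is flat and finite with $Z_{i,\eta}$ smooth over $\eta$, then I would take $Z = \bigsqcup_i Z_i$ with the induced morphism to $T$. A disjoint union of finitely many flat morphisms is flat, and of finitely many finite morphisms is finite; the fibre $Z_t = \bigsqcup_i Z_{i,t} \simeq \bigsqcup_i R_i = R$; and $Z_\eta = \bigsqcup_i Z_{i,\eta}$ is a disjoint union of smooth $\eta$-schemes, hence smooth over $\eta$. In the embedded case the same works since a finite disjoint union of closed subschemes of $Y \times T$ is a closed subscheme. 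So all three smoothing axioms are inherited componentwise, and this direction needs no real work.

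The converse is the substantive part. Here I am given $(Z, R) \to (T, t)$ with $T = \Spec A$, $A$ a complete local $\kk$-algebra with residue field $\kk$, and I must show $Z$ decomposes as $\bigsqcup_i Z_i$ matching the decomposition $R = \bigsqcup_i R_i$. The key point is that since $Z \to T$ is finite, $Z = \Spec B$ is affine and $B$ is a finite $A$-module, hence itself a complete semilocal ring; by the structure theory of complete semilocal rings (a finite algebra over a complete local ring is a finite product of complete local rings, cf.~\cite[Cor~7.6]{eisenbud} as used already in the proof of Lemma~\ref{lem_finding_a_curve_through_point_and_open_subset}), $B = \prod_j C_j$ with each $C_j$ local and complete. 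This gives a decomposition $Z = \bigsqcup_j \Spec C_j$ into connected components. Now I would reduce modulo $\gotm_A$: since $B/\gotm_A B$ is the coordinate ring of the fibre $Z_t \simeq R$, and $R = \bigsqcup_i R_i$ is the (unique) decomposition of $R$ into connected components, the idempotents of $B/\gotm_A B$ correspond bijectively to the $R_i$. The crucial lemma is that idempotents lift uniquely along the surjection $B \onto B/\gotm_A B$ because $B$ is $\gotm_A$-adically complete (it is a finite module over the complete local ring $A$); lifting of idempotents along a complete, hence Henselian along the ideal, surjection is standard. This lift matches the connected-component decomposition $B = \prod_j C_j$ with the fibre decomposition, so after reindexing $k = \#\{C_j\}$ and $\Spec(C_i/\gotm_A C_i) \simeq R_i$.

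It then remains to check that each $Z_i := \Spec C_i \to T$ is an abstract smoothing of $R_i$. Flatness and finiteness of $Z_i \to T$ follow because $Z_i$ is a connected component of $Z$, i.e.~$C_i$ is a direct factor of the flat finite $A$-algebra $B$, so it is itself flat and finite over $A$. The special fibre is $R_i$ by construction. For the generic fibre: $Z_\eta = \bigsqcup_i Z_{i,\eta}$ and $Z_\eta$ is smooth over $\eta$ by hypothesis, and a disjoint summand (open-and-closed subscheme) of a smooth scheme is smooth, so each $Z_{i,\eta}$ is smooth over $\eta$. Finally, $T$ is irreducible (it was already irreducible, or we may use that $A$ is a domain after the reductions of Theorem~\ref{ref:goodbaseofsmoothing:thm}), so each $(Z_i, R_i) \to (T, t)$ satisfies all the axioms of Definition~\ref{ref:abstractsmoothable:def}.

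I expect the main obstacle to be the bookkeeping around lifting idempotents and making sure the lifted decomposition of $B$ is genuinely the same as the connected-component decomposition and that it restricts correctly to the fibre over $t$ — this is where completeness of $A$ (and hence $\gotm_A$-adic completeness of the finite module $B$) is used essentially, and it is the one place where the hypothesis ``$A$ complete'' cannot be dropped. Everything else is a routine transfer of flatness, finiteness and smoothness across a direct product decomposition of rings.
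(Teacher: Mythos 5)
Your proof is correct and follows essentially the same route as the paper: the forward direction is the same componentwise check, and the converse rests on the same structure theorem (a finite algebra over a complete Noetherian local ring is a product of complete local rings, \cite[Thm 7.2a, Cor 7.6]{eisenbud}), with flatness, finiteness and generic smoothness transferred to each direct factor exactly as in the paper. The only cosmetic difference is that you match the factors of $B$ with the components of $R$ by lifting idempotents along $B\onto B/\gotm B$, whereas the paper indexes the factors directly by the maximal ideals of $B$ containing $\gotm$, which correspond bijectively to the maximal ideals of $B/\gotm B$; these are two presentations of the same decomposition.
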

    \begin{proof}
        The first claim is clear, since we may check that $Z = \bigsqcup Z_i$
        is flat and finite locally on \emph{connected components} of $Z$.
        Let $\eta$ be the generic point of $T$, then
        $Z_{\eta} = \bigsqcup (Z_i)_{\eta}$ is smooth over $\eta$ since
        $(Z_i)_{\eta}$ are all smooth.

        \def\nn{\mathfrak{n}}
        \def\mm{\mathfrak{m}}
        For the second part, note that $Z$ is affine by definition of finite
        morphism~\cite[Tag 01WH]{stacks_project}.
        Let $Z = \Spec B$, then
        $B$ is a finite $A$-module and $R = \Spec B/\mm B$.
        Let $\nn_i$ be the maximal ideals in $B$ containing $\mm$. They
        correspond bijectively to maximal ideals of $B/\mm B$ and thus to
        components of $R$. Namely, $R_i = (B/\mm B)_{\nn_i}$ for appropriate indexing of $\nn_i$.
        Since $A$ is a complete Noetherian
        $\kk$-algebra, by~\cite[Thm 7.2a, Cor 7.6]{eisenbud} we get that $B =
        B_{\nn_1} \times  \ldots \times B_{\nn_n}$.
        Then $B_{\nn_i}$ is a flat $A$-module, as a localisation of $B$,
        and also a finite $A$-module, since it may be regarded as a quotient
        of $B$. The fibre of $B_{\nn_i}$ over the generic point of $\Spec A$ is a
        localisation of the fibre of $B$. Therefore $\Spec B_{\nn_i} \to \Spec
        A$ is a smoothing of $\Spec (B_{\nn_i})/\mm B_{\nn_i} = \Spec (B/\mm
        B)_{\nn_i} = R_i$.
    \end{proof}

    \begin{cor}\label{ref:smoothingcomponentsresult:cor}
        Let $R = R_1\sqcup R_2\sqcup  \ldots  \sqcup R_k$ be a~finite scheme.
        Then $R$ is abstractly smoothable if and only if each $R_i$ is
        abstractly smoothable.
    \end{cor}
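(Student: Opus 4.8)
The plan is to derive this corollary directly from Proposition~\ref{ref:smoothingcomponents:prop}, the only subtlety being that the proposition's converse direction assumes the base of the smoothing is the spectrum of a \emph{complete local} $\kk$-algebra, whereas Definition~\ref{ref:abstractsmoothable:def} of abstract smoothability only provides an irreducible base $T$ with a $\kk$-rational point. So the first step is to reduce to that situation using Theorem~\ref{ref:goodbaseofsmoothing:thm}.

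First I would prove the ``if'' direction. Suppose each $R_i$ is abstractly smoothable, say via $(Z_i, R_i)\to (T_i, t_i)$. To apply the first half of Proposition~\ref{ref:smoothingcomponents:prop} I need all the $Z_i$ to live over a \emph{common} base. I would obtain this by a base change: applying Theorem~\ref{ref:goodbaseofsmoothing:thm} to each smoothing, I may assume each $T_i = \Spec A_i$ with $A_i$ a one-dimensional Noetherian complete local domain with residue (and quotient) field $\kk$. These $A_i$ need not be isomorphic, so I would instead note that it suffices to base change each $(Z_i,R_i)\to(T_i,t_i)$ along a single common $T\to T_i$; concretely, one can take $T = \Spec A$ for a suitable one-dimensional complete local domain mapping to every $T_i$ (for instance, by Lemma~\ref{ref:basechange:lem}, any $T\to T_i$ hitting the generic point and a $\kk$-rational point over $t_i$ works, and over $\kk$ algebraically closed one just takes $T = \Spec\kk[[\alpha]]$; in general one maps $\Spec\kk[[\alpha]]$ or an appropriate DVR into each $T_i$). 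After this base change all $R_i$ are smoothed over the same $T$, and the first part of Proposition~\ref{ref:smoothingcomponents:prop} gives that $Z = \bigsqcup Z_i \to T$ is an abstract smoothing of $R$.

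For the ``only if'' direction, suppose $R$ is abstractly smoothable via $(Z, R)\to (T,t)$. By Theorem~\ref{ref:goodbaseofsmoothing:thm} I may base change so that $T = \Spec A$ with $A$ a one-dimensional Noetherian complete local domain with residue field $\kk$; in particular $A = (A,\gotm,\kk)$ is a complete local $\kk$-algebra, so the hypothesis of the second part of Proposition~\ref{ref:smoothingcomponents:prop} is met. That proposition then yields $Z = Z_1\sqcup\dotsb\sqcup Z_k$ with $(Z_i,R_i)\to(T,t)$ an abstract smoothing of $R_i$, so each $R_i$ is abstractly smoothable. This completes the proof.

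The only mild obstacle is the bookkeeping in the ``if'' direction: making the bases of the individual smoothings agree. This is purely a matter of invoking Lemma~\ref{ref:basechange:lem} (base change preserves smoothings, provided the generic point and a $\kk$-rational point over the special point are hit) together with the fact, built into Theorem~\ref{ref:goodbaseofsmoothing:thm}, that one may take the base to be a one-dimensional complete local domain with residue field $\kk$ — and such a domain (e.g. a complete DVR, or $\kk[[\alpha]]$ itself when $\kk=\kkbar$) admits a map into any irreducible base sending its closed point to a prescribed $\kk$-rational point and its generic point into any given dense open. There is no essential difficulty beyond this.
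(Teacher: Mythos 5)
Your overall strategy is the same as the paper's: both directions are reductions to Proposition~\ref{ref:smoothingcomponents:prop}, with Theorem~\ref{ref:goodbaseofsmoothing:thm} supplying the complete local base needed for the converse. The ``only if'' direction is exactly the paper's argument and is fine.

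The one weak point is your construction of a common base in the ``if'' direction. The first part of Proposition~\ref{ref:smoothingcomponents:prop} does not require a complete local base, only a \emph{common} one, so your preliminary application of Theorem~\ref{ref:goodbaseofsmoothing:thm} to each $T_i$ is unnecessary; worse, the parenthetical recipe you then rely on --- mapping $\Spec\kk[[\alpha]]$ or ``an appropriate DVR'' with residue field $\kk$ into each $T_i$ --- can fail when $\kk$ is not algebraically closed. The paper's own example after Lemma~\ref{lem_finding_a_curve_through_point_and_open_subset} is the obstruction: for $A=\RR\oplus\alpha\CC[[\alpha]]$ there is no injective local $\RR$-algebra map $A\to B$ into a local domain $B$ with residue field $\RR$ unless $-1$ is a square in $\operatorname{Frac}(B)$ (since $(i\alpha)^2=-\alpha^2$ forces $\phi(\alpha)^2+\phi(i\alpha)^2=0$), so in particular no such map into $\RR[[\beta]]$ or any DVR with residue field $\RR$ hits the generic point. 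The robust construction, and the one the paper uses (Example~\ref{ex_change_base_by_product}), is to take $T=T_1\times\dotsb\times T_k$ with the $\kk$-rational point $t_1\times\dotsb\times t_k$; the projections $T\to T_i$ automatically satisfy the hypotheses of Lemma~\ref{ref:basechange:lem}, after which the first part of Proposition~\ref{ref:smoothingcomponents:prop} finishes the argument. With that substitution your proof is complete.
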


    \begin{proof}
        If each $R_i$ is abstractly smoothable, then we may choose smoothings
          over the same base $T$, for instance by taking the product of the all bases of the individual smoothings,
          compare with Example~\ref{ex_change_base_by_product}.
        The claim follows from
        Proposition~\ref{ref:smoothingcomponents:prop}.
        
        Conversely, if $R$ is
        smoothable, then we may choose a smoothing over a one-dimensional
        Noetherian complete local domain by
        Theorem~\ref{ref:goodbaseofsmoothing:thm}. Again the result is implied by Proposition~\ref{ref:smoothingcomponents:prop}.
    \end{proof}

\subsection{Comparing abstract and embedded
smoothings}\label{sect_smoothings_abstract_embedded}

    Now we will compare the notion of abstract smoothability and embedded
    smoothability of a scheme $R$. First, we need a technical lemma.
    \begin{lem}\label{ref:closedimmersions:lem}
        Let $(A, \gotm, \kk)$ be a local $\kk$-algebra and $T = \Spec A$ with a
        $\kk$-rational point $t$ corresponding to $\gotm$.
        Let $Z$ be a scheme with a unique closed point and $Z\to T$ be a finite flat morphism.
        Let $Y$ be another scheme and $f:Z\to Y \times T$ be a
        morphism such that the following diagram is commutative:
        \[
          \begin{tikzcd}
             Z \arrow{rr}{}\arrow{dr}{} & & Y \times T\arrow{dl}{}\\
             & T 
          \end{tikzcd}
        \]
        If $f_t:Z_t\to Y$ is a closed
        immersion, then $f$ is also a closed immersion.
    \end{lem}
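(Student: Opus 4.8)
The plan is to turn the claim into the surjectivity of a single ring homomorphism, deduce that surjectivity from the fibre $Z_t$ alone by Nakayama's lemma, and then upgrade the resulting immersion into an affine chart to a closed immersion into $Y\times T$ by a properness argument.

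First I would make everything affine. Since $Z\to T$ is finite it is affine, so $Z=\Spec B$ with $B$ a finite $A$-module. Finite morphisms are closed, hence every closed point of $Z$ maps to a closed point of $T=\Spec A$, i.e.\ to the unique closed point $t$; so all closed points of $Z$ lie in the finite $\kk$-scheme $Z_t=\Spec(B/\gotm B)$ and are exactly its points. The hypothesis that $Z$ has a unique closed point thus says $B/\gotm B$ is local, and since every maximal ideal of $B$ contains $\gotm B$, this forces $B$ to be local; call its maximal ideal $\gotn$ and the corresponding point $z_0$. Under $f_t$ the point $z_0$ goes to some $y_0\in Y$; pick an affine open $U=\Spec C\subseteq Y$ with $y_0\in U$. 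Then $U\times T=\Spec(C\otimes_{\kk}A)$ is an affine open of $Y\times T$ containing $f(z_0)=(y_0,t)$, so $f^{-1}(U\times T)$ is a nonempty open subset of $Z$; as every nonempty closed subset of $\Spec B$ contains $z_0$ (its only closed point), the closed complement $Z\setminus f^{-1}(U\times T)$ is empty and $f$ factors through $U\times T$.

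Now $f$ corresponds to an $A$-algebra homomorphism $\phi\colon C\otimes_{\kk}A\to B$, and $f\colon Z\to U\times T$ is a closed immersion exactly when $\phi$ is surjective. Put $M=\coker\phi$, a finitely generated $A$-module since it is a quotient of the finite $A$-module $B$. Applying $-\otimes_A\kk$ to $C\otimes_{\kk}A\xrightarrow{\phi}B\to M\to 0$ and using $(C\otimes_{\kk}A)\otimes_A\kk=C$ and $B\otimes_A\kk=B/\gotm B$ gives an exact sequence $C\to B/\gotm B\to M/\gotm M\to 0$ whose first map is the ring homomorphism underlying $f_t\colon Z_t\to U$; since $f_t$ is a closed immersion this map is surjective, so $M/\gotm M=0$, and Nakayama's lemma gives $M=0$. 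Hence $\phi$ is surjective and $Z\to U\times T$ is a closed immersion. (Note that flatness of $Z\to T$ is not used.)

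Finally I would upgrade this to a closed immersion into all of $Y\times T$. The map $f$ factors as $Z\to U\times T\hookrightarrow Y\times T$, a closed immersion followed by an open immersion, so it is an immersion, and it is a closed immersion as soon as $f(Z)$ is closed in $Y\times T$. But $Z\to T$ is finite, hence proper, and $Y\times T\to T$ is separated, being the base change of the separated morphism $Y\to\Spec\kk$; by the cancellation property of proper morphisms $f$ is proper, in particular closed, so $f(Z)$ is closed. This last passage from the affine chart $U\times T$ to $Y\times T$ is the only delicate point, and I expect the care to lie precisely in invoking properness of $Z\to T$ together with the standing separatedness of $Y$.
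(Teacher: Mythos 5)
Your proof is correct and follows essentially the same route as the paper's: factor $f$ through an affine chart $U\times T$ using the unique closed point of $Z$, deduce surjectivity of the ring map $C\otimes_{\kk}A\to B$ from the special fibre via Nakayama and finiteness of $B$ over $A$, and conclude closedness of the image from properness of $Z\to T$ together with separatedness of $Y\times T\to T$ (the paper runs the properness step first, you run it last, and your observation that flatness is never used is also accurate).
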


    \begin{proof}
        Since $Y\times T\to T$ is separated and $Z\to T$ is finite, from the
        cancellation property (\cite[Ex.~II.4.8]{hartshorne}) it follows that
        $f:Z\to Y\times T$ is
        finite, in particular proper, thus the image of $Z$ in $Y\times T$ is closed.
        Then it is enough to prove that $Z\to Y\times T$ is a~locally closed
        immersion.

        Let $U\subseteq Y$ be an open affine neighbourhood of $f_t(p)$, where
        $p$ is the unique
        closed point of $Z$. Since the preimage of $U \times T$ in $Z$ is open
        and contains $p$, the morphism $Z\to Y\times T$ factors through
        $U\times T$.
        We claim that $Z\to U\times T$ is a~closed immersion.
        Note that it is a morphism of affine schemes. 
        Let $B$, $C$ denote the coordinate rings of $Z$ and $U\times T$, respectively.
        Then the morphism of schemes $Z\to Y\times T$ corresponds to a~morphism of $A$-algebras $C\to B$. 
        Since the base change $A\to A/\gotm$
        induces an isomorphism $C/\gotm C\to B/\gotm B$, we have $B = \gotm B + C$,
        thus $C\to B$ is onto by Nakayama Lemma and the fact that $B$ is
        a finite $A$-module. Hence, the morphism $f:Z\to U \times T\to Y\times T$ is
        a~locally closed immersion.
    \end{proof}

    The following Theorem~\ref{ref:abstractvsembedded:thm} together with its
    immediate Corollary~\ref{ref:smoothingeverytwhere:cor} is a generalisation
    of \cite[Lemma~2.2]{casnati_notari_irreducibility_Gorenstein_degree_9} and \cite[Prop.~2.1]{nisiabu_jabu_cactus}. 
    Similar ideas are mentioned in \cite[Lemma~4.1]{cartwright_erman_velasco_viray_Hilb8} and in \cite[p.~4]{artin_deform_of_sings}.
    A sketch of this argument was also communicated to the first named author by V.~Srinivas.
    The theorem uses the notion of \emph{formal smoothness}, see \cite[Def.~17.1.1]{ega4_4}, which we recall briefly.
    A scheme $X$ is formally smooth if and only if for every affine scheme $Y$ and every closed subscheme $Y_0 \subset Y$ defined by a nilpotent ideal of $\ccO_Y$,
    every morphism $Y_0\to X$ extends to a morphism $Y\to X$.
    A scheme is smooth (in the sense Section~\ref{sec_smooth_regular_flat}) 
       if and only if it is locally of finite type and formally smooth, see \cite[17.3]{ega4_4}.

    \begin{thm}[Abstract smoothing versus embedded smoothing]\label{ref:abstractvsembedded:thm}
        Let $R$ be a~finite scheme over $\kk$ which is embedded into a~formally smooth scheme $X$.
        Then $R$ is smoothable in $X$ if and only if it is abstractly smoothable.
    \end{thm}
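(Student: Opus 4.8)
The plan is to prove the nontrivial implication: if $R$ is abstractly smoothable, then it is smoothable inside the formally smooth scheme $X$. The reverse implication is immediate, since an embedded smoothing $Z \subseteq X \times T$ is in particular an abstract smoothing of $R$. So assume $(Z,R)\to(T,t)$ is an abstract smoothing. By Theorem~\ref{ref:goodbaseofsmoothing:thm} we may shrink the base and assume $T = \Spec A$ with $A$ a one-dimensional Noetherian complete local $\kk$-algebra with residue field $\kk$; moreover by Corollary~\ref{ref:smoothingcomponentsresult:cor} and Proposition~\ref{ref:smoothingcomponents:prop} we may treat each connected component of $R$ separately, so we may assume $R$ is irreducible, supported at a single $\kk$-rational point $x \in X$, and $Z$ has a unique closed point.

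The key step is to produce a $T$-morphism $Z \to X \times T$ lifting the inclusion $R \hookrightarrow X$ of the special fibre. Here is where formal smoothness of $X$ enters. Write $Z = \Spec B$ with $B$ a finite flat $A$-module; let $\gotm \subset A$ be the maximal ideal, so that $B/\gotm B$ is the coordinate ring of $R$ and we are given a $\kk$-algebra surjection from the local ring of $X$ at $x$ onto $B/\gotm B$, i.e.\ a morphism $R \to X$. I want to extend this to a morphism $\Spec B \to X$ over $\kk$ (equivalently a $T$-morphism $\Spec B \to X \times T$). Since $A$ is $\gotm$-adically complete and $B$ is finite over $A$, $B$ is $\gotm B$-adically complete, so $B = \varprojlim_n B/\gotm^n B$. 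I build the lift level by level: given a morphism $\Spec(B/\gotm^n B) \to X$ extending $R \to X$, the closed subscheme $\Spec(B/\gotm^n B) \subseteq \Spec(B/\gotm^{n+1} B)$ is cut out by the nilpotent ideal $\gotm^n B/\gotm^{n+1} B$, so formal smoothness of $X$ provides an extension to $\Spec(B/\gotm^{n+1} B) \to X$. Passing to the inverse limit — and using that $X$ is locally of finite type, so that $\Hom(\Spec B, X) = \varprojlim \Hom(\Spec B/\gotm^n B, X)$ once we have pinned down the image closed point — yields the desired $\kk$-morphism $f \colon Z \to X$, hence a $T$-morphism $f \colon Z \to X \times T$ compatible with the projections to $T$.

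Now apply Lemma~\ref{ref:closedimmersions:lem}: we have a finite flat morphism $Z \to T$, a $T$-morphism $f\colon Z \to X \times T$, and the restriction $f_t \colon Z_t = R \to X$ is a closed immersion (it is the given embedding). The lemma concludes that $f$ itself is a closed immersion. Setting $Z' := f(Z) \subseteq X \times T$, we obtain a closed subscheme of $X \times T$ which, via $f$, is isomorphic to $Z$ over $T$; hence $Z' \to T$ is finite, flat, with special fibre $R \subseteq X$ and smooth generic fibre. That is exactly an embedded smoothing of $R \subseteq X$, as required.

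The main obstacle is the lifting argument in the middle paragraph: one must be careful that the successive extensions furnished by formal smoothness glue to an honest morphism out of $\Spec B$ rather than merely a compatible system of maps out of the thickenings, and that the target point stays the closed point $x$ throughout. This is handled by the finiteness hypotheses — $B$ is finite over the complete local ring $A$, hence complete, and $X$ is locally of finite type, so it suffices to work in an affine open neighbourhood $U = \Spec \ccO_X(U)$ of $x$ and to lift a ring homomorphism $\ccO_X(U) \to B/\gotm B$ to $\ccO_X(U) \to B$, which is a purely algebraic completeness-plus-smoothness statement. Once the lift exists, Lemma~\ref{ref:closedimmersions:lem} does all the remaining work, and the component-by-component reduction at the start lets us assume the local-at-a-point setting that both the lifting and the lemma require.
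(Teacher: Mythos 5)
Your proof is correct and follows essentially the same route as the paper: reduce via Theorem~\ref{ref:goodbaseofsmoothing:thm} to a complete local base and, via Proposition~\ref{ref:smoothingcomponents:prop}, to irreducible $R$; lift $R \to X$ through the infinitesimal thickenings $\Spec B/(\gotm B)^n$ using formal smoothness together with completeness of the finite $A$-algebra $B$; and conclude with Lemma~\ref{ref:closedimmersions:lem}. The only detail you pass over is reassembling the embedded smoothings of the individual components into one for all of $R$ (one checks the $Z_i \subset X\times T$ have pairwise distinct closed points, hence are disjoint closed subschemes whose union is the required smoothing), which the paper spells out but which is routine.
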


    \begin{proof}
        Clearly from definition, if $R$ is smoothable in $X$, then it is
        abstractly smoothable. It remains to prove the other implication.

        Let $(Z, R)\to (T, t)$ be an abstract smoothing of $R$. Using
        Theorem~\ref{ref:goodbaseofsmoothing:thm} we may assume that $T$ is
        a~spectrum of a~complete local ring $(A, \gotm, \kk)$. Since $Z\to T$ is
        finite, $Z \simeq \Spec B$, where $B$ is a~finite $A$-algebra. 
        Let us first consider the case when $R$ is irreducible.
        In particular, since $Z$ is irreducible, it is complete  by~\cite[Cor.~7.6]{eisenbud},
          and by~\cite[Thm~7.2a]{eisenbud}, the algebra $B$ is the inverse limit of Artinian $\kk$-algebras $B/(\gotm B)^n$, where $n\in \mathbb{N}$.

        By definition of $X$ being formally smooth, the morphism $R = \Spec
        B/\gotm B \to X$ lifts to $\Spec B/(\gotm B)^2\to X$
          and subsequently $\Spec B/(\gotm B)^n\to X$ lifts to $\Spec B/(\gotm B)^{n+1}\to X$ for every $n \in \mathbb{N}$.
        Since $B$ is complete, together these morphisms give a morphism $Z = \Spec B \to X$, which in
        turn gives rise to a~morphism of $T$-schemes $Z\to X\times
        T$. This morphism is a closed immersion by
        Lemma~\ref{ref:closedimmersions:lem}.
        This finishes the proof in the case of
        irreducible $R$.

        Now consider a not necessarily irreducible $R$. Let $R = R_1 \sqcup
        \ldots  \sqcup R_k$ be the decomposition into irreducible (or
        connected) components. By
        Proposition~\ref{ref:smoothingcomponents:prop}, the smoothing $Z$
        decomposes as $Z = Z_1 \sqcup  \ldots  \sqcup Z_k$, where $(Z_i,
        R_i)\to (T, t)$ are smoothings of $R_i$. The schemes $R_i$ are
        irreducible, so by the previous case, these smoothings give
        rise to embedded smoothings $Z_i \subseteq X\times T$. 
        In particular, each subscheme $Z_i$ is closed.
        Moreover, the images of closed points of $Z_i$ are pairwise different in $X\times T$, 
          thus $Z_i$ are pairwise disjoint and 
          we get an embedding of $Z = Z_1\sqcup  \ldots  \sqcup Z_k \subset X\times T$, which is the required embedded smoothing.
    \end{proof}

    \begin{cor}\label{ref:smoothingeverytwhere:cor}
        Suppose that $R$ is a finite scheme and $X$ and $Y$ are two smooth schemes. 
        If $R$ can be embedded in $X$ and in $Y$, then $R$ is
        smoothable in $X$ if and only if $R$ is smoothable in $Y$.
    \end{cor}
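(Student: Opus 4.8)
The plan is to reduce Corollary~\ref{ref:smoothingeverytwhere:cor} to Theorem~\ref{ref:abstractvsembedded:thm}, which is exactly the bridge between embedded and abstract smoothability whenever the ambient scheme is formally smooth. The first observation I would record is that a smooth scheme is in particular locally of finite type and formally smooth (as recalled just before Theorem~\ref{ref:abstractvsembedded:thm}), so both $X$ and $Y$ satisfy the hypothesis of that theorem.

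With that in place, the argument is a two-line chain of equivalences. First I would apply Theorem~\ref{ref:abstractvsembedded:thm} with the ambient scheme $X$: since $R$ embeds in $X$ and $X$ is formally smooth, $R$ is smoothable in $X$ if and only if $R$ is abstractly smoothable. Then I would apply the very same theorem with the ambient scheme $Y$: since $R$ embeds in $Y$ and $Y$ is formally smooth, $R$ is smoothable in $Y$ if and only if $R$ is abstractly smoothable. Chaining these two equivalences through the common middle term ``$R$ is abstractly smoothable'' gives precisely the desired equivalence: $R$ is smoothable in $X$ iff $R$ is smoothable in $Y$.

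There is essentially no obstacle here; the whole content has been pushed into Theorem~\ref{ref:abstractvsembedded:thm}. The only point worth a word of care is the implicit claim that a finite scheme $R$ has a well-defined notion of ``abstract smoothability'' independent of any embedding — but that is built into Definition~\ref{ref:abstractsmoothable:def}, which refers only to the abstract scheme $R$ and not to any $X$ or $Y$. So the corollary follows formally, and the proof can be stated in two or three sentences invoking Theorem~\ref{ref:abstractvsembedded:thm} twice.

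For completeness I might also remark that the statement does not even require $X$ and $Y$ to be of finite type or Noetherian beyond our standing assumptions: formal smoothness alone suffices, so the corollary holds verbatim for any two formally smooth schemes into which $R$ embeds. This is a minor strengthening one gets for free, and it is the form in which the result is actually used later when comparing embeddings of a finite Gorenstein scheme into an affine space versus into a smooth variety.
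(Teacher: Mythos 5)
Your proof is correct and coincides with the paper's own argument, which likewise deduces the corollary by applying Theorem~\ref{ref:abstractvsembedded:thm} to each of the two ambient schemes and chaining through abstract smoothability. The observation that smoothness implies formal smoothness, so the theorem applies, is exactly the point the paper relies on as well.
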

    \begin{proof}
        Follows directly from Theorem~\ref{ref:abstractvsembedded:thm}.
    \end{proof}

    \begin{proof}[Proof of Theorem~\ref{thm_equivalence_of_abstract_and_embedded_smoothings}]
        Corollary~\ref{ref:smoothingcomponentsresult:cor} gives the
        equivalence of \ref{it:Rabssm} and \ref{it:Rconabssm}.
        Smooth variety is formally smooth by definition, so
        Theorem~\ref{ref:abstractvsembedded:thm} implies equivalence of
        \ref{it:Rabssm} and \ref{it:Rembsm} as well as
        \ref{it:Rconabssm} and \ref{it:Rconembsm}.
    \end{proof}

    \subsection{Embedded smoothability depends only on singularity type}

    While the comparison between abstract and embedded smoothings
    given in Theorem~\ref{ref:abstractvsembedded:thm} above is
    satisfactory, it is natural to ask what is true without the assumption of formal smoothness.
    This assumption cannot be removed altogether: for a projective
    curve $C$ all its tangent
    vectors are smoothable in $C$ if and only if all tangent spaces are
    contained in tangent stars,
    see~\cite[Section~3.3]{jabu_ginensky_landsberg_Eisenbuds_conjecture}.
    However we will see that this assumption may be removed entirely if we
    have an appropriate morphism, see
    Corollary~\ref{ref:pushingsmoothings:cor} and that the existence of
    smoothings depends only on the formal geometry of $X$ near the support of
    its subscheme, see
    Proposition~\ref{prop_smoothability_depends_only_on_sing_type}.

    \begin{cor}\label{ref:pushingsmoothings:cor}
        Let $R$ be a finite scheme embedded in a scheme $X$ and smoothable in $X$.
        Let $Y$ be a scheme with a morphism $X\to Y$ which induces an isomorphism of $R$
           with its scheme-theoretic image $S \subseteq Y$. Then $R  \simeq S$
        is smoothable in $Y$.
    \end{cor}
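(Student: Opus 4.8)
The plan is to reduce the statement to Theorem~\ref{ref:abstractvsembedded:thm} by replacing the morphism $X \to Y$ with an embedding into a formally smooth scheme. First I would take an embedded smoothing $(Z, R) \to (T, t)$ of $R$ inside $X$, so $Z \subseteq X \times T$ is closed, flat and finite over $T$, with $Z_t \simeq R$ and smooth generic fibre. Composing with $X \to Y$ gives a morphism $Z \to Y \times T$ over $T$. The key claim is that this morphism is a closed immersion, so that its image is an embedded smoothing of $S \subseteq Y$. Since $R \simeq S$ via $X \to Y$, the fibre over $t$ of $Z \to Y \times T$ is $R \simeq S$, an isomorphism onto its image; the issue is to propagate this to the whole family.

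After possibly shrinking the base via Theorem~\ref{ref:goodbaseofsmoothing:thm} we may assume $T = \Spec A$ with $(A, \gotm, \kk)$ a one-dimensional complete local domain, and by Proposition~\ref{ref:smoothingcomponents:prop} we may treat each connected component of $R$ separately, so assume $Z$ has a unique closed point $p$. Now $Z \to Y \times T$ is a morphism of $T$-schemes whose fibre over $t$ is the closed immersion $R \hookrightarrow S \hookrightarrow Y$. This is precisely the hypothesis of Lemma~\ref{ref:closedimmersions:lem} (with $Y$ there being our $Y$): a finite flat $Z \to T$ over a local base, a $T$-morphism to $Y \times T$ whose special fibre is a closed immersion. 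Hence $Z \to Y \times T$ is a closed immersion. For the reassembly over all components one argues as at the end of the proof of Theorem~\ref{ref:abstractvsembedded:thm}: the images of the closed points of the $Z_i$ are distinct in $Y \times T$ (they are distinct already over $t$, since $R \simeq S$ has that many distinct points of support), so the $Z_i$ are pairwise disjoint and glue to a closed subscheme $Z' \subseteq Y \times T$.

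It then remains to check that $Z' \to T$ is genuinely an abstract smoothing of $S$: it is finite and flat because $Z' \simeq Z$ as schemes over $T$ (the closed immersion $Z \to Y \times T$ is an isomorphism onto $Z'$), the special fibre is $R \simeq S$, and the generic fibre is smooth because it is isomorphic to the smooth generic fibre of $Z \to T$. Therefore $Z' \subseteq Y \times T$ witnesses that $S$ is smoothable in $Y$, and $R \simeq S$ as claimed.

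The main obstacle is purely the verification that the special fibre being a closed immersion forces the whole family to be a closed immersion; but this is exactly what Lemma~\ref{ref:closedimmersions:lem} provides, once one has arranged (via Theorem~\ref{ref:goodbaseofsmoothing:thm} and Proposition~\ref{ref:smoothingcomponents:prop}) a local complete base and an irreducible $Z$. So there is essentially no new difficulty here beyond correctly invoking the infrastructure already built; the corollary is a clean formal consequence of Theorem~\ref{ref:abstractvsembedded:thm} and its supporting lemmas.
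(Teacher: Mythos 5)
Your proposal is correct and follows essentially the same route as the paper: reduce to a one-dimensional complete local base via Theorem~\ref{ref:goodbaseofsmoothing:thm}, split into irreducible components via Proposition~\ref{ref:smoothingcomponents:prop}, and apply Lemma~\ref{ref:closedimmersions:lem} to conclude that $Z\to Y\times T$ is a closed immersion. The paper's proof is just a terser version of the same argument, leaving the reassembly of components and the verification that the image is a smoothing of $S$ implicit.
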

    \begin{proof}
        Let $Z\subseteq X \times T\to T$ be an embedded smoothing of $X$ over
        a base $(T, t)$.  The morphism $X\to Y$ induces a morphism $Z\to Y \times
        T$ which, over $t$, induces a closed embedding $R \subseteq Y$. Then
        we need to prove that $Z\to Y \times T$ is a closed immersion.
        By Theorem~\ref{ref:goodbaseofsmoothing:thm} and
        Proposition~\ref{ref:smoothingcomponents:prop} we may reduce to the
        case when $R$ is irreducible.
        Then the claim follows from Lemma~\ref{ref:closedimmersions:lem}.
    \end{proof}

    Using Corollary~\ref{ref:pushingsmoothings:cor} we may strengthen
    Corollary~\ref{ref:smoothingeverytwhere:cor} a bit, obtaining a direct
    generalisation of \cite[Prop 2.1]{nisiabu_jabu_cactus}.
    \begin{cor}
        Let $X$ be a finite type scheme and $R \subseteq X$ be a finite closed
        subscheme of $X$, supported in the smooth locus of $X$. If $R$ is
        abstractly smoothable, then $R$ is smoothable in $X$.
    \end{cor}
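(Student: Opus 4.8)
The plan is to reduce to the case of a formally smooth ambient scheme, which is already handled by Theorem~\ref{ref:abstractvsembedded:thm}. Write $U \subseteq X$ for the smooth locus of $X$; this is an open subscheme, and it is of finite type and smooth, hence formally smooth. Since $R$ is finite its support consists of finitely many closed points, all of which lie in $U$ by hypothesis. Therefore the open subscheme $R \times_X U$ of $R$ has the same underlying topological space as $R$, so $R \times_X U = R$; equivalently $R$ is a closed subscheme of $U$ and the given closed immersion $R \hookrightarrow X$ factors as $R \hookrightarrow U \hookrightarrow X$.

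With $R$ now exhibited as a finite closed subscheme of the formally smooth scheme $U$, and $R$ abstractly smoothable by hypothesis, Theorem~\ref{ref:abstractvsembedded:thm} applies and shows that $R$ is smoothable in $U$.

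It then remains to transport this embedded smoothing along the open immersion $\iota \colon U \hookrightarrow X$, and for this I would invoke Corollary~\ref{ref:pushingsmoothings:cor} with the roles of $X$ and $Y$ in that statement played by $U$ and $X$ respectively. The hypothesis to verify is that $\iota$ induces an isomorphism of $R$ with its scheme-theoretic image in $X$: but the composite $R \hookrightarrow U \xrightarrow{\iota} X$ is exactly the original closed immersion $R \hookrightarrow X$, so $R$ is already a closed subscheme of $X$ through which this morphism factors, hence it is the scheme-theoretic image and the induced morphism $R \to R$ is the identity. Corollary~\ref{ref:pushingsmoothings:cor} then gives that $R$ is smoothable in $X$, completing the argument.

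I do not expect a genuine obstacle here: the content is entirely in Theorem~\ref{ref:abstractvsembedded:thm} and Corollary~\ref{ref:pushingsmoothings:cor}, and the only points requiring a line of justification are that the smooth locus is open and that $R$ sits inside it as a closed subscheme (for which one could, alternatively, cover $\Supp R$ by finitely many open subschemes on which $X$ is smooth and take their union), together with the elementary verification that an open immersion into which $R$ already embeds as a closed subscheme satisfies the scheme-theoretic image condition of Corollary~\ref{ref:pushingsmoothings:cor}.
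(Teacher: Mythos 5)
Your proposal is correct and is essentially identical to the paper's proof: restrict to the smooth locus $X^{sm}$, apply Theorem~\ref{ref:abstractvsembedded:thm} there, and push the embedded smoothing forward into $X$ via Corollary~\ref{ref:pushingsmoothings:cor}. The extra verifications you supply (openness of the smooth locus, that $R$ is closed in it, and that the scheme-theoretic image condition holds for the open immersion) are exactly the routine details the paper leaves implicit.
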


    \begin{proof}
        Let $X^{sm}$ be the smooth locus of $X$. By
        Theorem~\ref{ref:abstractvsembedded:thm} the scheme $R$ is smoothable in
        $X^{sm}$ and by Corollary~\ref{ref:pushingsmoothings:cor} it is also
        smoothable in $X$.
    \end{proof}

    We now show that the possibility of smoothing a given $R$ inside $X$
    depends only on $R$ and the formally local structure of $X$ near $R$. This
    is the strongest result in this direction we could hope for; it implies
    that smoothability depends only on Zariski neighbourhoods of $R$ in $X$
      (and even \'etale local neighbourhoods: if $X'\to X$ is \`etale and $R
      \subset X' \to X$ are two embeddings, then the residue fields at
      corresponding points are isomorphic, hence also formal neighbourhoods
      are isomorphic, see~\cite[Tag~0257, discussion
      after~40.11.3]{stacks_project}).

    \begin{prop}\label{prop_smoothability_depends_only_on_sing_type}
        Let $R \subset X$ be a finite scheme, supported at points $x_1, \ldots
        ,x_k$ of $X$. Then $R$ is smoothable in $X$ if
        and only if $R$ is smoothable in $\bigsqcup \Spec \hat{\ccO}_{X, x_i}$.
    \end{prop}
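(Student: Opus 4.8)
The plan is to reduce to the case of irreducible $R$. Write $R = R_1\sqcup\dotsc\sqcup R_k$, where $R_i$ is the (closed) part of $R$ supported at $x_i$; since $R$ is finite over $\kk$, each $x_i$ is a closed point of $X$. Set $\hat X_i := \Spec\hat{\ccO}_{X,x_i}$ with its canonical morphism $\iota_i\colon \hat X_i\to X$. First I would record that $R_i$ is naturally a closed subscheme of $\hat X_i$: choosing an affine open $\Spec A_X\subseteq X$ containing $x_i$, the subscheme $R_i$ has coordinate ring $A_X/I_i$, a finite local $\kk$-algebra, hence Artinian and complete, so the quotient map $A_X\twoheadrightarrow A_X/I_i$ factors through the localisation $\ccO_{X,x_i} = (A_X)_{\gotp_{x_i}}$ and then, necessarily by a surjection, through its completion $\hat{\ccO}_{X,x_i}$. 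Thus $R_i\hookrightarrow\hat X_i$ is a closed immersion whose composite with $\iota_i$ is the original $R_i\subseteq X$; taking disjoint unions yields a morphism $\iota\colon\bigsqcup_i\hat X_i\to X$ for which $\iota|_R$ is the original closed immersion $R\hookrightarrow X$, so in particular $\iota|_R$ identifies $R$ with its scheme-theoretic image in $X$.

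Granting this, one implication is immediate: if $R$ is smoothable in $\bigsqcup_i\hat X_i$, then Corollary~\ref{ref:pushingsmoothings:cor} applied to the morphism $\iota$ shows that $R$ is smoothable in $X$.

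For the converse I would start from an embedded smoothing of $R$ in $X$ and use Theorem~\ref{ref:goodbaseofsmoothing:thm} to arrange that its base is $T = \Spec A$ with $A$ a one-dimensional complete local Noetherian domain with residue field $\kk$ and special point $t$; write the smoothing as $Z = \Spec B\subseteq X\times T$ with $B$ finite over $A$. By Proposition~\ref{ref:smoothingcomponents:prop}, $B = \prod_i B_i$, and each $Z_i := \Spec B_i$, being open and closed in $Z$ hence closed in $X\times T$, is an embedded smoothing of $R_i$ in $X$; moreover each $B_i$ is local (it is finite over the local ring $A$, and $B_i$ modulo the maximal ideal of $A$ is the Artinian local coordinate ring of $R_i$), say with maximal ideal $\gotn_i$, and $B_i$ is complete by~\cite[Cor.~7.6]{eisenbud}. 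For fixed $i$, the composite $Z_i\hookrightarrow X\times T\to X$ corresponds over $\Spec A_X\ni x_i$ to a ring map $A_X\to B_i$ with $\gotn_i\cap A_X = \gotp_{x_i}$ (because the closed point of $Z_i$ maps to $(x_i,t)$); since $B_i$ is local, every element of $A_X\setminus\gotp_{x_i}$ maps to a unit, so the map factors through $\ccO_{X,x_i}$, and since $\gotp_{x_i}$ maps into $\gotn_i$ and $B_i$ is $\gotn_i$-adically complete, it factors further through $\hat{\ccO}_{X,x_i}$. Hence $Z_i\to X$ factors as $Z_i\to\hat X_i\xrightarrow{\iota_i}X$, giving a $T$-morphism $Z_i\to\hat X_i\times T$ whose fibre over $t$ is the closed immersion $R_i\hookrightarrow\hat X_i$ from the first paragraph; as $Z_i\to T$ is finite and flat and $Z_i$ has a single closed point, Lemma~\ref{ref:closedimmersions:lem} shows this $T$-morphism is a closed immersion. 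Finally, the $Z_i$ lie in the distinct summands $\hat X_i\times T$ of $\bigl(\bigsqcup_i\hat X_i\bigr)\times T = \bigsqcup_i(\hat X_i\times T)$, so $\bigsqcup_i Z_i$ is a closed subscheme there, and by the first part of Proposition~\ref{ref:smoothingcomponents:prop} it is an embedded smoothing of $R$ in $\bigsqcup_i\hat X_i$.

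I expect the crux to be the factorisation step in the converse: that a smoothing component $Z_i$, a priori only known to be a subscheme of $X\times T$ supported over $(x_i,t)$, actually maps to the formal neighbourhood $\hat X_i$ of $x_i$, and that the induced morphism remains a closed immersion. This rests on two inputs: shrinking the base of the smoothing to a complete local one-dimensional domain (Theorem~\ref{ref:goodbaseofsmoothing:thm}), which forces the coordinate algebra $B_i$ of a connected component to be a complete local ring; and the elementary fact that a morphism to $X$ out of the spectrum of a complete local ring whose closed point lies over $x_i$ factors uniquely through $\Spec\hat{\ccO}_{X,x_i}$. Closedness of the factored immersion is then just Lemma~\ref{ref:closedimmersions:lem}, for which one needs $R_i$ to have a single point and $R_i\hookrightarrow\hat X_i$ to be a closed immersion — the latter being precisely the bookkeeping recorded at the start, which also feeds the application of Corollary~\ref{ref:pushingsmoothings:cor} in the easy direction.
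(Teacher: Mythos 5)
Your argument is correct and follows essentially the same route as the paper: the easy direction via Corollary~\ref{ref:pushingsmoothings:cor} applied to $\bigsqcup_i\Spec\hat{\ccO}_{X,x_i}\to X$, and the converse by shrinking the base to a complete local ring (Theorem~\ref{ref:goodbaseofsmoothing:thm}), splitting off the components (Proposition~\ref{ref:smoothingcomponents:prop}), factoring each $Z_i\to X\times T$ through $\Spec\hat{\ccO}_{X,x_i}\times T$ via completeness of $B_i$, and then checking the factored map is a closed immersion. The only cosmetic differences are that the paper establishes $\gotn$-adic completeness of $\ccO_{Z_i}$ by comparing with the $\gotm_A$-adic topology (using $\gotn^{d}\ccO_{Z_i}\subseteq\gotm_A\ccO_{Z_i}$) rather than via locality of $B_i$, and deduces closedness from separatedness of $\Spec\hat{\ccO}_{X,x}\to X$ rather than from Lemma~\ref{ref:closedimmersions:lem}; both variants are sound.
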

    The proof of this proposition relies on more terminology than introduced so far.
    See~\cite[\S5.5]{vakil_FoAG} for the definition of \emph{associated points} of a scheme (see also Section~\ref{sec_Castelnuovo_Mumford_reg}), 
    and~\cite[Chapter~10]{atiyah_macdonald} for the $\gotm$-\emph{adic topology} and properties of \emph{Cauchy sequences} in this topology.
    \begin{proof}
        \def\YY{{Z_i}}%
        The ``only if'' part follows from
        Corollary~\ref{ref:pushingsmoothings:cor} applied to the map
        $\bigsqcup\Spec
        \hat{\ccO}_{X, x_i} \to X$. We will now prove the ``if'' part, so we
        assume that $R$ is smoothable in $X$.
        By Theorem~\ref{ref:goodbaseofsmoothing:thm} we may take
        a smoothing of $R$ over $T = \Spec A$ where $(A, \gotm)$ is local and
        complete; this is a family $Z \subset X \times T$ cut out of
        $\ccO_{X} \tensor_{\kk} A$ by an ideal sheaf $\mathcal{I}$.
        By Proposition~\ref{ref:smoothingcomponents:prop} we have $Z =
        \bigsqcup Z_i$ where $Z_i$ is a smoothing of $R_{x_i}$.
        We will show that $Z_i \to X \times T$ can be factorised as follows:
        \begin{equation}\label{eq:trimming}
            Z_i \to \Spec \hat{\ccO}_{X, x_i} \times T \to X \times T.
        \end{equation}
        Fix $i$. Let $x := x_i$ and $\gotn \subset \ccO_X$ be the ideal sheaf
        of $x\in X$. 
        Since $\YY \to T$ is finite, the algebra $H^0(\YY, \ccO_\YY)$ is
        $\gotm$-adically complete. Since $R$ is finite, say of degree $d$, we
        have $\gotn^d \ccO_\YY \subset \gotm \ccO_\YY$.
        This means that each $\gotn \ccO_\YY$-adic Cauchy sequence is also
        an $\gotm \ccO_\YY$-adic Cauchy sequence and hence has a unique limit in
        $\ccO_\YY$.
        Thus the algebra $H^0(\YY, \ccO_\YY)$ is complete in $\gotn \ccO_\YY$-adic topology. By
        universal property of completion, the map $\YY \to X \times T$ factors
        through $\Spec \hat{\ccO}_{X, y} \times T$. The map $\Spec
        \hat{\ccO}_{X, x} \to X$ is separated, hence from~\eqref{eq:trimming}
        it follows that $\YY \to \Spec\hat{\ccO}_{X, x} \times T$ is a closed
        immersion; this gives a deformation $\YY$ embedded into
        $\Spec \hat{\ccO}_{X, x}$, taking the union over all components we get the desired
        embedding $Z \subset T \times \bigsqcup \Spec \hat{\ccO}_{X, x_i}$.
    \end{proof}

    \begin{cor}
        Let $X$ and $Y$ be two schemes and $x\in X$, $y\in Y$ be points with
        isomorphic completions of local rings; let
        $\varphi:\Spec\hat{\ccO}_{X,x}\to \Spec\hat{\ccO}_{Y,y}$
        be an isomorphism.

        Suppose that $R$ is an irreducible finite scheme
        with embeddings $i_X:R \to X$ and $i_Y:R\to Y$ such that $i_X(R)$,
        $i_Y(R)$ are supported at $x$, $y$ respectively. 
        Suppose that $\varphi$ maps isomorphically $i_X(R)$ onto $i_Y(R)$.
        Then $R =i_X(R)$ is smoothable in $X$ if and only if $R=i_Y(R)$ smoothable in $Y$.
    \end{cor}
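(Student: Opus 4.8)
The plan is to deduce this corollary directly from Proposition~\ref{prop_smoothability_depends_only_on_sing_type}, combined with the elementary fact that an isomorphism of schemes transports embedded smoothings. Since $R$ is irreducible, its topological space is a single point, so $i_X(R)$ is supported only at $x$ and $i_Y(R)$ only at $y$. Applying Proposition~\ref{prop_smoothability_depends_only_on_sing_type} to each of the two embeddings, $R = i_X(R)$ is smoothable in $X$ if and only if it is smoothable in $\Spec\hat{\ccO}_{X,x}$, and likewise $R = i_Y(R)$ is smoothable in $Y$ if and only if it is smoothable in $\Spec\hat{\ccO}_{Y,y}$. So it is enough to prove that $i_X(R)$ is smoothable in $\Spec\hat{\ccO}_{X,x}$ if and only if $i_Y(R)$ is smoothable in $\Spec\hat{\ccO}_{Y,y}$.

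For this remaining equivalence I would transport structure along $\varphi$. Suppose $Z \subseteq \Spec\hat{\ccO}_{X,x} \times T$ is an embedded smoothing of $i_X(R)$ over some base $(T,t)$. The map $\varphi \times \id_T \colon \Spec\hat{\ccO}_{X,x} \times T \to \Spec\hat{\ccO}_{Y,y} \times T$ is an isomorphism commuting with the projections to $T$, so its image $Z' := (\varphi \times \id_T)(Z)$ is a closed subscheme of $\Spec\hat{\ccO}_{Y,y} \times T$, the morphism $Z' \to T$ is identified with $Z \to T$ and hence is finite and flat with smooth generic fibre, and the fibre $Z'_t$ is the image of $Z_t = i_X(R)$ under $\varphi$, which equals $i_Y(R)$ by hypothesis. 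Thus $Z'$ is an embedded smoothing of $i_Y(R) \subseteq \Spec\hat{\ccO}_{Y,y}$. Running the same argument with $\varphi^{-1}$ in place of $\varphi$ gives the converse, so the two formal-local smoothability statements coincide, and combining with the two reductions above yields the corollary.

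I do not expect any genuine obstacle here; the single point that deserves a word of care is that $\varphi$ is assumed only to be an \emph{abstract} isomorphism of the formal neighbourhoods, not one induced by a morphism $X \to Y$. This is harmless precisely because ``smoothable in a scheme'' depends on that scheme and its embedded subscheme only up to isomorphism, which is exactly what legitimises the transport step; and it is to make this reduction to the formal neighbourhood available in the first place that Proposition~\ref{prop_smoothability_depends_only_on_sing_type} was established.
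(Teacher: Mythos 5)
Your proposal is correct and follows essentially the same route as the paper: reduce both sides to the formal neighbourhoods via Proposition~\ref{prop_smoothability_depends_only_on_sing_type} and then transport the smoothing along $\varphi$. The only difference is that you spell out the transport step ($\varphi\times\id_T$ carrying an embedded smoothing to an embedded smoothing), which the paper leaves implicit as an obvious isomorphism-invariance.
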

    \begin{proof}
        By Proposition~\ref{prop_smoothability_depends_only_on_sing_type} the
        scheme $i_X(R)$ is smoothable in $X$ if and only if it is smoothable in
        $\hat{\ccO}_{X,x}$. By the assumptions $i_X(R) \subset \Spec
        \hat{\ccO}_{X, x}$ is isomorphic to $i_Y(R) \subset \Spec
        \hat{\ccO}_{Y, y}$ via $\varphi$. By
        Proposition~\ref{prop_smoothability_depends_only_on_sing_type} again,
        $i_Y(R)$ is smoothable in $\hat{\ccO}_{Y, y}$ if and only if it is
        smoothable in $Y$.
    \end{proof}

\subsection{Subschemes of smoothable schemes}\label{sec_intersect_gives_nonsmoothable}

    Proposition~\ref{ref:smooth_subvar_and_pushing:prop} proves that if $Y
    \subset X$ are smooth varieties and $R\subset X$ is smoothable, then
    $R\cap Y$ can be embedded into a smoothable subscheme of degree at most $\deg
    R$. This is an algebraic version of
    \cite[Lemma~2.8]{jabu_ginensky_landsberg_Eisenbuds_conjecture}. Note that
    $R\cap Y$ needs not to be smoothable, see
    Corollary~\ref{cor_nonsmoothable_intersection_of_smoothable_and_smooth}.

    \begin{lem}\label{ref:bubulem:lem}
        Let $X$ be a scheme and $T = \Spec \kk[[\alpha]]$ with $t\in T$ the closed point corresponding to the ideal $(\alpha)$.
        Suppose that $(Z, R) \subseteq X \times (T, t)$ is an embedded
        smoothing of a finite scheme $R$. 
        Let $Y$ be a  scheme with a quasi-compact
        morphism $f:X\to Y$. Suppose that $f$ maps different points from support of
        $R$ to different points.

        Let $Z'$ be the
        scheme-theoretic image of $Z$ under $f\times \id_T$. Then $(Z', R') \subseteq Y \times (T,
        t)$ is an embedded smoothing of a finite scheme $R'$, such that $\deg R'
        \leq \deg R$. Furthermore, the
        scheme-theoretic image of $R$ in $Y$ is contained in $R'$ as a closed
        subscheme.
    \end{lem}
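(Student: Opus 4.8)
The plan is to work over the concrete base $T=\Spec\kk[[\alpha]]$ and carefully track what the scheme-theoretic image construction does to the various fibres. First I would record the setup: $Z\subseteq X\times T$ is finite and flat over $T$, with $Z_t\simeq R$ and $Z_\eta$ smooth over the generic point $\eta$. Since $f$ is quasi-compact and $Z\to T$ is finite, the morphism $g:=f\times\id_T\colon Z\to Y\times T$ is quasi-compact, so the scheme-theoretic image $Z'$ is well-behaved and commutes with flat base change on the target, in particular with the inclusions of the fibres over $t$ and over $\eta$ (one must be slightly careful here, since fibre inclusions are not flat in general; the right statement is that because $Z\to T$ is \emph{finite} and $T$ is the spectrum of a DVR, $Z$ is a free $\kk[[\alpha]]$-module, so $Z'$ is cut out inside $Y\times T$ by the kernel of $\ccO_{Y\times T}\to g_*\ccO_Z$, and freeness lets us intersect with the two fibres). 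I would define $R'$ to be the fibre $Z'_t$.

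Next I would establish the three properties of $R'$. \emph{(a) $Z'\to T$ is finite and flat, and $Z'_t=R'$, $Z'_\eta$ smooth.} Flatness over a DVR is equivalent to $\alpha$ being a non-zero-divisor, i.e.\ to $Z'$ having no embedded or torsion components supported over $t$; since $Z'$ is the scheme-theoretic image of the $T$-flat $Z$, the structure sheaf $\ccO_{Z'}\hookrightarrow g_*\ccO_Z$ injects into a torsion-free $\kk[[\alpha]]$-module, hence is itself torsion-free, hence flat. Finiteness of $Z'\to T$ follows since $Z'\to Y\times T$ is a closed immersion and $Z'$ is a closed subscheme of the image of a finite scheme — more directly, $\ccO_{Z'}$ is a $\kk[[\alpha]]$-submodule of the finite free module $\ccO_Z$, hence finite. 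For the generic fibre: since scheme-theoretic image commutes with the flat base change $\eta\to T$, $Z'_\eta$ is the scheme-theoretic image of $Z_\eta$ under $(Z_\eta\to Y_\eta)$, and a closed subscheme of $Y\times\eta$ through which a smooth $\eta$-scheme factors; but actually I only need that $Z'_\eta$ is smooth, and since $Z_\eta$ is smooth and $f$ separates the (finitely many, reduced over $\eta$ after shrinking) points, $Z_\eta\to Z'_\eta$ is an isomorphism — this uses that $f$ is injective on the support of $R$ and hence, after possibly shrinking $T$, on the support of the whole family, so $g$ is a monomorphism on the generic fibre and a finite monomorphism is a closed immersion, forcing $Z_\eta\xrightarrow{\sim}Z'_\eta$.

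Then \emph{(b) $\deg R'\le\deg R$}: by flatness over $T$, $\deg R'=\dim_\kk\ccO_{Z'_t}=\operatorname{rank}_{\kk[[\alpha]]}\ccO_{Z'}\le\operatorname{rank}_{\kk[[\alpha]]}\ccO_Z=\deg R$, the inequality because $\ccO_{Z'}\hookrightarrow\ccO_Z$ is an inclusion of free modules of the indicated ranks. And \emph{(c) the scheme-theoretic image of $R$ in $Y$ sits inside $R'$ as a closed subscheme}: restricting the surjection $\ccO_{Z'}\twoheadrightarrow\ccO_{Z}$ — wait, the arrow goes the other way; what we have is that $Z'$ is the image of $Z$, so $Z\to Z'$ is dominant (scheme-theoretically), and base-changing to $t$, $R=Z_t\to Z'_t=R'$ has scheme-theoretic image equal to a closed subscheme of $R'$; since the composite $R\to R'\hookrightarrow Y$ has the same scheme-theoretic image as $R\to Y$ (image of image is image), that scheme-theoretic image of $R$ in $Y$ is a closed subscheme of $R'$. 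Finally, $Z'$ is a closed subscheme of $Y\times T$ by construction of scheme-theoretic image, so by Definition~\ref{ref:smoothable:def} it is an embedded smoothing of $R'$ in $Y$.

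The main obstacle I anticipate is the compatibility of scheme-theoretic image with passing to the special fibre $t$: scheme-theoretic image commutes with \emph{flat} base change, and $\{t\}\hookrightarrow T$ is not flat. The resolution is exactly the finiteness of $Z\to T$ over the DVR $\kk[[\alpha]]$, which makes $\ccO_Z$ a finite \emph{free} module, so that $\ccO_{Z'}=\ker(\ccO_{Y\times T}\to\ccO_Z)$ is a saturated (torsion-free) submodule and tensoring the exact sequence $0\to\ccO_{Z'}\to\ccO_{Y\times T}\to\ccQ\to0$ with $\kk[[\alpha]]/(\alpha)=\kk$ behaves well because $\ccQ$ is a submodule of the free module $\ccO_Z$, hence flat, so $\Tor_1(\ccQ,\kk)=0$ and $\ccO_{Z'}\otimes\kk\hookrightarrow\ccO_{Y\times T}\otimes\kk$ — i.e.\ $Z'_t$ really is a closed subscheme of $Y$, cut out by the reduction mod $\alpha$ of the ideal of $Z'$. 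Getting this torsion-freeness/$\Tor$-vanishing bookkeeping right is the only genuinely delicate point; everything else is a routine unwinding of definitions, citing Lemma~\ref{ref:closedimmersions:lem} for the closed-immersion claims where needed.
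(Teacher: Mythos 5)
Your overall strategy --- flatness of $Z'$ over the DVR via torsion-freeness of $\ccO_{Z'}\hookrightarrow g_*\ccO_Z$, the degree bound via ranks of free $\kk[[\alpha]]$-modules, and the last claim via factoring $R=Z_t\to Z'_t\hookrightarrow Y$ --- is essentially the paper's proof (the paper phrases the flatness step via associated points and the ``criterion for flatness over a regular curve'', which amounts to the same torsion-freeness). However, one step is genuinely wrong: the claim that $g=f\times\id_T$ is a monomorphism on the generic fibre, so that $Z_\eta\xrightarrow{\sim}Z'_\eta$. The hypothesis only says that $f$ separates the points of the \emph{support of $R$}; it says nothing about the generic points of $Z$, and ``shrinking $T$'' is not available since $T=\Spec\kk[[\alpha]]$ has no smaller dense open subsets. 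Concretely, take $X=\AA^2_{\kk}=\Spec\kk[x,y]$, $Y=\AA^1_{\kk}=\Spec\kk[y]$, $f$ the projection, $R=V(x^2,y)$ (supported at a single point, so the hypothesis on $f$ is vacuous) and $Z=V(x(x-\alpha),y)$. Then $Z'=V(y)=\{0\}\times T$ and $Z_\eta\to Z'_\eta$ is two-to-one, so smoothness of $Z'_\eta$ cannot be obtained by transporting it along an isomorphism from $Z_\eta$.

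The smoothness of $Z'_\eta$ is nevertheless true and needs a different (easy) argument: by flat base change along $\eta\to T$, $\ccO_{Z'_\eta}$ is a finite $\kappa(\eta)$-subalgebra of $\ccO_{Z_\eta}$, which is \'etale over $\kappa(\eta)$; a subalgebra of a geometrically reduced algebra is geometrically reduced (tensoring with $\overline{\kappa(\eta)}$ preserves the injection), and a finite geometrically reduced $\kappa(\eta)$-algebra is \'etale, hence $Z'_\eta$ is smooth over $\eta$. Two smaller points: (i) your identity ``$\ccO_{Z'}=\ker(\ccO_{Y\times T}\to\ccO_Z)$'' should say that the \emph{ideal} of $Z'$ is that kernel; (ii) to treat $\ccO_{Z'}$ as a single finite $\kk[[\alpha]]$-module you need $Z'$ to be affine, which is not automatic when $Y$ is not affine --- this is exactly where the paper spends the hypothesis on $f$: it reduces to irreducible $R$ via Proposition~\ref{ref:smoothingcomponents:prop}, observes that $Z'$ then has a unique closed point, and traps $Z'$ inside $U\times T$ for an affine open $U\subset Y$ because the complement is finite over $T$ with no closed points. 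In your write-up the hypothesis on $f$ is instead consumed by the invalid generic-fibre injectivity claim, so this reduction should be restored.
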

    \begin{proof} 
        Since $Z\to T$ is proper and $Y \times T\to T$ is
        separated, we see that $Z\to Y\times T$ is proper.
        Every associated point of $Z'$ (see~\cite[\S5.5]{vakil_FoAG} for
        definition) is an image of an associated point of $Z$
        by~\cite[8.3B]{vakil_FoAG}, thus maps to
        the generic point of $T$.
        Therefore $Z'\to T$ is flat, see~\cite[Exercise 24.4.J ``Criterion for
        flatness over a regular curve'']{vakil_FoAG} or \cite[Prop.~III.9.7]{hartshorne}.

        Since $\kk[[\alpha]]$ is complete, using Proposition~\ref{ref:smoothingcomponents:prop} 
          and the assumption on $f$ we may reduce to the case of $R$ irreducible.
        We argue similarly as in the proof of Theorem~\ref{ref:abstractvsembedded:thm}.
        The scheme $R$ has a unique closed point, so also $Z$ and $Z'$ have unique closed points.
        Let $p \times t$ be the closed point of $Z'$, and pick an affine open $U \subset Y$ containing $p$.
        Then $U\times T$ is open affine subset of $Y \times T$ which contains $Z'$:
          indeed, the complement $Z'\cap ((Y \setminus  U)
          \times T)$ is finite over $T$ and contains no closed points, hence
          is empty.
        Thus $Z\to Y \times T$ factors through $U\times T$ and $Z'$ is contained in $U\times T$.

        Let $Z = \Spec B$ and $Z' = \Spec C$, so that $C\to B$ is an injection.
        Note that $C$ is a $T$-submodule of $B$.  Since $T$ is local, $C$ is a
        free $T$-module of rank not higher than $B$. Then 
        \[
           \deg R' =\deg Z'_t = \rk_k C/\alpha C = \rk_T C \leq \rk_T B = \rk_k B/\alpha B = \deg Z_t = \deg R.
        \]
        For the last claim of the lemma, note that we get a morphism $R = Z_t \to Z_{t}'$,
        which by definition factors through the scheme-theoretic image of $R$.
    \end{proof}

    \begin{prop}\label{ref:smooth_subvar_and_pushing:prop}
        Let $\kk$ be algebraically closed.
        Let $X$ be a smooth subvariety of a smooth variety $Y$ and $R$ be a
        finite smoothable closed subscheme of $Y$ supported in $X$. Then there exists
        a finite scheme $Q\subseteq X$ containing $R\cap X$ and smoothable in $X$,
        Such that $\deg Q \le  \deg R$.
        If $\dim X >0$, we may assume $\deg Q = \deg R$.
    \end{prop}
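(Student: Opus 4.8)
The plan is to take a good smoothing of $R$ in $Y$, restrict it to the formal neighbourhood of the (finite) support of $R$, and push it forward along a \emph{formal} retraction of $Y$ onto $X$; the formal retraction exists precisely because $X$ is smooth inside the smooth $Y$, and this is where that hypothesis is used.

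First I would invoke Theorem~\ref{ref:goodbaseofsmoothing:thm} to obtain an embedded smoothing $Z \subseteq Y \times T$ of $R$ over $T = \Spec \kk[[\alpha]]$ (here $\kk = \kkbar$ is used). Writing $R = R_1 \sqcup \dots \sqcup R_k$ with $R_j$ supported at $x_j \in X$, Proposition~\ref{ref:smoothingcomponents:prop} decomposes $Z = \bigsqcup Z_j$ with $Z_j$ a smoothing of $R_j$; since the $x_j$ are pairwise distinct, a disjoint union of the outputs produced below will again be a closed subscheme of $X$, so it suffices to treat each irreducible $R_j$ separately, and I drop the index $j$. Exactly as in the proof of Proposition~\ref{prop_smoothability_depends_only_on_sing_type} (which is where finiteness of $R$ is used), the morphism $Z \to Y \times T$ factors through, and is a closed immersion into, $\Spec \hat{\ccO}_{Y,x} \times T$, so $Z$ is also an embedded smoothing of $R$ inside $\Spec \hat{\ccO}_{Y,x}$.

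Next I would construct the retraction. As $X \subseteq Y$ is a closed immersion of smooth varieties, the ideal of $X$ in $\ccO_{Y,x}$ is generated by part of a regular system of parameters; completing and choosing coordinates accordingly gives $\hat{\ccO}_{Y,x} \simeq \kk[[\alpha_1, \ldots, \alpha_n]]$ with $X$ cut out by $(\alpha_{m+1}, \ldots, \alpha_n)$ and $\hat{\ccO}_{X,x} \simeq \kk[[\alpha_1, \ldots, \alpha_m]]$, and the subring inclusion $\kk[[\alpha_1, \ldots, \alpha_m]] \hookrightarrow \kk[[\alpha_1, \ldots, \alpha_n]]$ defines a morphism $\rho \colon \Spec \hat{\ccO}_{Y,x} \to \Spec \hat{\ccO}_{X,x}$ splitting the inclusion of $X$. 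I would then apply Lemma~\ref{ref:bubulem:lem} to the quasi-compact morphism $\rho$ and the embedded smoothing $Z \subseteq \Spec \hat{\ccO}_{Y,x} \times T$ (the hypothesis on separating support points is vacuous as $R$ is irreducible): the scheme-theoretic image $Z'$ of $Z$ under $\rho \times \id_T$ is an embedded smoothing, inside $\Spec \hat{\ccO}_{X,x}$, of the finite scheme $Q := Z'_t$, with $\deg Q \leq \deg R$, and the scheme-theoretic image of $R$ under $\rho$ is a closed subscheme of $Q$. A short comparison of ideals in the chosen coordinates shows $R \cap X$ is itself a closed subscheme of that image (the ideal of $R \cap X$ in $\kk[[\alpha_1, \ldots, \alpha_m]]$ is the image of the ideal $I_R$ of $R$ under $\alpha_{m+1}, \ldots, \alpha_n \mapsto 0$, which contains $I_R \cap \kk[[\alpha_1, \ldots, \alpha_m]]$, the ideal cutting out the image of $R$ under $\rho$); hence $R \cap X \subseteq Q$. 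Finally $Q$, viewed via $\Spec \hat{\ccO}_{X,x} \to X$ as a closed subscheme of $X$ supported at $x$, is smoothable in $\Spec \hat{\ccO}_{X,x}$, hence smoothable in $X$ by Proposition~\ref{prop_smoothability_depends_only_on_sing_type}.

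To conclude, I would reassemble: the schemes $Q_j \subseteq X$ have pairwise distinct supports $x_j$, so $Q := \bigsqcup Q_j$ is a closed subscheme of $X$, smoothable in $X$ by Corollary~\ref{ref:smoothingcomponentsresult:cor}, with $R \cap X = \bigsqcup_j (R_j \cap X) \subseteq Q$ and $\deg Q = \sum_j \deg Q_j \leq \sum_j \deg R_j = \deg R$. If moreover $\dim X > 0$, then $X(\kk)$ is infinite, so when $\deg Q < \deg R$ I would enlarge $Q$ by a disjoint union of $\deg R - \deg Q$ distinct $\kk$-rational points of $X$ chosen off $\Supp Q$; this preserves smoothability in $X$ (a disjoint union of smoothable schemes, Corollary~\ref{ref:smoothingcomponentsresult:cor}) and the inclusion $R \cap X \subseteq Q$, and forces $\deg Q = \deg R$. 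I expect the crux to be the middle step: one must transport the \emph{whole} family, not just $R$, into the formal neighbourhood so that the retraction $\rho$ can be applied through Lemma~\ref{ref:bubulem:lem}, and then verify that taking scheme-theoretic images does not lose the subscheme $R \cap X$.
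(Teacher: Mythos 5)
Your proposal is correct and follows essentially the same route as the paper: reduce to irreducible $R$ over the base $\Spec\kk[[\alpha]]$, pass to the formal neighbourhood $\Spec\hat{\ccO}_{Y,x}$, push the smoothing forward along a formal retraction onto $\Spec\hat{\ccO}_{X,x}$ via Lemma~\ref{ref:bubulem:lem}, and verify $R\cap X\subseteq Q$ by the same ideal comparison (the paper phrases the retraction as a section $\pi^*$ of $i^*\colon\hat{\ccO}_{Y,x}\onto\hat{\ccO}_{X,x}$, you phrase it in explicit coordinates — these are identical). The only cosmetic difference is that the paper obtains the smoothing inside $\Spec\hat{\ccO}_{Y,x}$ directly from abstract smoothability plus formal smoothness of the power series ring, whereas you transport the given embedded smoothing there by the completeness argument of Proposition~\ref{prop_smoothability_depends_only_on_sing_type}; both are valid.
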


    \begin{proof}
        \def\locY{\hat{\mathcal{O}}_{Y, x}}
        \def\locX{\hat{\mathcal{O}}_{X, x}}
        Suppose that $R$ is irreducible, supported at $x\in X$.
        Since $R$ is smoothable, it is also smoothable in
        $\Spec \locY$, because it is a power series ring, thus it is formally
        smooth (Theorem~\ref{ref:abstractvsembedded:thm}). Moreover by~Theorem~\ref{ref:goodbaseofsmoothing:thm} we may
        assume that the base of the smoothing is equal to $\Spec\kk[[\alpha]]$. Now
        $\locX$ is another power series ring with an
        epimorphism $i^*\colon \locY \onto \locX$. This epimorphism has a section $\pi^*:\locX\into \locY$ 
        (coming from any splitting of the cotangent bundle), 
        which gives a morphism $\pi:\Spec \locY \to \Spec \locX$. Applying
        Lemma~\ref{ref:bubulem:lem} to $\pi$, we obtain a smoothing of
        a finite scheme $Q \subseteq \Spec \locX$. Moreover $\deg Q\leq \deg R$ and $Q$
        contains the scheme-theoretic image $S$ of $R$ under $\pi$.

        If $R$ is cut out by an ideal $I\subseteq \locY$, then $R\cap X$ is
        defined by $i^*(I)$ and $S$ is defined by $(\pi^*)^{-1}(I)$.
        Since $\pi^*$ is a section of
        $i^*$, we see that $(i^*)^{-1}(I) \subseteq \pi^*(I)$, so that $R\cap X
        \subseteq S\subseteq Q$.
        Finally, Corollary~\ref{ref:pushingsmoothings:cor} implies that
        $Q$ is smoothable in $X$.  For the general case, we consider the
        irreducible components of $R$.

        The obtained $Q$ satisfies $\deg Q\leq \deg R$. Adding points to $Q$,
        we obtain the equality $\deg Q = \deg R$.
    \end{proof}
    
    It is a natural question, if the scheme $R\cap X$ in the setting of Proposition~\ref{ref:smooth_subvar_and_pushing:prop}
      is necessarily smoothable. We answer this question in negative in Corollary~\ref{cor_nonsmoothable_intersection_of_smoothable_and_smooth}.
    In fact, any finite scheme $R'$ can be realised as intersection of a smoothable scheme and a smooth variety.

We construct an example of a smooth variety $Y$, a smoothable scheme $R \subset Y$ and a smooth variety $X \subset Y$,
such that $R \cap X$ is not smoothable.
This construction is based on the unpublished note \cite{jabu_example_to_BGL},
   where it illustrates a delicacy in the course of the proof of
   \cite[Theorem~1.1]{jabu_ginensky_landsberg_Eisenbuds_conjecture}.

   First, we show that for any finite scheme (perhaps non-smoothable)
   there exists a finite smoothable superscheme.
\begin{lem}\label{lem_exists_smoothable_superscheme}
   Let $R'$ be any finite $\kk$-scheme. Then there exists a finite smoothable
   $\kk$-scheme $R$, and an inclusion $R' \subset R$.
\end{lem}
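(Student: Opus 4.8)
The plan is to realize $R'$ as a subscheme of a smoothable scheme by embedding it into affine space and then passing to a suitable complete intersection of high degree. Concretely, write $R' \simeq \Spec A'$ with $A'$ a finite $\kk$-algebra, and choose, as in Section~\ref{sect_finite_schemes}, a surjection $\kk[\alpha_1, \ldots ,\alpha_n] \onto A'$, giving a closed immersion $R' \subset \AA^n_{\kk}$. The key idea is that $R'$, being supported at finitely many points, is cut out set-theoretically inside a small neighbourhood by finitely many equations; more usefully, $R'$ is a subscheme of a finite scheme $R$ which is itself a complete intersection, and complete intersection finite schemes are smoothable because they deform flatly to a general complete intersection of the same multidegree, which is reduced.

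The key steps, in order, would be: First, reduce to the case where $R'$ is irreducible (equivalently, local), using Corollary~\ref{ref:smoothingcomponentsresult:cor}: if each component $R_i'$ embeds into a smoothable $R_i$, then, after translating the $R_i$ to have disjoint supports in $\AA^n_{\kk}$, the disjoint union $R = \bigsqcup R_i$ is smoothable and contains $R' = \bigsqcup R_i'$. Second, for $R'$ local supported at the origin of $\AA^n_{\kk}$, with ideal $I' \subset \kk[\alpha_1, \ldots ,\alpha_n]$, observe that $\gotm^N \subset I'$ for $N$ large, where $\gotm = (\alpha_1, \ldots ,\alpha_n)$, since $R'$ is finite. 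Third, choose $n$ general polynomials $f_1, \ldots ,f_n \in I'$ — for instance generic $\kk$-linear combinations of a generating set of $I'$ together with the monomials of degree $N$ — so that $R := \Spec \kk[\alpha_1, \ldots ,\alpha_n]/(f_1, \ldots ,f_n)$ is a finite complete intersection scheme containing $R'$ (finiteness follows because $V(f_1, \ldots ,f_n)$ is contained in $V(I') $ set-theoretically near the origin, hence zero-dimensional; one may need $\kk$ infinite, and otherwise pass to $\kkbar$ and descend using Proposition~\ref{prop_base_change_equivalence}). Fourth, exhibit a smoothing of $R$: the family $\Spec \kk[\alpha_1, \ldots ,\alpha_n, t]/(f_1 - t g_1, \ldots ,f_n - t g_n) \to \Spec \kk[t]$, for generic polynomials $g_i$, is finite and flat (the fibres have constant length, being complete intersections of fixed multidegree), has special fibre $R$ at $t = 0$, and has smooth general fibre since a generic complete intersection of that multidegree is reduced; hence by Definition~\ref{ref:abstractsmoothable:def} the scheme $R$ is abstractly smoothable.

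The main obstacle is the fourth step: one must arrange both that the total family is \emph{flat} (which for a complete intersection requires the fibre length to be constant, i.e.\ that the $f_i - t g_i$ form a regular sequence for all $t$, equivalently that the fibres stay zero-dimensional) and that the \emph{generic} fibre is reduced, hence smooth over the generic point. The flatness is the genuinely delicate point: a naive homotopy $f_i \rightsquigarrow g_i$ may develop components at infinity or drop dimension. A clean way around this is to use a multidegree in which $\gotm^N$-type monomials dominate, so that all fibres are supported near the origin by a leading-term argument, or alternatively to invoke the known smoothability of complete intersection finite schemes directly (they are smoothable because the relevant Hilbert scheme locus is irreducible with the reduced point a generic member). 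With flatness secured, reducedness of the general complete intersection of multidegree $(\deg f_1, \ldots ,\deg f_n)$ is standard by Bertini-type reasoning over $\kkbar$, and then Proposition~\ref{prop_base_change_equivalence} returns the conclusion over $\kk$.
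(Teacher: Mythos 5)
Your overall strategy --- embed $R'$ in a finite complete intersection $R$ and then deform $R$ to a reduced scheme --- is exactly the paper's, but the step you yourself flag as delicate is a genuine gap as written. For a general choice of $f_1,\ldots,f_n\in I'$ you establish neither the flatness of the family $\Spec \kk[\alpha_1,\ldots,\alpha_n,t]/(f_1-tg_1,\ldots,f_n-tg_n)\to\Spec\kk[t]$ nor, cleanly, the finiteness of $V(f_1,\ldots,f_n)$: special fibres of your homotopy can acquire positive-dimensional components (your ``leading-term'' remedy is not carried out, and the leading terms of $f_i-tg_i$ move with $t$ unless you constrain the $g_i$), and your fallback --- citing smoothability of finite complete intersections with a one-line justification about an irreducible Hilbert-scheme locus --- is precisely the statement that needs proving, so it cannot be invoked as a black box here. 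There is also a secondary problem over finite fields: ``generic'' combinations may not exist over $\kk$, and passing to $\kkbar$ produces a smoothable superscheme of $R'_{\kkbar}$ that need not descend to a $\kk$-scheme containing $R'$, so Proposition~\ref{prop_base_change_equivalence} does not directly rescue you.

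The missing idea that closes all of these gaps at once is to choose the $f_i$ \emph{univariate}: since $A'=\kk[\alpha_1,\ldots,\alpha_n]/I'$ is finite-dimensional over $\kk$, the powers $1,\alpha_i,\alpha_i^2,\ldots$ are linearly dependent in $A'$, so $I'$ contains a monic polynomial $F_i(\alpha_i)\in\kk[\alpha_i]$ for each $i$. Setting $J=(F_1,\ldots,F_n)\subset I'$, the quotient $\kk[\alpha_1,\ldots,\alpha_n]/J$ is visibly finite with monomial basis $\{\alpha_1^{s_1}\cdots\alpha_n^{s_n}: s_i<\deg F_i\}$ --- no prime avoidance, no infinite-field hypothesis, no reduction to the local case --- and it contains $R'$. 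The deformation $F_i\rightsquigarrow (1-t)F_i+tg_i$ with $g_i$ univariate monic of the same degree and split separable keeps this ``upper-triangular'' shape, so the total family is free (hence flat) over $\kk[t]$ with the \emph{same} monomial basis, and the fibre at $t=1$ is a disjoint union of reduced points; this is the argument of Example~\ref{ex:fieldExtensions}. Without some such structural choice, your Step 4 is an assertion rather than a proof.
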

\begin{proof}
   Let $A = H^0(R, \mathcal{O}_R)$, then $A$ has finite $\kk$-dimension and
   can be written as a quotient $A = \kk[\alpha_1, \ldots ,\alpha_n]/I$, see~Section~\ref{sect_finite_schemes}.
   For any fixed $i$, the powers $\{\alpha_i^j\ |\ j\geq 0\}$ are
   linearly dependent over $\kk$: there exist polynomials $F_i\in \kk[\alpha_i]$
   such that $F_i\in I$.

   Take the ideal $J = (F_1, \ldots , F_n)$.
   Every power of $\alpha_i$ can be reduced modulo $J$ to $\alpha_i^j$ with $j < \deg
   F_i$. Thus every monomial can be reduced modulo $J$ to $\alpha_1^{j_1} \ldots
   \alpha_n^{j_n}$ with $j_i < \deg F_i$ for all $i$. Therefore $\kk[\alpha_1, \ldots
   ,\alpha_n]/J$ is finite; we choose $R' = V(J)$.
   The proof of its smoothability is a special case of the argument of
   Example~\ref{ex:fieldExtensions}.
\end{proof}

It is an interesting problem, how to find $R$ as in Lemma~\ref{lem_exists_smoothable_superscheme} 
    which has a minimal possible degree and what is this minimal degree.
This question is related to a topological problem of constructions of $k$-regular maps
  (see the final paragraph of \cite[Sect.~5.2]{jabu_januszkiewicz_jelisiejew_michalek_k_reg}), 
  and to the problem of finding the border rank of multiplication tensor in finite commutative algebras 
  \cite{balser_lysikov_on_degs_of_tensors_and_algebras}.

Second, we show, that any embedding of finite schemes can be extended to an embedding of smooth varieties in a compatible way.

\begin{prop}\label{prop_Rprime_subset_R_can_be_extended_to_X_subset_Y}
   Let $R'$ be any finite scheme and $R$ be any finite scheme containing $R'$. 
   Then there exists a smooth variety $Y$, a smooth subvariety $X \subset Y$, and an embedding $R\subset Y$, 
      such that $X \cap R = R'$.
   Moreover, we can assume $\dim X$ is not more than the embedding dimension
   of $R$ (see~Section~\ref{sect_finite_schemes} for definition), and $X$ and $Y$ are affine spaces.
\end{prop}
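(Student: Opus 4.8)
The plan is to construct $Y$ and $X$ explicitly as affine spaces, reducing the problem to a linear-algebraic statement about the algebras $A' = H^0(R', \ccO_{R'})$ and $A = H^0(R, \ccO_R)$. The surjection $R' \subset R$ corresponds to a surjection of $\kk$-algebras $A \onto A'$ with kernel $K$. First I would choose a convenient presentation: pick $\kk$-algebra generators $a_1, \ldots, a_m$ of $A$ whose images in $A'$ extend a minimal set of generators of the maximal ideals of $A'$ (so that $m$ is at least the embedding dimension of $R$, and we may take it \emph{equal} to the embedding dimension of $R$ if we are careful, using that any generating set of an algebra contains one of size equal to the embedding dimension when $R$ is local, and handling the disconnected case componentwise). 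This gives a closed immersion $R \into \AA^m_{\kk} =: Y$. The subtlety is to simultaneously arrange that the linear subspace $X = \AA^{m'}_{\kk} \subset \AA^m_{\kk}$ cut out by $a_{m'+1} = \cdots = a_m = 0$ satisfies $X \cap R = R'$ and not something smaller.

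The key step is therefore the choice of generators. I would proceed as follows. Let $n'$ be the embedding dimension of $R'$ and fix generators $\bar b_1, \ldots, \bar b_{n'}$ of $A'$ realizing it, then lift them to $b_1, \ldots, b_{n'} \in A$. These alone need not generate $A$; the obstruction is exactly the kernel $K = \ker(A \onto A')$, which is a nilpotent ideal (as $A, A'$ are finite and their reduced quotients agree on the underived level only up to separable pieces — here one must be slightly careful if $\kk$ is not perfect, but $K$ is still contained in every maximal ideal of $A$ on the components meeting $R'$, and is the whole factor on components of $R$ disjoint from $R'$). Choose finitely many elements $c_1, \ldots, c_s$ that together with the $b_i$ generate $A$; I claim we may take each $c_j \in K$. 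Indeed, given an arbitrary generating set $\{b_i\} \cup \{d_j\}$, each $d_j$ maps to a polynomial $p_j(\bar b_1, \ldots, \bar b_{n'})$ in $A'$; replacing $d_j$ by $d_j - p_j(b_1, \ldots, b_{n'})$ (a lift of $p_j$) changes it by an element of $K$ without changing the span of the generating set. So we get generators $b_1, \ldots, b_{n'}, c_1, \ldots, c_s$ of $A$ with all $c_j \in K$, hence $c_j \mapsto 0$ in $A'$.

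Now set $m = n' + s$ (which is at most the embedding dimension of $R$ plus a slack we can absorb — or, after a further reduction, exactly the embedding dimension of $R$ by padding the $b$'s instead), $Y = \AA^m_{\kk}$ with coordinates $\alpha_1, \ldots, \alpha_{n'}, \beta_1, \ldots, \beta_s$, and embed $R \into Y$ via $\alpha_i \mapsto b_i$, $\beta_j \mapsto c_j$. Let $X = V(\beta_1, \ldots, \beta_s) \cong \AA^{n'}_{\kk} \subset Y$. Then $X$ is a smooth (indeed linear) subvariety of the smooth variety $Y$. The scheme-theoretic intersection $X \cap R$ is $\Spec$ of $A/(c_1, \ldots, c_s)A$. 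The ideal $(c_1, \ldots, c_s)A$ lies in $K$; conversely $K$ is generated by the $c_j$ together with relations among the generators, but since the $c_j$ generate $K$ as an ideal once we mod out — this is the point that needs the cleanest verification — we get $A/(c_1,\ldots,c_s) = A/K = A'$. Concretely: the composite $A \onto A/(c_1,\ldots,c_s)A \onto A/K = A'$ is surjective, and is injective because any element of $A$ killed in $A'$ lies in $K = (c_1, \ldots, c_s)A$ — this last equality holds because modulo $(c_1,\ldots,c_s)$ the $b_i$ already generate everything and map compatibly to the generators of $A'$, so the surjection $A/(c_1,\ldots,c_s) \onto A'$ is also injective by a dimension/minimal-generator count. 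Hence $X \cap R = R'$ as closed subschemes of $Y$.

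The main obstacle I anticipate is not the geometry but the careful bookkeeping in the non-algebraically-closed, possibly imperfect case: ensuring $K \subseteq \gotm$ for each maximal ideal $\gotm$ of $A$ corresponding to a point of $R'$, so that the "lifting a polynomial relation" trick genuinely lands in $K$, and separately checking the claimed equality $(c_1, \ldots, c_s)A = K$ rather than merely $\subseteq$. For the latter I would argue by localizing at each maximal ideal of $A$: on a component of $R$ meeting $R'$ the statement is a Nakayama-type count comparing minimal generating sets; on a component of $R$ disjoint from $R'$ the corresponding $A$-factor must be entirely in $K$, which forces at least one $c_j$ to be a unit there, killing that factor in $A/(c_j)$, as required. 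Finally the moreover clause ("$X$, $Y$ affine spaces", "$\dim X \le $ embedding dimension of $R$") is built into the construction, with the dimension bound obtained by a last pass replacing the naive generating set by one of minimal size $n = \dim_\kk \gotm/\gotm^2$ componentwise, distributing the $c_j$'s among these $n$ slots and padding $X$ accordingly.
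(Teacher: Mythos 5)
There is a genuine gap at exactly the step you flag yourself: the equality $(c_1,\ldots,c_s)A = K$. Your $c_j$ are engineered to lie in $K$ and to generate $A$ as a $\kk$-algebra together with the $b_i$, but nothing in the construction forces them to generate $K$ as an \emph{ideal}, and the ``dimension/minimal-generator count'' you invoke does not exist: a surjection of finite $\kk$-algebras whose source and target are generated by corresponding elements need not be injective. Concretely, take $A=\kk[x]/(x^4)$ and $A'=\kk[x]/(x^3)$, so $K=(x^3)$. Here $b_1=x$ already generates $A$, so your recipe allows $s=0$; and starting instead from the generating set $\{x,x^2\}$ your replacement trick produces $c_1=x^2-x^2=0$. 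Either way $(c_1,\ldots,c_s)A=0\neq K$, and your $X\cap R$ equals $R$, not $R'$. In general the construction as written only yields $R'\subseteq X\cap R$.

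The fix is to choose the $c_j$ so that they generate $K$ as an ideal (e.g.\ take a $\kk$-basis of $K$, adding further elements of $K$ via your replacement trick if needed so that $\{b_i\}\cup\{c_j\}$ still generates $A$ as an algebra); then $(c_1,\ldots,c_s)A=K$ holds by construction and the rest of your argument goes through. The paper sidesteps the re-embedding bookkeeping altogether: it keeps $R$ inside $\AA^n_{\kk}$ with $n$ the embedding dimension of $R$, picks polynomials $f_1,\ldots,f_m$ which together with $I(R)$ generate $I(R')$ --- i.e.\ honest ideal generators of your $K$ --- and takes $Y=\AA^{n+m}_{\kk}$ with $X$ the graph of $(f_1,\ldots,f_m)$, cut out by $(f_1-\beta_1,\ldots,f_m-\beta_m)$. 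Then $X\cap(\AA^n_{\kk}\times\{0\})$ is cut out by $(f_1,\ldots,f_m)$, so $X\cap R=R'$ is immediate, $X\simeq\AA^n_{\kk}$ is smooth of the required dimension, and no claim about algebra generators versus ideal generators ever has to be made.
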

\begin{proof}
Let $R = \Spec\kk[\fromto{\alpha_1}{\alpha_n}]/I$, where $I=I(R)$ is an ideal defining $R$ in the affine space $\AA_{\kk}^n$.
Without loss of generality, $n$ is equal to the embedding dimension of $R$.
Let polynomials $f_1,\dotsc, f_m\in \kk[\fromto{\alpha_1}{\alpha_n}]$ be such that the ideal of $R'$
   is generated by $I(R)$ together with $f_1,\dotsc, f_m$.
In the affine space 
\[
  Y:=\AA_{\kk}^n \times \AA_{\kk}^m \simeq \AA_{\kk}^{n+m} = \Spec\kk[\fromto{\alpha_1}{\alpha_n}, \fromto{\beta_1}{\beta_m}] 
\]
  consider the subscheme $X$ defined by $(f_1-\beta_1, \dotsc, f_m-\beta_m)$,
  where $f_i$ are the polynomials as above on the $\AA_{\kk}^n$ factor, whereas $\beta_1, \dotsc, \beta_m$
  are the coordinates on the $\AA_{\kk}^m$ factor.
That is, $X$ is a graph of the map $\AA_{\kk}^n \to \AA_{\kk}^m$ given by $(f_1, \dotsc, f_m)$.
In particular, $X$ and $Y$ are smooth varieties of dimensions $n$ and $m$ respectively.
Note also that $X\cap (\AA_{\kk}^n \times \set{0})$ is defined in
$\AA_{\kk}^n$ by $(f_1, \ldots ,f_n)$. Therefore, $X \cap R = R'$.
\end{proof}

\goodbreak
Finally, we prove the main counter-example of this subsection.

\begin{cor}\label{cor_nonsmoothable_intersection_of_smoothable_and_smooth}
   There exists a finite smoothable scheme $R$ embedded in a smooth variety $Y$ and a smooth subvariety $X$,
     such that $X \cap R$ is not smoothable. 
   Moreover, the intersection $X\cap R$ can be chosen to be isomorphic to any nonsmoothable finite scheme.
\end{cor}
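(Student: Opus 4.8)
The plan is simply to assemble the two immediately preceding results. First I would fix an arbitrary nonsmoothable finite $\kk$-scheme $R'$; such schemes exist, for instance in all sufficiently large degree, as recalled in Example~\ref{ref:nonsmoothableOfHighDegree:ex}. This $R'$ is the scheme that will be realised as $X \cap R$, so proving the statement for this choice simultaneously establishes the ``moreover'' clause.

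Next I would apply Lemma~\ref{lem_exists_smoothable_superscheme} to $R'$ to obtain a finite smoothable $\kk$-scheme $R$ together with a closed embedding $R' \subset R$. Then I would feed the pair $R' \subset R$ into Proposition~\ref{prop_Rprime_subset_R_can_be_extended_to_X_subset_Y}, which produces a smooth variety $Y$ (in fact an affine space), a smooth subvariety $X \subset Y$ (also an affine space), and a closed embedding $R \subset Y$ with $X \cap R = R'$. It then remains only to check that $R$ is smoothable \emph{in} $Y$, not merely abstractly: since $Y$ is an affine space it is smooth, hence formally smooth, so Theorem~\ref{ref:abstractvsembedded:thm} (equivalently Theorem~\ref{thm_equivalence_of_abstract_and_embedded_smoothings}) upgrades the abstract smoothability of $R$ to embedded smoothability in $Y$. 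Thus $R \subset Y$ is smoothable in $Y$, $X \subset Y$ is a smooth subvariety, and $X \cap R = R'$ is nonsmoothable, exactly as claimed.

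There is no real obstacle here: the corollary is a formal consequence of Lemma~\ref{lem_exists_smoothable_superscheme} and Proposition~\ref{prop_Rprime_subset_R_can_be_extended_to_X_subset_Y}, with Theorem~\ref{thm_equivalence_of_abstract_and_embedded_smoothings} invoked only to move between the abstract and the embedded notions of smoothability. The only points meriting a moment's care are that the embedding $R \subset Y$ with respect to which the intersection is computed is the one delivered by the Proposition, and that the construction genuinely keeps $R$ strictly larger than $R'$ when needed (automatic, since $R$ is smoothable while $R'$ is not); both are immediate from the cited statements.
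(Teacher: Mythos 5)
Your proposal is correct and follows essentially the same route as the paper: fix a nonsmoothable $R'$, enlarge it to a smoothable $R$ via Lemma~\ref{lem_exists_smoothable_superscheme}, and realise $R'=X\cap R$ inside affine spaces via Proposition~\ref{prop_Rprime_subset_R_can_be_extended_to_X_subset_Y}. The extra remark that Theorem~\ref{ref:abstractvsembedded:thm} converts abstract smoothability of $R$ into smoothability in $Y$ is a sensible (if implicit in the paper) finishing touch.
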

\begin{proof}
   Suppose $R'$ is any nonsmoothable finite scheme over $\kk$, for example as
   in Example~\ref{ref:nonsmoothableOfHighDegree:ex}.
   Construct a smoothable scheme $R$ containing $R'$ as in Lemma~\ref{lem_exists_smoothable_superscheme}.
   Then find smooth varieties $X$, $Y$, such that $X \subset Y$, $R \subset Y$ and $R' = X \cap R$ 
      as in Proposition~\ref{prop_Rprime_subset_R_can_be_extended_to_X_subset_Y}.
\end{proof}

Suppose  $X$, $Y$, $R$ are constructed by the methods presented in
Lemma~\ref{lem_exists_smoothable_superscheme}, 
   Proposition~\ref{prop_Rprime_subset_R_can_be_extended_to_X_subset_Y} 
   and Corollary~\ref{cor_nonsmoothable_intersection_of_smoothable_and_smooth}.
Further let $Q \subset X$ be the finite smoothable subscheme 
  containing $X\cap R$ constructed in Proposition~\ref{ref:smooth_subvar_and_pushing:prop}.
Then $R$ is (abstractly) isomorphic to $Q$.
However it is just a feature of these proofs, and it is not always the case.
That is, in general $Q$ needs not to be (abstractly) isomorphic to $R$.
%

\section{Projective schemes and embedded geometry} \label{sec_projective_schemes_and_embedded_geometry} 

As explained in Section~\ref{sec_preliminaries_and_finite_schemes}, the definition of a scheme is local in nature:
we describe the affine schemes which are then glued to a global object.
This is very useful for studying local properties of schemes as illustrated in Section~\ref{sec:smoothability}.
However, global properties are often difficult to tackle using local affine
geometry. Below we switch to projective geometry.

Projective varieties and schemes are of uttermost importance to algebraic geometry.
They are analogues of compact manifolds in differential geometry, 
  and indeed, many famous examples of holomorphic compact manifolds can be realised as smooth complex projective varieties.
In this section we review some basics of projective geometry: Veronese map, apolarity, homogeneous ideals and corresponding schemes, 
  linear span of schemes and their Veronese reembeddings, Castelnuovo-Mumford regularity.

\subsection{Vector spaces and duality of symmetric powers}\label{sec_vector_spaces_duality}\label{sec_apolarity}\label{sec_Veronese_map}

The purpose of this subsection is to present Macaulay's duality and the Veronese map in a choice free way.
Macaulay's duality relates two polynomial rings $\kk[\fromto{x_1}{x_n}]$ and $\kk[\fromto{\alpha_1}{\alpha_n}]$
  with an action of the latter one on the first one denoted by $\hook$: 
  if $F= x_1^{a_1}x_2^{a_2}\dotsm x_n^{a_n}$, then
  \[
     \alpha_i\hook F = \begin{cases}
                         x_i^{-1}F & \text{if $a_i >0$}\\
                          0 & \text{if $a_i =0$}
                       \end{cases}.
  \]
  The action extends linearly to an action of $\alpha_i$ on
  whole $\kk[\fromto{x_1}{x_n}]$, hence to an action of
  $\kk[\fromto{\alpha_1}{\alpha_n}]$ on $\kk[\fromto{x_1}{x_n}]$.
    Similarly, the $d$-th Veronese map is defined
    by all monomials of degree $d$:
  \begin{equation}\label{equ_define_Veronese}
    (\fromto{a_1}{a_n}) \mapsto(\fromto{a_1^d, a_1^{d-1}a_2}{a_n^d}).
  \end{equation}
  A serious drawback of those definitions is that they depend on the choice of
  bases $\fromto{x_1}{x_n}$ and $\fromto{\alpha_1}{\alpha_n}$. 
  In characteristic zero there are
  alternative definitions which are ``coordinate-free''.

  \begin{remark}
      If $\cchar \kk = 0$ then the Macaulay's duality is, up to change of
      basis, equivalent to the action $\alpha_i\hook F =
      \frac{\partial}{\partial x_i} F$. The Veronese map may be similarly
      defined by $\ell \mapsto \ell^d$.
  \end{remark}

  Here we present equivalent coordinate free definitions in all characteristic, 
    thus proving that the above drawback is only apparent. 
  Our presentation below is based on~\cite[Section~4.2]{jelisiejew_MSc} and mostly on \cite[Appendix]{eisenbud}.

Denote by $\GG_m$ the algebraic group, which as a variety is $\GG_m = \Spec \kk[\alpha, \alpha^{-1}] = \AA^1_{\kk} \setminus \set{0}$.
Thus, informally, $\GG_m$ might be thought of as the group of invertible elements of the base  field $\kk$. 
In particular, the group structure comes from the multiplication and inverse in the field.

Let $V^{alg}$ be a finite dimensional \emph{algebraic vector space} over $\kk$.
More precisely, $V^{alg}$ is an affine space with the additive group structure 
  $\Add\colon V^{alg}\times V^{alg} \to V^{alg}$ and 
  a rescaling action $\GG_m \times V^{alg} \to V^{alg}$ satisfying the usual vector space axioms.
The set of closed $\kk$-rational points of $V^{alg}$ is a $\kk$-vector space in the usual sense, which we denote by $V$.
Then
\[
   V^{alg}= \Spec S^{\bullet} V^* = \Spec \bigoplus_{i=0}^{\infty} S^{i} V^* \simeq \kk[\fromto{\alpha_1}{\alpha_n}]
\]
  where $\fromto{\alpha_1}{\alpha_n}$ is a basis of $V^*$ and $S^{\bullet} V^*$ is the symmetric algebra.
In particular, $V^{alg}$ is uniquely determined by $V$ and vice-versa, 
  and any linear automorphism of $V$ naturally determines an algebraic automorphism of $V^{alg}$ preserving its structure 
    (additivity and rescalings) and vice-versa.

The additive structure of $V^{alg}$ is given by a Hopf algebra structure of $S^{\bullet} V^*$
   with usual multiplication and unity, comultiplication induced by $\Add^*(v) = 1\tensor v +
v\tensor 1$, counity induced from the zero map $V^*\to \kk$ and antipode $S(v) = -v$.
Note that all these maps are homogeneous with respect to the natural grading
on $S^{\bullet} V^*$.
Consider the space of \emph{divded powers} of $V$:
\[
    \DPV{\bullet}  := \bigoplus_{d\geq 0} \left(S^{d} V^*\right)^*.
\]
Note that $\DPV{0} = \kk$ and $\DPV{1} = V$, thus we will regard $V$ as subspace of $\DPV{\bullet}$.
We have an action of $S^{\bullet} V^*$ on
$\DPV{\bullet}$ by precomposition of functionals. 
Namely, for $\sigma\in S^{i} V^*$ and $f\in \DPV{j}$ we get $\sigma\hook f\in \DPV{j-i}$ defined by
\[
    (\sigma\hook f)(\tau) = f(\sigma\tau)
\]
for all $\tau\in S^{j-i} V^*$.
We call it the \emph{contraction action} of $S^{\bullet} V^*$ on
$\DPV{\bullet}$.
Since $\DPV{\bullet}$ is the space of graded functionals on a Hopf algebra it is a Hopf
algebra as well. In particular, it has a multiplication, which is closely
related to the Veronese map, which we define below.

The inclusion $S^d V^* \subset S^{\bullet} V^*$ induces a
homomorphism $\hat{\nu}_d^{*}:S^{\bullet} (S^d V^*) \to S^{\bullet} V^*$ of graded
algebras, which in turn gives a morphism $\hat{\nu}_d$ from $\Spec S^{\bullet} V^* = V^{alg}$ to $\Spec S^{\bullet} (S^d V^*) = (\DPV{d})^{alg}$.
This map is called \emph{Veronese map}, and it is usually considered in the projective setting (see Section~\ref{sec_embedded_proj_geom}).

From the explicit point-wise perspective, if $v \in V$ is any vector and $v^{alg} \in V^{alg}$ is a corresponding $\kk$-rational point, 
   then   the image under $\hat{\nu}_d\colon V^{alg}\to (\DPV{d})^{alg}$ of
   $v^{alg}$ is the closed $\kk$-rational point corresponding to $\hat{\nu}_d(v) \in \DPV{d}$, which is the evaluation at $v$  in the following sense.
\[
  \DPV{d} = \left(S^{d} V^*\right)^*  \ni \hat{\nu}_d(v)  \colon S^d V^* \to \kk, \text{ where } \hat{\nu}_d(v) (\Theta) = \Theta(v).
\]
The map $\hat{\nu}_d$ descends to a projective map 
$\nu_d:\PP V\to \PP(\DPV{d})$, 
which also has an explicit description. Namely, suppose $W \subset V$ is a
one-dimensional space. Then $W^{\perp} \subset V^*$ is of codimension one,
thus the space $W^{\perp} \cdot S^{d-1} V^* \subset S^d V^*$ is also of
codimension one. Hence its perpendicular space $W' \subset (S^d V^*)^* =
\DPV{d}$ is one dimensional and indeed $\nu_d(W) = W'$.

The above definitions are natural, we never used any coordinates.
Now suppose a basis of $V$ is chosen to be $\fromto{x_1}{x_n}$, and $\fromto{\alpha_1}{\alpha_n}$ is the dual basis of $V^*$.
The space $S^d V^*$ has a natural basis of monomials $\alpha_1^{p_1}\dotsm\alpha_n^{p_n}$ (here the sum of the nonnegative integers $p_i$ is $d$),
   and a dual basis of $\DPV{d}$ is 
  \[
    x_1^{(d)}, x_1^{(d-1)}x_2,  \dotsc, x_1^{(p_1)}x_2^{(p_2)} \dotsm x_n^{(p_n)}, \dotsc, x_n^{(d)},
  \]
  where the \emph{divided power monomials} $x_1^{(p_1)}x_2^{(p_2)}\dotsm x_n^{(p_n)}$ take value $1$ on $\alpha_1^{p_1}\dotsm\alpha_n^{p_n}$ and $0$ on all the other monomials.
Note that this is not the same as the monomial basis of $S^dV$, particularly in a positive characteristic.
Consider a closed $\kk$-rational point $a_1x_1+\dotsb + a_n x_n \in V \subset V^{alg}$. 
The Veronese map in these coordinates is defined as:
\[
  a_1x_1+\dotsb + a_n x_n \stackrel{\hat{\nu}_d}{\longmapsto}  \sum a_1^{p_1}\dotsm a_n^{p_n} \cdot x_1^{(p_1)}\dotsm x_n^{(p_n)},
\]
which is the same as suggested in Equation~\eqref{equ_define_Veronese}.

If $g \colon V \to V$ is a linear automorphism, 
  then by the naturality of construction it induces a linear automorphism $g^{(d)}$ of $\DPV{d}$.
Explicitly in coordinates, the automorphism $g^{(d)}$ takes the divided power monomial $x_1^{(p_1)}x_2^{(p_2)}\dotsm x_n^{(p_n)}$ 
  to $\nu_{p_1}(g x_1) \nu_{p_2}(g x_2) \dotsm \nu_{p_n}(g x_n)$.
In particular, the Veronese map is equivariant under these two actions of $\GL(V)$ and it preserves the rescalings.

\subsection{Projective schemes and varieties}

The $n$-dimensional projective space $\PP_{\kk}^N$ is obtained by glueing of $N+1$ copies of affine spaces $\AA_{\kk}^N$.
Alternatively, it is the quotient $\pi\colon \AA_{\kk}^{N+1} \setminus \set{0} \to \PP_{\kk}^N$  by the action of the algebraic group $\GG_m$.
The description we are going to recall here is the algebraic version
of this global description using homogeneous ideals.

Let $V$ be an $(N+1)$-dimensional vector space over $\kk$.
By $\PP V$, or $\PP_{\kk}^N$, we denote the scheme consisting of the set of
homogeneous prime ideals in
\[
   \Sym V^*:=\bigoplus_{i=0}^{\infty} S^{i} V^*  = \kk \oplus V^* \oplus S^2 V^* \oplus \dotsb \simeq \kk[\fromto{\alpha_0}{\alpha_N}],
\]
which are contained in, but not equal to, the ideal $\bigoplus_{i\geq 1} S^i V^*$.
The Zariski topology on $\PP V$ is defined in the same way as in the affine
case, except we restrict attention to homogeneous ideals: a subset $X \subset
\PP V$ is closed if and only if there is a homogeneous ideal $I \subset \Sym
V^*$, such that $X$ is the set of homogeneous prime ideals containing $I$.
The projective space has distinguished open affine subsets, each of them is
the complement $U_{(f)}$ of the closed subset defined by a homogeneous ideal
$(f)$ for some nonzero $f \in S^k V^*$ and the global functions on $U_{(f)}$ are
  \[
    \ccO_{\PP V}(U_{(f)}) = (\Sym V^*)_{(f)} = \set{\frac{g}{f^{i}} \mid g\in S^{ik}V^*, i \in \ZZ_{\ge 0}}.
  \]
In particular, such $U_{(f)}$ is naturally isomorphic to $\Spec (\Sym V^*)_{(f)}$.

Closed subschemes of $\PP V$ are defined by homogeneous ideals in $\Sym V^*$. 
That is, for a homogeneous ideal $I \subset \Sym V^*$,
the  subscheme $X \subset \PP V$ defined by $I$ is locally on each
$U_{(f)}$ defined by $I_{(f)} = \set{\frac{g}{f^{i}} \mid g\in S^{ik}V^* \cap I} \subset  (\Sym V^*)_{(f)}$.
Conversely,
  given a closed subscheme $X$ of $\PP V$, the \emph{(saturated) ideal} of $X$ is the largest homogeneous ideal $I(X) \subset \Sym V^*$, 
  such that the scheme defined by $I(X)$ is $X$.
The ideal $I(X)$ consists of all the polynomial functions on $V$ that vanish identically on the affine cone over $X$, that is on the scheme $\hat{X} := \pi^{-1} (X) \subset \AA_{\kk}^{N+1}$.

A scheme (or a variety) $X$ is \emph{projective}, if there \emph{exists} an embedding $X \hookrightarrow \PP_{\kk}^N$ making $X$ a closed subscheme of $\PP^N_{\kk}$.
We stress that the embedding is far from unique!
In the literature, sometimes the term projective scheme or variety refer to a situation,
   where the embedding $X \hookrightarrow \PP V$ is chosen and fixed. 
We discuss this in more details in Section~\ref{sec_embedded_proj_geom}.

\begin{lem}\label{lem_finite_is_projective}
  Suppose $R$ is a finite scheme. Then $R$ is projective.
\end{lem}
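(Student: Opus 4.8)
The plan is to reduce to facts already recorded in Section~\ref{sect_finite_schemes}. Write $R \simeq \Spec A$ with $\dim_{\kk} A < \infty$, pick a $\kk$-vector space basis $a_1,\dots,a_n$ of $A$, and use the induced surjection $\kk[\alpha_1,\dots,\alpha_n]\onto A$ to realise $R$ as a closed subscheme of $\AA^n_{\kk}$, exactly as in the discussion of embeddings of finite schemes in Section~\ref{sect_finite_schemes}. Now identify $\AA^n_{\kk}$ with the distinguished open subset $U_{(\alpha_0)}\subset \PP^n_{\kk}$; composing the closed immersion $R\into \AA^n_{\kk}$ with this open immersion gives an immersion $\iota\colon R\into \PP^n_{\kk}$.

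The only thing left to verify is that $\iota$ is a \emph{closed} immersion, since a closed subscheme of an open subscheme of $\PP^n_{\kk}$ is a priori weaker than a closed subscheme of $\PP^n_{\kk}$. Here I would invoke properness: a finite morphism is proper, so $R\to\Spec\kk$ is proper, and since $\PP^n_{\kk}\to\Spec\kk$ is separated, the cancellation property for proper over separated morphisms (\cite[Ex.~II.4.8]{hartshorne}, as already used in the proof of Lemma~\ref{ref:closedimmersions:lem}) shows that $\iota\colon R\to\PP^n_{\kk}$ is proper. An immersion is a monomorphism, and a proper monomorphism is a closed immersion (see, e.g., \cite[Tag~04XV]{stacks_project}); hence $\iota$ is a closed immersion and $R$ is projective.

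The passage from ``closed in the affine chart'' to ``closed in $\PP^n_{\kk}$'' is the only point that is not purely formal, and it can also be seen concretely: $R$ has finitely many points, each a closed point of $\AA^n_{\kk}$ whose closure in $\PP^n_{\kk}$ is zero-dimensional and irreducible, hence equal to that point, so the image of $R$ in $\PP^n_{\kk}$ is closed, and one then checks on the standard affine charts that the ideal defining $R$ in $\AA^n_{\kk}$ extends (by the unit ideal on the charts $R$ does not meet) to a homogeneous ideal defining $R$ in $\PP^n_{\kk}$. Either way, there is no real obstacle here --- this lemma is essentially a bookkeeping consequence of ``finite $\Rightarrow$ proper'' together with the embeddability of finite schemes in affine space.
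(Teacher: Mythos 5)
Your proof is correct. The paper itself does not argue this lemma at all --- it simply cites \cite[17.3.C]{vakil_FoAG} and \cite[Corollary~13.77]{gortz_wedhorn_algebraic_geometry_I} --- so your write-up supplies a self-contained argument where the paper defers to textbooks. The argument you give is the standard one and is sound: the embedding $R\into\AA^n_{\kk}$ is exactly the one recorded in Section~\ref{sect_finite_schemes}, and the upgrade from locally closed to closed immersion in $\PP^n_{\kk}$ is correctly handled either by the cancellation property (finite $\Rightarrow$ proper over $\Spec\kk$, plus $\PP^n_{\kk}$ separated, so $\iota$ is proper, and a proper monomorphism is a closed immersion) or by the concrete observation that the image is a finite set of closed points --- for the latter, the cleanest justification is that every point of $R$ has residue field finite over $\kk$, hence maps to a closed point of the finite-type scheme $\PP^n_{\kk}$, so the image is closed and an immersion with closed image is a closed immersion. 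Both routes are fine; nothing is missing.
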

\begin{proof}
    See, for example \cite[17.3.C,~pg.~451]{vakil_FoAG}
    or~\cite[Corollary~13.77]{gortz_wedhorn_algebraic_geometry_I}.
\end{proof}

\begin{lem}
  If $Y$ is any scheme, $X \subset Y$ is a subscheme, and $X$ is a projective
  scheme, then $X$ is a closed subscheme of $Y$.
\end{lem}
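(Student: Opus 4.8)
The plan is to unwind what ``subscheme'' means and reduce to a closedness statement. By definition $X\subset Y$ being a subscheme means $X$ is a closed subscheme of some open subscheme $U\subseteq Y$; equivalently, the inclusion $\iota\colon X\to Y$ is an immersion. So it suffices to prove that $\iota$ is a \emph{closed} immersion. Since $X$ is projective, it is a closed subscheme of some $\PP V$, and every such scheme is proper over $\kk$ (Section~\ref{sec_product_and_base_change}); hence the structure morphism $\pi_X\colon X\to\Spec\kk$ is proper.

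Next I would apply the cancellation property. Write $\pi_X=\pi_Y\circ\iota$, where $\pi_Y\colon Y\to\Spec\kk$ is the structure morphism of $Y$, which is separated by our standing assumption (Notation~\ref{not_scheme}). Since $\pi_X$ is proper and $\pi_Y$ is separated, $\iota$ is proper (see~\cite[Ex.~II.4.8]{hartshorne}, the same cancellation property already used in the proof of Lemma~\ref{ref:closedimmersions:lem}). In particular $\iota$ is universally closed, so its underlying topological map is closed and $\iota(X)$ is a closed subset of $Y$.

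Finally I would upgrade the immersion $\iota$ with closed image to a closed immersion. Factor $\iota$ as $X\hookrightarrow U\hookrightarrow Y$ with $X$ closed in the open subscheme $U$, and put $U':=U\cup(Y\setminus\iota(X))$. This is open in $Y$, and in fact equals $Y$: every point of $Y$ lies either in $\iota(X)\subseteq U$ or in its complement. Gluing the closed subscheme $X\subseteq U$ with the empty subscheme on $Y\setminus\iota(X)$ — they agree on the overlap $U\setminus\iota(X)$ — exhibits $X$ as a closed subscheme of $U'=Y$. I do not expect a genuine obstacle; the only mildly technical point is this last gluing step, which may alternatively be quoted as the standard fact that an immersion with closed image is a closed immersion.
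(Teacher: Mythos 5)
Your proof is correct and follows essentially the same route as the paper: the paper's one-line proof simply cites the cancellation property for properness (Vakil, Proposition~10.3.4e) applied to $X \to Y \to \Spec \kk$ with $X$ proper over $\kk$ and $Y$ separated, which is exactly your middle step. The additional details you supply --- that projectivity gives properness, and that an immersion with closed image is a closed immersion (your gluing argument is fine) --- are the standard facts the paper leaves implicit.
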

\begin{proof}
    Subject to Notation~\ref{not_scheme}, $Y$ is separated.
    The claim follows for example from~\cite[Proposition~10.3.4e]{vakil_FoAG},
    where we should take $Z = \Spec \kk$ so that $X$ is proper over $Z$.
\end{proof}

\subsection{Embedded projective geometry}\label{sec_embedded_proj_geom}

In this subsection we discuss several notions related to embeddings of projective varieties into projective spaces.
Particularly, we are interested in the notions of Veronese embedding, linear span and how does linear span behave with respect to Veronese reembeddings.

Recall from Subsection~\ref{sec_Veronese_map} the Veronese map $\hat{\nu}_d\colon V^{alg} \to (\DPV{d})^{alg}$. 
Since it preserves the dilation action of $\GG_m$, it descents to an algebraic morphism of projective spaces $\nu_d \colon\PP V \to \PP(\DPV{d})$.
In the projective setting, the map $\nu_d$ is an embedding, thus we call it
the \emph{Veronese embedding}.

A \emph{linear subspace} of $\PP V$ is a subscheme defined by an ideal $I$ generated by linear functions in $V^*$.
Equivalently, it is subscheme, such that its affine cone is a linear subspace of $V^{alg}$.
In particular, a subscheme $X \subset \PP V$ is a linear subspace if and only if there exists a linear subspace $W \subset V$, 
  such that $X = \PP W$ and the ideal of $X$ is $\ker (V^* \to W^*)$.

From now on, throughout this subsection, we assume $X$ is a projective scheme with a fixed embedding $X \subset \PP V$,
  i.e.~$X$ is a closed subscheme of $\PP V$. 
Let $I(X) = \bigoplus_{i=0}^{\infty} I(X)_i \subset \Sym V^*$ be the homogeneous ideal of $X$.
The function assigning to an integer $i$ the codimension of $I(X)_i \subset S^i V^*$ is called the \emph{Hilbert function} of 
  $X \subset \PP V$ and denoted:
\[
   H_{X\subset \PP V}(i) := \codim \left(I(X)_i \subset S^i V^*\right) = \dim \left( S^i V^*/ I(X)_i \right).
\]

The \emph{linear span} $\langle X \rangle$ of $X$ is the smallest linear
subspace of $\PP V$ that contains $X$.
Equivalently, $\langle X \rangle$ is the scheme defined by the ideal generated by $I(X)_1 = I(X) \cap V^*$.
We have $\dim \langle X \rangle = H_{X \subset \PP V}(1) -1$, the modification by $-1$ comes from the projectivisation.

Consider also the Veronese reembedding of $X$. 
That is, fix a positive integer $d$, and consider $\nu_d(X) \subset \PP( \DPV{d})$.
The scheme $\nu_d(X)$ is isomorphic to $X$, but its embedding into a projective space is different,
  in particular, its ideal $I(\nu_d(X))$ is different, in fact, it is an ideal in a different ring.
More precisely, we have the following composition of ring homomorphisms:
  \[
        \Sym(S^d V^*)  = \bigoplus_{k=0}^{\infty} S^{k} (S^d V^*)
               \twoheadrightarrow  \bigoplus_{k=0}^{\infty} S^{kd} V^*
               \hookrightarrow  \bigoplus_{k=0}^{\infty} S^{d} V^*
                   =   \Sym(V^*)
               \twoheadrightarrow   \Sym(V^*) / I(X).
  \]
The kernel of the composed map is equal to $I(\nu_d(X))$.
In particular, 
\[
  \begin{array}{ccc}
             I(\nu_d(X))_1 & = & I(X)_d \\
               \cap && \cap\\
             S^1(S^d V^*) &=& S^d V^*
  \end{array}
\]
and thus the linear span $\langle \nu_d(X)\rangle$ is determined by the degree $d$ equations of $X$ 
and $\dim \langle \nu_d(X) \rangle = \dim \left(\Sym V^*/I(X)\right)_d -1 = H_{X \subset \PP V}(d) -1$.

From the point of view of sheaf theory, the algebra $\Sym V^*/I(X)$ is determined by global sections of twisted sheaves.
Specifically, denote by $\ccO_{\PP V}(-1)$ the sheaf of sections of the
tautological line bundle;
  informally, the line bundle is a subset of $\PP V \times V$, determined by $\set{(x, v) : x \in v}$.
Then denote by $\ccO_{\PP V}(1):= \ccO_{\PP V}(-1)^*$ the sheaf of sections of a dual line bundle, 
  and for a positive integer $d$:
\[
   \ccO_{\PP V}(d) :=  \ccO_{\PP V}(1)^{\otimes d},  \text{ and } \ccO_{\PP
   V}(-d) :=  \ccO_{\PP V}(-1)^{\otimes d}, \text{ and, for consistence, }\ccO_{\PP V}(0) :=  \ccO_{\PP V}.
\]
As a consequence, for an open affine subset $U_{(f)}$ given by $f \in S^{k} V^*$ (for $k>0$, $f \ne 0$) we have:
  \[
    \ccO_{\PP V}(d)\left(U_{(f)}\right) = \set{\frac{g}{f^i} \mid g\in S^{ik + d} V^*, i \in \ZZ}.
  \]
Glueing together the local sections to global sections we have:
\[
    H^0(\ccO_{\PP V}(d)) = S^{d} V^* \text{ (where $S^d V^* = 0$ if $d < 0$)}.
\]

Let $\ccI_X \subset \ccO_{\PP V}$ be the ideal sheaf of $X$.
Denote
  \[
    \ccI_X(d) := \ccI_X \otimes \ccO_{\PP V}(d), \text{ so that }
    \ccI_{X}(d)\left(U_{(f)}\right) = \set{\frac{g}{f^i} \mid g\in I(X)_{ik + d}, i \in \ZZ},
  \]
  and $\ccO_X(d) := \ccO_{\PP V}(d) /  \ccI_{X}(d) = \ccO_X \otimes \ccO_{\PP V}(d)$. 
Now, from the definitions and the short exact sequence of sheaves $ 0 \to \ccI_{X}(d)  \to \ccO_{\PP V}(d) \to \ccO_X(d) \to 0 $ we conclude:
\begin{align}
 S^d V^*/I(X)_d &=  H^0\left(\ccO_{\PP V}(d)\right) / H^0\left(\ccI_{X}(d)\right) \text{ and }\nonumber \\
   \dim \langle \nu_d(X) \rangle +1 = H_{X \subset \PP V}(d)&= \dim H^0 \left(\ccO_X(d)\right)  - \dim H^1 \left( \ccI_X(d)\right).     \label{eq:dimensionofspan}
\end{align}
In particular, the linear span of Veronese reembedding is strongly connected to cohomological properties of $\ccI_X$ and related sheaves.

\begin{example}\label{exam_Hilbert_function_of_subscheme_drops_by_at_most_r_minus_r_prime}
   Suppose $R \subset \PP V$ is a finite subscheme of degree $r$ and $R' \subset R$ is a subscheme of degree $r'$.
   Then $0 \le H_{R \subset \PP V}(d)- H_{R' \subset \PP V}(d) \le r-r'$.
\end{example}
\begin{proof}
   We have $H^0(\ccO_R(d)) \simeq H^0(\ccO_R)\simeq \CC^r$ and  $H^0(\ccO_{R'}(d)) \simeq H^0(\ccO_{R'})\simeq \CC^{r'}$.
   The following diagram is commutative:
   \[
     \begin{tikzcd}
        S^d V^*/I(R)_d \arrow{r}{}\arrow{d}{}& S^d V^*/I(R')_d \arrow{d}{} \\ 
        H^0(\ccO_R(d)) \arrow{r} &  H^0(\ccO_{R'}(d)).
     \end{tikzcd}
   \]
   The horizontal maps are surjective while the vertical are injective.
   Hence the induced map of kernels of horizontal maps is also injective.
   The kernel of the lower map has dimension $r-r'$, thus the kernel of the upper map has dimension at most $r-r'$.
\end{proof}

We say that the  embedding of $X \subset \PP V$ is \emph{linearly
normal}, if $H^1 \left( \ccI_X(1)\right) =0$ or equivalently, $\dim \sspan{X} = \dim H^0 \left(\ccO_X(1)\right) -1$.
So by above discussion, the Veronese reembedding $\nu_d(X)$ is linearly normal if and only if $H^1 \left( \ccI_X(d)\right)=0$,
  or $\dim \sspan{\nu_d(X)} = \dim H^0 \left(\ccO_X(d)\right) -1$.
Linearly normal embeddings share some properties of linearly independent subsets from linear algebra:
\begin{lem}\label{lem_properties_of_lin_indep_emb}
  Fix a projective space $\PP V$ and its closed subschemes $R, X, Y  \subset \PP V$. 
  \begin{enumerate}
      \item\label{it:linindeppoints} If $R$ is a finite union of distinct reduced $\kk$-rational points in $\PP V$, then the embedding of $R$ 
           is linearly normal if and only if the $\kk$-rational points lifted to $V$ are linearly independent in $V$.
       \item\label{it:linindepintersect} \emph{(Grassmann lemma)} If $X$ and $Y$ are subschemes of $\PP V$, 
           and the embeddings of $X\cap Y$, $X$, $Y$, and $X \cup Y$ into $\PP
           V$  are  linearly normal, then:
         \[
            \sspan{X} \cap \sspan{Y} = \sspan{X \cap Y}.
         \]
    \item \label{item_R_finite_lin_indep_then_any_subscheme_lin_indep}
          If $R \subset \PP V$ is finite, whose embedding into
          $\PP V$ is linearly normal, then any subscheme $R' \subset R$ is
          linearly normal.
          In particular, in \ref{it:linindepintersect}, if $X$ and $Y$ are finite, it is enough to assume $X\cup Y$ is linearly normal.
  \end{enumerate}
\end{lem}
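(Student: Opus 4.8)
The plan is to reduce all three parts to linear algebra through the identity~\eqref{eq:dimensionofspan}. For a closed subscheme $Z\subseteq \PP V$ I write $\sspan{Z} = \PP W_Z$, where $W_Z\subseteq V$ is the linear subspace cut out by $I(Z)_1$, so that $\dim W_Z = H_{Z\subset \PP V}(1) = \dim H^0(\ccO_Z(1)) - \dim H^1(\ccI_Z(1))$; thus $Z$ is linearly normal exactly when $\dim W_Z = \dim H^0(\ccO_Z(1))$, and unconditionally $\dim W_Z \le \dim H^0(\ccO_Z(1))$. I will also use the two elementary facts $\sspan{X\cup Y} = \PP(W_X + W_Y)$ and $\sspan{X}\cap\sspan{Y} = \PP(W_X\cap W_Y)$, both immediate from the definition of the linear span (a linear subspace $\PP U$ contains a subscheme $Z$ if and only if $W_Z\subseteq U$).

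For part~\ref{it:linindeppoints}, writing $R = \setfromto{[v_1]}{[v_r]}$ with $v_i\in V$ chosen nonzero lifts, we have $W_R = \sspan{\fromto{v_1}{v_r}}$ and $\dim H^0(\ccO_R(1)) = \dim H^0(\ccO_R) = \deg R = r$, since $\ccO_R(1)$ is a (necessarily trivial) line bundle on the finite scheme $R$. As $W_R$ is spanned by the $r$ vectors $\fromto{v_1}{v_r}$ we always have $\dim W_R\le r$, so $R$ is linearly normal if and only if $\dim W_R = r$, i.e.\ if and only if $\fromto{v_1}{v_r}$ are linearly independent.

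For part~\ref{it:linindepintersect}, the inclusion $\sspan{X\cap Y}\subseteq \sspan{X}\cap\sspan{Y}$ is formal (from $X\cap Y\subseteq X,Y$ and $\sspan{X}\cap\sspan{Y}$ being linear), so it suffices to prove the reverse, and for that it is enough to show $\dim W_{X\cap Y}\ge \dim(W_X\cap W_Y)$. I would take $H^0$ of the Mayer--Vietoris sequence
\[
   0\to \ccO_{X\cup Y}(1)\to \ccO_X(1)\oplus \ccO_Y(1)\to \ccO_{X\cap Y}(1)\to 0,
\]
which is exact because $\ccI_{X\cup Y} = \ccI_X\cap \ccI_Y$ and $\ccI_{X\cap Y} = \ccI_X + \ccI_Y$; the resulting exact sequence $0\to H^0(\ccO_{X\cup Y}(1))\to H^0(\ccO_X(1))\oplus H^0(\ccO_Y(1))\to H^0(\ccO_{X\cap Y}(1))$ gives
\[
   \dim H^0(\ccO_{X\cap Y}(1)) \ge \dim H^0(\ccO_X(1)) + \dim H^0(\ccO_Y(1)) - \dim H^0(\ccO_{X\cup Y}(1)).
\]
Using linear normality of $X$, $Y$, $X\cap Y$ and $X\cup Y$ to replace each $\dim H^0(\ccO_{\bullet}(1))$ by $\dim W_{\bullet}$, then invoking $W_{X\cup Y}=W_X+W_Y$ and the Grassmann formula $\dim W_X + \dim W_Y - \dim(W_X+W_Y) = \dim(W_X\cap W_Y)$, the right-hand side is exactly $\dim(W_X\cap W_Y)$. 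Since $W_{X\cap Y}\subseteq W_X\cap W_Y$, this forces $W_{X\cap Y}=W_X\cap W_Y$, i.e.\ $\sspan{X\cap Y}=\sspan{X}\cap\sspan{Y}$.

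For part~\ref{item_R_finite_lin_indep_then_any_subscheme_lin_indep}, let $R$ be finite and linearly normal of degree $r$ and $R'\subseteq R$ of degree $r'$. By Example~\ref{exam_Hilbert_function_of_subscheme_drops_by_at_most_r_minus_r_prime}, $H_{R'\subset \PP V}(1)\ge H_{R\subset \PP V}(1)-(r-r') = r-(r-r') = r'$, where $H_{R\subset \PP V}(1)=\dim H^0(\ccO_R(1))=r$ because $R$ is linearly normal. On the other hand $H_{R'\subset \PP V}(1)=\dim H^0(\ccO_{R'}(1))-\dim H^1(\ccI_{R'}(1))=r'-\dim H^1(\ccI_{R'}(1))$, whence $\dim H^1(\ccI_{R'}(1))\le 0$, so it vanishes and $R'$ is linearly normal. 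For the last assertion, if $X$ and $Y$ are finite then $X$, $Y$ and $X\cap Y$ are closed subschemes of the finite scheme $X\cup Y$, so linear normality of $X\cup Y$ already forces linear normality of all four and part~\ref{it:linindepintersect} applies. The main point requiring care is the bookkeeping in part~\ref{it:linindepintersect}: one must check that the cohomological inequality and the linear-algebra Grassmann identity combine so that the sandwich $\dim(W_X\cap W_Y)\le \dim W_{X\cap Y}\le \dim(W_X\cap W_Y)$ actually closes; the rest is routine.
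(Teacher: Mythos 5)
Your proof is correct and follows essentially the same route as the paper's: choosing lifts and counting dimensions for~\ref{it:linindeppoints}, the Mayer--Vietoris sequence for $\ccO_{X\cup Y}(1)$ combined with the Grassmann formula for~\ref{it:linindepintersect}, and the bound $H_{R'\subset \PP V}(1)\ge H_{R\subset\PP V}(1)-(r-r')$ for~\ref{item_R_finite_lin_indep_then_any_subscheme_lin_indep}. The only cosmetic difference is in~\ref{it:linindepintersect}, where you use just the left-exactness of the $H^0$-sequence to get an inequality and close it against the trivial inclusion $W_{X\cap Y}\subseteq W_X\cap W_Y$, whereas the paper derives the equality of dimensions directly; both are sound.
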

\begin{proof}
    In~\ref{it:linindeppoints} the scheme $R$ is a union of $r = \dim
    H^0(\ccO_X(1))$ points. Choose lifts of those points to $v_1, \ldots
    ,v_r\in V$, then $\dim \sspan{R} = \dim \sspan{v_1, \ldots ,v_r} - 1$ is
    equal to $r-1$ if and only if lifts are linearly independent.
    To prove~\ref{it:linindepintersect} note that $\sspan{X \cap Y} \subset
    \sspan{X} \cap \sspan{Y}$; it is enough to check that these spaces have
    the same dimension.
    By a slight abuse of notation, let $\sspan{X} + \sspan{Y} \subset \PP V$
       be the linear subspace spanned by $\sspan{X}$ and $\sspan{Y}$, which is equal to $\sspan{X \cup Y}$.
    To prove the equality of dimensions, we note that
    \begin{equation}\label{eq:tmpsspans}
        \dim \sspan{X}\cap \sspan{Y} = \dim \sspan{X} +
        \dim \sspan{Y} - \dim (\sspan{X} + \sspan{Y}),
    \end{equation}
    where $\sspan{X} + \sspan{Y} = \sspan{X \cup Y}$.
    Linear normality and Equation~\eqref{eq:tmpsspans} imply that
    \begin{equation}\label{eq:tmpsspanstwo}
        \dim (\sspan{X}\cap \sspan{Y}) = \dim H^0(\ccO_X(1)) + \dim
        H^0(\ccO_Y(1)) - \dim H^0(\ccO_{X\cup Y}(1)) - 1.
    \end{equation}
    Using the long exact sequence of cohomologies of the following short exact sequence:
    \[
       0 \to \ccO_{X\cup Y}(1) \to \ccO_{X}(1) \oplus \ccO_Y(1) \to \ccO_{X\cap Y}(1) \to 0,
    \]
    and exploiting the linear normality of $X\cup Y$ we transform
    Equation~\eqref{eq:tmpsspanstwo} to
    \[
        \dim (\sspan{X}\cap \sspan{Y}) = \dim H^0(\ccO_{X\cap Y}(1)) -1 = \dim
        \sspan{X\cap Y},
    \]
    which concludes proof of~\ref{it:linindepintersect}.
    To prove~\ref{item_R_finite_lin_indep_then_any_subscheme_lin_indep}, note
    that $\dim H^0(R, \ccO_R(1)) = \deg R$ for any finite scheme. If $\deg R'
    = \deg R - k$, then $R'$ is cut out of $R$ by at most $k$ equations, so
    $\dim\sspan{R'} \geq \dim \sspan{R} - k = \deg R' - 1 = 
     \dim H^0(R',\ccO_{R'}(1))-1$.
    By~\eqref{eq:dimensionofspan} this implies
      $\dim \sspan{R'} = \dim H^0(R',\ccO_{R'}(1))-1$, 
      so that $R'$ is linearly normal.
\end{proof}
Note however that Property~\ref{item_R_finite_lin_indep_then_any_subscheme_lin_indep} fails to be true if $R$ is not finite.
For example, if $R=\PP V$ and $R'$ is finite but very long (degree of $R'$ is more than $\dim V$),
  then $R$ is linearly normal, but $R'$ is not.

It might be difficult to control the linear normality of embeddings, 
  but there is a stronger notion of Castelnuovo-Mumford regularity which is easier to control due to its inductive properties, 
  and it implies linear normality of Veronese reembeddings.
In the following subsection we explain how to use it.

\subsection{Castelnuovo-Mumford regularity}\label{sec_Castelnuovo_Mumford_reg}

Let $\ccF$ be a \emph{coherent sheaf} on $\PP V$. 
By this we mean a sheaf corresponding to a graded $\Sym V^*$-module $M$ in a similar way
  as $\ccI_X$ above corresponds to $I(X)$, and the twists $\ccO_{\PP V}(d)$ correspond to shifts of $\Sym V^*$ by $d$. 
Here the main coherent sheaves we will consider are $\ccO_{X}(d)$ and $\ccI_X(d)$ for a subscheme $X\subset \PP V$.
The Euler characteristic of the twists $\ccF(d) = \ccF \otimes \ccO(d)$ is a polynomial in $d$, called the \emph{Hilbert polynomial}
  of $\ccF$. 
  In the case $\ccF = \ccO_{X}$, we denote this polynomial by $h_{X \subset \PP
V}(d)$, see~\cite[Section~III.3.1]{eisenbud_harris},
or~\cite[Section~18.6]{vakil_FoAG}.
For sufficiently large $d$, the Euler characteristic is equal to the
Hilbert function by Serre's vanishing theorem~\cite[Theorem~18.1.4]{vakil_FoAG}.
How large $d$ we have to take to obtain equality of the Hilbert polynomial and Hilbert function 
   is measured by the notion of Castelnuovo-Mumford regularity.
We say $\ccF$ is \emph{Castelnuovo-Mumford $\delta$-regular}\footnote{%
Many authors simply say \emph{$\delta$-regular}. 
This is a different notion than the regularity of schemes discussed in Section~\ref{sec_smooth_regular_flat},
  which is why we will keep mentioning the names of Guido Castelnuovo and David Mumford.}
if $H^i(\ccF(\delta-i)) =0$ for all $i >0$.
If $\ccF$ is Castelnuovo-Mumford $\delta$-regular, 
  then it is also Castelnuovo-Mumford $d$-regular for all $d\geq \delta$ \cite[Lemma 5.1(b)]{fantechi_et_al_fundamental_ag}.

\begin{cor}\label{cor_regular_implies_independent_embedding}
   If $X\subset \PP V$ is a subscheme such that $\ccI_X$ is Castelnuovo-Mumford $\delta$-regular, 
     then $\nu_d(X) \subset \PP (\DPV{d})$ is a linearly normal embedding for all $d\ge \delta -1$.
\end{cor}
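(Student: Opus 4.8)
The plan is to reduce the statement to a single cohomological vanishing that has already been identified in the text. Recall from the discussion of linear normality surrounding Equation~\eqref{eq:dimensionofspan} that the Veronese reembedding $\nu_d(X)\subset \PP(\DPV{d})$ is linearly normal if and only if $H^1(\ccI_X(d)) = 0$ (equivalently, $\dim \sspan{\nu_d(X)} = \dim H^0(\ccO_X(d)) - 1$). So it suffices to prove $H^1(\ccI_X(d)) = 0$ for all $d\ge \delta - 1$.

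For this I would invoke the monotonicity of Castelnuovo--Mumford regularity recalled just above the statement: since $\ccI_X$ is Castelnuovo--Mumford $\delta$-regular, it is Castelnuovo--Mumford $d'$-regular for every $d'\ge\delta$, by~\cite[Lemma~5.1(b)]{fantechi_et_al_fundamental_ag}. Apply this with $d' = d+1$, which satisfies $d'\ge\delta$ exactly when $d\ge\delta-1$. Unwinding the definition of $(d+1)$-regularity, $H^i(\ccI_X(d+1-i)) = 0$ for all $i > 0$; taking $i = 1$ yields $H^1(\ccI_X(d)) = 0$, which is exactly what is needed.

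There is essentially no obstacle here: the corollary is a formal consequence of the definition of Castelnuovo--Mumford regularity together with the translation, established earlier in this section, between linear normality of the $d$-th Veronese reembedding and the vanishing of $H^1(\ccI_X(d))$. The only point requiring care is the index bookkeeping --- it is $(d+1)$-regularity, not $d$-regularity, that forces $H^1$ to vanish in twist $d$, and this is precisely why the bound reads $d\ge\delta-1$ rather than $d\ge\delta$.
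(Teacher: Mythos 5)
Your proof is correct and is essentially the argument the paper intends: the corollary is left as an immediate consequence of the cohomological characterisation of linear normality ($H^1(\ccI_X(d))=0$) together with the persistence of Castelnuovo--Mumford regularity, and your index bookkeeping (using $(d+1)$-regularity to kill $H^1(\ccI_X(d))$) is exactly right.
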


We will need a technical lemma (Lemma~\ref{ref:mumfordreg:lem}) relating the Castelnuovo-Mumford regularities of sheaves appearing in a short exact sequence, 
  under an assumption that the quotient sheaf is ``very small''.
For this purpose we introduce the notions of \emph{support} and \emph{associated points} of a coherent sheaf on $\PP V$.

Castelnuovo-Mumford regularity does not depend on the base field change, as it
is defined in terms of cohomologies, which base-change well.
In particular, in the proofs we can always assume that the base field is infinite.
This is important for inductive procedures, as whenever the base field is infinite, we can pick a hyperplane $W \subset V$, 
  which avoids finitely many (not necessarily closed) points in $\PP V$.
The points we want to avoid are the \emph{associated points} of sheaves in question.
To define them, suppose $\ccF$ is a coherent sheaf and $x\in \PP V$ is any point.
Let $\ccF_x$ be  the stalk of $\ccF$ at $x$, which is a $\ccO_{\PP V, x}$-module.
Let $\gotp_x$ be maximal ideal of the local ring $\ccO_{\PP V, x}$.
We say that $x$ is the \emph{associated point} of $\ccF$ if there exists an element of $\ccF_x$, whose annihilator is $\gotp_x$.
A crucial observation is that every coherent sheaf on $\PP V$ has only
finitely many associated points \cite[Tag~05AF]{stacks_project},
or~\cite[Section~5.5]{vakil_FoAG}. Intuitively, as Ravi~Vakil vividly explains, associated points are the ones
``that control everything'' or the ones, where ``all the action happens'' (\cite[Section~5.5, title and p.~168]{vakil_FoAG}).

The \emph{support} of $\ccF$ consists of all points
$x\in \PP V$ such that $\ccF_x \ne 0$.
It is a closed subset of $\PP V$ by \cite[Tag~01BA]{stacks_project}
  and it is equal to the closure of all the associated points of $\ccF$ by \cite[Tag~05AH]{stacks_project}.
It can be also considered as a scheme with reduced structure.

\begin{example}\label{exam_finite_support_regular}
   Suppose $\ccF$ is a coherent sheaf whose support, considered as a reduced scheme,  is finite.
   (For instance $\ccF = \ccO_R$ for a finite subscheme $R \subset \PP V$.)
   Then $\ccF$ is Castelnuovo-Mumford $\delta$-regular for all integers $\delta$. 
   Indeed, $\ccF \simeq \ccF(\delta)$ for any $\delta$, 
     and $\ccF$ is  Castelnuovo-Mumford $\delta$-regular for some $\delta$.
\end{example}
  
\begin{example}
   Suppose $X\subset \PP^3_{\kk}$ is a closed subscheme defined by the homogeneous ideal 
      $I_X = (\alpha^3, \alpha^2\beta, \alpha\beta^2, \alpha^2\gamma, \alpha\beta\gamma)$.
   Then the support of the sheaf $\ccI_X$ is $\PP V$, it has only one associated point, which is the generic point of $\PP V$, 
      i.e.~the point corresponding to the prime ideal $(0)$.
   On the other hand, the structure sheaf $\ccO_X$ has its support on the plane $(\alpha)$,
     while the associated points of $\ccO_X$ are the points corresponding to the prime ideals $(\alpha)$, $(\alpha, \beta)$ 
     and $(\alpha, \beta, \gamma)$. 
\end{example}

\begin{example}
   Suppose $\ccF$ is a non-zero locally free sheaf on $\PP V$, or equivalently, $\ccF$ is the sheaf of local sections of a vector bundle.
   Then the support of $\ccF$ is $\PP V$ and the unique associated point is the generic point of $\PP V$.
\end{example}

For a hypersurface $Y\subset \PP V$ defined by an ideal generated by 
  a homogeneous polynomial $f \in S^d V^*$ and for every integer $s$
  there is a short exact sequence:
\begin{equation}\label{equ_ideal_structure_sheaf_sequence_for_hypersurface}
   0 \to \ccO_{\PP V}(s-d) \to \ccO_{\PP V}(s) \to \ccO_{Y} (s) \to 0.
\end{equation}
The first nontrivial map $\ccO_{\PP V}(s-d) \to \ccO_{\PP V}(s)$ is the multiplication by 
  $f$ interpreted as a global section of $\ccO_{\PP V}(d)$.
An important observation, which is ubiquitous in algebraic geometry, is:
\begin{prop}\label{prop_ideal_structure_sheaf_seq_tensored_with_F}
  If $\ccF$ is a coherent sheaf on $\PP V$ and $Y \subset \PP V$ is a hypersurface 
    that does not contain any of the associated points of $\ccF$,
    then the sequence \eqref{equ_ideal_structure_sheaf_sequence_for_hypersurface} tensored with $\ccF$ is still exact:
    \[
        0 \to \ccF(s-d) \to \ccF(s) \to \ccF \otimes \ccO_{Y} (s) \to 0.
    \]
\end{prop}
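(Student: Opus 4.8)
The plan is to recognise the claimed sequence as the result of applying $-\tensor\ccF$ to the exact sequence \eqref{equ_ideal_structure_sheaf_sequence_for_hypersurface}. Since $-\tensor\ccF$ is right exact, the sequence $\ccF(s-d)\to\ccF(s)\to\ccF\tensor\ccO_Y(s)\to 0$ is exact and $\ccF\tensor\ccO_Y(s)$ is identified with the cokernel of the first map, which is multiplication by $f$. Thus the whole statement reduces to proving that multiplication by $f$ is \emph{injective} as a map $\ccF(s-d)\to\ccF(s)$; after untwisting by $\ccO_{\PP V}(s)$ this is the assertion that the global section $f\in H^0(\ccO_{\PP V}(d))=S^dV^*$ acts injectively on $\ccF$.

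Injectivity of a sheaf map may be checked on an open cover, so I would work on a standard affine chart $U_{(g)}=\Spec B$ with $B=(\Sym V^*)_{(g)}$; this ring is Noetherian, $\ccF$ restricts there to the sheaf associated to a finitely generated $B$-module $M$, and all twists $\ccO_{\PP V}(*)$ are (noncanonically) trivial on $U_{(g)}$. Fixing such trivialisations, the map ``$\cdot f$'' becomes multiplication by an element $h\in B$ whose vanishing locus $V(h)$ is exactly $Y\cap U_{(g)}$, and I must show $h$ is a nonzerodivisor on $M$. By the standard fact that the set of zerodivisors on a finitely generated module over a Noetherian ring equals the union of its associated primes \cite[Chapter~3]{eisenbud}, it suffices to verify that $h\notin\mathfrak{q}$ for every $\mathfrak{q}\in\Ass_B(M)$.

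Finally I would translate back to the geometric hypothesis. By the definition of associated point recalled before the statement (an element of the stalk whose annihilator is the maximal ideal) together with the compatibility of $\Ass$ with localisation, the primes in $\Ass_B(M)$ are precisely those corresponding to the associated points of $\ccF$ that lie in $U_{(g)}$. For such a point $y$ with corresponding prime $\mathfrak{q}_y$, the condition $h\in\mathfrak{q}_y$ says exactly that $h$ vanishes at $y$, i.e.\ $y\in V(h)=Y$. Since by assumption $Y$ contains none of the associated points of $\ccF$, we get $h\notin\mathfrak{q}$ for all $\mathfrak{q}\in\Ass_B(M)$, so $h$ is a nonzerodivisor on $M$; ranging over a cover by charts $U_{(g)}$ gives injectivity of $\cdot f\colon\ccF(s-d)\to\ccF(s)$, which completes the proof.

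I expect no serious obstacle: the content is entirely the one-line commutative-algebra statement that a local equation of a hypersurface missing all associated primes of a module is a nonzerodivisor on it. The only thing requiring care is the bookkeeping of two identifications --- between associated points of the sheaf $\ccF$ and associated primes of the local module $M$, and between the global section $f$ of $\ccO_{\PP V}(d)$ and the local equation $h$ of $Y$ --- after which the argument is immediate.
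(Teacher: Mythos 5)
Your proposal is correct and takes essentially the same route as the paper: both reduce the statement to injectivity of multiplication by the defining equation of $Y$ and then conclude from the hypothesis that $Y$ misses every associated point of $\ccF$. The only cosmetic difference is that you carry out the argument chart by chart via the commutative-algebra fact that the zerodivisors on a finitely generated module over a Noetherian ring are the union of its associated primes, whereas the paper invokes the equivalent sheaf-level statement that the kernel of the multiplication map would be supported at associated points of $\ccF$, so it suffices to check injectivity near each such point, where $f$ is a unit.
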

\begin{proof}
    It is enough to check that $i:\ccF(s-d) \to \ccF(s)$, the multiplication
    by the defining equation of $Y$, is injective. Its kernel would be
    supported at a subset of associated points of $\ccF$
    by~\cite[5.5L]{vakil_FoAG}, thus it is enough to check that $i$ is
    injective near each associated
    point of $\ccF$. But this is the assumption: every such point $x$ does not lie on $Y$
    so the map $\ccO(s-d)\to \ccO(s)$ is an isomorphism near $x$. Then
    $\ccF(s-d)\to \ccF(s)$ is an isomorphism near $x$ as well.
\end{proof}
We stress, that in
Proposition~\ref{prop_ideal_structure_sheaf_seq_tensored_with_F} we only
assume that $Y$ does not contain the associated points of $\ccF$; in contrast
$Y$ can intersect
the support of $\ccF$.
In particular, if $\ccF$ has only one associated point, which is the generic point of $\PP V$,
  then any $Y$ satisfies the assumption.
In general, if $\ccF$ is arbitrary, and $\kk$ is infinite,
  then there is a hypersurface $Y$ of any degree that avoids all associated points of $\ccF$.
  
We note the following dependence between the Castelnuovo-Mumford regularities of sheaves in a short exact sequence.
    \begin{lem}\label{ref:mumfordreg:lem}
        Let $0\to \ccF'\to \ccF\to \ccF''\to 0$ be a short exact sequence of sheaves on
        $\PP V$ and pick integers $\delta, r \geq 0$. Suppose that
        \begin{enumerate}
            \item $\ccF$ is Castelnuovo-Mumford $\delta$-regular,
            \item The support of $\ccF''$, as scheme with reduced structure, is finite,
            \item $\dim H^{0}(\ccF'') \leq r$.
        \end{enumerate}
        Then $\ccF'$ is Castelnuovo-Mumford $(\delta+r)$-regular.
    \end{lem}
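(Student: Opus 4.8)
The strategy is to peel off the Castelnuovo–Mumford regularity one degree at a time, using a generic hyperplane section to reduce $r$ (the bound on $\dim H^0(\ccF'')$), while keeping track of how the regularity index degrades at each step. Concretely, I would induct on $r$. For the base case $r = 0$: then $H^0(\ccF'') = 0$, and since the support of $\ccF''$ is finite, $\ccF''$ has no associated points other than those in a finite set of closed points; but a coherent sheaf supported on a finite set is determined by its $H^0$ (it equals $\bigoplus_{x}\ccF''_x$ as a global object, or more carefully: a sheaf with finite support is flasque-like and $\ccF'' = 0$ iff $H^0(\ccF'') = 0$). Hence $\ccF'' = 0$, so $\ccF' \simeq \ccF$ is Castelnuovo–Mumford $\delta$-regular, as required.

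For the inductive step, assume the statement for $r-1$. Since Castelnuovo–Mumford regularity is insensitive to base field extension (as remarked in Section~\ref{sec_Castelnuovo_Mumford_reg}), I may assume $\kk$ is infinite. Pick a hyperplane $W \subset V$, i.e.\ a degree-one hypersurface $Y = \PP W$, avoiding all associated points of $\ccF$, of $\ccF'$, of $\ccF''$, and also avoiding the finitely many points of $\Supp \ccF''$ — this is possible because all these are finite sets and $\kk$ is infinite. By Proposition~\ref{prop_ideal_structure_sheaf_seq_tensored_with_F} applied with $d = 1$, tensoring $0 \to \ccO_{\PP V}(-1) \to \ccO_{\PP V} \to \ccO_Y \to 0$ with the three sheaves in the given short exact sequence yields a commutative diagram with exact rows and columns; in particular $0 \to \ccF'|_Y \to \ccF|_Y \to \ccF''|_Y \to 0$ is exact on $Y \simeq \PP W$. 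Now $\ccF|_Y$ is Castelnuovo–Mumford $\delta$-regular on $\PP W$ (restriction to a hyperplane avoiding the associated points preserves the regularity index — this is the standard inductive behaviour of CM-regularity, e.g.\ via the long exact sequence coming from $0 \to \ccF(\delta-1-i) \to \ccF(\delta-i) \to \ccF|_Y(\delta-i) \to 0$). The support of $\ccF''|_Y$ is still finite (a subset of the already finite $\Supp\ccF''$), but now $\dim H^0(\ccF''|_Y) \leq r - 1$: indeed $Y$ avoids $\Supp \ccF''$ entirely since we chose it to avoid those finitely many points, so actually $\ccF''|_Y = 0$ — wait, that over-kills it. Let me instead only require $Y$ to avoid the associated points of $\ccF$, $\ccF'$, $\ccF''$ but to pass through exactly one of the (reduced) support points of $\ccF''$ contributing to $H^0$; then the section of $\ccF''$ not vanishing at that point dies upon restriction, giving $\dim H^0(\ccF''|_Y) \leq r-1$. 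By the inductive hypothesis on $\PP W \simeq \PP^{N-1}$, the sheaf $\ccF'|_Y$ is Castelnuovo–Mumford $(\delta + r - 1)$-regular.

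The final step is to lift the regularity of $\ccF'|_Y$ back to $\ccF'$ on $\PP V$. From the exact sequence $0 \to \ccF'(s-1) \to \ccF'(s) \to \ccF'|_Y(s) \to 0$ (valid since $Y$ avoids the associated points of $\ccF'$, again Proposition~\ref{prop_ideal_structure_sheaf_seq_tensored_with_F}), the long exact cohomology sequence gives, for each $i > 0$,
\[
    H^i(\ccF'|_Y(s)) \to H^{i+1}(\ccF'(s-1)) \to H^{i+1}(\ccF'(s)).
\]
Knowing $H^i(\ccF'|_Y(\delta + r - 1 - i)) = 0$ for all $i > 0$ (that is CM $(\delta+r-1)$-regularity of $\ccF'|_Y$, and it persists for all larger twists), an ascending induction on $s$ starting from Serre vanishing (which forces $H^{i+1}(\ccF'(s)) = 0$ for $s \gg 0$) shows $H^{i+1}(\ccF'(s-1)) = 0$ for all $s \geq \delta + r - 1 - i$, i.e.\ $H^j(\ccF'(m)) = 0$ for $j \geq 1$ and $m \geq \delta + r - j$. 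Reindexing, this is exactly Castelnuovo–Mumford $(\delta + r)$-regularity of $\ccF'$, completing the induction. I expect the main obstacle to be bookkeeping: getting the shift from $\delta + (r-1)$ on the hyperplane to $\delta + r$ on $\PP V$ exactly right, and making the "$Y$ avoids associated points but catches a support point of $\ccF''$" choice precise enough that $\dim H^0(\ccF''|_Y)$ genuinely drops by one — one must check that a global section of $\ccF''$ nonzero at a chosen support point does not lie in the image of $H^0(\ccF''(-1))$, which is where the hypothesis that $Y$ misses the associated points of $\ccF''$ is used.
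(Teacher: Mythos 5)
Your base case and your final step (lifting Castelnuovo--Mumford regularity of $\ccF'|_Y$ back to $\ccF'$ via the sequence $0\to\ccF'(s-1)\to\ccF'(s)\to\ccF'|_Y(s)\to 0$) are fine, but the heart of the inductive step has a genuine gap, and I do not see how to repair it in the form you propose. The problem is your hyperplane $Y$. For a coherent sheaf $\ccF''$ with finite support, every point of the support \emph{is} an associated point (the stalk is a finite-length module, so its only associated prime is the maximal ideal); hence the requirement ``$Y$ avoids the associated points of $\ccF''$ but passes through a support point of $\ccF''$'' is self-contradictory. If you drop the avoidance requirement for $\ccF''$ and let $Y$ pass through a support point $x$, two things break. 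First, restricting the short exact sequence to $Y$ is only right-exact: the snake lemma applied to multiplication by the linear form $h$ cutting out $Y$ gives an exact sequence $0\to\ker(h|_{\ccF''})\to\ccF'|_Y\to\ccF|_Y\to\ccF''|_Y\to 0$, and $\ker(h|_{\ccF''})\neq 0$ whenever $x\in Y$, so your claimed exactness of $0\to\ccF'|_Y\to\ccF|_Y\to\ccF''|_Y\to 0$ fails. Second, and more fatally, $\dim H^0(\ccF''|_Y)$ need not drop: at a support point $x\in Y$ the stalk of $\ccF''|_Y$ is $M_x/hM_x$ with $h\in\gotm_x$, and $\dim_\kk M_x/hM_x=\dim_\kk\ker(h|_{M_x})$, which can equal $\dim_\kk M_x$. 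The simplest counterexample is $\ccF''=\ccO_p$ for a reduced point $p$: any hyperplane through $p$ gives $\ccF''|_Y\simeq\ccO_p$ again, so $\dim H^0(\ccF''|_Y)=1=r$ and the induction does not close.

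For comparison, the paper's proof avoids this entirely by choosing $\PP W$ \emph{disjoint} from the (finite) support of $\ccF''$, so that $\ccF''\otimes\ccO_{\PP W}=0$ and $\ccF'\otimes\ccO_{\PP W}=\ccF\otimes\ccO_{\PP W}$ is Castelnuovo--Mumford $\delta$-regular; the quantity $r$ is not reduced geometrically but enters only through the surjection $H^0(\ccF''(\delta-1))\onto H^1(\ccF'(\delta-1))$, which bounds $\dim H^1(\ccF'(\delta-1))$ by $r$, after which one shows $\dim H^1(\ccF'(s))$ strictly decreases in $s$ until it vanishes. You might try to reorganise your induction along those lines (induct on $\dim H^1(\ccF'(\delta-1))$ rather than on $\dim H^0(\ccF'')$), but as written the step ``restrict to a hyperplane to decrease $r$ by one'' is not valid.
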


    \begin{proof}
        The sheaf $\ccF''$ is Castelnuovo-Mumford $(\delta-1)$-regular by Example~\ref{exam_finite_support_regular}.
        Looking at the piece 
        \[
          H^{i-1}(\ccF''(s)) \to H^{i}(\ccF'(s))\to H^{i}(\ccF(s))
        \]
          of the long exact sequence of cohomologies one sees that $H^i(\ccF'(s)) = 0$
          whenever $i > 1$ and $s\geq \delta -i$, in particular
        $H^i(\ccF'(\delta+r-i)) = 0$ for all $i > 1$. Moreover we deduce that if $H^1(\ccF'(s-1)) = 0$ for
        some $s\geq \delta$ then $\ccF'$ is Castelnuovo-Mumford $s$-regular.

        By a base-change we may assume that the underlying field $\kk$ is infinite.
        Take a general hyperplane $W \subset V$.
        Then $\PP W$ is disjoint from the associated points of all three sheaves 
        $\ccF'$, $\ccF$, $\ccF''$. 
        Hence $\PP W$  is disjoint from the support of $\ccF''$ and $\ccF'' \otimes \ccO_{\PP W}$ is the zero sheaf by 
           a trivial instance of Proposition~\ref{prop_ideal_structure_sheaf_seq_tensored_with_F}.
        Thus we have the isomorphism $\ccF'\otimes \ccO_{\PP W} = \ccF\otimes \ccO_{\PP W}$ 
           and the latter is Castelnuovo-Mumford  $\delta$-regular by a remark before \cite[Lemma~5.1]{fantechi_et_al_fundamental_ag}.
        Fix any $s \geq \delta$.
        The long exact sequence of cohomologies of 
        $0\to \ccF'(s-1)\to \ccF'(s)\to \ccF'\otimes \ccO_{\PP W}(s)\to 0$ 
          (Proposition~\ref{prop_ideal_structure_sheaf_seq_tensored_with_F}) gives
        \[
            0\to H^0(\ccF'(s-1))\to H^0(\ccF'(s)) \overset{\rho_s}\to H^0(\ccF'\otimes \ccO_{\PP W}(s))\to H^1(\ccF'(s-1))\to
            H^1(\ccF'(s))\to 0.
        \]
        Suppose that $\dim H^{1}(\ccF'(s-1)) = \dim H^1(\ccF'(s))$. 
        Then $\rho_s$ is onto. 
        But also 
        \[
          H^0(\ccO(1))\tensor H^0(\ccF'\otimes \ccO_{\PP W}(s))\to H^{0}(\ccF'\otimes \ccO_{\PP W}(s+1))
        \]
        is onto \cite[Lemma~5.1a]{fantechi_et_al_fundamental_ag},
           which implies that $\rho_{s+1}$ is onto, 
           and by induction $\rho_{t}$ is onto for $t\geq s$.
        Then $H^{1}(\ccF'(s-1)) = H^1(\ccF'(t)) = 0$ for $t\gg 0$.
        We deduce that $H^1(\ccF'(s-1)) \neq 0$ implies $\dim H^1(\ccF'(s)) < \dim H^1(\ccF'(s-1))$. 
        Moreover  $H^1(\ccF'(s-1)) = 0$ implies $H^1(\ccF'(s)) = 0$.

        Now $H^0(\ccF''(s))\to H^{1}(\ccF'(s))$ is surjective for $s\geq \delta-1$, 
          so $\dim H^1(\ccF'(\delta-1)) \leq \dim H^0(\ccF''(\delta-1)) = \dim H^0(\ccF'') \leq r$ by assumption.
        Then $\dim H^1(\ccF'(\delta+r-1)) = 0$, thus $\ccF'$ is Castelnuovo-Mumford $(\delta+r)$-regular.
    \end{proof}

\begin{cor}\label{cor_finite_schemes_r_regular}
   Suppose $R \subset \PP V$ is a finite scheme of degree $r$.
   Then $\ccI_R$ is Castelnuovo-Mumford $r$-regular.
\end{cor}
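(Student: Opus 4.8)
I want to prove that $\ccI_R$ is Castelnuovo-Mumford $r$-regular for a finite scheme $R\subset\PP V$ of degree $r$. The natural tool is the short exact sequence of sheaves
\[
0\to \ccI_R\to \ccO_{\PP V}\to \ccO_R\to 0,
\]
together with Lemma~\ref{ref:mumfordreg:lem}, which is tailor-made for exactly this situation: it says that if $\ccF$ is $\delta$-regular, $\ccF''$ has finite support (as a reduced scheme) and $\dim H^0(\ccF'')\le r$, then $\ccF'$ is $(\delta+r)$-regular.

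First I would set $\ccF = \ccO_{\PP V}$, $\ccF' = \ccI_R$ and $\ccF'' = \ccO_R$ in that lemma. The structure sheaf $\ccO_{\PP V}$ is Castelnuovo-Mumford $0$-regular: indeed $H^i(\ccO_{\PP V}(-i))=0$ for all $i>0$ since $H^i(\ccO_{\PP V}(j))=0$ for $0<i<N$ and all $j$, and $H^N(\ccO_{\PP V}(-i)) = 0$ for $i<N+1$, in particular for $i\le N$. So we may take $\delta = 0$. Next, the support of $\ccO_R$ is $R$ with its reduced structure, which is finite since $R$ is a finite scheme; this checks hypothesis (ii). Finally, $H^0(\ccO_R) = H^0(R,\ccO_R)$ has dimension equal to $\deg R = r$ (as noted in Example~\ref{exam_Hilbert_function_of_subscheme_drops_by_at_most_r_minus_r_prime} and the surrounding discussion), and higher cohomology vanishes because $R$ is affine, so $\dim H^0(\ccO_R)\le r$, giving hypothesis (iii). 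Lemma~\ref{ref:mumfordreg:lem} then yields that $\ccI_R$ is Castelnuovo-Mumford $(0+r) = r$-regular, which is exactly the claim.

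There is essentially no obstacle here: the statement is an immediate corollary of Lemma~\ref{ref:mumfordreg:lem} once one supplies the $0$-regularity of $\ccO_{\PP V}$ and the degree computation $\dim_\kk H^0(R,\ccO_R) = r$. If one wanted to avoid citing the cohomology vanishing for $\ccO_{\PP V}$ explicitly, one could instead note that $\ccO_{\PP V}$ is $1$-regular (which is standard and cheap) and conclude $(r+1)$-regularity — but the sharper bound $r$ follows from the genuinely optimal $\delta = 0$, so I would record the short verification that $H^i(\ccO_{\PP V}(-i))=0$ for $i>0$. The only mild subtlety is the degenerate case $N=0$, where $\PP V$ is a point, $R = \PP V$, $r=1$, and $\ccI_R = 0$, which is $\delta$-regular for every $\delta$ trivially; this is harmless and can be left implicit or dispatched in one line.
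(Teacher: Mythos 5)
Your proof is correct and follows exactly the paper's argument: apply Lemma~\ref{ref:mumfordreg:lem} to the sequence $0 \to \ccI_R \to \ccO_{\PP V} \to \ccO_R \to 0$ with $\delta = 0$. The extra verifications you supply (the $0$-regularity of $\ccO_{\PP V}$ and $\dim H^0(\ccO_R) = r$) are the details the paper leaves implicit, and they are all accurate.
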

\begin{proof}
   The sequence $0 \to \ccI_R \to \ccO_{\PP V} \to \ccO_{R} \to 0$ satisfies assumptions of Lemma~\ref{ref:mumfordreg:lem} 
     with $\delta =0$.
\end{proof}

As a conclusion, we obtain a strengthening of
\cite[``Main Lemma'', Lemma~1.2]{jabu_ginensky_landsberg_Eisenbuds_conjecture}.
    \begin{lem}\label{ref:bglmainlemma:lem}
        Let $X\subseteq \mathbb{P}_{\kk}^n$ be a subscheme and suppose that $\ccI_X$ is Castelnuovo-Mumford $\delta$-regular for some 
        $\delta \ge 0$. 
        Let $R$ be a $0$-dimensional subscheme of $\mathbb{P}^n$ of degree at most $r = \deg R$.
        Then
        \begin{equation}\label{eq:spanintersections}
            \sspan{\nu_d(R)} \cap \sspan{\nu_d(X)} = \sspan{\nu_{d}(X\cap R)}
        \end{equation}
        for all integers $d$ satisfying $d \geq \delta + r - 1$.
    \end{lem}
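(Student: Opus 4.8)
The plan is to deduce Lemma~\ref{ref:bglmainlemma:lem} from the Grassmann lemma for linearly normal embeddings (Lemma~\ref{lem_properties_of_lin_indep_emb}\ref{it:linindepintersect}) applied to the pair $\nu_d(X)$, $\nu_d(R)$ inside $\PP(\DPV{d})$. To invoke that lemma I need four linear-normality statements: that $\nu_d(X)$, $\nu_d(R)$, $\nu_d(X)\cup \nu_d(R) = \nu_d(X\cup R)$, and $\nu_d(X\cap R)=\nu_d(X)\cap\nu_d(R)$ are all linearly normal in $\PP(\DPV{d})$. The first step is therefore to record that $\nu_d(X)\cap\nu_d(R)=\nu_d(X\cap R)$ and $\nu_d(X)\cup\nu_d(R)=\nu_d(X\cup R)$ as subschemes — this is immediate because $\nu_d$ is a closed embedding, so it commutes with scheme-theoretic intersection and union of closed subschemes.

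Next I would establish the linear-normality inputs. For $X$ itself: $\ccI_X$ is Castelnuovo-Mumford $\delta$-regular by hypothesis, and $\delta \le \delta + r - 1 \le d$, so by Corollary~\ref{cor_regular_implies_independent_embedding} (applied with $d \ge \delta - 1$) the reembedding $\nu_d(X)$ is linearly normal. For the finite schemes $R$, $X\cap R$, $X\cup R$ — all of degree at most\ldots\ well, $R$ and $X\cap R$ have degree $\le r$, but $X\cup R$ need not be finite. So instead I handle $X\cup R$ directly: there is a short exact sequence $0\to \ccI_{X\cup R}\to \ccI_X\to \ccQ\to 0$ where $\ccQ$ is supported on (a subscheme of) $R$, hence on a finite set, with $\dim H^0(\ccQ)\le \deg R = r$. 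By Lemma~\ref{ref:mumfordreg:lem}, $\ccI_{X\cup R}$ is Castelnuovo-Mumford $(\delta+r)$-regular, so $\nu_d(X\cup R)$ is linearly normal for $d \ge \delta + r - 1$, which is exactly our range of $d$. Finally, linear normality of $\nu_d(R)$ and of $\nu_d(X\cap R)$ follows from that of $\nu_d(X\cup R)$: both are subschemes of the finite scheme $X\cup R$\ldots\

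\emph{Wait} — $X\cup R$ is not finite. So I instead argue: $\nu_d(R)$ is linearly normal because $\ccI_R$ is Castelnuovo-Mumford $r$-regular by Corollary~\ref{cor_finite_schemes_r_regular}, and $r \le \delta + r \le d+1$, so Corollary~\ref{cor_regular_implies_independent_embedding} applies (here I use $\delta \ge 0$); and $\nu_d(X\cap R)$ is linearly normal because $X\cap R$ is a subscheme of the finite scheme $R$, both of which reembed linearly normally, so Lemma~\ref{lem_properties_of_lin_indep_emb}\ref{item_R_finite_lin_indep_then_any_subscheme_lin_indep} gives linear normality of the subscheme $\nu_d(X\cap R)\subset \nu_d(R)$. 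With all four linear-normality statements in hand, Lemma~\ref{lem_properties_of_lin_indep_emb}\ref{it:linindepintersect} yields $\sspan{\nu_d(X)}\cap\sspan{\nu_d(R)} = \sspan{\nu_d(X)\cap\nu_d(R)} = \sspan{\nu_d(X\cap R)}$, which is \eqref{eq:spanintersections}.

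The main obstacle, and the step deserving the most care, is the regularity bookkeeping for $X\cup R$: one must check that the quotient sheaf $\ccI_X/\ccI_{X\cup R}$ really is supported on a finite scheme and has global sections of dimension at most $r$, so that Lemma~\ref{ref:mumfordreg:lem} applies with the right parameters and produces exactly the bound $\delta + r$ matching the hypothesis $d \ge \delta + r - 1$. Once the four linear-normality facts are correctly assembled over the stated range of $d$, the conclusion is a formal application of the Grassmann lemma; the only subtlety is being consistent about using the version of Corollary~\ref{cor_regular_implies_independent_embedding} that requires $d \ge \delta - 1$ versus the crude inequalities $\delta \le d$ and $r \le d$, all of which follow from $d \ge \delta + r - 1$ together with $\delta, r \ge 0$.
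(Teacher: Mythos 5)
Your proposal is correct and follows essentially the same route as the paper: establish Castelnuovo-Mumford $(\delta+r)$-regularity of $\ccI_{X\cup R}$ via the exact sequence $0\to\ccI_{X\cup R}\to\ccI_X\to\ccJ\to 0$ and Lemma~\ref{ref:mumfordreg:lem}, deduce linear normality of all four Veronese reembeddings for $d\ge\delta+r-1$, and conclude by the Grassmann lemma. The only (harmless) deviation is that you obtain linear normality of $\nu_d(X\cap R)$ from Lemma~\ref{lem_properties_of_lin_indep_emb}\ref{item_R_finite_lin_indep_then_any_subscheme_lin_indep} as a subscheme of the finite linearly normal $\nu_d(R)$, whereas the paper applies Corollary~\ref{cor_finite_schemes_r_regular} to $X\cap R$ directly.
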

Our lemma is slightly stronger than in the original statement \cite[Lemma~1.2]{jabu_ginensky_landsberg_Eisenbuds_conjecture}: 
   instead of $\delta$ there is the Gotzmann number of the Hilbert polynomial of $X$, 
   which is a possibly larger integer.
Moreover, we state it over any base field $\kk$.

    \begin{proof}
        First we claim that $\ccI_{X\cup R}$ is Castelnuovo-Mumford $(\delta+r)$-regular.
        Indeed, in the exact sequence
        \[
           0\to \ccI_{X\cup R} \to \ccI_X \to \ccJ\to 0.
        \]
        The quotient sheaf $\ccJ$ has a finite support contained in $R$.
        Thus Lemma~\ref{ref:mumfordreg:lem} implies $\ccI_{X\cup R}$ is Castelnuovo-Mumford $(\delta+\dim H^0(\ccJ))$-regular.
        Thus it sufficies to prove $\dim H^0(\ccJ) \leq r$.
        But the composition morphism $\ccI_X\to \ccO_X\to \ccO_R$ descends to an injection $\ccJ\to \ccO_R$, 
        thus $\dim H^0(\ccJ) \leq \dim H^0(\ccO_R) = \deg R = r$,
        which finishes the proof of the claim.
        
        Moreover, since $\delta \ge 0$, the ideal sheaves $\ccI_{R}$ and $\ccI_{X \cap R}$ are $\delta + r$ 
           regular by Corollary~\ref{cor_finite_schemes_r_regular}.
        By Corollary~\ref{cor_regular_implies_independent_embedding} 
           all schemes $\nu_d(X)$,  $\nu_d(R)$, $\nu_d(X\cup R)$, $\nu_d(X\cap R)$  
           are embedded in a linearly normal way for $d\ge \delta +r -1$.
        By Grassmann Lemma (Lemma~\ref{lem_properties_of_lin_indep_emb}), the equality in \eqref{eq:spanintersections} holds.
   \end{proof}
   
  As a side remark we also mention a classical fact that sufficiently high degree Veronese reembeddings 
    of a projective scheme is linearly normal.
  \begin{lem}\label{lem_regular_then_complete_linear_system}
     Suppose $X\subset \PP V$ is a projective embedded scheme (possibly embedded by a non-complete linear system),
        and  $\ccI_X$ is Castelnuovo-Mumford $\delta$-regular.
     Then for all $d\ge \delta -1$ the Veronese reembedding $\nu_d(X) \subset \sspan{\nu_d(X)}\subset \DPV{d}$ 
        is embedded by a complete linear system.
  \end{lem}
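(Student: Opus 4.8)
For $X\subset \PP V$ with $\ccI_X$ being Castelnuovo-Mumford $\delta$-regular, and for all $d\ge \delta-1$, the Veronese reembedding $\nu_d(X)\subset \sspan{\nu_d(X)}$ is embedded by a complete linear system, i.e. $\nu_d(X)$ is linearly normal in $\sspan{\nu_d(X)}$.

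The plan is to observe that this lemma is, after unwinding definitions, essentially a restatement of Corollary~\ref{cor_regular_implies_independent_embedding}, and to supply the short cohomological bridge. First I would recall what it means for $\nu_d(X)\subset \sspan{\nu_d(X)}$ to be embedded by a \emph{complete} linear system: the linear system cutting out this embedding is the image of the restriction map $\sspan{\nu_d(X)}^{*}\to H^0(\nu_d(X),\ccO_{\nu_d(X)}(1))$, which is injective by the very definition of the linear span, so completeness amounts to surjectivity, i.e.\ to the equality $\dim\sspan{\nu_d(X)}+1=\dim H^0(\ccO_X(d))$, where I use the identification $\ccO_{\nu_d(X)}(1)\simeq \ccO_X(d)$ under $\nu_d(X)\simeq X$. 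This is exactly the condition that $\nu_d(X)$ is linearly normal in the sense of Section~\ref{sec_embedded_proj_geom}, and it does not depend on whether one regards $\nu_d(X)$ inside $\PP(\DPV{d})$ or inside its own span, since both give the same line bundle $\ccO_X(d)$ and the same span.

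Next I would invoke Equation~\eqref{eq:dimensionofspan}, which gives $\dim\sspan{\nu_d(X)}+1 = H_{X\subset \PP V}(d) = \dim H^0(\ccO_X(d)) - \dim H^1(\ccI_X(d))$. Hence the completeness condition holds if and only if $H^1(\ccI_X(d))=0$, and it remains only to verify this vanishing under the hypothesis $d\ge \delta-1$.

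For that final step: since $\ccI_X$ is Castelnuovo-Mumford $\delta$-regular, it is Castelnuovo-Mumford $d'$-regular for every $d'\ge\delta$ by \cite[Lemma~5.1(b)]{fantechi_et_al_fundamental_ag}; applying this with $d'=d+1$, which is legitimate because $d\ge\delta-1$, and reading off the $i=1$ term of the definition of $(d+1)$-regularity yields $H^1(\ccI_X((d+1)-1)) = H^1(\ccI_X(d)) = 0$, as required. There is no genuine obstacle in this argument; the only care needed is in matching the three equivalent phrasings — ``complete linear system'', ``linearly normal'', and ``$H^1(\ccI_X(d))=0$'' — via Equation~\eqref{eq:dimensionofspan}. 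Alternatively one may simply cite Corollary~\ref{cor_regular_implies_independent_embedding} directly and add the one-line remark that linear normality of $\nu_d(X)$ is the same as its being embedded by the complete linear system $|\ccO_X(d)|$.
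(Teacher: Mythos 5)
Your proof is correct and follows essentially the same route as the paper: the paper's own (two-line) proof likewise identifies the relevant map as $H^0(\ccO_{\PP V}(d))\to H^0(\ccO_X(d))$ and deduces its surjectivity from $H^1(\ccI_X(d))=0$, which is exactly the vanishing you extract from $(d+1)$-regularity. The only difference is that you spell out the bookkeeping (via Equation~\eqref{eq:dimensionofspan} and the equivalence with linear normality) that the paper leaves implicit.
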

  \begin{proof}
     The map $H^0(\ccO_{\PP (\DPV{d})}(1)) \to H^0(\ccO_{\nu_d(X)}(1))$ is equal to  $H^0(\ccO_{\PP V}(d)) \to  H^0(\ccO_{X}(d))$.
     Since $H^{1}(\ccI(d)) =0$, the latter (and hence also the first) is surjective. 
  \end{proof}

  \section{Hilbert scheme of finite subschemes}   \label{sec_Hilbert}
  
Hilbert scheme of a scheme $X$ is a moduli space for all flat families of subschemes of $X$.  
There are many excellent resources about categorical properties and existence (for projective scheme $X$) of such scheme,
see for instance~\cite{fantechi_et_al_fundamental_ag, miller_sturmfels_combinatorial_commutative_algebra,
    haiman_sturmfels_multigraded_Hilb,
nakajima_lectures_on_Hilbert_schemes}.
In this section we briefly summarise the properties for the Hilbert scheme of finite subschemes.

\subsection{Definition and constructions of the Hilbert
scheme}\label{sect_Hilbert_intro}

Suppose $Z$ and $T$ are any schemes and $\pi\colon Z\to T$ is a flat finite morphism.
Then the push forward sheaf $\pi_*\ccO_Z$ is locally
free~\cite[Proposition~12.19]{gortz_wedhorn_algebraic_geometry_I}, so the rank of the structure ring of
the fibre $\ccO_{Z_t}$ (as a vector space over the residue field of $t$) is
locally constant; in other words the degree of $\ccO_{Z_t}$ is locally constant.
We say that $Z\to T$ has \emph{degree} $r$ if this degree is constant and equal
to $r$ globally, i.e.~constant over all connected components of $T$.
In particular, if $T$ is connected, and $\pi\colon Z\to T$ is a flat finite morphism, 
then $Z \to T$ has some degree $r$.

Fix a scheme $X$ and suppose $r\ge 1$ is an integer.
Define $\Hilb_r(X)$ to be the \emph{Hilbert scheme} of finite degree $r$
subschemes of $X$. It is also called the \emph{Hilbert scheme of $r$ points on
$X$}.
It comes
with a subscheme
$\ccU_r\subset X \times \Hilb_r(X)$ satisfying the following universal property.
If $T$ is any scheme, and  $Z \subset X \times  T$ a subscheme such that
  the projection $Z \to T$ is flat and finite of degree $r$, 
then there exists a unique morphism $T \to \Hilb_r(X)$ such that $Z = \ccU_r \times_{\Hilb_r(X)} T$.
  
If such Hilbert scheme exists, then its $\kk$-rational points are in $1$-to-$1$ correspondence with
  finite subschemes of $X$ of degree $r$. 
If $R \subset X$ is a finite subscheme of degree $r$, 
  then by $[R] \in \Hilb_r(X)$ we denote the corresponding $\kk$-rational point of the Hilbert scheme.

The existence of Hilbert schemes is established for several important classes of schemes $X$.

If $X=\PP V$ is a projective space, then $\Hilb_r(\PP V)$ is constructed as a closed subscheme of a Grassmannian.
Very roughly, a finite subscheme $R$ is mapped to the linear span $\sspan{\nu_{2r-1}(R)}$, which uniquely determines $R$ by 
  Lemma~\ref{ref:bglmainlemma:lem}.
In particular, $\Hilb_r(\PP V)$ is projective.

If $X \subset \PP V$ is a projective scheme, then $\Hilb_r(X)$ is constructed as a closed subscheme of $\Hilb_r(\PP V)$.
Roughly, the closed condition is given by the containment in $X$. The scheme $\Hilb_r(X)$ for projective $X$ is also projective.
In particular, it has finitely many irreducible components.

If $X$ is an open subset of a projective scheme $\overline{X}$ (for instance,
$X$ is an affine space), 
  then $\Hilb_r(X)$ is an open subset of $\Hilb_r(\overline{X})$; again the open condition is given by containment in $X$.

If $X = \Spec \kk[[\fromto{\alpha_1}{\alpha_n}]]$ or more generally $X = \Spec A$ for any $\kk$-algebra
  $A$, then $\Hilb_r(X)$ exists by~\cite[Theorem~5.4]{gustavsen_laksov_skjelnes__existence_for_affine}.


\begin{example}
   Suppose $r=1$. 
   Then $\Hilb_1(X) = X$ and $\ccU_1 \simeq X$ embedded as the diagonal $\Delta \subset X \times X$ with a
   projection map to $X = \Hilb_1(X)$.
\end{example}

\begin{example}
   Suppose $X=\PP^1_{\kk} = \PP V$ for two a dimensional vector space $V$. 
   Then $\Hilb_r(X) = \PP(S^r V^*) \simeq \PP^r_{\kk}$, and 
   \[
     \ccU_r = (F = 0) \subset \PP(S^r V^*) \times \PP V,
   \]
   where $F$ is the universal form of degree $r$; if we fix homogeneous coordinates $a_i$
   on $\PP(S^r V^*)$ corresponding to a basis $m_i$ of $S^r V^*$, then $F = \sum
   a_i m_i$, where $a_i$ is a function on $\PP(S^rV^*)$ and $m_i$ is a form
   on $\PP V$. Thus $F\in H^0(\ccO_{\PP(S^r V^*)}(1) \boxtimes \ccO_{\PP V}(r))$.
   Over a point $(b_i)\in \PP(S^r V^*)$ the corresponding fibre is a subscheme
   of $X = \PP_{\kk}^1$, cut out by the equation $\sum b_i m_i$.
   This example vastly generalises; the only assumption we need is that $X$ is
   a smooth curve, see~\cite{fantechi_et_al_fundamental_ag}.
\end{example}

\begin{example}\label{ex_Hilb2_of_a_line_with_an_embedded_pt}
   Suppose $r=2$ and $X \subset \PP^2_{\kk}$ is a subscheme defined by $I(X) = (\alpha^2, \alpha\beta)$.
   Then $\Hilb_2 (X)$ consists of two irreducible components: 
      one of them, isomorphic to $\PP^2_{\kk}$, represents schemes of degree
      two contained in the line $\alpha =0$,
      and the other is a $\PP^1_{\kk}$ and represents points of degree two supported at the point $x_0$ defined by $\alpha=\beta=0$.
   The two components intersect in a $\kk$-rational point which corresponds to the scheme whose ideal is $(\alpha, \beta^2)$.
   In addition there is an embedded component supported at the line in $\PP^2_{\kk}$.
   This line corresponds to schemes whose support contains $x_0$, and the nonreduced structure comes from an infinitesimal deformations of $x_0$, 
      i.e.~from the nonreduced structure of $X$ at $x_0$.
\end{example}

\subsection{Base change of Hilbert scheme}\label{sec_Hilb_base_change}

The categorical properties of the Hilbert scheme imply that it is a very natural object
  and it is invariant under base changes.
To explain a special case of this phenomena, 
  denote $\Hilb_{r}(X/\kk):=\Hilb_{r}(X)$ to stress that the construction concerns schemes over $\kk$.
Now if $\KK$ is another field containing $\kk$ as a subfield, then let $X_{\KK}:=X \times \Spec \KK$.
We can now consider $\Hilb_{r}(X_{\KK}/\KK)$, where the role of $\kk$ is replaced by $\KK$.
Similarly, denote the universal families by $\ccU_{r,\kk}$ and $\ccU_{r, \KK}$ respectively.

\begin{prop}\label{prop_base_change_for_Hilb_r}
   Suppose $X$ is a scheme such that $\Hilb_r(X/\kk)$ exists.
   Then 
   \begin{align*}
      \Hilb_r(X_{\KK}/ \KK) &= \Hilb_{r}(X/\kk) \times \Spec \KK \text{ and } \\
      \ccU_{r, \KK}  = \ccU_{r, \kk} \times \Spec \KK &=  \ccU_{r, \kk} \times_{\Hilb_{r}(X/\kk)} \Hilb_{r}(X_{\KK}/\KK).
   \end{align*}
   Moreover, all the relevant maps are commutative:
   \[
     \begin{tikzcd}
        \ccU_{r, \KK} \arrow{r}{}\arrow{d}{}& \ccU_{r, \kk}\arrow{d}{} \\
        X_{\KK} \times_{\Spec \KK} \Hilb_r(X_{\KK}/ \KK) \arrow{r}{}\arrow{d}{} & 
          X \times \Hilb_r(X_{\kk}/ \kk)\arrow{d}{} \\
        \Hilb_r(X_{\KK}/ \KK) \arrow{r} & \Hilb_r(X/ \kk)  
     \end{tikzcd}
   \]
\end{prop}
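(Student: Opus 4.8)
The plan is to verify the stated isomorphisms by checking the universal property. Let me write $H := \Hilb_r(X/\kk)$ with universal family $\ccU_{r,\kk} \subset X \times H$, and set $H_{\KK} := H \times_{\Spec\kk} \Spec\KK$. First I would form the base-changed family $\ccU' := \ccU_{r,\kk} \times_{\Spec\kk}\Spec\KK$, viewed as a closed subscheme of $X_{\KK}\times_{\Spec\KK} H_{\KK}$ (note $X\times_\kk H\times_\kk\Spec\KK = X_{\KK}\times_{\Spec\KK} H_{\KK}$, using associativity and symmetry of fibre products from Section~\ref{sec_product_and_base_change}). The projection $\ccU' \to H_{\KK}$ is the base change of the flat finite degree-$r$ morphism $\ccU_{r,\kk}\to H$, hence it is again flat, finite, and of degree $r$ — flatness and finiteness are stable under base change, and the degree is read off fibrewise so it is unchanged. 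Thus $(\ccU' \subset X_{\KK}\times H_{\KK}) \to H_{\KK}$ is a flat finite family of degree-$r$ subschemes of $X_{\KK}$ over the $\KK$-scheme $H_{\KK}$.

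Next I would show $(H_{\KK}, \ccU')$ satisfies the universal property defining $\Hilb_r(X_{\KK}/\KK)$. Suppose $T$ is any $\KK$-scheme and $Z \subset X_{\KK} \times_{\Spec\KK} T$ is a flat finite degree-$r$ family. Regarding $T$ as a $\kk$-scheme via $\Spec\KK\to\Spec\kk$, we have $X_{\KK}\times_{\Spec\KK} T = X \times_{\Spec\kk} T$, so $Z$ is also a flat finite degree-$r$ family of subschemes of $X$ over the $\kk$-scheme $T$. By the universal property of $H = \Hilb_r(X/\kk)$ there is a unique $\kk$-morphism $g\colon T \to H$ with $Z = \ccU_{r,\kk}\times_H T$. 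Because $T$ is a $\KK$-scheme, $g$ factors uniquely through a $\KK$-morphism $\tilde g\colon T \to H \times_{\Spec\kk}\Spec\KK = H_{\KK}$ (this is precisely the universal property of the fibre product $H_{\KK}$: a $\kk$-map to $H$ together with the structure $\KK$-map to $\Spec\KK$ gives a unique map to $H_{\KK}$). Then $\ccU' \times_{H_{\KK}} T = (\ccU_{r,\kk}\times_\kk\Spec\KK)\times_{H_{\KK}} T = \ccU_{r,\kk}\times_H T = Z$, again by juggling fibre products. Uniqueness of $\tilde g$ follows from uniqueness of $g$ together with uniqueness in the universal property of $H_{\KK}$. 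This establishes $\Hilb_r(X_{\KK}/\KK) = H_{\KK}$ and identifies the universal family as $\ccU_{r,\KK} = \ccU' = \ccU_{r,\kk}\times_{\Spec\kk}\Spec\KK$; the second displayed equality $\ccU_{r,\kk}\times_{\Spec\kk}\Spec\KK = \ccU_{r,\kk}\times_H H_{\KK}$ is the same fibre-product identity. The commutativity of the diagram is then automatic, as every square is a pullback square by construction.

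The only genuine subtlety — and the step I would be most careful about — is the bookkeeping of fibre products: one must consistently use that for a $\KK$-scheme $T$ the $\kk$-fibre product $X\times_{\Spec\kk} T$ coincides with the $\KK$-fibre product $X_{\KK}\times_{\Spec\KK} T$, and that base change commutes with pullback of the universal family. Everything else is formal manipulation with the representability already assumed in the hypothesis $\Hilb_r(X/\kk)$ exists. No deep geometric input is needed beyond the stability of flatness, finiteness, and degree under base change, all recalled in Section~\ref{sec_product_and_base_change} and Section~\ref{sect_finite_schemes}.
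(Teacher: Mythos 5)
Your argument is correct and complete: the paper itself gives no proof, merely citing \cite[(5), p.~112]{fantechi_et_al_fundamental_ag}, and your verification via the universal property (base-changed family is still flat, finite, of degree $r$; a $\KK$-family over $T$ is the same thing as a $\kk$-family over $T$; factor the classifying map through the fibre product) is exactly the standard argument underlying that citation. The fibre-product bookkeeping you flag as the only delicate point is handled correctly, including the uniqueness of $\tilde g$.
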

\begin{proof}
    See~\cite[(5) pg. 112]{fantechi_et_al_fundamental_ag}.
\end{proof}


\subsection{Comparing embedded smoothings and Hilbert schemes}\label{sect_embedded_smoothings_and_Hilb}

    We now compare the abstract notion of smoothability of a finite scheme $R$ to
    the geometry of Hilbert scheme around $[R]$. 
    This enables us to give the notion of 
    smoothability a more direct touch; prove that there exist nonsmoothable
    schemes and that for a given family the set of smoothable fibres is
    closed.

    Fix a scheme $X$ of finite type over $\kk$ and such that the Hilbert
    scheme $\Hilb_r(X)$ of $r$ points on $X$ exists.
    Consider the subset $\Hilb_r^{\circ}(X) \subset \Hilb_r(X)$ consisting of points $t \in \Hilb_r(X)$ such that the fibre of $\ccU_{r} \to \Hilb_r(X)$ over $t$ is smooth.
    The subset $\Hilb_r^{\circ}(X)$ is open in $\Hilb_r(X)$ by~\cite[top of
    p.~580, item (18)]{gortz_wedhorn_algebraic_geometry_I}. Endow it with a
    reduced scheme structure.
    Let $\Hilb^{sm}_r(X) := \overline{\Hilb_r^{\circ}(X)} \subset \Hilb_r(X)$
    be its Zariski closure in $X$ with reduced scheme
    structure; this is a
    reduced scheme and a reduced union of components of $\Hilb_r(X)$.
    Let $\kk \subset \KK$ be a field extension and $X_{\KK} = X \times \Spec \KK$. 
    Since smoothness is defined on (geometric) fibres, it is faithfully
    preserved by a base change, so we have $\Hilb_{r}^{\circ}(X_{\KK} / \KK) =
    \Hilb_r^{\circ}(X)\times \Spec\KK$.  Therefore also
    \begin{equation}\label{eq:basechangeforsmoothablecomponent}
    \Hilb_{r}^{sm}(X_{\KK}/\KK) = \reduced{\left(\Hilb_{r}^{sm}(X) \times \Spec \KK\right)}.
    \end{equation}
    (Recall, that $\reduced{(\cdot)}$ denotes the maximal reduced subscheme.)
    The scheme $\Hilb_{r}^{sm}(X) \times \Spec \KK$ on the right hand side of \eqref{eq:basechangeforsmoothablecomponent}
      possesses a nonreduced structure only over an imperfect field~$\kk$~\cite[Corollaries~5.56, 5.57]{gortz_wedhorn_algebraic_geometry_I}.
    Therefore, if, for instance, $\cchar \kk =0$  or $\kk$ is algebraically closed, or $\kk$ is a finite field,
      then Equation~\eqref{eq:basechangeforsmoothablecomponent} 
      may be strengthened to 
      \[
        \Hilb_{r}^{sm}(X_{\KK}/\KK) = \Hilb_{r}^{sm}(X) \times \Spec \KK.
      \]
      By Equation~\eqref{eq:basechangeforsmoothablecomponent}
      the scheme $\Hilb_r^{sm}(X)$ is empty only in the degenerate case where $X$ is
      zero-dimensional --- otherwise we may always find infinitely many
      morphisms $\Spec \kkbar \to X$ with pairwise different images, and in particular a tuple of $r$ of them; such
      a tuple gives a map $\Spec \kkbar  \to \Hilb_r(X)$, whose image  is contained in $\Hilb^{sm}_r (X)$.
    \begin{prop}\label{ref:embeddedvsHilbert:prop}
        Let $R \subset X$ be a finite subscheme of degree $r$. Then the
        following are equivalent
        \begin{enumerate}
            \item\label{it:1smooth} $R$ is smoothable in $X$,
            \item\label{it:2smooth} $[R]\in \Hilb_r^{sm}(X)$.
        \end{enumerate}
    \end{prop}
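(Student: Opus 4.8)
The plan is to unwind both sides into concrete statements about the universal family $\ccU_r \to \Hilb_r(X)$ and a one-dimensional base, using Theorem~\ref{ref:goodbaseofsmoothing:thm} to reduce to a very small base. First I would prove \ref{it:1smooth}$\Rightarrow$\ref{it:2smooth}. Suppose $Z \subseteq X \times T \to T$ is an embedded smoothing of $R$ with special point $t$ and generic point $\eta$; by Theorem~\ref{ref:goodbaseofsmoothing:thm} we may take $T = \Spec A$ for $A$ a one-dimensional Noetherian complete local domain. The flat finite degree-$r$ family $Z \subseteq X \times T$ induces, by the universal property of $\Hilb_r(X)$, a morphism $\varphi\colon T \to \Hilb_r(X)$ with $\varphi(t) = [R]$. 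Since $Z_\eta$ is smooth, the point $\varphi(\eta)$ lands in $\Hilb_r^{\circ}(X)$ (using that smoothness of a fibre is detected by the open set from \cite[top of p.~580, item (18)]{gortz_wedhorn_algebraic_geometry_I}), so $\varphi(\eta)\in\Hilb_r^{\circ}(X)$. As $t$ lies in the closure of $\{\eta\}$ in $T$ and $\varphi$ is continuous, $[R]=\varphi(t)$ lies in the closure of $\varphi(\eta)$, hence in $\overline{\Hilb_r^{\circ}(X)} = \Hilb_r^{sm}(X)$. This gives \ref{it:2smooth}.

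For the converse \ref{it:2smooth}$\Rightarrow$\ref{it:1smooth}, suppose $[R]\in\Hilb_r^{sm}(X) = \overline{\Hilb_r^{\circ}(X)}$. Apply Lemma~\ref{lem_finding_a_curve_through_point_and_open_subset} to the scheme $\Hilb_r(X)$, the open subset $U := \Hilb_r^{\circ}(X)$, and the point $[R]$ (which lies in $\overline U$ by hypothesis): there is a one-dimensional Noetherian complete local domain $A'$ with residue field $\kappa = \kappa([R])$ and a morphism $T' = \Spec A' \to \Hilb_r(X)$ sending the closed point $t'$ to $[R]$ and the generic point $\eta'$ into $U$. The subtlety here is that the closed point $t'$ is $\kappa$-rational, not $\kk$-rational, whereas our definition of smoothing (Definition~\ref{ref:abstractsmoothable:def}) requires a $\kk$-rational special point; but $[R]$ is a $\kk$-rational point of $\Hilb_r(X)$, so $\kappa([R]) = \kk$ and this is not an issue. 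Pulling back the universal family along $T' \to \Hilb_r(X)$ gives $Z := \ccU_r \times_{\Hilb_r(X)} T' \subseteq X \times T'$, a flat finite degree-$r$ family with $Z_{t'}\simeq R$ and $Z_{\eta'}$ smooth (since $\eta'$ maps into $\Hilb_r^{\circ}(X)$, where fibres are by definition smooth). This is precisely an embedded smoothing of $R\subseteq X$ in the sense of Definition~\ref{ref:smoothable:def}, so $R$ is smoothable in $X$.

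The main obstacle I anticipate is bookkeeping around non-$\kk$-rational points and residue fields when invoking Lemma~\ref{lem_finding_a_curve_through_point_and_open_subset}: one must be careful that the curve $T'$ produced there has special point with residue field exactly $\kappa([R]) = \kk$ so that the pulled-back family genuinely satisfies the $\kk$-rationality clause in the definition of a smoothing, and that the generic fibre's smoothness is read off correctly from $\eta'$ mapping into the reduced open subscheme $\Hilb_r^{\circ}(X)$ rather than merely its closure. A secondary technical point is confirming that the morphism $T'\to\Hilb_r(X)$ really does hit the generic point of no unwanted component — but this is exactly what Lemma~\ref{lem_finding_a_curve_through_point_and_open_subset} guarantees by placing $\eta'$ inside the open set $U$. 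Everything else is a direct application of the universal property of the Hilbert scheme together with the reduction already carried out in Theorem~\ref{ref:goodbaseofsmoothing:thm}.
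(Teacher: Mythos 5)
Your proof is correct and follows essentially the same route as the paper: both directions translate between flat finite families and morphisms to $\Hilb_r(X)$ via the universal property, using that $\Hilb_r^{sm}(X)$ is the closure of the open locus $\Hilb_r^{\circ}(X)$ of smooth fibres. The only (harmless) difference is in \ref{it:2smooth}$\Rightarrow$\ref{it:1smooth}, where you shrink the base to a curve germ via Lemma~\ref{lem_finding_a_curve_through_point_and_open_subset}, whereas the paper simply takes a whole irreducible component of $\Hilb_r^{sm}(X)$ containing $[R]$ as the base of the smoothing.
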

    \begin{proof}
        To show \ref{it:1smooth}$\implies$\ref{it:2smooth}, pick an embedded smoothing of $R$ in $X$, 
          which is a family $Z \subset X \times T$ flat over an irreducible base $T$,
        such that a fibre over a $\kk$-rational point $t\in T$ is $Z_{t} = R \times \set{t}$.
        In particular, the degree of $Z \to T$ is $r$, 
          and hence it gives a map $\varphi:T\to \Hilb_r(X)$. 
        The base $T$ is irreducible and the fibre of $Z\to T$ over the generic point $\eta \in T$ is smooth.
        Thus the image of the generic point  $\varphi(\eta)$ is contained in $\Hilb_r^{0}(X)$, 
          and the image of any point of $T$ is contained in its closure $\Hilb_r^{sm}(X)$.
        In particular, $\varphi(t) = [R] \in \Hilb_r^{sm}(X)$.

        To show \ref{it:2smooth}$\implies$\ref{it:1smooth},
          pick an irreducible component $T$ of $\Hilb_r^{sm}(X)$ containing $[R]$ 
          and let $Z \subset X \times T$ be the restriction of the universal family $\ccU_r$ to $T$.
        The map $f\colon Z\to T$ is flat, finite and by definition of $\Hilb_r^{sm}(X)$ there exists an open dense $U$ such that $f:f^{-1}(U)\to U$ is smooth.
        Hence in particular the fibre over the generic point of $T$ is smooth, so $Z\to T$ gives an embedded smoothing of $R$.
    \end{proof}

    The following corollary enables us to reduce questions of smoothability to
    schemes over an algebraically closed field.
    \begin{cor}\label{ref:basechangesmoothings:cor}
        Let $R$ be a finite scheme over $\kk$ and $\kk \subset \KK$ be a field extension. 
        Then $R$ is smoothable if and only if the scheme over $\KK$
        \[
            R_{\KK} = R\times \Spec \KK
        \]
        is smoothable over $\KK$ (so with all definitions in
        Section~\ref{sec:smoothability} having $\kk$ replaced with $\KK$).
    \end{cor}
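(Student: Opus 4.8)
The plan is to reduce the statement to a base-change property of Hilbert schemes, which is almost entirely packaged in Proposition~\ref{ref:embeddedvsHilbert:prop}, Proposition~\ref{prop_base_change_for_Hilb_r}, and Equation~\eqref{eq:basechangeforsmoothablecomponent}. First I would fix an embedding $R \subset X := \AA^n_{\kk}$, which exists because $R$ is finite (Section~\ref{sect_finite_schemes}); base changing gives $R_{\KK} \subset X_{\KK} = \AA^n_{\KK}$, and $\deg R = \deg R_{\KK} =: r$. Since $\AA^n$ is smooth, hence formally smooth, over $\kk$ and over $\KK$, Theorem~\ref{ref:abstractvsembedded:thm} applied over each base field turns abstract smoothability of $R$ (resp.\ of $R_{\KK}$) into embedded smoothability of $R$ in $X$ (resp.\ of $R_{\KK}$ in $X_{\KK}$). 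So it suffices to show that $R$ is smoothable in $X$ if and only if $R_{\KK}$ is smoothable in $X_{\KK}$.

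Next I would apply Proposition~\ref{ref:embeddedvsHilbert:prop} over $\kk$ and over $\KK$ to rephrase these as $[R] \in \Hilb_r^{sm}(X)$ and $[R_{\KK}] \in \Hilb_r^{sm}(X_{\KK}/\KK)$ (note $\Hilb_r(X_{\KK}/\KK)$ exists, e.g.\ by Proposition~\ref{prop_base_change_for_Hilb_r}). By Proposition~\ref{prop_base_change_for_Hilb_r} there is an identification $\Hilb_r(X_{\KK}/\KK) = \Hilb_r(X/\kk) \times \Spec\KK$, compatible with the universal families; under the first projection $\pi\colon \Hilb_r(X/\kk) \times \Spec\KK \to \Hilb_r(X/\kk)$ the point $[R_{\KK}]$ maps to $[R]$, because the family $R_{\KK} \subset X_{\KK}$ over $\Spec\KK$ is the base change of the family $R \subset X$ classified by $[R]$, so its classifying map over $\kk$ is $[R]$ composed with $\Spec\KK \to \Spec\kk$. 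Finally, Equation~\eqref{eq:basechangeforsmoothablecomponent} gives $\Hilb_r^{sm}(X_{\KK}/\KK) = \reduced{(\Hilb_r^{sm}(X) \times \Spec\KK)}$, and passing to the reduced structure does not alter the underlying set, so $[R_{\KK}] \in \Hilb_r^{sm}(X_{\KK}/\KK)$ if and only if $[R_{\KK}]$ lies in the closed subset $\Hilb_r^{sm}(X) \times \Spec\KK = \pi^{-1}(\Hilb_r^{sm}(X))$, i.e.\ if and only if $\pi([R_{\KK}]) = [R] \in \Hilb_r^{sm}(X)$. Chaining these equivalences yields the corollary.

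The routine points are $\deg R = \deg R_{\KK}$ and the scheme-theoretic identity $\pi^{-1}(\Hilb_r^{sm}(X)) = \Hilb_r^{sm}(X) \times \Spec\KK$, both immediate. I expect the only genuinely delicate step to be the bookkeeping that the base-changed point $[R_{\KK}]$ of $\Hilb_r(X_{\KK}/\KK)$ really has image $[R]$ under $\pi$ — equivalently that "the $\KK$-scheme attached to the base-changed point" coincides with "the base change $R_{\KK}$ of the $\kk$-scheme $R$" — but this is forced once one quotes the compatibility of universal families in Proposition~\ref{prop_base_change_for_Hilb_r}. Finally note that nothing in this argument requires $\KK/\kk$ to be algebraic, so both Corollary~\ref{ref:basechangesmoothings:cor} and its special case $\KK = \kkbar$ (Proposition~\ref{prop_base_change_equivalence}) follow.
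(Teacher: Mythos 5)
Your argument is correct and, for the substantive direction ($R_\KK$ smoothable $\Rightarrow$ $R$ smoothable), it is essentially the paper's own proof: embed $R$ into a smooth ambient space carrying a Hilbert scheme of points (you use $\AA^n_\kk$, the paper uses $\PP^N_\kk$; both work), then combine Theorem~\ref{ref:abstractvsembedded:thm}, Proposition~\ref{ref:embeddedvsHilbert:prop}, Proposition~\ref{prop_base_change_for_Hilb_r} and Equation~\eqref{eq:basechangeforsmoothablecomponent}. The only divergence is the easy direction, which the paper handles directly by base-changing a smoothing $(Z,R)\to(T,t)$ along $T\times\Spec\KK\to T$, whereas your symmetric treatment through the Hilbert scheme is equally valid and rests both implications on the single identity $\pi([R_\KK])=[R]$, which you correctly reduce to the compatibility of universal families.
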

    \begin{proof}
        Suppose $R$ is smoothable and take its smoothing $(Z, R)\to (T, t)$,
        then $Z_t  \simeq R$. 
        The point $t$ is $\kk$-rational, so it gives a
           $\KK$-rational point $t_{\KK} \in T_{\KK} = T \times \Spec \KK$
           just by a product of the composition $\Spec\KK\to \Spec\kk\to T$ and the identity $\Spec \KK \to \Spec \KK$. 
        Moreover $Z_{t_{\KK}} = Z_t \times \Spec \KK = R_{\KK}$.

        Suppose now $R_{\KK}$ is smoothable as a scheme over $\KK$. 
        Since $R$ is finite, we can embed $R$ into a $\kk$-projective space $\PP^N_{\kk}$ by Lemma~\ref{lem_finite_is_projective}. 
        Since $\PP^N_{\kk}$ is smooth and $\PP^N_{\KK} = \PP^N_{\kk} \times \Spec \KK$,
          by Theorem~\ref{ref:abstractvsembedded:thm} the scheme $R_{\KK}$ is smoothable (over $\KK$)
          in $\PP^N_{\KK} = \PP^N_{\kk} \times \Spec \KK$.
        By Proposition~\ref{ref:embeddedvsHilbert:prop} this means that the point $[R_{\KK}]$ lies in 
        $\Hilb_r^{sm}(\PP^N_{\KK}/\KK)= \reduced{\left(\Hilb_r^{sm}(\PP^N_{\kk}) \times \Spec \KK\right)}$.
        The image of the projection of this point to $\Hilb_r^{sm}(\PP^N_{\kk})$ is equal to $[R]$ 
          by the commutativity of the diagram in Proposition~\ref{prop_base_change_for_Hilb_r}.
        Using Proposition~\ref{ref:embeddedvsHilbert:prop} again, we get that $R$ is smoothable in $X$.
    \end{proof}
    \begin{proof}[Proof of Proposition~\ref{prop_smoothability}]
        Using Corollary~\ref{ref:basechangesmoothings:cor} we may reduce to
        the case $\kk = \kkbar$. In this case the proof was given in
        \cite[Theorem~1.1]{cartwright_erman_velasco_viray_Hilb8}
        and~\cite[Theorem~A]{casnati_jelisiejew_notari_Hilbert_schemes_via_ray_families}.
    \end{proof}
    \begin{remark}\label{remark_smoothability_depends_only_on_char}
        Let $\kk$ be any field and $\kk_0 \subset \kk$ be its prime subfield,
        i.e.,~$\kk_0 = \mathbb{Q}$ or $\kk_0 = \mathbb{F}_p$. In particular
        $\kk_0$ is perfect.
        From~\eqref{eq:basechangeforsmoothablecomponent} it follows that
        $\Hilb^{sm} \mathbb{A}^n_{\kk} = \Hilb^{sm} \mathbb{A}^{n}_{\kk_0}
        \times_{\kk_0} \Spec \kk$.
        Hence the equality 
        $\Hilb^{sm} \mathbb{A}^n_{\kk} = \Hilb \mathbb{A}^{n}_{\kk}$  
         (or $\Hilb^{sm} \mathbb{A}^n_{\kk} = \reduced{(\Hilb \mathbb{A}^{n}_{\kk})}$)
        holds if
        and only if $\Hilb^{sm} \mathbb{A}^{n}_{\kk_0} = \Hilb \mathbb{A}^{n}_{\kk_0}$ 
         (or $\Hilb^{sm} \mathbb{A}^n_{\kk_0} = \reduced{(\Hilb \mathbb{A}^{n}_{\kk_0})}$)
        holds. 
        The latter equality depends only on the characteristic of $\kk$.
        From Proposition~\ref{ref:embeddedvsHilbert:prop} it follows that the
        validity of the statement \emph{all $\kk$-schemes of degree $r$ are
        smoothable} 
           is implied by the same statement with $\kk$ replaced with the algebraic closure $\overline{\kk_0}$, 
           which depends only on $r$ and the characteristic of $\kk$.
        Also the statement \emph{all Gorenstein $\kk$-schemes of degree $r$ are
        smoothable} is analogously implied by a statement over $\overline{\kk_0}$,
          because the Gorenstein property is faithfully preserved under base change
          (Proposition~\ref{prop_Gorenstein_base_change}).
    \end{remark}

\begin{example}[nonsmoothable schemes of high degree over any base field
    $\kk$]\label{ref:nonsmoothableOfHighDegree:ex}
    \newcommand{\mm}{\mathfrak{m}}%
    \newcommand{\Flarge}{\mathcal{F}}%
    We will now recall that nonsmoothable subschemes of
    $\mathbb{A}^n_{\kk}$ for every $n\geq 3$ and every field $\kk$ exist. The
    proof is nonconstructive. We follow~\cite{iarrobino__reducibility}.

    Consider finite subschemes $R \subset \mathbb{A}^n$ of fixed degree $r$. By
    Theorem~\ref{ref:abstractvsembedded:thm} and
    Proposition~\ref{ref:embeddedvsHilbert:prop} every such an $R$ is smoothable if and
    only if $[R]$ belongs to $\Hilb_r^{sm}(X)$.
    Suppose we constructed a family $\Flarge \subset \Hilb_r(X)$ such
    that $\dim \Flarge > d\cdot \dim X$. Then $\Flarge \not\subset
    \Hilb_r^{sm}(X)$, hence a general element of $\Flarge$ is nonsmoothable.
    We will construct such an $\Flarge$ below.

    We fix $n$ and consider a integer varying parameter $s$. For a function $f(s)$ with real values
    we say that $f$ is asymptotically $\Omega(s^a)$ if $\lim_{s\to \infty}
    f(s)/s^a$ exists and is a finite positive number.
    Denote by $\mm$ the ideal of the origin.
    We have $\dim_{\kk} \mm^s/\mm^{s+1} = \binom{n+s-1}{n-1}$, which
    asymptotically is $\Omega(s^{n-1})$.
    For an integer $s$ consider ideals $I$
    satisfying $\mm^{s+1} \subset I \subset \mm^{s}$.
    The set of ideals $I$ is parametrised by
    union of finitely many Grassmannians and the largest of them has dimension asymptotically
    $\Omega(s^{2(n-1)})$.
    On the other hand, $\dim_{\kk} \kk[\mathbb{A}^n]/\mm^{s}$
    asymptotically equal to $\Omega(s^n)$, so also the dimension of the
    smoothable component of the Hilbert scheme containing $I$ above is
    $\Omega(s^n)$.

    For every $n \geq 3$ we have $2(n-1) >
    n$, so that for $s\gg
    0$ the Grassmannian above is not contained in the smoothable component, so the
    Hilbert scheme is reducible.
    Replacing asymptotic bounds by a careful calculation, one can check that,
    for example, $\Hilb_r \mathbb{A}^3$ is reducible for $r\geq 96$.
\end{example}

    From Proposition~\ref{ref:embeddedvsHilbert:prop} we can also deduce that
    smoothability is a closed property.
    \begin{cor}\label{ref:limitOfSmoothableIsSmoothable:cor}
        Let $T$ be an irreducible scheme and $\eta$
        be its generic point. Let $\pi:Z\to T$ be a finite flat morphism such that
        the generic fibre $Z_{\eta}$ is smoothable over $\kappa(\eta)$ (where $\kappa(\eta)$ is the residue field of $\eta$). 
        Then every fibre $\pi^{-1}(t)$ of $\pi$ is smoothable over $\kappa(t)$ (where $\kappa(t)$ is the residue field of $t$).
    \end{cor}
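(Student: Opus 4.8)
The plan is to transport the statement into the Hilbert scheme of $\AA^n_{\kk}$ and exploit that the smoothable locus $\Hilb_r^{sm}$ is closed. Since smoothability of $\pi^{-1}(t)$ over $\kappa(t)$ depends only on the fibre $Z_t$, I would first shrink $T$ to an affine open neighbourhood $\Spec A$ of the fixed point $t$; it automatically contains the generic point $\eta$, so it stays irreducible with the same generic point. Write $Z = \Spec B$ with $B$ a finite flat $A$-algebra; as $T$ is connected, $B$ is locally free of constant rank $r := \dim_{\kappa(\eta)}\ccO_{Z_{\eta}}$. Choosing $A$-algebra generators $b_1,\dots,b_n$ of $B$ gives a closed immersion $Z\hookrightarrow \AA^n_{\kk}\times T$ over $T$, so $Z$ is a flat finite family of degree $r$ over $T$ and (by the universal property of $\Hilb_r(\AA^n_{\kk})$, which exists as an open subscheme of $\Hilb_r(\PP^n_{\kk})$) is classified by a morphism $\varphi\colon T\to \Hilb_r(\AA^n_{\kk})$ with $Z = \ccU_r\times_{\Hilb_r(\AA^n_{\kk})}T$.

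Next I would treat the generic fibre. By hypothesis $Z_{\eta}$ is smoothable over $\kappa(\eta)$, and it sits as a closed subscheme of the smooth (hence formally smooth) scheme $\AA^n_{\kappa(\eta)}$; so Theorem~\ref{ref:abstractvsembedded:thm} gives that $Z_{\eta}$ is smoothable in $\AA^n_{\kappa(\eta)}$, and Proposition~\ref{ref:embeddedvsHilbert:prop} (over $\kappa(\eta)$) yields $[Z_{\eta}]\in\Hilb_r^{sm}(\AA^n_{\kappa(\eta)}/\kappa(\eta))$. By Proposition~\ref{prop_base_change_for_Hilb_r} and Equation~\eqref{eq:basechangeforsmoothablecomponent} we have $\Hilb_r^{sm}(\AA^n_{\kappa(\eta)}/\kappa(\eta)) = \reduced{\left(\Hilb_r^{sm}(\AA^n_{\kk})\times\Spec\kappa(\eta)\right)}$, which is the scheme-theoretic preimage of $\Hilb_r^{sm}(\AA^n_{\kk})$ under the projection $\Hilb_r(\AA^n_{\kappa(\eta)}/\kappa(\eta))\to\Hilb_r(\AA^n_{\kk})$. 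The composite $\Spec\kappa(\eta)\xrightarrow{[Z_{\eta}]}\Hilb_r(\AA^n_{\kappa(\eta)}/\kappa(\eta))\to\Hilb_r(\AA^n_{\kk})$ classifies the pullback of $\ccU_r$ along $\Spec\kappa(\eta)\to T\xrightarrow{\varphi}\Hilb_r(\AA^n_{\kk})$, i.e.\ $Z_{\eta}$ itself, so its image point is $\varphi(\eta)$; hence $\varphi(\eta)\in\Hilb_r^{sm}(\AA^n_{\kk})$.

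Now closedness does the topological half of the work: $\varphi^{-1}\bigl(\Hilb_r^{sm}(\AA^n_{\kk})\bigr)$ is a closed subset of $T$ containing $\eta$, and since $T$ is irreducible it is all of $T$; in particular $\varphi(t)\in\Hilb_r^{sm}(\AA^n_{\kk})$. Finally I would run the base change backwards at $t$: the $\kappa(t)$-point $[Z_t]$ of $\Hilb_r(\AA^n_{\kappa(t)}/\kappa(t)) = \Hilb_r(\AA^n_{\kk})\times\Spec\kappa(t)$ has image $\varphi(t)$ in $\Hilb_r(\AA^n_{\kk})$, so (as $\kappa(t)$-points of field spectra factor through any closed subscheme containing the image point) it factors through $\Hilb_r^{sm}(\AA^n_{\kk})\times\Spec\kappa(t)$, and since $\Spec\kappa(t)$ is reduced it factors further through $\reduced{\left(\Hilb_r^{sm}(\AA^n_{\kk})\times\Spec\kappa(t)\right)} = \Hilb_r^{sm}(\AA^n_{\kappa(t)}/\kappa(t))$. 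Proposition~\ref{ref:embeddedvsHilbert:prop} over $\kappa(t)$ then gives that $Z_t$ is smoothable in $\AA^n_{\kappa(t)}$, hence abstractly smoothable over $\kappa(t)$, as claimed. The main point requiring care is the bookkeeping of the two base changes: identifying $[Z_{\eta}]$ and $[Z_t]$ with $\varphi(\eta)$ and $\varphi(t)$ through the universal property, and tracking $\Hilb_r^{sm}$ through Proposition~\ref{prop_base_change_for_Hilb_r}; once these identifications are pinned down, the rest is formal.
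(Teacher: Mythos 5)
Your proposal is correct and follows essentially the same route as the paper: restrict to an affine neighbourhood of $t$, embed $Z$ into $\AA^n_{\kk}\times T$ to get a classifying map $\varphi\colon T\to\Hilb_r(\AA^n_{\kk})$, use Theorem~\ref{ref:abstractvsembedded:thm} and Proposition~\ref{ref:embeddedvsHilbert:prop} together with the base-change identities to place $\varphi(\eta)$ in $\Hilb_r^{sm}(\AA^n_{\kk})$, conclude $\varphi(t)\in\Hilb_r^{sm}(\AA^n_{\kk})$ by closedness, and translate back at $t$. Your explicit handling of the two base changes and of the closedness argument (via $\varphi^{-1}(\Hilb_r^{sm})$ being closed and containing the dense point $\eta$) only makes precise what the paper's proof states more tersely.
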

    \begin{proof}
        Let $r = \deg \pi$.
        Choose a point of $t\in T$. We will show that $Z_t$ is smoothable.
        Choose an affine neighbourhood of $t$ in $T$ and restrict to it.
        After a restriction $Z$ and $T$ are affine. Moreover, $Z$ is finitely generated as
        $\ccO_T$-algebra by at most $r$ elements, so we have a closed embedding
        \begin{equation}\label{eq:embed}
            Z\subset \mathbb{A}_{\kk}^r \times T,
        \end{equation}
        which induces a map $\varphi\colon T\to \Hilb_r \mathbb{A}^r_{\kk}$.
        Restriction of~\eqref{eq:embed} gives embeddings $Z_{\eta} \subset
        \mathbb{A}^r_{\kappa(\eta)}$ and $Z_t \subset \mathbb{A}^r_{\kappa(t)}$.
        By Theorem~\ref{ref:abstractvsembedded:thm} and
        Proposition~\ref{ref:embeddedvsHilbert:prop} we have 
        $[Z_{\eta}]\in \Hilb^{sm}_r(\mathbb{A}^{r}_{\kappa(\eta)}/\kappa(\eta)) 
          = \Hilb^{sm}_r(X) \times\Spec \kappa(\eta)$. 
        The projection of $[Z_{\eta}]$ onto the first coordinate is
        equal to $\varphi(\eta)$, hence $\varphi(\eta)\in \Hilb^{sm}_r(X)$.
        Consequently, $\im \varphi \subset \Hilb^{sm}_r(X)$.
        The point $\varphi(t)$ of $\Hilb^{sm}_r(X) \times_{\kk} \Spec
        \kappa(t)$ is equal to the point $[Z_t]\in \Hilb^{sm}_r(X\times
        \kappa(t)/\kappa(t))$.
        By Proposition~\ref{ref:embeddedvsHilbert:prop} again, we have $Z_t$
        smoothable in $\mathbb{A}^{r}_{\kappa(t)}$, hence smoothable.
    \end{proof}

\subsection{Composition of smoothings}

We apply the base change property to show that the product of smoothable schemes is smoothable, and that schemes smoothable over larger fields are also smoothable over subfields.

\begin{prop}[product of smoothable is smoothable]\label{prop_product_is_smoothable}
   If $R_1$ and $R_2$ are two finite smoothable $\kk$-schemes, then $R_1
   \times R_2$ is a smoothable $\kk$-scheme.
\end{prop}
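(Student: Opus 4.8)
The plan is to produce a smoothing of $R_1\times R_2$ as a fibre product of smoothings of $R_1$ and $R_2$ taken over a single, well-behaved base, after first reducing to the case of an algebraically closed field. For the reduction, note that $(R_1\times R_2)\times_{\Spec\kk}\Spec\kkbar=(R_1)_{\kkbar}\times_{\Spec\kkbar}(R_2)_{\kkbar}$, where $(R_i)_{\kkbar}:=R_i\times_{\Spec\kk}\Spec\kkbar$; by Corollary~\ref{ref:basechangesmoothings:cor} each $R_i$ is smoothable over $\kk$ if and only if $(R_i)_{\kkbar}$ is smoothable over $\kkbar$, and likewise $R_1\times R_2$ is smoothable over $\kk$ if and only if its base change is smoothable over $\kkbar$. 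Hence it suffices to prove the statement for $\kk=\kkbar$.

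Assume $\kk=\kkbar$. By Theorem~\ref{ref:goodbaseofsmoothing:thm} we may choose abstract smoothings $(Z_1,R_1)\to(T_1,0)$ and $(Z_2,R_2)\to(T_2,0)$ with $T_1=\Spec\kk[[\alpha]]$ and $T_2=\Spec\kk[[\beta]]$. Set $B:=\Spec\kk[[\alpha,\beta]]$, an irreducible Noetherian local scheme with closed $\kk$-rational point $o$ and generic point $\eta$. The ring inclusions $\kk[[\alpha]]\hookrightarrow\kk[[\alpha,\beta]]$ and $\kk[[\beta]]\hookrightarrow\kk[[\alpha,\beta]]$ induce morphisms $B\to T_1$ and $B\to T_2$ carrying $\eta$ to the generic point of $T_i$ and $o$ to the origin of $T_i$; therefore, by Lemma~\ref{ref:basechange:lem}, the base changes $Z_i':=Z_i\times_{T_i}B\to B$ are abstract smoothings of $R_i$ over $B$. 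Put $W:=Z_1'\times_B Z_2'$ with its projection $\pi\colon W\to B$.

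It remains to verify that $\pi$ is an abstract smoothing of $R_1\times R_2$. Since each $Z_i'\to B$ is finite and flat and these properties are stable under base change and composition, the morphism $\pi$, factored as $Z_1'\times_B Z_2'\to Z_2'\to B$, is finite and flat. The fibre $\pi^{-1}(o)$ is $(Z_1')_o\times_{\Spec\kk}(Z_2')_o=R_1\times R_2$. Finally $W_\eta=(Z_1')_\eta\times_{\kappa(\eta)}(Z_2')_\eta$; both factors are smooth over $\kappa(\eta)$ because $Z_i'\to B$ are smoothings, and a fibre product over a field of two schemes each smooth over that field is again smooth over it (stability of smoothness under base change and composition, Section~\ref{sec_product_and_base_change}). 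Thus $W_\eta$ is smooth over $\eta$ and $\pi$ is a smoothing, so $R_1\times R_2$ is smoothable.

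The only genuinely delicate step is the choice of the base. The naive candidate $T_1\times_{\Spec\kk}T_2=\Spec(\kk[[\alpha]]\otimes_\kk\kk[[\beta]])$ is unsatisfactory: the tensor product need not be Noetherian, and over a non-algebraically-closed field a product of irreducible bases may even fail to be irreducible. This is exactly why we first reduce to $\kk=\kkbar$ and then replace the product base by the regular local ring $\kk[[\alpha,\beta]]$, importing the two smoothings along it via the base-change Lemma~\ref{ref:basechange:lem}; with that in place the remaining verifications are routine.
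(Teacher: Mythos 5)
Your proof is correct and follows essentially the same route as the paper: reduce to $\kk=\kkbar$ via Corollary~\ref{ref:basechangesmoothings:cor}, put both smoothings over a common complete local base, and take their fibre product, whose special fibre is $R_1\times R_2$ and whose generic fibre is a product of smooth schemes over the generic point, hence smooth. The paper is slightly more economical: since Theorem~\ref{ref:goodbaseofsmoothing:thm} lets each smoothing be taken over the very same ring $\kk[[\alpha]]$ once $\kk=\kkbar$, it forms the fibre product directly over $\Spec\kk[[\alpha]]$ and thereby skips your detour through $\Spec\kk[[\alpha,\beta]]$ and Lemma~\ref{ref:basechange:lem}, but your version is equally valid.
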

\begin{proof}
    By base change of smoothability
    (Corollary~\ref{ref:basechangesmoothings:cor}) we may
    assume $\kk = \kkbar$.
    Let $B = \Spec \kk[[\alpha]]$ and $(Z_i, R_i)\to (\Spec \kk[[\alpha]], 0)$
    be smoothings of $R_i$, which exist by
    Theorem~\ref{ref:goodbaseofsmoothing:thm}.
   Consider the fiber product $\pi:Z_1 \times_B Z_2 \to B$.
   The special fibre of $\pi$ is $R_1 \times R_2$,
     while the general fibre is $(Z_1)_{\eta} \times_{\eta} (Z_2)_{\eta}$,
     which is smooth as product of schemes smooth over $\eta$. Thus $\pi$ is a smoothing of $R_1 \times R_2$.
\end{proof}

\begin{cor}[composition of smoothings]\label{cor_composition_of_smoothings}
   Suppose $\kk\subset \KK$ is a finite field extension and $R$ is a finite scheme over $\KK$, smoothable over $\KK$.
   Then $R$ (considered as a scheme over $\kk$) is smoothable (over $\kk$).
\end{cor}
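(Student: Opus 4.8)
The plan is to realise $R$, viewed over $\kk$, as the special fibre of a finite flat family over an irreducible base whose generic fibre is smoothable, and then to apply Corollary~\ref{ref:limitOfSmoothableIsSmoothable:cor}; the family will be obtained by ``tensoring $R$ with an abstract smoothing of $\Spec\KK$ over $\kk$''. First I record the \emph{separable case}: if $E\subset F$ is a \emph{finite separable} field extension and a finite scheme $X$ is smoothable over $F$, then $X$ is smoothable over $E$. Indeed, by Corollary~\ref{ref:basechangesmoothings:cor} it suffices that $X\otimes_E\overline{E}$ be smoothable over $\overline{E}$; since $F\otimes_E\overline{E}\simeq\overline{E}^{[F:E]}$ by separability, $X\otimes_E\overline{E}$ is a finite disjoint union of base changes of $X$ along the distinct $E$-embeddings $F\hookrightarrow\overline{E}$, each smoothable over $\overline{E}$ by Corollary~\ref{ref:basechangesmoothings:cor}, so one concludes with Corollary~\ref{ref:smoothingcomponentsresult:cor}.

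Now to the general case. Write $R=\Spec B$ with $B$ a finite $\KK$-algebra. By Example~\ref{ex:fieldExtensions} and Theorem~\ref{ref:goodbaseofsmoothing:thm} there is a smoothing $(W,\Spec\KK)\to(S,\sigma)$ over $\kk$ with $S=\Spec A_0$, where $A_0$ is a one-dimensional complete local Noetherian $\kk$-domain and $\sigma$ is a $\kk$-rational point. Writing $W=\Spec C_0$, the ring $C_0$ is finite over $A_0$ and $C_0/\gotm_{A_0}C_0\cong\KK$ is a field, so $C_0$ is local with $\gotm_{A_0}C_0=\gotm_{C_0}$ and residue field $\KK$, and, being finite over the complete local ring $A_0$, it is complete local Noetherian. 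The key step is that $C_0$ is equicharacteristic, so the Cohen structure theorem (\cite[Thm~7.7]{eisenbud}) furnishes a coefficient field in $C_0$, i.e.\ a subfield mapping isomorphically onto $C_0/\gotm_{C_0}=\KK$; let $\psi\colon\KK\to C_0$ be the inverse of that isomorphism followed by the inclusion, so that $\psi$ reduces to $\operatorname{id}_\KK$ modulo $\gotm_{C_0}$. Via $\psi$ the ring $C_0$ becomes a $\KK$-algebra, and I put
\[
   \ccR := B\otimes_\KK C_0 .
\]
As $B$ is free over the field $\KK$ and $C_0$ is finite flat, hence free, over the local ring $A_0$, the module $\ccR$ is finite free over $A_0$, so $\Spec\ccR\to S$ is finite and flat.

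It remains to compute the two relevant fibres. Over $\sigma$ one gets $\ccR\otimes_{A_0}\kk=B\otimes_\KK(C_0/\gotm_{C_0})=B\otimes_{\KK,\operatorname{id}}\KK=B$, and chasing the $\kk$-algebra structure — which enters through $\kk\subset A_0\subset C_0$ and its reduction, the standard inclusion $\kk\hookrightarrow\KK$ — this fibre is $R$ regarded over $\kk$. Over the generic point $\eta_S$ one gets $\ccR\otimes_{A_0}\kappa(\eta_S)=B\otimes_\KK L$ with $L=C_0\otimes_{A_0}\kappa(\eta_S)$ finite over $\kappa(\eta_S)$; by condition (iii) of Definition~\ref{ref:abstractsmoothable:def} the scheme $\Spec L$ is smooth over $\kappa(\eta_S)$, hence (Section~\ref{sect_finite_schemes}) $L\cong\prod_a L_a$ with the $L_a$ finite separable over $\kappa(\eta_S)$, and $\psi$ makes each $L_a$ an extension field of $\KK$. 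Thus the generic fibre is $\coprod_a(R\otimes_\KK L_a)$; each $R\otimes_\KK L_a$ is smoothable over $L_a$ by Corollary~\ref{ref:basechangesmoothings:cor}, hence smoothable over $\kappa(\eta_S)$ by the separable case above, so the disjoint union is smoothable over $\kappa(\eta_S)$ by Corollary~\ref{ref:smoothingcomponentsresult:cor}. Corollary~\ref{ref:limitOfSmoothableIsSmoothable:cor}, applied to $\Spec\ccR\to S$, then shows that every fibre, in particular $R$ over $\sigma$, is smoothable over its residue field.

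I expect the main obstacle to be exactly the step producing $\psi$: upgrading the mere existence of a smoothing of $\Spec\KK$ to a smoothing whose \emph{total space} $C_0$ is a $\KK$-algebra, compatibly with the residue map, is what makes the construction $\ccR=B\otimes_\KK C_0$ possible — without the Cohen coefficient field one only has a surjection $C_0\twoheadrightarrow\KK$. A secondary point requiring care, which nevertheless causes no trouble, is that this coefficient field need not contain the canonical copy of $\kk$ sitting inside $C_0$; but all the reductions computed above still match up, so $\ccR\otimes_{A_0}\kk$ genuinely recovers the original $\kk$-structure of $R$ and not a twist of it.
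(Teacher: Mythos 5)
Your proof is correct, but it takes a genuinely different route from the paper's. The paper's argument is a three-line reduction to the algebraic closure: it base-changes both $R$ and the smoothing of $\Spec\KK$ to $\kkbar$, observes the tensor identity $(R\times_\KK\kkbar)\times_{\kkbar}(\KK\times_\kk\kkbar)=R\times_\kk\kkbar$, and invokes Proposition~\ref{prop_product_is_smoothable} (product of smoothables is smoothable) together with Corollary~\ref{ref:basechangesmoothings:cor}. You instead build an explicit finite flat family over the base of a smoothing of $\Spec\KK$ whose special fibre is $R/\kk$, and conclude with Corollary~\ref{ref:limitOfSmoothableIsSmoothable:cor}; the price is that forming $B\otimes_\KK C_0$ requires making the total space $C_0$ a $\KK$-algebra compatibly with the residue map, which you obtain from a Cohen coefficient field. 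That is exactly the step the paper sidesteps: over $\kkbar$ the needed product decomposition of $R\times_\kk\kkbar$ falls out of associativity of the tensor product, with no coefficient field needed. Your verification that the $\kk$-structure on the special fibre (which enters only through $A_0$, not through $\psi$) recovers $R/\kk$ on the nose is the one delicate point, and you handle it correctly; likewise your direct treatment of the separable case via $F\otimes_E\overline{E}\simeq\overline{E}^{[F:E]}$ and Corollaries~\ref{ref:basechangesmoothings:cor} and~\ref{ref:smoothingcomponentsresult:cor} is sound and does not rely on anything proved after the corollary. What your approach buys is a more constructive picture — an actual one-parameter degeneration of $R/\kk$ into separable twists of $R$ — and independence from Proposition~\ref{prop_product_is_smoothable}; what the paper's approach buys is brevity and avoidance of the Cohen structure theorem.
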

\begin{proof}
   Let $\kkbar$ be an algebraic closure of both $\kk$ and $\KK$.
   For clarity we avoid writing $\Spec$ next to fields.
   The scheme $R \times_{\KK} \kkbar$ is smoothable over $\kkbar$
   by assumption and Corollary~\ref{ref:basechangesmoothings:cor}.
   The scheme $\KK$ is smoothable
   over $\kk$ by Example~\ref{ex:fieldExtensions}, so the $\kkbar$-scheme $\KK
   \times_{\kk} \kkbar$ is smoothable over $\kkbar$ by the same corollary.
   The product
   \[
       (R \times_{\KK} \kkbar) \times_{\kkbar} (\KK
       \times_{\kk} \kkbar) = R
       \times_{\KK} (\kkbar \times_{\kkbar} (\KK
       \times_{\kk} \kkbar)) = (R
       \times_{\KK} \KK) \times_{\kk} \kkbar = R \times_{\kk}
       \kkbar
   \]
   is smoothable over $\kkbar$ by
   Proposition~\ref{prop_product_is_smoothable}. Therefore $R$ is smoothable
   over $\kk$ by Corollary~\ref{ref:basechangesmoothings:cor}.
\end{proof}

\begin{example}
   Let $\kk \subset \KK$ be a finite field extension and let $I \subset \KK[\fromto{\alpha_1}{\alpha_n}]$ be a monomial ideal.
   If the quotient ring $A:=\KK[\fromto{\alpha_1}{\alpha_n}]/I$ has finite $\KK$-dimension,
   then $\Spec A$ is smoothable over $\kk$.
   Indeed, finite monomial schemes over algebraically closed fields are
   smoothable by Hartshorne's distraction method
   (\cite[p.~34]{hartshorne_connectedness},
   ~\cite[Proposition~4.15]{cartwright_erman_velasco_viray_Hilb8}), hence they
   are smoothable over any field by
   Corollary~\ref{ref:basechangesmoothings:cor}.
   Using Proposition~\ref{prop_product_is_smoothable} the scheme $\Spec A$ is smoothable over $\kk$.
\end{example}

\subsection{Gorenstein locus}
The \emph{Gorenstein locus} of the Hilbert scheme. It is the set
$\Hilb^{Gor}_r(X) \subset \Hilb_r(X)$ consisting of points $[Z]$ such that $Z$
is Gorenstein. Gorenstein subschemes are important for our point of view because they are precisely
those which have few codegree one subschemes, see
Lemma~\ref{lem_Gorenstein_is_equivalent_to_finite_Hilb}. Later in
Lemma~\ref{lem_Gorenstein_are_enough_for_cactus} we will see that spans of
Gorenstein subschemes are enough to define cactus varieties.

As proven below, the Gorenstein locus is an open subset containing all smooth
schemes, so it may be thought of as an intermediate object between the set of
smooth schemes and the whole Hilbert scheme.
\begin{lem}\label{ref:GorensteinLocus:lem}
    Let $X$ be a scheme, and for a field extension $\kk \subset \KK$ denote by $\Hilb^{Gor}(X_{\KK}/\KK)$
      the Gorenstein locus of $\Hilb_r(X_{\KK}/\KK)$. Then:
    \begin{enumerate}
        \item\label{it:Gorlocfirst}  $\Hilb^{\circ}_r(X) \subset \Hilb^{Gor}_r(X)$,
        \item\label{it:Gorlocsecond} The subset $\Hilb^{Gor}_r(X)$ is open in~$\Hilb_r(X)$,
        \item\label{it:Gorlocthird} $\Hilb^{Gor}_r(X_{\KK}/\KK) = \Hilb^{Gor}_r(X) \times \Spec \KK$.
    \end{enumerate}
\end{lem}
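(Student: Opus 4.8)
The plan is to treat \ref{it:Gorlocfirst} and \ref{it:Gorlocthird} quickly and to spend the real effort on \ref{it:Gorlocsecond}. For \ref{it:Gorlocfirst}: if $t\in \Hilb^{\circ}_r(X)$ then the fibre $Z_t$ of $\ccU_r\to \Hilb_r(X)$ over $t$ is a smooth finite $\kappa(t)$-scheme, so $Z_t\times_{\kappa(t)}\overline{\kappa(t)}$ is a disjoint union of copies of $\Spec\overline{\kappa(t)}$. Each copy is Gorenstein by Lemma~\ref{ref:Gorchar:eis:lem}\ref{it:tmpsecond} (its maximal ideal is zero, so $\dim_{\overline{\kappa(t)}}(0:\gotm)=1$), hence the disjoint union is Gorenstein by Lemma~\ref{ref:Gorchar:eis:lem}\ref{it:tmpfirst}, and then $Z_t$ itself is Gorenstein over $\kappa(t)$ by Proposition~\ref{prop_Gorenstein_base_change}; thus $[Z_t]\in\Hilb^{Gor}_r(X)$. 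For \ref{it:Gorlocthird}: write $H=\Hilb_r(X)$ and $H_{\KK}=\Hilb_r(X_{\KK}/\KK)$. By Proposition~\ref{prop_base_change_for_Hilb_r} one has $H_{\KK}=H\times\Spec\KK$ and $\ccU_{r,\KK}=\ccU_r\times\Spec\KK$, so for $s\in H_{\KK}$ over $t\in H$ the fibre of $\ccU_{r,\KK}\to H_{\KK}$ over $s$ is $Z_t\times_{\kappa(t)}\kappa(s)$, which by Proposition~\ref{prop_Gorenstein_base_change} is Gorenstein over $\kappa(s)$ iff $Z_t$ is Gorenstein over $\kappa(t)$. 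Hence $\Hilb^{Gor}_r(X_{\KK}/\KK)$ and $\Hilb^{Gor}_r(X)\times\Spec\KK$ (the preimage of $\Hilb^{Gor}_r(X)$ under $H_{\KK}\to H$) have the same underlying set, and since by \ref{it:Gorlocsecond} both are open subschemes of $H_{\KK}$ they are equal.

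For \ref{it:Gorlocsecond} I would argue as follows. The structural map $\pi\colon \ccU_r\to H$ is finite and flat of degree $r$, so $\mathcal{A}:=\pi_*\ccO_{\ccU_r}$ is a locally free $\ccO_H$-algebra of rank $r$; set $\mathcal{W}:=\mathcal{H}om_{\ccO_H}(\mathcal{A},\ccO_H)$, locally free of rank $r$ over $\ccO_H$, with its natural $\mathcal{A}$-module structure $(a\cdot\phi)(b)=\phi(ab)$. Because $\mathcal{A}$ and $\mathcal{W}$ are locally free over $\ccO_H$, they commute with base change: for $t\in H$ one gets $\mathcal{A}\otimes\kappa(t)=A_t:=\ccO_{Z_t}(Z_t)$ and $\mathcal{W}\otimes\kappa(t)=\Hom_{\kappa(t)}(A_t,\kappa(t))=A_t^{*}$, compatibly with the module structures. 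By Definition~\ref{ref:Gorenstein:def}, then, $[Z_t]\in\Hilb^{Gor}_r(X)$ iff $\mathcal{W}\otimes\kappa(t)$ is a cyclic $A_t$-module, and the plan is to see this as an open condition on $t$.

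To do so I would pass to the total space $p\colon \mathbf{W}:=\underline{\Spec}_{H}\bigl(\Sym_{\ccO_H}\mathcal{W}^{\vee}\bigr)\to H$, which carries a tautological section $\sigma$ of $p^{*}\mathcal{W}$. Multiplication against $\sigma$ gives an $\ccO_{\mathbf{W}}$-linear map $p^{*}\mathcal{A}\to p^{*}\mathcal{W}$ of locally free sheaves of equal rank $r$; let $V\subseteq\mathbf{W}$ be the locus where it is an isomorphism, equivalently where its determinant (a section of a line bundle on $\mathbf{W}$) is nonzero — so $V$ is open — and equivalently, by Nakayama, where it is surjective. Since $p$ is flat and of finite type it is open \cite[Tag~01UA]{stacks_project}, so $p(V)$ is open in $H$. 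It then remains to identify $p(V)=\Hilb^{Gor}_r(X)$: a point $t\in H$ lies in $p(V)$ iff $V\times_H\Spec\overline{\kappa(t)}\ne\emptyset$ iff this fibre contains an $\overline{\kappa(t)}$-rational point, i.e.\ iff there is an element of $\mathcal{W}\otimes\overline{\kappa(t)}$ generating it over $\mathcal{A}\otimes\overline{\kappa(t)}$, i.e.\ iff $Z_t\times\overline{\kappa(t)}$ is Gorenstein, i.e.\ iff $Z_t$ is Gorenstein by Proposition~\ref{prop_Gorenstein_base_change}.

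The step I expect to be the main obstacle is this last identification: one must be careful that ``cyclic'' is a geometric condition, so that it may be tested after the extension $\kappa(t)\subset\overline{\kappa(t)}$ and a cyclic generator may be chosen $\overline{\kappa(t)}$-rational — which is exactly what Proposition~\ref{prop_Gorenstein_base_change} buys — and that surjectivity of $p^{*}\mathcal{A}\to p^{*}\mathcal{W}$ at a possibly non-closed point of $\mathbf{W}$ spreads to a neighbourhood, which uses that source and target are locally free of the same rank. As a fallback, \ref{it:Gorlocsecond} can also be obtained in a more black-box way from the openness of the Gorenstein locus of a flat, locally finite type morphism together with the finiteness (hence closedness) of $\pi$, which makes the set of $t$ for which some point of $Z_t$ fails to be Gorenstein closed in $H$.
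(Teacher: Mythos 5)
Your proposal is correct, and parts \ref{it:Gorlocfirst} and \ref{it:Gorlocthird} match the paper's treatment in substance (the paper also derives \ref{it:Gorlocthird} from Proposition~\ref{prop_Gorenstein_base_change}, and checks \ref{it:Gorlocfirst} on geometric points, though it additionally leans on the openness of both loci, which your pointwise argument avoids). The real divergence is in \ref{it:Gorlocsecond}: the paper simply cites the Stacks Project (tags 0C09 and 0C02) for the openness of the Gorenstein locus of a flat, finite type morphism and combines it with closedness of the finite map $\pi$ --- exactly the ``fallback'' you mention at the end --- whereas you give a self-contained argument. Your construction (the total space of $\mathcal{W}=\mathcal{H}om_{\ccO_H}(\pi_*\ccO_{\ccU_r},\ccO_H)$, the tautological section, and the non-vanishing of the determinant of $a\mapsto a\cdot\sigma$ between locally free sheaves of equal rank $r$) is sound: formation of $\mathcal{A}$ and $\mathcal{W}$ commutes with base change because both are locally free, $p$ is open because it is an affine bundle, and the identification $p(V)=\Hilb^{Gor}_r(X)$ goes through since nonemptiness of the fibre $V_t$ can be tested on $\overline{\kappa(t)}$-points and Gorensteinness descends along $\kappa(t)\subset\overline{\kappa(t)}$ by Proposition~\ref{prop_Gorenstein_base_change}. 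What your route buys is a proof entirely within the toolkit the paper has already set up (finite flat families, duals of locally free sheaves, Definition~\ref{ref:Gorenstein:def}), at the cost of about a page of argument; what the paper's citation buys is brevity and a statement valid in the greater generality of an arbitrary flat, locally finite type morphism.
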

\begin{proof}
    Point~\ref{it:Gorlocsecond} is proven as a special case in~\cite[tags 0C09
    and 0C02]{stacks_project}, it is also discussed in~\cite[Introduction,
    p.~2056]{casnati_notari_irreducibility_Gorenstein_degree_9}.
    Point~\ref{it:Gorlocfirst} follows, since~both subsets are open,
    and every $\kkbar$-point of $\Hilb^{\circ}_r(X)$ is isomorphic to $\left(
    \Spec \kkbar \right)^{\times r}$ which is Gorenstein by
    Example~\ref{ex_Gorenstein_schemes}.
    Point~\ref{it:Gorlocthird} follows from
    Proposition~\ref{prop_Gorenstein_base_change}.
\end{proof}

By Lemma~\ref{ref:GorensteinLocus:lem} the locus $\Hilb^{Gor}_r(X)$ has a
natural open subscheme structure; we will use it without further comments.
The Gorenstein locus is not necessarily dense; the first example is
$\Hilb_{8} \PP_{\kk}^4$, see~\cite{cartwright_erman_velasco_viray_Hilb8}.

\begin{lem}\label{lem_Gorenstein_is_equivalent_to_finite_Hilb}
   Suppose $R$ is a finite scheme of degree $r$.
   Consider the Hilbert scheme $\Hilb_{r-1}(R)$.
   Then $R$ is Gorenstein if and only if $\dim \Hilb_{r-1}(R) = 0$.
\end{lem}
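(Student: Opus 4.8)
The plan is to reduce to a local Artinian algebra over an algebraically closed field and there identify the codegree‑one subschemes with the projectivized socle.

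First I would reduce to the case $\kk=\kkbar$. By Proposition~\ref{prop_base_change_for_Hilb_r} we have $\Hilb_{r-1}(R_{\kkbar}/\kkbar)=\Hilb_{r-1}(R/\kk)\times\Spec\kkbar$, so the dimension of the Hilbert scheme is unchanged by this base change, and by Proposition~\ref{prop_Gorenstein_base_change} the scheme $R$ is Gorenstein if and only if $R_{\kkbar}$ is. Next I would reduce to $R$ irreducible. Write $R=R_1\sqcup\dotsb\sqcup R_k$ with $\deg R_i=r_i$. Over a connected base $T$ a finite flat family of degree $r-1$ of subschemes of $R$ splits as a disjoint union of finite flat families over the $R_i$ of constant degrees $r_i'$ with $\sum r_i'=r-1$ and $r_i'\le r_i$; since $\sum r_i=r$ this forces exactly one index $j$ with $r_j'=r_j-1$ and $r_i'=r_i$ otherwise. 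As the only degree‑$r_i$ subscheme of $R_i$ is $R_i$ itself, the corresponding family over $R_i$ is the trivial one, so $\Hilb_{r-1}(R)=\bigsqcup_j\Hilb_{r_j-1}(R_j)$ and hence $\dim\Hilb_{r-1}(R)=\max_j\dim\Hilb_{r_j-1}(R_j)$. Combined with Lemma~\ref{ref:Gorchar:eis:lem}\ref{it:tmpfirst}, it suffices to treat $R=\Spec A$ with $(A,\gotm,\kk)$ local and $r=\dim_\kk A\ge 2$; the case $r=1$ is trivial since then $R=\Spec\kk$ is Gorenstein and $\Hilb_0(R)=\Spec\kk$.

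In the local case the key observation is a dictionary between codegree‑one subschemes and the socle $(0:\gotm)$. A closed subscheme of $R$ of degree $r-1$ corresponds to an ideal $I\subseteq A$ with $\dim_\kk I=1$, say $I=\kk x$; for $m\in\gotm$ we get $mx=c_m x$ with $c_m\in\kk$, so $(m-c_m)x=0$, and since $m-c_m$ is a unit unless $c_m=0$ this forces $\gotm x=0$, i.e.\ $x\in(0:\gotm)$. Conversely every nonzero $x\in(0:\gotm)$ gives an ideal $Ax=\kk x$, and $\kk x=\kk x'$ exactly when $x,x'$ are proportional. Thus the closed points (equivalently $\kkbar$‑points) of $\Hilb_{r-1}(R)$ are in bijection with $\PP((0:\gotm))(\kk)$. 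One can in fact promote this to a morphism $\PP((0:\gotm))\to\Hilb_{r-1}(R)$, noting that $\ccO(-1)\hookrightarrow (0:\gotm)\otimes_\kk\ccO\hookrightarrow A\otimes_\kk\ccO$ is an ideal sheaf on $\PP((0:\gotm))$ with locally free quotient of rank $r-1$; but for the dimension count only the bijection on closed points is needed.

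Finally I would conclude as follows. Since $R$ is finite, hence projective by Lemma~\ref{lem_finite_is_projective}, the scheme $\Hilb_{r-1}(R)$ is projective, in particular of finite type over $\kk=\kkbar$, and such a scheme has dimension $0$ if and only if it has only finitely many points. If $R$ is Gorenstein then $\dim_\kk(0:\gotm)=1$ by Lemma~\ref{ref:Gorchar:eis:lem}\ref{it:tmpsecond}, so there is a unique codegree‑one subscheme and $\dim\Hilb_{r-1}(R)=0$. If $R$ is not Gorenstein then $\dim_\kk(0:\gotm)\ge 2$, so $\PP((0:\gotm))(\kk)$ is infinite, giving infinitely many codegree‑one subschemes and $\dim\Hilb_{r-1}(R)\ge 1$. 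I expect the main obstacle to be the bookkeeping in the reduction to the local, algebraically closed case — in particular the splitting of $\Hilb_{r-1}(R)$ over the connected components of $R$ — together with a careful statement of the socle dictionary; the remaining steps are formal.
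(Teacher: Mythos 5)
Your proof is correct. The paper itself only performs the base change to $\kkbar$ (exactly as in your first step, via Propositions~\ref{prop_Gorenstein_base_change} and~\ref{prop_base_change_for_Hilb_r}) and then cites \cite[Lemma~3.5]{jabu_ginensky_landsberg_Eisenbuds_conjecture}, asserting that the proof there generalises verbatim to any algebraically closed field; what you have written is essentially a self-contained version of that cited argument. Your two pieces of bookkeeping are both sound: the splitting $\Hilb_{r-1}(R)=\bigsqcup_j\Hilb_{r_j-1}(R_j)$ follows because a degree-$r_i$ flat subfamily of $R_i\times T$ is cut out by a surjection of locally free sheaves of equal rank, hence is all of $R_i\times T$; and the socle dictionary (a one-dimensional ideal $\kk x$ of a local Artinian $A$ forces $\gotm x=0$, since $m-c_m$ would otherwise be a unit killing $x$) matches the characterisation $\dim_{\kkbar}(0:\gotm_i)=1$ of Lemma~\ref{ref:Gorchar:eis:lem}\ref{it:tmpsecond}. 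The final step is also fine: $\Hilb_{r-1}(R)$ is projective, hence Noetherian of finite type, so it has dimension $0$ if and only if it has finitely many closed points, and over $\kkbar$ the set $\PP\left((0:\gotm)\right)(\kkbar)$ is a single point or infinite according to whether the socle has dimension $1$ or at least $2$.
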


\begin{proof}
   By invariance of the notions of \emph{Gorenstein}
   (Proposition~\ref{prop_Gorenstein_base_change}), of
   \emph{dimension}, and of \emph{Hilbert scheme}
   (Proposition~\ref{prop_base_change_for_Hilb_r})
     under the base change from $\kk$ to $\kkbar$, we may assume the base
     field $\kk$ is algebraically closed.
     For $\kkbar = \mathbb{C}$ the claim is proven
     in~\cite[Lemma~3.5]{jabu_ginensky_landsberg_Eisenbuds_conjecture};
     the proof generalises word by word to arbitrary algebraically closed fields.
\end{proof}

    It is intuitively clear, that if the Hilbert scheme of $r$ points is
    reducible, also the Hilbert scheme of $r' > r$ points is reducible. For
    further use, we prove it formally below.
    Denote by $\Hilb^{Gor, sm}_r(X) := \Hilb^{Gor}_r(X) \cap \Hilb^{sm}_r(X)$.
    In particular, since $\Hilb^{Gor}_r(X)$ is open and $\Hilb^{sm}_r(X)$ is reduced, 
      it follows that $\Hilb^{Gor, sm}_r(X)$ is reduced.
    \begin{cor}\label{ref:persistence_of_smoothability:cor}
        Let $X$ be a projective scheme such that $\Hilb^{sm}_r(X) =
        \reduced{(\Hilb_{r}(X))} \neq \emptyset$. Then $\Hilb^{sm}_k(X) =
        \reduced{(\Hilb_{k}(X))}$ for all $k\leq r$.
        Also, if $\Hilb^{Gor, sm}_r(X) =
        \reduced{(\Hilb^{Gor}_{r}(X))} \neq \emptyset$ then $\Hilb^{Gor,sm}_k(X) =
        \reduced{(\Hilb^{Gor}_{k}(X))}$ for all $k\leq r$.
    \end{cor}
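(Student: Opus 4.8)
The plan is to reduce a finite scheme $R$ of degree $k \le r$ (over $\kkbar$, after a base change, which is permitted since all of $\Hilb_k$, $\Hilb_k^{sm}$, $\Hilb_k^{Gor}$, $\Hilb_k^{Gor,sm}$ commute with the base change $\kk \subset \kkbar$ by Equation~\eqref{eq:basechangeforsmoothablecomponent}, Proposition~\ref{prop_base_change_for_Hilb_r} and Lemma~\ref{ref:GorensteinLocus:lem}) to a scheme of degree $r$ by adding $r-k$ general reduced $\kkbar$-rational points lying off the support of $R$; call the result $\tilde R$. The point is that $\tilde R$ is smoothable if and only if $R$ is: by Corollary~\ref{ref:smoothingcomponentsresult:cor} smoothability is local on connected components, so $\tilde R = R \sqcup (\text{$r-k$ reduced points})$ is smoothable precisely when $R$ is. Since $\Hilb_r^{sm}(X) = \reduced{(\Hilb_r(X))}$ by hypothesis, and since $[\tilde R]$ is a $\kkbar$-point of $\Hilb_r(X)$, Proposition~\ref{ref:embeddedvsHilbert:prop} (combined with Theorem~\ref{ref:abstractvsembedded:thm} to pass between abstract and embedded smoothability, using that $X$ is projective hence embeds in a smooth $\PP^N$) tells us $\tilde R$ is smoothable in $X$; hence $R$ is smoothable, hence $[R] \in \Hilb_k^{sm}(X)$. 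As $[R]$ was an arbitrary $\kkbar$-point and $\Hilb_k^{sm}(X)$ is reduced, this gives $\Hilb_k^{sm}(X) = \reduced{(\Hilb_k(X))}$, which is the first assertion.

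For the Gorenstein statement the same strategy applies, with one extra check: that $\tilde R$ is Gorenstein whenever $R$ is. This is exactly Lemma~\ref{ref:Gorchar:eis:lem}\ref{it:tmpfirst}: over $\kkbar$ a finite scheme is Gorenstein iff each of its connected components is, and $\Spec\kkbar$ is Gorenstein (Example~\ref{ex_Gorenstein_schemes}), so adding reduced rational points preserves Gorensteinness. Thus $[\tilde R] \in \Hilb_r^{Gor}(X)$, and by the hypothesis $\Hilb_r^{Gor,sm}(X) = \reduced{(\Hilb_r^{Gor}(X))}$ we get that $\tilde R$ is smoothable, hence $R$ is smoothable, hence $[R] \in \Hilb_k^{Gor,sm}(X)$; since this holds for all $\kkbar$-points $[R]$ of $\Hilb_k^{Gor}(X)$ and $\Hilb_k^{Gor,sm}(X)$ is reduced, we conclude $\Hilb_k^{Gor,sm}(X) = \reduced{(\Hilb_k^{Gor}(X))}$.

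One point deserving care is the nonemptiness hypothesis and the possibility that $X$ is zero-dimensional: if $\dim X = 0$ then $\Hilb_r^{sm}(X)$ can be empty, but the hypothesis $\Hilb_r^{sm}(X) = \reduced{(\Hilb_r(X))} \ne \emptyset$ rules this out, and in fact forces $X$ to have enough $\kkbar$-rational points in the smooth locus to choose the $r-k$ extra general points disjoint from $\Supp R$; indeed by the discussion preceding Proposition~\ref{ref:embeddedvsHilbert:prop}, nonemptiness of $\Hilb_r^{sm}(X)$ already implies $\dim X > 0$ (as long as $r\ge 1$), so infinitely many such points exist. The main obstacle—though it is more bookkeeping than a genuine difficulty—is to make sure every reduction step (base change to $\kkbar$; passing between abstract and embedded smoothings via a projective embedding; the component-wise characterization of smoothability and of the Gorenstein property) is invoked with its hypotheses satisfied, and that the final ``holds for all $\kkbar$-points, and the target scheme is reduced, hence equality of schemes'' step is legitimate, which it is because a reduced scheme of finite type over $\kkbar$ is determined by its $\kkbar$-points together with the ambient scheme $\Hilb_k(X)$ (resp.\ $\Hilb_k^{Gor}(X)$) in which it sits as a closed, reduced subscheme consisting of a union of components.
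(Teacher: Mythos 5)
Your proposal is correct and follows essentially the same route as the paper: reduce to $\kk=\kkbar$ via the base-change statements, pad $R$ with $r-k$ reduced closed points away from $\Supp R$ to get a degree-$r$ (Gorenstein, in the second case) superscheme, apply the hypothesis at level $r$, and descend smoothability to $R$ via Corollary~\ref{ref:smoothingcomponentsresult:cor}, concluding by comparing $\kkbar$-points of reduced finite-type schemes. The only slip is your claim that nonemptiness of $\Hilb_r^{sm}(X)$ forces $\dim X>0$; it only forces $X$ to have at least $r$ distinct closed points over $\kkbar$ (a zero-dimensional $X$ with $\ge r$ reduced points also qualifies), but that weaker fact already suffices to choose the $r-k$ extra points, so the argument is unaffected.
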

    \begin{proof}
        By Proposition~\ref{prop_base_change_for_Hilb_r} and Lemma~\ref{ref:GorensteinLocus:lem}\ref{it:Gorlocthird} it is enough to prove the statement 
          for algebraically closed $\kk=\kkbar$.
        From $\Hilb_r^{sm}(X) \neq\emptyset$ we see that $X$ has at least $r$ distinct closed points.
        Take any $k \leq r$, any $\kk$-rational point $[R]\in \Hilb_k(X)$ 
          and a superscheme $R'\supset R$ consisting of $R$ 
          and a tuple of $r-k$ closed points distinct from the support of $R$ 
          (which are all $\kk$-rational, since $\kk$ is algebraically closed).
        By the assumption $[R']\in \Hilb^{sm}_{r}(X)$, hence $R'$ is smoothable. 
        Then  Corollary~\ref{ref:smoothingcomponentsresult:cor} implies that $R$
        is also smoothable, hence $[R]\in \Hilb^{sm}_k(X)$. The schemes
        $\Hilb^{sm}_k(X) \subset \reduced{(\Hilb_k(X))}$ are reduced, finite type
        and have the same $\kkbar$-points, so they are equal.
        The proof for
        Gorenstein case is the same, just note that adding points to
        a Gorenstein scheme $R$ gives a \emph{Gorenstein} scheme $R'$.
    \end{proof}

\subsection{Relative linear span}\label{sec_relative_linear_span}

Now we discuss linear spans of various points of the Hilbert scheme. They
enable us to sensibly define secant varieties of $X$ when $X$ does not have
enough $\kk$-rational points, see Section~\ref{sect_defining_secant_and_cactus}.

Assume $X \subset \PP V\simeq \PP_{\kk}^n$ is an embedded projective scheme and $r\ge 1$ is an integer.
Consider the Hilbert scheme $\Hilb_r(X)$.
If $[R] \in \Hilb_r(X)$ is a $\kk$-rational point, then it corresponds to $R
\subset \PP V$ and its linear span $\sspan{R} \subset \PP V$ is the smallest
projective subspace containing $R$, as in the construction described in Section~\ref{sec_embedded_proj_geom}.

More generally, let $y \in \Hilb_r(X)$ be a point with residue field $\kappa(y) = \KK$.
It corresponds to a closed subscheme $Y \subset \PP V_{\KK} = \PP V \times \Spec \KK$.
We can take its $\KK$-linear span i.e.~the smallest projective subspace $\PP^k_{\KK}$
containing $Y$. We denote this $\KK$-linear span by $\sspan{Y}_{\KK} \subset
\PP V_{\KK}$.
Note that the $\KK$-dimension of $\sspan{Y}_{\KK}$ is at most $r-1$.
In fact, this dimension can be read directly from the structure of the Hilbert scheme at $y$,
   see Section~\ref{sect_universal_ideal_and_graded_pieces}.
Finally, we take the images of $\KK$-linear spans inside our fixed space $\PP V$.
\begin{defn}\label{defn_cactus_of_point}
For $r$, $X$, $y$, and $Y$ as above, 
  the \emph{relative linear span} $\cactus{r}{X,y}$
  is the scheme theoretic image of $\sspan{Y}_{\KK}$ under the projection $\PP (V \otimes_{\kk} \KK) \to \PP V$.
\end{defn}

Note that $\cactus{r}{X,y}$ is not necessarily a linear subspace of $\PP V$.
Informally, one can think of $\cactus{r}{X,y}$ as the closure of the union of
linear spans of schemes in the family $\overline{\set{y}} \subset \Hilb_{r}(X)$.

\begin{remark}
    In the initial case, when $y=[Y] \in \Hilb_r(X)$ is a $\kk$-rational point, we
have $\cactus{r}{X, y} = \sspan{Y}$.
More generally, if $y = [Y]\in\Hilb_r(X)$ is a \emph{closed} point, then it is a $\KK$-rational point for some \emph{finite} field extension $\kk \subset \KK$.
Therefore, the projection morphism $\PP V_{\KK} \to \PP V$ is also finite, hence closed.
It follows that $\cactus{r}{X, y}$ is the set-theoretic image of $\sspan{Y}_{\KK}$; no closure is necessary.
\end{remark}

\begin{remark}
    There are is another possible way of defining something resembling a linear span of a
    $\KK$-rational point $y$. Namely, we first project $Y \subset \PP V_{\KK}$
    to $\PP V$ obtaining $Y'$ and then take the smallest linear subspace
    $\sspan{Y'} \subset \PP V$, containing $Y'$. We call this the \emph{naive
    linear span}.
  The linear space obtained in this way tends to be very large.
  For example, if $r = 1$ and $y\in \Hilb_1(X) = X$ is the generic point,
  then the image of $Y$ in $\PP V$ is equal to $X$ and so $\sspan{Y'} =
  \sspan{X}$.
\end{remark}

We stress that the notions of linear span heavily depends on the chosen embedding of $X \subset \PP V$.
If we consider also a different embedding of $X$, say the $i$-th Veronese reembedding $\nu_i(X) \subset \PP (\DPV{d})$, 
  then the $\KK$-linear span and the relative linear span are denoted, respectively:
\[
   \sspan{\nu_i(Y)}_{\KK}, \qquad \cactus{r}{\nu_i(X),y}.
\]

\begin{example}\label{ex:sspans}
   Suppose $r=1$ and $X \subset \PP^n_{\kk}$ is a subscheme.
   Then $\Hilb_1 (X) = X$.
   Pick a point $y \in \Hilb_1(X)$ and consider its closure $\overline{Y}$ as a closed reduced irreducible subvariety in $X$.
   Let $\KK$ be the residue field of $y$. Then
   \begin{enumerate}
    \item The naive linear span of $y$ is just the linear span of $\overline{Y} \subset \PP V$.
    \item The $\KK$-linear span of $y$ is the single $\KK$-rational point of $X\times\KK$ corresponding to $y$.
    \item The relative linear span of $y$ is $\overline{Y}$.
   \end{enumerate}
\end{example}

\begin{example}
   Suppose $r=2$ and $X \subset \PP^2_{\kk}$ is a subscheme defined by $I(X) =
   (\alpha^2, \alpha\beta)$. As in
   Example~\ref{ex_Hilb2_of_a_line_with_an_embedded_pt},
   the reduced subscheme of $\Hilb_2 (X)$ has two irreducible components
   isomorphic to $\PP^2_{\kk}$ and $\PP^1_{\kk}$ respectively.
   \begin{enumerate}
    \item  Pick a point $y \in \PP^2_{\kk}$ from the first component.
           Then its naive and relative linear span coincide and are equal to
           $\PP_{\kk}^1 \subset X$, the unique line in $X$.
           If $\KK$ is the residue field of $y$, then the $\KK$-linear span is always equal to $\PP_{\KK}^1 = \PP_{\kk}^1 \times \Spec \KK$.
    \item  Now pick a point $y\in \PP^1_{\kk}$ in the latter component of $\Hilb_2(X)$.
   If $y$ is a closed point with residue field $\KK$ (which is a finite extension of $\kk$), then its naive, $\KK$-, or relative linear span is a $\PP^1_{\KK}$ through the point $\alpha=\beta=0$
     and pointing out in the direction represented by $y$.
   If $y$ is a generic point of $\PP^1_{\kk}$ with residue field $\kappa(y) =
   \kk(t)$ with $t = \alpha/\beta$, then its naive linear span and relative linear span are the whole $\PP^2_{\kk}$.
   Its $\kk(t)$-linear span is a $\PP^1_{\kk(t)}\subset \PP^2_{\kk(t)}$ defined by the linear equation $\alpha -  t \beta =0$.
   \end{enumerate}
\end{example}

We also define the relative linear span for a family $T \to \Hilb_r(X)$, where $T$ is a reduced scheme.
\begin{defn}\label{defn_cactus_of_family}
    Let $T$ be a reduced scheme with a morphism $\phi\colon T \to \Hilb_r(X)$.
    Then the \emph{relative linear span of $T$} (or, strictly speaking, of
    $\phi:T\to \Hilb_r(X)$) is
    \begin{equation}   \label{eq:cactus_of_family}
        \cactus{r}{X, \phi(T)} := \overline{\bigcup \left\{ \cactus{r}{X,
        \phi(\eta)}\ |\ \eta\text{ is a generic point of a component of } T \subset \PP V\right\}}.
    \end{equation}
\end{defn}
If $T$ is affine or projective (or more generally quasi-compact), there are
finitely many components of $T$ and thus there are finitely many $\eta$,
so that the union in \eqref{eq:cactus_of_family} is closed in $\PP V$, no closure is needed.
If $T= \Spec \KK$ corresponds to a point $t\in \Hilb_r(X)$, then $\cactus{r}{X, \phi(T)} =
\cactus{r}{X, t}$, i.e.~Definitions~\ref{defn_cactus_of_point} and~\ref{defn_cactus_of_family} agree.
Note that in the definition we only use generic points of the components of $T$.
We will see in Proposition~\ref{prop_relative_span_of_a_point_contained} that  $\cactus{r}{X, \phi(T)}$ 
  contains also the relative linear span of all points in $T$.
Example~\ref{ex_cacti_for_families_beware} illustrates that this is nontrivial, as a naive approach gives an incorrect answer.
Also reducedness of $T$ is essential, for reasons analogous to  Example~\ref{ex_cacti_for_families_beware}.
Defining spans of nonreduced families will be an object of future research.

\subsection{Graded pieces of universal ideal and relative linear span}\label{sect_universal_ideal_and_graded_pieces}

In this section we relate the universal ideal sheaf on the Hilbert scheme 
  to the notion of linear span from Section~\ref{sec_relative_linear_span}. 

Assume $X \subset \PP V\simeq \PP_{\kk}^n$ is an embedded projective scheme.
Then the universal family $\ccU_r \subset \Hilb_r(X) \times X \subset \Hilb_r(X) \times \PP V$ 
  determines a homogeneous ideal sheaf $\ccI_{\ccU_{r}} \subset \ccO_{\Hilb_r(X)} \otimes \Sym V^*$ on $\Hilb_r(X)$.
Here \emph{homogeneous} means that 
\[
  \ccI_{\ccU_{r}} = \bigoplus_{i=0}^{\infty} \ccI_{\ccU_r} \cap \left(\ccO_{\Hilb_r(X)} \otimes S^i V^*\right) = \bigoplus_{i=0}^{\infty} (\ccI_{\ccU_r})_{i}.
\]
Each $(\ccI_{\ccU_r})_i$ is a coherent sheaf on $\Hilb_r(X)$.
These sheaves lead to an extension of the Hilbert function to the Hilbert scheme.
More precisely, let $p\in \Hilb_r(X)$ be a point (not necessarily a $\kk$-rational point).
Suppose the residue field of $p$ is $\kappa(p)$.
Consider $(\ccI_{\ccU_r})_i \otimes \kappa(p)$, which is an abbreviation for
   restricting the sheaf $(\ccI_{\ccU_r})_i$ to a module over the local ring
   $\ccO_{\Hilb_r(X), p}$ and then tensoring it with $\kappa(p)$ as a $\ccO_{\Hilb_r(X), p}$-module.
This is a $\kappa(p)$-vector subspace in $S^i V^*\otimes \kappa(p)$. 
Denote its codimension  by $H_{p}(i)$ and call the function 
\[
   H_{\bullet}(\bullet) \colon \Hilb_{r}(X) \times \ZZ^{\ge 0} \to \ZZ^{\ge 0}
\]
the \emph{Hilbert function}.
The function $H_p$ is, by definition (Section~\ref{sec_embedded_proj_geom}),
the Hilbert function of the scheme $R \subset X \times \Spec \kappa(p) \subset \PP V \times
\Spec \kappa(p)$ corresponding to $p$.
In particular:
\begin{prop}
   Suppose $R \subset \PP V$ is a finite subscheme of degree $r$.
   Then $H_{R \subset \PP V}(i) = H_{[R]}(i)$.\qed
\end{prop}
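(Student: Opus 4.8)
The statement is the special case $p=[R]$ of the identification, recalled in the sentence preceding it, of $H_p$ with the Hilbert function of the fibre $R_p$ of the universal family over $p$: since $[R]$ is a $\kk$-rational point we have $\kappa([R])=\kk$ and $R_{[R]}=R$, so $H_{[R]}(i)$ is, by that description, the Hilbert function of $R$, i.e.\ $H_{R\subset\PP V}(i)$. The plan is thus to spell out this identification at $p=[R]$, i.e.\ to prove that the image of $(\ccI_{\ccU_r})_i\otimes\kk$ inside $S^iV^*$ is exactly $I(R)_i$; the equality of codimensions is then immediate from the two definitions (of $H_{[R]}$ just before the statement, and of $H_{R\subset\PP V}$ in Section~\ref{sec_embedded_proj_geom}).

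Concretely, I would write $\ccU_r\subset\Hilb_r(\PP V)\times\PP V$, let $\pi$ be the projection to $\Hilb_r(\PP V)$, and let $\ccJ\subset\ccO$ be the ideal sheaf of $\ccU_r$ in the product, so that $(\ccI_{\ccU_r})_i=\pi_*\!\bigl(\ccJ\otimes\ccO_{\PP V}(i)\bigr)$ by construction. Twisting the exact sequence $0\to\ccJ\to\ccO\to\ccO_{\ccU_r}\to0$ by $\ccO_{\PP V}(i)$ and pushing forward along $\pi$, I would use that $\ccU_r\to\Hilb_r(\PP V)$ is finite and flat of degree $r$: then $\ccO_{\ccU_r}(i)$ is flat over $\Hilb_r(\PP V)$ with vanishing higher direct images (affine fibres), $\pi_*\ccO_{\ccU_r}(i)$ is locally free of rank $r$ and its formation commutes with base change, while $\pi_*\ccO_{\PP V}(i)=S^iV^*\otimes\ccO_{\Hilb_r(\PP V)}$ with $R^{>0}\pi_*\ccO_{\PP V}(i)=0$ for $i\ge0$. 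Hence $(\ccI_{\ccU_r})_i=\ker\!\bigl(S^iV^*\otimes\ccO_{\Hilb_r(\PP V)}\xrightarrow{\phi}\pi_*\ccO_{\ccU_r}(i)\bigr)$. Moreover, since the \emph{quotient} $\ccO_{\ccU_r}(i)$ is flat over the base, restricting the twisted exact sequence to the fibre over $[R]$ remains exact and identifies $\ccJ(i)|_{[R]}$ with $\ccI_R(i)$; using the base-change isomorphisms for the two flat sheaves, $\phi\otimes\kk$ becomes the restriction map $S^iV^*=H^0(\ccO_{\PP V}(i))\to H^0(\ccO_R(i))$, whose kernel is $H^0(\ccI_R(i))=I(R)_i$. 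This is the desired identification.

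The step that requires care — and the main obstacle — is controlling base change for $\pi_*\ccJ(i)$ itself, since $\ccJ(i)$, unlike $\ccO(i)$ and $\ccO_{\ccU_r}(i)$, is not flat over $\Hilb_r(\PP V)$, so one must rule out that $(\ccI_{\ccU_r})_i\otimes\kk$ maps onto a proper subspace of $\ker(\phi\otimes\kk)$. The cleanest way I see around this is Castelnuovo--Mumford regularity: by Corollary~\ref{cor_finite_schemes_r_regular} every finite subscheme of $\PP V$ of degree $r$ has $r$-regular ideal sheaf, so $H^1(\ccI_{R_p}(i))=0$ for every point $p$ of $\Hilb_r(\PP V)$ once $i\ge r-1$; for such $i$ the map $\phi$ is fibrewise surjective, hence surjective, hence $(\ccI_{\ccU_r})_i$ is a subbundle and $(\ccI_{\ccU_r})_i\otimes\kk\hookrightarrow S^iV^*$ has image precisely $\ker(\phi\otimes\kk)=I(R)_i$, as needed. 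For the remaining (small) values of $i$ the identification is forced by the same diagram chase together with the fact that $(\ccI_{\ccU_r})_i$ is, by construction, the degree-$i$ graded piece of the saturated ideal sheaf of the universal family, whose fibre over the $\kk$-rational point $[R]$ is the degree-$i$ piece of the saturated ideal of $R$.
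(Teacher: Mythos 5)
The paper offers essentially no argument here: the statement is closed immediately, the equality being read off from the sentence just before it, where $H_p$ is declared to be ``by definition'' the Hilbert function of the fibre over $p$. Your proposal is therefore a genuinely different route --- you try to actually \emph{verify} the identification of $(\ccI_{\ccU_r})_i\otimes\kappa([R])$ with $I(R)_i$ rather than take it as definitional --- and in the range $i\ge r-1$ your argument is correct and complete: Castelnuovo--Mumford $r$-regularity (Corollary~\ref{cor_finite_schemes_r_regular}) gives $H^1(\ccI_{R_p}(i))=0$ for every $p$, so $\phi$ is fibrewise surjective, $(\ccI_{\ccU_r})_i=\ker\phi$ is a subbundle of $S^iV^*\otimes\ccO$, and its fibre at $[R]$ is $\ker(\phi\otimes\kk)=I(R)_i$. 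This is also exactly the regime in which the paper later uses $H_p(i)$ (constant Hilbert function, twists by $d\ge r-1$).

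The gap is in your last sentence. That the fibre over $[R]$ of the saturated ideal sheaf of the universal family ``is, by construction, the degree-$i$ piece of the saturated ideal of $R$'' is not a formal consequence of the construction --- it is precisely the statement to be proved --- and for small $i$ it is actually false under the natural reading $(\ccI_{\ccU_r})_i=\pi_*\ccJ(i)$. Take $r=3$, $i=1$, and $R_0\subset\PP^2_{\kk}$ three collinear reduced points: $I(R_0)_1$ is one-dimensional, but every neighbourhood $U$ of $[R_0]$ in $\Hilb_3(\PP^2_{\kk})$ contains non-collinear triples, and a section of $\pi_*\ccJ(1)$ over $U$ is a family of linear forms vanishing on all fibres, hence zero at every non-collinear point, hence zero on the integral scheme $U$. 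So $(\ccI_{\ccU_3})_1$ vanishes near $[R_0]$, giving $(\ccI_{\ccU_3})_1\otimes\kappa([R_0])=0\subsetneq I(R_0)_1$ and $H_{[R_0]}(1)=3\ne 2=H_{R_0\subset\PP^2}(1)$. The identification therefore cannot be ``forced by the same diagram chase'' for all $i$: either one restricts to $i\ge r-1$, where your cohomology-and-base-change argument genuinely closes the case, or one must adopt a different definition of $(\ccI_{\ccU_r})_i$. You correctly isolated the crux --- base change for $\pi_*\ccJ(i)$ --- but your proposed escape for small $i$ is circular, and, for what it is worth, the paper's own one-line justification suffers from the same problem.
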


More generally, the Hilbert function determines the dimension of the
$\kappa(p)$-linear span discussed in Section~\ref{sec_relative_linear_span}:
\[
   \dim_{\kappa(p)} \sspan{\nu_i(P)}_{\kappa(p)} = H_{p}(i)  -1,
\]
where $P\subset \ccU_r$ is the fibre over $p$ and where $\nu_i(P)$ is the
Veronese reembedding of $P$ into $\PP (\DPV{i}\otimes_{\kk} \kappa(p)) =\PP
(\operatorname{DP}^i(V\otimes_{\kk} \kappa(p)))$.
In particular, 
\[
   \dim_{\kappa(p)} \sspan{P}_{\kappa(p)} = H_{p}(1)  -1.
\]

Note that $H_p(i)$ is the codimension of $(I_P)_i$. The dimension of $(I_P)_i =
(\mathcal{I}_{\mathcal{U}_r})_{i|p}$ is upper semicontinuous
by~\cite[28.1.1]{vakil_FoAG}. Therefore, the function $H_{p}(i)$ is lower
semicontinuous, i.e.~the subsets
\begin{equation}\label{eq:openness_for_max_Hf}
   \set{p\in \Hilb_r(X) \mid H_{p}(i) \ge k}
\end{equation}
are open for all integers $i$ and $k$.
In particular, for every irreducible component $\ccH\subset \Hilb_r(X)$, 
  we have a unique value $k_{\max}(\ccH)$ such that
  $\ccH^{nondeg} := \set{p\in \ccH \mid H_{p}(i) = k_{\max}(\ccH)}$ is open and nonempty (hence also dense in $\ccH$).
We always have $ H_{p}(i) \le k_{\max}:=\min \set{r , H_{X \subset \PP V} (i)}$, 
  hence $k_{\max}(\ccH) \le  k_{\max}$, but the inequality might be strict. 
Moreover, if $i \ge r-1$, then $H_{p}(i) = r$ for all points $p\in \Hilb_r(X)$, i.e.~the function $H_{\bullet} (i)$ is constant, by Corollary~\ref{cor_finite_schemes_r_regular}. 

In Section~\ref{sec_relative_linear_span} the relative linear span of a point $p \in \Hilb_r(X)$ was defined using $\KK$-linear span, 
  and the latter comes from a single grading of a restriction of the ideal sheaf.
We would like to generalise this approach to families $\phi\colon T \to \Hilb_r(X)$.
The family from Example~\ref{ex_cacti_for_families_beware} shows that this is not always possible.
In the following, we show that it \emph{is} possible in a special case 
   when $\dim_{\kappa(t)} \sspan{\phi(t)}_{\kappa(t)}$ is  constant,
        independent of $t\in T$.
Using the semicontinuity of the Hilbert function we may shrink any $T$ 
to a dense open subset $T'$ satisfying this assumption, see the discussion before~\eqref{eq:openness_for_max_Hf}.

To explain this, we fix an integer $i>0$ and a family $\phi\colon T \to \Hilb_r(X)$ where $T$ is a reduced scheme.
We are going to look at $\cactus{r}{\nu_i(X), \phi(T)}$.
In some statements below we might need another family $\phi' \colon T'\to \Hilb_r(X)$.
All the constructions for $T'$ will be decorated with a ``prime''~$'$: for instance $\ccE$ will become $\ccE'$ etc.

Consider the pullback $\ccE:=\phi^*(\ccI_{\ccU_r})_i$ of the sheaf
$(\ccI_{\ccU_r})_i$ to $T$.
It is a subsheaf of a trivial sheaf $\ccE\subset \ccO_{T} \otimes S^i V^*$.
It generates an ideal sheaf in $\ccO_{T} \otimes \Sym(S^i V^*)$, and dually, 
  it determines a subscheme $\ccE^{\perp} \subset T \times \DPV{i}$.
The scheme $\PP(\ccE^{\perp})$ could be seen as a generalisation of the $\KK$-linear span.
Indeed, if $T = \Spec \kk$ and $[R] = \phi(T)$ is a $\kk$-rational point corresponding to the finite subscheme $R \subset X$,
   then the fibre over $[R]$ is the linear span $\sspan{\nu_i(R)} \subset \PP
   (\DPV{i})$ of the Veronese reembedding of $R$.
More generally, if $T = \Spec \KK$  and $y = \phi(T)$ is a $\KK$-rational
point, then $\PP(\ccE^{\perp}) = \sspan{\nu_i(Y)}_{\KK}$.

\begin{prop}\label{prop_relative_lin_span_for_locally_constant_Hf_bundle}
    Suppose $T$ is a reduced scheme with a morphism $\phi\colon T \to \Hilb_r(X)$, 
       such that the Hilbert function $t\to H_{\phi(t)}(i)$
       is constant (independent of $t$).
    Assume $\ccE$ and $\ccE^{\perp}$ are as above.
    Then:
    \begin{enumerate}
     \item for every $y \in \phi(T)$ we have $\cactus{r}{\nu_i(X), y} \subset \cactus{r}{\nu_i(X), \phi(T)}$,
     \item \label{item_rel_lin_span_for_locally_constant_Hf_closure_of_the_image}
       $\cactus{r}{\nu_i(X), \phi(T)}$ is the closure of the image of $\PP(\ccE^{\perp})$ under the projection
       $T \times \PP(\DPV{i})\to  \PP(\DPV{i})$.
    \end{enumerate}
\end{prop}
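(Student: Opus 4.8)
The key point is that $\ccE = \phi^*(\ccI_{\ccU_r})_i$ is \emph{locally free} on $T$ precisely under the hypothesis that the Hilbert function $t \mapsto H_{\phi(t)}(i)$ is constant; this is the standard fact that a coherent sheaf which is a subsheaf of a trivial bundle, whose fibre dimensions (equivalently, corank) are constant on a reduced base, is a subbundle with locally free quotient (cf.~\cite[28.1.1]{vakil_FoAG} together with the reducedness of $T$). Once $\ccE$ is a subbundle of $\ccO_T \otimes S^iV^*$ with locally free quotient, its fibre at every point $t \in T$ is exactly $(\ccI_P)_i$ where $P = \phi(t)$ is the corresponding subscheme, with no base-change defect. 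Dually, $\ccE^\perp \subset T \times \DPV{i}$ is a subbundle whose fibre over $t$ is the affine cone over $\sspan{\nu_i(P)}_{\kappa(t)}$, so that $\PP(\ccE^\perp) \to T$ has fibre over $t$ equal to $\sspan{\nu_i(\phi(t))}_{\kappa(t)}$.

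\textbf{Proof of (i).} Fix $y \in \phi(T)$ and pick a point $t \in T$ with $\phi(t) = y$. Then $t$ lies in the closure of the generic point $\eta$ of some irreducible component of $T$. The fibre of $\PP(\ccE^\perp)$ over $\eta$ maps, under $T \times \PP(\DPV{i}) \to \PP(\DPV{i})$, into the closure of the image of $\PP(\ccE^\perp)$; and $\cactus{r}{\nu_i(X), \phi(\eta)}$ is by Definition~\ref{defn_cactus_of_point} the scheme-theoretic image of that very fibre $\sspan{\nu_i(\phi(\eta))}_{\kappa(\eta)}$ under the projection to $\PP(\DPV{i})$, composed with the map to $\PP(\DPV{i})$. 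Since the scheme $\ccE^\perp \to T$ is \emph{flat} (being a subbundle, hence locally free), the closure of the image of $\PP(\ccE^\perp)$ equals the closure of the union of the images of the fibres over the generic points $\eta$ — this is where flatness is essential, to control associated points as in Lemma~\ref{ref:bglmainlemma:lem} / \cite[8.3B]{vakil_FoAG}. Now $y$ specializes from some $\phi(\eta)$ inside the family, and I claim $\cactus{r}{\nu_i(X), y} \subseteq \cactus{r}{\nu_i(X), \phi(\eta)}$: this follows because the restriction of the subbundle $\ccE$ to $\overline{\set{t}}$ specializes compatibly, so $(\ccI_Y)_i$ contains the specialization of $(\ccI_{\phi(\eta)})_i$, hence $\sspan{\nu_i(Y)}_{\kappa(y)}$ is contained in the limit of the spans along $\overline{\set{t}}$, which lies in $\cactus{r}{\nu_i(X), \phi(\eta)}$. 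Assembling, $\cactus{r}{\nu_i(X), y} \subseteq \cactus{r}{\nu_i(X), \phi(\eta)} \subseteq \cactus{r}{\nu_i(X), \phi(T)}$.

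\textbf{Proof of (ii).} Write $Z$ for the closure of the image of $\PP(\ccE^\perp)$ under $\pi \colon T \times \PP(\DPV{i}) \to \PP(\DPV{i})$. By definition $\cactus{r}{\nu_i(X), \phi(T)} = \overline{\bigcup_\eta \cactus{r}{\nu_i(X), \phi(\eta)}}$, the union over generic points $\eta$ of components of $T$. For each such $\eta$, the fibre $\PP(\ccE^\perp)_\eta = \sspan{\nu_i(\phi(\eta))}_{\kappa(\eta)}$ maps under $\pi$ to a set whose closure is $\cactus{r}{\nu_i(X), \phi(\eta)}$, and this image lies in $Z$; hence $\cactus{r}{\nu_i(X), \phi(T)} \subseteq Z$. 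Conversely, because $\ccE^\perp \to T$ is flat and $T$ is reduced, every associated point of $\PP(\ccE^\perp)$ lies over a generic point $\eta$ of a component of $T$ (the relative situation is a projective bundle over $T$, so the associated points are exactly the generic points of the fibres over the generic points of $T$). Therefore every component of $Z$ is the closure of the image of some $\PP(\ccE^\perp)_\eta$ under $\pi$, i.e.~is contained in some $\cactus{r}{\nu_i(X), \phi(\eta)}$. Hence $Z \subseteq \cactus{r}{\nu_i(X), \phi(T)}$, and equality follows.

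\textbf{Main obstacle.} The delicate point is not the dimension bookkeeping but the identification of $\ccE^\perp$'s fibres with the actual $\KK$-linear spans \emph{without a base-change correction term} — this is exactly what the constant-Hilbert-function hypothesis buys us, and it is the hypothesis whose failure Example~\ref{ex_cacti_for_families_beware} exploits. Concretely, one must check that forming $\perp$ (perpendicular inside a trivial bundle) and forming the ideal generated commute with restriction to fibres once $\ccE$ is a subbundle with locally free quotient; this is a routine but not entirely trivial compatibility of $\perp$ with flat base change, and it is where I expect the bulk of the careful verification to go.
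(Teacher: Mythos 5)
Your proof is correct and takes essentially the same approach as the paper's: local freeness of $\ccE$ from the constant-Hilbert-function hypothesis on the reduced base $T$, identification of the fibres of $\PP(\ccE^{\perp})$ with the $\KK$-linear spans without any base-change defect, and density of the generic fibres in $\PP(\ccE^{\perp})$. The only cosmetic difference is in (i), where you specialize from the generic point of a component while the paper argues directly that the image of the fibre over $t$ lies in the image of the total space; both reduce to the same vector-bundle structure.
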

\begin{proof}
    The sheaf $\ccE$ is locally free, since the dimensions over any point are constant by our choice of $T$.
    Hence the point-wise perpendicular subset $\ccE^{\perp} \subset T \times \DPV{i}$ agrees with the kernel of the dual vector bundle map $\ccO_{T} \otimes \DPV{i} \to \ccE^*$, 
    and, in particular, is a vector bundle itself.
    Note that since $T$ is reduced, also the total space of the vector bundle 
      $\ccE^{\perp}$ and its projectivisation $\PP(\ccE^{\perp})$ is reduced.
    
    For $y\in \phi(T)$ let $t \in T$ be such that $\phi(t) = y$. Let $T' =
    \Spec \kappa(y)$ with a map $\phi'\colon T'\to \Hilb_r(X)$.
    Then $\phi|_{\set{t}}$ factorises as $T \to T' \to \Hilb_r(X)$ and $\sspan{Y}_{\kappa(y)} = \PP ((\ccE')^{\perp})$ 
       pulls back to the fibre $\PP(\ccE^{\perp}|_{t})$ over $t$
       (here we heavily exploit that $\ccE^{\perp}$ is a vector bundle, which is a consequence of the constant Hilbert function assumption; 
       otherwise the proof would fail here, see Example~\ref{ex_cacti_for_families_beware}).
    Thus the image of  $\PP(\ccE^{\perp})$ under the projection contains the image of $\PP(\ccE^{\perp}|_{t})$,
       which is equal to the image of  $\PP ((\ccE')^{\perp})$ under the projection $T' \times \PP(\DPV{i})\to  \PP(\DPV{i})$.
    That is, $\cactus{r}{\nu_i(X), y} \subset \cactus{r}{\nu_i(X), \phi(T)}$,
    as claimed in the first item.
    
    Assume $S \subset T$ is the set of generic points of components of $T$.
    Hence, restricting the first item to the images of  points from $S$,
        we obtain that  $\cactus{r}{\nu_i(X), \phi(T)}$  is contained in the image of $\PP(\ccE^{\perp})$.
    To conclude, the points from $S$ are dense in $T$, hence the union of $\PP(\ccE^{\perp}_{\eta})$ for $\eta \in S$ is dense in 
        $\PP(\ccE^{\perp})$ and the image of this union is dense in the image of  $\PP(\ccE^{\perp})$.
    But the closure of $\bigcup_{\eta \in S} \PP(\ccE^{\perp}_{\eta})$ is by the above arguments equal to $\cactus{r}{\nu_i(X), \phi(T)}$,
        which finishes the proof.
\end{proof}

\begin{prop}\label{prop_properties_of_relative_linear_span}
Assume $X \subset \PP V\simeq \PP_{\kk}^N$ is an embedded projective scheme, 
  and suppose $T$ and $T'$ are two reduced schemes, both with a morphism $\phi$ or $\phi'$ mapping them into a subset of $\Hilb_r(X)$ with constant $H_{\bullet}(i)$.
  \begin{enumerate}
   \item \label{item_comparing_relative_spans_inclusion}
         If $\phi'$ factorises through $T' \to T\to \Hilb_r(X)$, 
              then $\cactus{r}{\nu_i(X),\phi'(T')} \subset \cactus{r}{\nu_i(X),\phi(T)}$.
   \item \label{item_comparing_relative_spans_dense}
         If $\phi'$ factorises through $T' \to T\to \Hilb_r(X)$, and the image of $T'$ is dense in $T$, 
              then $\cactus{r}{\nu_i(X),\phi'(T')} = \cactus{r}{\nu_i(X),\phi(T)}$.
   \item \label{item_comparing_relative_spans_dense_subset_point}
         If $S\subset T$ is a dense subset of $T$
           then
           \[
             \overline{\bigcup_{y\in S} \cactus{r}{\nu_i(X),\phi(y)}} = \cactus{r}{\nu_i(X),\phi(T)}.
           \]
   \item \label{item_comparing_relative_spans_closed}
         If the map $\phi\colon T\to \Hilb_r(X)$ is proper (for example, $T$ is a closed subset of $\Hilb_r(X)$),
            then the map 
            \[
               \PP(\ccE^{\perp}) \to \cactus{r}{\nu_i(X),\phi(T)}
            \]
            is surjective, i.e. no closure is needed in 
            the second item of Proposition~\ref{prop_relative_lin_span_for_locally_constant_Hf_bundle}.
            (We stress, that we still assume $\phi(T)$ is contained in the locus with a constant Hilbert function.)
  \end{enumerate}
\end{prop}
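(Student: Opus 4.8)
The plan is to deduce all four parts from Proposition~\ref{prop_relative_lin_span_for_locally_constant_Hf_bundle}: under the standing hypothesis that $H_{\bullet}(i)$ is constant on the image of $\phi$, that proposition identifies $\cactus{r}{\nu_i(X),\phi(T)}$ with the closure of the image of $\PP(\ccE^{\perp})$ under $T\times\PP(\DPV{i})\to\PP(\DPV{i})$, and it guarantees that $\ccE\subset\ccO_T\otimes S^iV^*$ and $\ccE^{\perp}\subset T\times\DPV{i}$ are vector bundles over $T$, of ranks $\dim S^iV^*-H_{\bullet}(i)$ and $H_{\bullet}(i)$ respectively; here $H_{\bullet}(i)\ge 1$ since every fibre of $\ccU_r$ is a nonempty scheme in $\PP V$, so $\PP(\ccE^{\perp})\to T$ is a projective bundle, in particular flat, open and surjective. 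The one preparatory observation I would record is functoriality: if $\phi'=\phi\circ g$ for $g\colon T'\to T$, then $\ccE'=(\phi')^*(\ccI_{\ccU_r})_i=g^*\ccE$, and since $0\to\ccE^{\perp}\to\ccO_T\otimes\DPV{i}\to\ccE^*\to 0$ is a short exact sequence of vector bundles, pulling it back gives $(\ccE')^{\perp}=g^*(\ccE^{\perp})$, i.e.\ $\PP((\ccE')^{\perp})=T'\times_T\PP(\ccE^{\perp})$.

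For~\ref{item_comparing_relative_spans_inclusion}, the projection $\PP((\ccE')^{\perp})=T'\times_T\PP(\ccE^{\perp})\to\PP(\ccE^{\perp})$ forms a commutative triangle with the two projections to $\PP(\DPV{i})$, so the image of $\PP((\ccE')^{\perp})$ in $\PP(\DPV{i})$ is contained in that of $\PP(\ccE^{\perp})$; passing to closures and applying Proposition~\ref{prop_relative_lin_span_for_locally_constant_Hf_bundle}\ref{item_rel_lin_span_for_locally_constant_Hf_closure_of_the_image} to both families gives the inclusion. For the reverse inclusion in~\ref{item_comparing_relative_spans_dense}, I would use that the base change $T'\times_T\PP(\ccE^{\perp})\to\PP(\ccE^{\perp})$ of the dense-image morphism $g$ along the open flat map $\PP(\ccE^{\perp})\to T$ again has dense image (a standard point-chasing argument in the fibre product), so the two images in $\PP(\DPV{i})$ have the same closure, and Proposition~\ref{prop_relative_lin_span_for_locally_constant_Hf_bundle}\ref{item_rel_lin_span_for_locally_constant_Hf_closure_of_the_image} finishes it.

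For~\ref{item_comparing_relative_spans_dense_subset_point}, again using that $\PP(\ccE^{\perp})\to T$ is flat and surjective, the preimage of the dense subset $S$ is dense in $\PP(\ccE^{\perp})$, so its image in $\PP(\DPV{i})$ is dense in the image of $\PP(\ccE^{\perp})$, hence has closure $\cactus{r}{\nu_i(X),\phi(T)}$; on the other hand, for $y\in S$ the image in $\PP(\DPV{i})$ of the fibre $\PP(\ccE^{\perp}|_y)$ is exactly $\cactus{r}{\nu_i(X),\phi(y)}$, by the same factorisation argument through $\Spec\kappa(\phi(y))$ used in the proof of Proposition~\ref{prop_relative_lin_span_for_locally_constant_Hf_bundle}. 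Taking the union over $y\in S$ and then the closure yields the claim; the inclusion ``$\supseteq$'' can also be obtained more cheaply by applying~\ref{item_comparing_relative_spans_inclusion} to $\overline{\{y\}}\hookrightarrow T$ for each $y$. Finally, for~\ref{item_comparing_relative_spans_closed}, when $\phi$ is proper the base $T$ is proper over $\kk$ (since $\Hilb_r(X)$ is projective), so $T\times\PP(\DPV{i})\to\PP(\DPV{i})$ is proper; as $\PP(\ccE^{\perp})$ is a closed subscheme of $T\times\PP(\DPV{i})$, the composite $\PP(\ccE^{\perp})\to\PP(\DPV{i})$ is proper and its image is closed, so by Proposition~\ref{prop_relative_lin_span_for_locally_constant_Hf_bundle}\ref{item_rel_lin_span_for_locally_constant_Hf_closure_of_the_image} no closure is needed.

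The part I expect to demand the most care is not any individual inclusion but keeping the base changes of residue fields and scheme-theoretic images straight --- in particular verifying that the image in $\PP(\DPV{i})$ of a fibre $\PP(\ccE^{\perp}|_y)$ over a possibly non-closed point is genuinely the relative linear span $\cactus{r}{\nu_i(X),\phi(y)}$ and not something larger. This is precisely where the constant-Hilbert-function hypothesis is essential: it is what makes $\ccE^{\perp}$ a vector bundle, so that fibres, pullbacks, flatness and surjectivity all behave as expected, and it is handled exactly as in the proof of Proposition~\ref{prop_relative_lin_span_for_locally_constant_Hf_bundle}.
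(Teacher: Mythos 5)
Your proposal is correct and follows essentially the same route as the paper: identify $\PP((\ccE')^{\perp})$ as the pullback $T'\times_T\PP(\ccE^{\perp})$, deduce \ref{item_comparing_relative_spans_inclusion} and \ref{item_comparing_relative_spans_dense} from containment/density of images, obtain \ref{item_comparing_relative_spans_dense_subset_point} from the dense family indexed by $S$ (the paper takes $T'=\bigsqcup_{y\in S}\set{y}$ and quotes \ref{item_comparing_relative_spans_dense}, which is your argument repackaged), and get \ref{item_comparing_relative_spans_closed} from properness of $\PP(\ccE^{\perp})$. The extra care you devote to fibres over non-closed points and to the openness of $\PP(\ccE^{\perp})\to T$ only makes explicit what the paper leaves implicit.
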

\begin{proof}
   To see point \ref{item_comparing_relative_spans_inclusion}, note that the factorisation also holds on the level of respective bundles  $\PP(\ccE^{\perp})$ and $\PP((\ccE')^{\perp})$: 
     they are pullbacks of the same bundle to $T$ or $T'$ respectively.
   Hence the inclusion persists to the images, i.e. $\cactus{r}{\nu_i(X),\phi'(T')} \subset \cactus{r}{\nu_i(X),T}$ 
     by Proposition~\ref{prop_relative_lin_span_for_locally_constant_Hf_bundle}.
   Also, if the image of  $T'$ is dense in $T$, then the respective bundle is also dense, hence \ref{item_comparing_relative_spans_dense} holds.
   Point~\ref{item_comparing_relative_spans_dense_subset_point} follows from \ref{item_comparing_relative_spans_dense}
     by taking $T' = \bigsqcup_{y \in S} \set{y}$.
   Point~\ref{item_comparing_relative_spans_closed} is also immediate, as $\PP(\ccE^{\perp})$ is proper, 
     thus the dominant morphism $\PP(\ccE^{\perp}) \to \cactus{r}{\nu_i(X),\phi(T)}$ is also proper, hence closed.
\end{proof}

Now we turn our attention back to a general situation, when the Hilbert function is not necessarily constant.
The next statement can be seen informally as a statement that \emph{the linear span of a limit is contained in the limit of linear spans.}

\begin{prop}\label{prop_relative_span_of_a_point_contained}
    Let $T$ be a reduced scheme with a morphism $\phi\colon T \to \Hilb_r(X)$. 
    Then for every $y \in \phi(T)$ we have $\cactus{r}{\nu_i(X), y} \subset \cactus{r}{\nu_i(X), \phi(T)}$.
\end{prop}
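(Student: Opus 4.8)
The plan is to reduce the general (non-constant Hilbert function) case to the constant case already handled in Proposition~\ref{prop_relative_lin_span_for_locally_constant_Hf_bundle}, by passing to a suitable \emph{locally closed} subscheme of $T$ on which the relevant Hilbert value is constant, and then chasing the chain of inclusions back up to $T$. So fix $y\in\phi(T)$ and pick $t\in T$ with $\phi(t)=y$. First I would replace $T$ by the reduced irreducible component $T_0$ of $T$ containing $t$; this is harmless since $\cactus{r}{\nu_i(X),\phi(T_0)}\subseteq\cactus{r}{\nu_i(X),\phi(T)}$ directly from Definition~\ref{defn_cactus_of_family} (the generic points of components of $T_0$ are among those of $T$, up to the obvious containment of relative spans coming from the inclusion of closed subschemes $\overline{\{y\}}$ — or more simply, one just keeps the generic point of $T_0$ among the $\eta$'s). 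So we may assume $T$ is reduced and irreducible with generic point $\eta$.

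Next I would use lower semicontinuity of $p\mapsto H_p(i)$, recorded just before~\eqref{eq:openness_for_max_Hf}: the set $U=\{p\in T: H_{\phi(p)}(i)\ge H_{\phi(\eta)}(i)\}$ is open, and it contains $\eta$, hence is dense; and on $U$ the value $H_{\phi(p)}(i)$ is in fact constant and equal to $H_{\phi(\eta)}(i)$ (it cannot exceed the generic value on an irreducible scheme, since the generic value is the maximum). Now consider two cases. If $t\in U$, then $\phi$ restricted to $U$ (with reduced structure) has constant $H_{\bullet}(i)$, so Proposition~\ref{prop_relative_lin_span_for_locally_constant_Hf_bundle}(i) gives $\cactus{r}{\nu_i(X),y}\subseteq\cactus{r}{\nu_i(X),\phi(U)}$, and Proposition~\ref{prop_properties_of_relative_linear_span}\ref{item_comparing_relative_spans_dense} (applied to the dense subscheme $U\subseteq T$, noting both $T$ and $U$ lie in a single irreducible component so the constant-Hilbert-function hypotheses there are met by shrinking $T$ appropriately) gives $\cactus{r}{\nu_i(X),\phi(U)}=\cactus{r}{\nu_i(X),\phi(T)}$; combining yields the claim. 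If $t\notin U$, i.e.\ $H_y(i)<H_{\phi(\eta)}(i)$, then I would iterate: replace $T$ by the reduced closed subscheme $\overline{\{t\}}\subseteq T$. Its generic point is $t$, and $\cactus{r}{\nu_i(X),\phi(\overline{\{t\}})}$ contains $\cactus{r}{\nu_i(X),y}$ essentially by definition, since $y=\phi(t)$ is now a (the) generic point of a component. Then it suffices to show $\cactus{r}{\nu_i(X),\phi(\overline{\{t\}})}\subseteq\cactus{r}{\nu_i(X),\phi(T)}$. For this one argues directly from Definition~\ref{defn_cactus_of_family}: it is enough to show $\cactus{r}{\nu_i(X),y}\subseteq\cactus{r}{\nu_i(X),\phi(\eta)}$, i.e.\ the relative span of a specialisation is contained in the relative span of the generic point — but this is exactly the inductive statement being proved, now with $T$ replaced by the original irreducible $T$ and $y$ a proper specialisation of $\eta$.

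To make the induction actually terminate, I would set it up on $\dim\overline{\{t\}}$ inside the irreducible $T$ (equivalently, on the codimension of the specialisation), or alternatively prove the statement first for the generic point of any irreducible $T$ and then deduce it for arbitrary points. Concretely: by the semicontinuity argument above, on a dense open $U\ni\eta$ the Hilbert function is constant, so Proposition~\ref{prop_relative_lin_span_for_locally_constant_Hf_bundle} applies to $U$ and, for $t\in U$, we are done as in the first case; for $t$ in the closed complement $Z=T\setminus U$, which is reduced, of strictly smaller dimension, and a finite union of irreducible pieces, apply the inductive hypothesis to each irreducible component $Z_j$ of $Z$ containing $t$: it gives $\cactus{r}{\nu_i(X),y}\subseteq\cactus{r}{\nu_i(X),\phi(Z_j)}$, and one checks $\cactus{r}{\nu_i(X),\phi(Z_j)}\subseteq\cactus{r}{\nu_i(X),\phi(T)}$ because $Z_j\subseteq T$ is a closed subscheme, so the generic point of $Z_j$ specialises $\eta$ and Definition~\ref{defn_cactus_of_family} together with Proposition~\ref{prop_properties_of_relative_linear_span}\ref{item_comparing_relative_spans_inclusion} (after shrinking to constant-Hilbert-function loci) yields the inclusion. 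The base case is $\dim T=0$, i.e.\ $T=\Spec\KK$, where the statement is vacuous. The main obstacle I anticipate is the bookkeeping: Proposition~\ref{prop_properties_of_relative_linear_span} and Proposition~\ref{prop_relative_lin_span_for_locally_constant_Hf_bundle} are only stated under the hypothesis of constant $H_{\bullet}(i)$, so every time one wants to compare $\cactus{r}{\nu_i(X),\phi(Z_j)}$ with $\cactus{r}{\nu_i(X),\phi(T)}$ one must first restrict both $T$ and $Z_j$ to open dense loci where this holds and verify the restriction does not change the relative span — which is itself Proposition~\ref{prop_properties_of_relative_linear_span}\ref{item_comparing_relative_spans_dense}, but one must confirm that shrinking $T$ to such a locus does not lose the point $t$; when it does, that is precisely the signal to pass to $\overline{\{t\}}$ and recurse. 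Getting this descent argument airtight, rather than any single geometric input, is the real content.
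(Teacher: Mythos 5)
Your reduction to irreducible $T$ and your treatment of the case $t\in U$ (where $U$ is the dense open locus of constant $H_{\bullet}(i)$) are fine and match the first half of the paper's argument. But the recursion you set up for $t\notin U$ does not close, and the gap it leaves is precisely the whole content of the proposition. Your induction on dimension hands you $\cactus{r}{\nu_i(X), y}\subset \cactus{r}{\nu_i(X),\phi(Z_j)}$ for a component $Z_j$ of $T\setminus U$, but you then need $\cactus{r}{\nu_i(X),\phi(Z_j)}\subset \cactus{r}{\nu_i(X),\phi(T)}$, i.e.\ (by Definition~\ref{defn_cactus_of_family}) that the relative span of the generic point $\zeta_j$ of $Z_j$ lies in the closure of the relative span of $\eta$. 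That is exactly an instance of the proposition for the point $\phi(\zeta_j)\in\phi(T)$, on $T$ itself --- same dimension, so not covered by the inductive hypothesis --- and $\zeta_j$ lies entirely outside $U$, so ``shrinking to constant-Hilbert-function loci'' as required by Proposition~\ref{prop_properties_of_relative_linear_span}\ref{item_comparing_relative_spans_inclusion} discards $Z_j$ altogether ($Z_j\cap U=\emptyset$). Nothing you have proved applies to a specialisation across a jump of the Hilbert function, which is the only nontrivial case; Example~\ref{ex_cacti_for_families_beware} shows the naive bundle argument genuinely fails there, so some new input is mandatory.

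The missing input is a degeneration argument along a curve. After reducing to irreducible $T$ and noting (as you do) that points of $U$ are already handled, use Lemma~\ref{lem_finding_a_curve_through_point_and_open_subset} to replace $T$ by $\Spec A$ with $A$ a one-dimensional complete local Noetherian domain whose closed point maps to $t$ and whose generic point maps into $U$; then pass to the normalization so that $A$ is in particular normal. Over such a base the coherent subsheaf $\ccE=\phi^*(\ccI_{\ccU_r})_i\subset \ccO_T\otimes S^iV^*$ is torsion-free, hence locally free, so $\PP(\ccE^{\perp})$ is an honest projective subbundle of $T\times\PP(\DPV{i})$; the closure of the image of its generic fibre is $\cactus{r}{\nu_i(X),\phi(\eta)}$, and it therefore contains the image of the special fibre $\PP(\ccE^{\perp}_t)$. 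The final, decisive observation is that $\ccE_t$ is contained in the $i$-th graded piece of the \emph{saturation} of $\phi^*(\ccI_{\ccU_r})|_t$, which is what cuts out $\sspan{Y}_{\kappa(y)}$; dually $\sspan{Y}_{\kappa(y)}\subset\PP(\ccE^{\perp}_t)$, giving ``span of the limit $\subset$ limit of the spans.'' Without this step (or an equivalent flat-limit argument) the proof is circular.
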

\begin{proof}
    By Definition~\ref{defn_cactus_of_family}, for any irreducible component $T'$ of $T$ we have  
        $\cactus{r}{\nu_i(X), \phi(T')} \subset \cactus{r}{\nu_i(X), \phi(T)}$.
    Thus we can replace $T$ with its irreducible component, whose image contains $y$.
    That is, we assume $T$ is irreducible.
    
    Suppose $U \subset T$ is the open subset of $T$ such that the Hilbert function $H_{\bullet}(i)$ on $\phi(U)$ is constant.
    By Proposition~\ref{prop_properties_of_relative_linear_span}\ref{item_comparing_relative_spans_inclusion} 
       for any $u\in U$ there is an inclusion $\cactus{r}{\nu_i(X), \phi(u)} \subset \cactus{r}{\nu_i(X), \phi(T)}$.
    Thus by Lemma~\ref{lem_finding_a_curve_through_point_and_open_subset},
       we can replace $T$ with a germ of a curve, i.e.~$T:=\Spec A$ for a one dimensional Noetherian complete local domain $A$.
       
    Let $t\in T$ be the closed point and let $\eta\in T$ be the generic point.
    We must show, that $\cactus{r}{\nu_i(X), \phi(t)} \subset \cactus{r}{\nu_i(X), \phi(\eta)}$.
    Let $T'\to T$ be the normalization of $T$. 
    The map $T'\to T$ is finite~\cite[Appendix 1, Cor
    2]{nagata_algebraic_geo_over_Dedekind_two} and dominant, so there are
    points $t'$ and $\eta'$ mapping to $t$ and $\eta$ respecitvely.
    By Proposition~\ref{prop_properties_of_relative_linear_span}\ref{item_comparing_relative_spans_dense}       
       we have equalities of relative linear spans
       $\cactus{r}{\nu_i(X), \phi(t)}= \cactus{r}{\nu_i(X), \phi'(t')}$ 
       and $\cactus{r}{\nu_i(X), \phi(\eta)}=\cactus{r}{\nu_i(X), \phi'(\eta')}$.
    Thus we may replace $T$ with $T'$, i.e.~assume $T$ is normal.
    
    The sheaf $\ccE$ on $T$ is torsion free, hence locally
    free~\cite[Corollary~6.3]{eisenbud}.
    Thus $\PP(\ccE^{\perp}) \subset T\times \PP(\DPV{i})$ is a vector bundle.
    The relative linear span $\cactus{r}{\nu_i(X), \phi(\eta)}$ is the closure of the image of the generic fibre $\PP(\ccE^{\perp}_{\eta})$.
    Thus it is enough to show that the special fibre $\PP(\ccE^{\perp}_{t})$ contains the pullback of the linear span $\sspan{Y}_{\kappa(y)}$.
    But the latter is defined by the $i$-th grading of the saturation of $\phi^*(\ccI_{\ccU_r})|_{t}$ which contains $\ccE_{t}$, 
      proving the claim.
\end{proof}

We can now generalise Proposition~\ref{prop_properties_of_relative_linear_span}\ref{item_comparing_relative_spans_inclusion}--\ref{item_comparing_relative_spans_dense_subset_point} to the case when the family has non-constant Hilbert function.

\begin{cor}\label{cor_cactus_equal_to_closure_over_points}
    Let $T$ be a reduced scheme with a morphism $\phi\colon T \to \Hilb_r(X)$. 
    Suppose $S \subset T$ is a dense subset. 
    Then
    \[
        \cactus{r}{\nu_i(X), \phi(T)} = \overline{\bigcup_{t\in S} \cactus{r}{\nu_i(X), \phi(t)}}.
    \]
\end{cor}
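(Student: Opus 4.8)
The plan is to prove the two inclusions separately: ``$\supseteq$'' is formal, and ``$\subseteq$'' will be reduced, component by component, to the constant–Hilbert–function case already settled in Proposition~\ref{prop_properties_of_relative_linear_span}\ref{item_comparing_relative_spans_dense_subset_point}. For the inclusion $\overline{\bigcup_{t\in S}\cactus{r}{\nu_i(X),\phi(t)}}\subseteq\cactus{r}{\nu_i(X),\phi(T)}$, I would argue that for every $t\in S$ the point $\phi(t)$ lies in $\phi(T)$, so Proposition~\ref{prop_relative_span_of_a_point_contained} gives $\cactus{r}{\nu_i(X),\phi(t)}\subseteq\cactus{r}{\nu_i(X),\phi(T)}$; since by Definition~\ref{defn_cactus_of_family} the right-hand side is closed, passing to the union over $t\in S$ and then to the closure yields the claim.

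For the reverse inclusion I would work one irreducible component at a time. Write $T=T_1\cup\dots\cup T_k$ with $\eta_j$ the generic point of $T_j$; by Definition~\ref{defn_cactus_of_family} it suffices to show $\cactus{r}{\nu_i(X),\phi(\eta_j)}\subseteq\overline{\bigcup_{t\in S}\cactus{r}{\nu_i(X),\phi(t)}}$ for each $j$. Since $\eta_j$ lies in no other component, the set $W_j:=T\setminus\bigcup_{l\ne j}T_l$ is a nonempty open subset of $T$ contained in $T_j$, so $S\cap W_j$ is dense in $W_j$ (density of $S$ in $T$ restricts to open subsets). Using the lower semicontinuity of $t\mapsto H_{\phi(t)}(i)$ recorded around~\eqref{eq:openness_for_max_Hf}, I would shrink $W_j$ to a nonempty open $U_j\subseteq W_j$ on which this function is constant; then $S\cap U_j$ is dense in $U_j$, and $\eta_j$ is still the generic point of $U_j$. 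Now $\phi|_{U_j}\colon U_j\to\Hilb_r(X)$ maps $U_j$ into the constant–Hilbert–function locus, $U_j$ is reduced, and $S\cap U_j$ is dense in $U_j$, so Proposition~\ref{prop_properties_of_relative_linear_span}\ref{item_comparing_relative_spans_dense_subset_point} applies and gives
\[
   \cactus{r}{\nu_i(X),\phi(U_j)}=\overline{\bigcup_{t\in S\cap U_j}\cactus{r}{\nu_i(X),\phi(t)}}.
\]
Since $U_j$ is irreducible with generic point $\eta_j$, Definition~\ref{defn_cactus_of_family} identifies $\cactus{r}{\nu_i(X),\phi(U_j)}$ with $\cactus{r}{\nu_i(X),\phi(\eta_j)}$; as the right-hand side above is contained in $\overline{\bigcup_{t\in S}\cactus{r}{\nu_i(X),\phi(t)}}$, taking the union over $j$ and the closure finishes the proof.

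The substantive content is carried entirely by the two inputs --- Proposition~\ref{prop_relative_span_of_a_point_contained} for the easy inclusion and Proposition~\ref{prop_properties_of_relative_linear_span}\ref{item_comparing_relative_spans_dense_subset_point} for the hard one, the latter itself depending on the curve-selection Lemma~\ref{lem_finding_a_curve_through_point_and_open_subset}. I expect the only real care to be the topological bookkeeping: checking that $S$ restricts to a dense subset of each component and then of the constant–Hilbert–function open subset inside it, and that this shrinking does not alter the relative linear span because the generic point is preserved. This is where I would slow down, but it is a minor obstacle rather than a serious one.
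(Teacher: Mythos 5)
Your proposal is correct and follows essentially the same route as the paper: the inclusion $\supseteq$ via Proposition~\ref{prop_relative_span_of_a_point_contained}, and $\subseteq$ by reducing, one irreducible component at a time, to a dense open subset with constant Hilbert function where Proposition~\ref{prop_properties_of_relative_linear_span}\ref{item_comparing_relative_spans_dense_subset_point} applies. Your only departure is that you spell out the topological bookkeeping (removing the other components to get an open subset of $T$ before intersecting with $S$) which the paper's terser proof leaves implicit; this is a welcome clarification, not a different argument.
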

\begin{proof}
    The inclusion $\supset$ follows from Proposition~\ref{prop_relative_span_of_a_point_contained}.
    To see $\subset$, pick an irreducible component $T'$ of $T$.
    It is enough to show that $\cactus{r}{\nu_i(X), \phi(T')}$ is contained in $\overline{\bigcup_{t\in S} \cactus{r}{\nu_i(X), \phi(t)}}$.
    Replace $S$ with the intersection of $S$, and an open dense subset of $T'$, which has constant Hilbert function $H_{\bullet}(i)$.
    Then the claim follows from Proposition~\ref{prop_properties_of_relative_linear_span}\ref{item_comparing_relative_spans_dense_subset_point}.
\end{proof}

Using Corollary~\ref{cor_cactus_equal_to_closure_over_points} we may in turn
strengthen Proposition~\ref{prop_relative_span_of_a_point_contained} by adding points in the closure of $\phi(T)$.
\begin{cor}\label{prop_relative_span_in_the_limit}
    Let $T$ be a reduced scheme with a morphism $\phi\colon T \to \Hilb_r(X)$. 
    Then for every $y \in \overline{\phi(T)}$ we have $\cactus{r}{\nu_i(X), y} \subset \cactus{r}{\nu_i(X), \phi(T)}$.
\end{cor}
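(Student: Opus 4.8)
The plan is to deduce this from Proposition~\ref{prop_relative_span_of_a_point_contained}, which already handles points $y$ lying in the \emph{image} $\phi(T)$, by replacing $\phi$ with the inclusion of its closed image. Set $T'' := \overline{\phi(T)} \subseteq \Hilb_r(X)$ with the reduced induced scheme structure; since $T$ is reduced this is the scheme-theoretic image of $\phi$, so $\phi$ factors as $T \to T'' \xrightarrow{\psi} \Hilb_r(X)$ with $\psi$ the inclusion and the first map dominant. In particular $\phi(T)$ is a dense subset of $T''$, and $\psi(T'') = \overline{\phi(T)}$ as a subset of $\Hilb_r(X)$, so the hypothesis $y \in \overline{\phi(T)}$ becomes $y \in \psi(T'')$.

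The first step is to check that passing to $T''$ does not change the relative linear span, that is, $\cactus{r}{\nu_i(X), \psi(T'')} = \cactus{r}{\nu_i(X), \phi(T)}$. Applying Corollary~\ref{cor_cactus_equal_to_closure_over_points} to $\psi\colon T''\to \Hilb_r(X)$ with the dense subset $\phi(T)\subseteq T''$, and noting that for $u\in\phi(T)$ the point $\psi(u)$ of $\Hilb_r(X)$ coincides with $\phi(s)$ for any $s\in T$ with $\phi(s)=u$ (hence $\cactus{r}{\nu_i(X),\psi(u)}=\cactus{r}{\nu_i(X),\phi(s)}$ by Definition~\ref{defn_cactus_of_point}), one obtains $\cactus{r}{\nu_i(X),\psi(T'')} = \overline{\bigcup_{s\in T}\cactus{r}{\nu_i(X),\phi(s)}}$. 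By Corollary~\ref{cor_cactus_equal_to_closure_over_points} applied to $\phi$ itself with $S=T$, the right-hand side equals $\cactus{r}{\nu_i(X),\phi(T)}$, giving the claimed equality.

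Finally, since $y\in\psi(T'')$ and $T''$ is reduced, Proposition~\ref{prop_relative_span_of_a_point_contained} applied to the family $\psi\colon T''\to\Hilb_r(X)$ yields $\cactus{r}{\nu_i(X),y}\subseteq\cactus{r}{\nu_i(X),\psi(T'')}$, and combining with the first step concludes the proof. I do not expect a genuine obstacle here: the entire content is the reduction described in the first paragraph, and the only point requiring care is that both Proposition~\ref{prop_relative_span_of_a_point_contained} and Corollary~\ref{cor_cactus_equal_to_closure_over_points} are stated for reduced source schemes — which is exactly why $T''$ must be equipped with its reduced structure — together with the verification (the first step) that this reduction genuinely preserves the span rather than enlarging or shrinking it.
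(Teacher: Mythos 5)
Your proposal is correct and follows essentially the same route as the paper: pass to the scheme-theoretic image (the reduced closure of $\phi(T)$), apply Proposition~\ref{prop_relative_span_of_a_point_contained} to $y$ as a point of that image, and use Corollary~\ref{cor_cactus_equal_to_closure_over_points} with the dense subset $\phi(T)$ to identify the two relative linear spans. Your write-up merely spells out the reducedness bookkeeping and the double application of Corollary~\ref{cor_cactus_equal_to_closure_over_points} that the paper leaves implicit.
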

\begin{proof}
    Let $T' \subset \Hilb_r(X)$ be the scheme-theoretic image of $\phi$. Then
    the set $\phi(T)$ is Zariski-dense in $T'$ and $y\in T'$.
    By Proposition~\ref{prop_relative_span_of_a_point_contained} we have
    $\cactus{r}{\nu_i(X), y} \subset \cactus{r}{\nu_i(X), T'}$ and
    Corollary~\ref{cor_cactus_equal_to_closure_over_points} implies that
    $\cactus{r}{\nu_i(X), T'} = \overline{\bigcup_{t\in T}\cactus{r}{\nu_i(X),
    \phi(t)}} = \cactus{r}{\nu_i(X), \phi(T)}$.
\end{proof}

\begin{example}\label{ex_cacti_for_families_beware}
    Fix coordinates $\alpha, \beta, \gamma, \delta$ on $\PP^3_{\kk}$.
    Consider $T = \Spec \kk[s, t]/(st)$ and a morphism $\phi:T\to \Hilb_3 \PP^3_{\kk}$
    which sends the point $(s_0, t_0)\in \Spec T$ to the reduced subscheme
    \[[1,1,0,0] \sqcup [0,1,0,0] \sqcup [1,0,s,t].\]
    Formally, $\phi$ is induced by a family $\mathcal{U}_{T} \subset \PP^3_{\kk}
    \times T$ given by the homogeneous ideal $I = (\alpha-\beta, \gamma, \delta) \cap (\alpha,
    \gamma, \delta) \cap (\beta, \gamma - s\alpha, \delta - t\alpha) \subset
    \kk[\alpha, \beta, \gamma, \delta]\tensor \kk[s, t]/(st)$.
    Now, the sheaf $\ccE$ consists of all linear elements of the saturation of $I$.
    But the only linear elements are $t\gamma$ and $s\delta$. 
    Thus $\PP (\ccE^{\perp}) \subset T \times \PP_{\kk}^3$, 
       the scheme defined by $\ccE = (t\gamma,s\delta)$, maps surjectively onto $\PP^3_{\kk}$.
    Instead, the relative linear span of $\phi(T)$ is a union of two
    $\PP^2_{\kk}$.
    Therefore, unlike in Proposition~\ref{prop_relative_lin_span_for_locally_constant_Hf_bundle}\ref{item_rel_lin_span_for_locally_constant_Hf_closure_of_the_image}, 
      or in the proof of Proposition~\ref{prop_relative_span_of_a_point_contained}, 
      the relative linear span $\kappa_3(\PP V, T)$ is \emph{not} equal to the image of $\PP(\ccE^{\perp})$.
    
    Geometrically, $T$ is a union of lines $s=0$ and $t=0$ intersecting at a
    point $s=t=0$ (in particular $T$ is not smooth at this point). The span of
    any smooth point on the line $s = 0$ is equal to $V(\gamma) = \sspan{[1,1,0,0],
    [0,1,0,0],[1, 0,0,1]}$ and the span of any smooth point on the line $t = 0$
    is equal to $V(\delta) = \sspan{[1,1,0,0], [0,1,0,0], [1,0,1,0]}$. Those
    are different $\PP^2_{\kk}$ and so in the intersection point $s=t=0$ they span
    the whole $\PP^3_{\kk}$. There is no candidate for a fibre of $\ccE^{\perp}$ over $s=t=0$, which
    would make the above family of projective subspaces flat.
\end{example}

The invariance of the Hilbert scheme and its universal family under base
change (Section~\ref{sec_Hilb_base_change}) implies that also
  the notion of relative linear span is invariant under base change in the following sense.

\begin{prop}\label{prop_base_change_for_relative_linear_span}
   Suppose $X \subset \PP V\simeq \PP_{\kk}^n$ is an embedded projective scheme,
      and $\phi\colon T \to \Hilb_r(X)$ is a morphism from a reduced scheme $T$.
   Fix a field extension $\kk \subset \KK$.
   Denote $X_{\KK} := X \times \KK \subset \PP(V \otimes \KK)$,
      $T_{\KK}:= \reduced{(T \times \KK)}$ and $\phi_{\KK}\colon T_{\KK} \to \Hilb_r(X_{\KK}/\KK)$.
   Then
   \[
     \reduced{(\cactus{r}{\nu_i(X),\phi(T)} \times \KK)}  = \cactus{r, \KK}{\nu_i(X_{\KK}),\phi_{\KK}(T_{\KK})},
   \]
     where the right hand side stands for the relative linear span of $\phi_{\KK}(T_{\KK})$ over the field $\KK$, i.e. 
     the subset of $\PP (V\otimes \KK)\simeq \PP_{\KK}^n$ defined as before from  $X_{\KK}$, but with the role of $\kk$ replaced by $\KK$.\qed
\end{prop}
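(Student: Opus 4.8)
The plan is to reduce the statement to the two building blocks already set up in this section: the base-change compatibility of the Hilbert scheme and its universal family (Proposition~\ref{prop_base_change_for_Hilb_r}), and the description of the relative linear span of a family in terms of the pulled-back graded piece $\ccE = \phi^*(\ccI_{\ccU_r})_i$ of the universal ideal sheaf. First I would reduce to the case where $\phi(T)$ lies in a locus of constant Hilbert function $H_\bullet(i)$, and $T$ is irreducible: by Corollary~\ref{cor_cactus_equal_to_closure_over_points} both sides of the claimed equality are closures of unions over the generic points of components, and over such a point the Hilbert function stabilises on a dense open subset, so after passing to components and shrinking we may assume $H_{\phi(t)}(i)$ is constant on $T$. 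Observe that $\phi_\KK(T_\KK)$ then also has constant Hilbert function, since the Hilbert function at a point is computed from $(\ccI_{\ccU_r})_i$, which base-changes by Proposition~\ref{prop_base_change_for_Hilb_r}; also $T_\KK = \reduced{(T\times\KK)}$ is still reduced by construction, so Proposition~\ref{prop_relative_lin_span_for_locally_constant_Hf_bundle} applies on both sides.

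Next I would compare the two sheaves $\ccE$ (on $T$) and $\ccE_\KK$ (on $T_\KK$). By Proposition~\ref{prop_base_change_for_Hilb_r} the universal family base-changes, $\ccU_{r,\KK} = \ccU_{r,\kk}\times\Spec\KK$, hence its homogeneous ideal sheaf and each graded piece base-change: $(\ccI_{\ccU_{r,\KK}})_i = (\ccI_{\ccU_r})_i \otimes_\kk \KK$ as sheaves on $\Hilb_r(X)\times\Spec\KK = \Hilb_r(X_\KK/\KK)$. Pulling back along $\phi_\KK$ and using that $\phi_\KK$ factors through $T\times\Spec\KK \to \Hilb_r(X)\times\Spec\KK$, we get $\phi_\KK^*(\ccI_{\ccU_{r,\KK}})_i = (\ccE\otimes_\kk\KK)|_{T_\KK}$; since $\ccE$ is locally free (constant Hilbert function) its pullback to the reduction $T_\KK$ is just $\ccE\otimes_\kk\KK$ restricted, and in particular $\ccE_\KK^\perp$ is the base change of $\ccE^\perp$, i.e. $\ccE_\KK^\perp = \ccE^\perp \times_{\Spec\kk}\Spec\KK$ inside $T_\KK\times\PP(\DPV{i}\otimes\KK)$ (up to taking the reduction, which changes nothing since $\ccE^\perp$ is a vector bundle over reduced $T$, hence reduced, and stays reduced over $\KK$ since $\PP$ of a vector bundle over a reduced base is reduced).

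Finally I would conclude by compatibility of scheme-theoretic image with base change. By Proposition~\ref{prop_relative_lin_span_for_locally_constant_Hf_bundle}\ref{item_rel_lin_span_for_locally_constant_Hf_closure_of_the_image}, $\cactus{r}{\nu_i(X),\phi(T)}$ is the closure of the image of $\PP(\ccE^\perp)$ under the projection to $\PP(\DPV{i})$, equivalently the scheme-theoretic image of that projection with reduced structure. The projection $T\times\PP(\DPV{i})\to\PP(\DPV{i})$ base-changes to $T_\KK\times\PP(\DPV{i}\otimes\KK)\to\PP(\DPV{i}\otimes\KK)$, and for the projection from a proper source (or, in general, since taking closure of the image commutes with flat base change by a field extension — here $\Spec\KK\to\Spec\kk$ is flat) the reduced image base-changes up to reduction; combined with the identification $\ccE_\KK^\perp = \ccE^\perp\times\Spec\KK$ from the previous step this gives
\[
   \reduced{(\cactus{r}{\nu_i(X),\phi(T)}\times\KK)} = \cactus{r,\KK}{\nu_i(X_\KK),\phi_\KK(T_\KK)}.
\]
The main obstacle I expect is bookkeeping around the reduced structure: $T_\KK$ is defined as a reduction, so I must check at each step (pullback of $\ccE$, formation of $\ccE^\perp$, taking the image) that passing to the reduction does not affect the comparison — this works because every object in sight ($\ccE^\perp$, $\PP(\ccE^\perp)$, and the relative linear span itself) is taken reduced by definition, and reducedness of $\PP$ of a vector bundle over a reduced scheme is preserved by field extension; but it needs to be said carefully rather than waved through.
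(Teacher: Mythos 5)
The paper gives no proof of this proposition at all — it is closed with a \verb|\qed| in the statement and justified only by the preceding remark that the Hilbert scheme and its universal family are invariant under base change (Proposition~\ref{prop_base_change_for_Hilb_r}). Your proposal is a correct and careful elaboration of exactly that intended argument: reduce via Corollary~\ref{cor_cactus_equal_to_closure_over_points} to components with constant Hilbert function $H_{\bullet}(i)$, identify $\ccE_{\KK}^{\perp}$ with the base change of $\ccE^{\perp}$ using $\ccU_{r,\KK}=\ccU_{r,\kk}\times\Spec\KK$ and flatness of $\Spec\KK\to\Spec\kk$, and conclude because scheme-theoretic images (closures of images) commute with this flat base change up to taking $\reduced{(\cdot)}$.
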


\section{Secant varieties}\label{sect_secants}

In elementary geometry, for a curve $C$ contained in a plane $\RR^2$, a \emph{secant line} is obtained by choosing two distinct points $x_1, x_2 \in C$ 
  and drawing an affine line through $x_1$ and $x_2$.
If the curve is smooth, and the two points converge one to the other, then in the limit we get a \emph{tangent line}, as illustrated on Figure~\ref{fig_secant_and_tangent}.
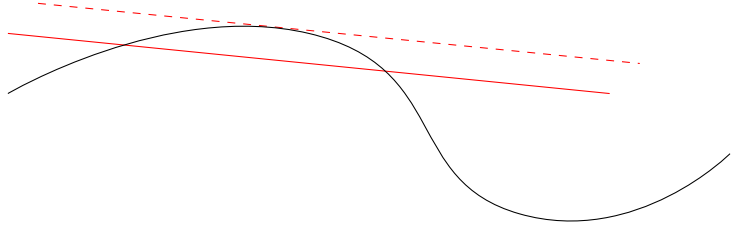
\begin{figure}[htb]
\begin{center}
\begin{tikzpicture}[scale=0.8]
\draw [red] (0, 1) -- (10, 0);
\draw [red, dashed] (0.50, 1.50) -- (10.50, 0.50);
\draw [black] plot [smooth, tension=1] coordinates { (0,0) (5,1)
(8.5,-2) (12,-1)};
\end{tikzpicture}
\end{center}
\caption{Secant and tangent line to a plane curve}\label{fig_secant_and_tangent} 
\end{figure}

The concept of secants in algebraic geometry generalises the above elementary picture.
Instead of working with a curve on a plane $C \subset \RR^2$, we usually work with a subvariety in a projective space $X \subset \PP^n_{\kk}$,
  but affine or linear approaches are also possible, see \cite{jabu_januszkiewicz_jelisiejew_michalek_k_reg}. 

\subsection{Defining secant and cactus varieties}\label{sect_defining_secant_and_cactus}

Fix a base field $\kk$ and an embedded projective variety, or more generally, an embedded projective subscheme $X \subset \PP V$.
The \emph{$r$-th secant variety} of $X$ (strictly speaking, of $X \subset \PP V$) is a well established object in algebraic geometry, provided $\kk$ is algebraically closed.
Roughly, it is a  ``closure'' of the union of all secant linear subspaces $\sspan{\fromto{x_1}{x_r}}$ for points $x_i \in X$.
This is a definition that often appears in articles on secant varieties over the complex field $\CC$, 
   and the authors really mean only closed points $x_i \in X$, which is a standard abuse of notation.
Since in this article we want to consider any base field, we must be more careful.

Define \emph{an $r$-th secant} of $X$ in the following way.
We say that a projective linear subspace $\PP_{\kk}^{r-1} \subset \PP V$ is an $r$-th secant of $X$, if $\PP_{\kk}^{r-1}$ is spanned by $\kk$-rational points of $X$.
In informal discussions, this is sometimes also called an \emph{honest $r$-th secant} or \emph{pure $r$-th secant}, to stress the difference between such secant and the limiting linear spaces (such as the tangent lines above).
In particular, $1$-st secants of $X$ are in one-to-one correspondence with $\kk$-rational points of $X$.

The exact definition of the secant variety  depends on the applications, one has in mind.
However, independent of the applications, the readers will certainly agree that the $r$-th secant variety $\sigma_r(X)$ should contain all (pure) $k$-th secants (for $k\le r$):
\begin{equation}\label{equ_secant_variety_and_secants}
    \sigma_r(X) \supset \bigcup \set{\sspan{\fromto{x_1}{x_r}} :  \text{each $x_i$ is a $\kk$-rational point of $X$}}.
\end{equation}

If the base field $\kk$ is algebraically closed, then the closed points of the right hand side agree with the closed points of a \emph{constructible} set,
   i.e. a set obtained as a finite union and intersection of open and closed subsets of $\PP V$.
In this sense, it is already very close to an algebraic variety and the usual definition of $\sigma_r(X)$ is just the Zariski closure of the right hand side. 
If $\kk=\CC$, then equally well one may take the Euclidean closure of the closed points of the right hand side to obtain the set of closed points of $\sigma_r(X)$.

\begin{remark}
If $\kk=\RR$, then the Euclidean closure of the $\RR$-rational points in the union on the right hand side above is a \emph{semialgebraic subset} 
  of a topological real projective space.
This means, the set is defined as a solution set of a bunch of polynomial \emph{inequalities}.
This closure is relevant in the studies of the notion of real rank (or $X$-rank), an important notion from the point of view of applications.
This is a delicate and complicated subject and very little is known about the structure of such closure even in very special cases.
See~\cite{blekherman_Typical, michalek_spaces_of_sums_of_powers,
bernardi_blekherman_ottaviani} for a sample of recent work in this direction.
We will not discuss further the semialgebraic approach, as it does not fit in the scope of our survey.
\end{remark}

If $\kk$ is not algebraically closed, then the union of secants on the right hand side of \eqref{equ_secant_variety_and_secants} might be very small, even empty, 
  which is not so interesting.
We need a more refined notion of a linear span of general $r$ points of $X$.
It is not a surprise that we use the Hilbert scheme and the notion of relative linear span
  (as in Sections~\ref{sec_relative_linear_span} and \ref{sect_universal_ideal_and_graded_pieces}) for this purpose.

\begin{example}
   Suppose $X=\nu_4(\PP_{\kk}^1) \subset \PP_{\kk}^4$ is the fourth Veronese embedding of $\PP_{\kk}^1$.
   Then let $\ccH:=\Hilb_2(X) \simeq \PP_{\kk}^2$, with $\ccU_2 \simeq X \times X = \PP_{\kk}^1 \times \PP_{\kk}^1$
      with the map $\PP_{\kk}^1 \times \PP_{\kk}^1 \to \PP_{\kk}^2$ equal to the quotient by the $\ZZ_2$-action swapping the factors.
   The map $\ccU_2 \to \PP_{\kk}^4$ is given by the projection on the first factor followed by $\nu_4$.
   The generic point $\eta$ of $\ccH$ has the residue field $\KK$ isomorphic to $\kk(s,t)$ and 
      the fibre $R_{\eta} \subset \ccU_2$ is isomorphic to $\Spec \KK[x]/(x^2 -sx +t)$, 
   where $x$ and $s - x$ are the coordinates on two factors of $\ccU_{2}$.
   The map $ R_{\eta} \to \PP_{\KK}^4$ is given in coordinates by:
   \[
     x \mapsto [1,x, sx-t, (s^2 - t)x -st , (s^3 - 2st)x- s^2t + t^2]. 
   \]
   Denoting the coordinates on $\PP_{\KK}^4$ by $\alpha_0, \alpha_1, \alpha_2, \alpha_3,\alpha_4$,
     the generators of the ideal of the image of $R_{\eta}$ in $\PP_{\KK}^4$ are
   \begin{gather*}
       t^3 \alpha_0 + (-s^3 + 2st) \alpha_3 + (s^2 - t)\alpha_4, \qquad 
       t^2 \alpha_1 + (-s^2 + t) \alpha_3 + s\alpha_4,\qquad
       t \alpha_2 - s\alpha_3 + \alpha_4,\\
       t \alpha_3^2 - s\alpha_3\alpha_4 + \alpha_4^2
   \end{gather*}
   Only the first three equations are linear in the coordinates $\alpha$, hence the $\KK$-linear span of $\iota(R_{\eta})$
     is the $\PP^1_{\KK}$ defined by the three equations in the first line.
   The relative linear span $\cactus{2}{X, \eta}$ is the projection of  $\sspan{\iota(R_{\eta})}_{\KK}$ and it is obtained by eliminating $s$ and $t$ from the ideal.
   In this way we obtain the usual cubic equation defining the second secant variety $\sigma_2(\nu_4(\PP_{\kk}^1))$:
   \[
          \alpha_2^3 - 2\alpha_1\alpha_2\alpha_3 + \alpha_0\alpha_3^2 + \alpha_1^2\alpha_4 - \alpha_0\alpha_2\alpha_4 =0.
   \]
   This cubic can also be seen, up to sign, as the determinant of the matrix
   $\begin{pmatrix}
       \alpha_0& \alpha_1&\alpha_2\\
       \alpha_1& \alpha_2&\alpha_3\\
       \alpha_2& \alpha_3&\alpha_4
    \end{pmatrix}
   $.
\end{example}

\begin{defn}\label{def_secant_and_cactus_varieties}
  Let $X \subset \PP V$ be an embedded projective scheme, $r$ a positive integer.
  \begin{itemize}
   \item We define the \emph{$r$-th secant variety} $\sigma_r(X)$ of $X$ to be
           the union:
           \[
             \sigma_r(X) := \bigcup_{k=1}^{r} \cactus{k}{X, \Hilb_{k}^{sm}(X)}.
           \]
    \item We define the \emph{$r$-th cactus variety} $\cactus{r}{X}$ to be the union:
           \[
             \cactus{r}{X}:=\bigcup_{k=1}^{r} \cactus{k}{X, \reduced{(\Hilb_{k}(X))}}.
           \]
  \end{itemize}
\end{defn}

As an immediate consequence, we have:
\begin{gather}
    \sigma_r(X) = \reduced{\sigma_r(X)} = \sigma_r(\reduced{X}) \subset \cactus{r}{\reduced{X}} \subset \cactus{r}{X} = \reduced{(\cactus{r}{X})} \subset \sspan{X},\nonumber \\
    \text{and } \sigma_r(X) \subset  \sigma_{r+1}(X),\ \ \cactus{r}{X} \subset
    \cactus{r+1}{X}. \label{equ_inclusions_increasing_r}
\end{gather}
The  inclusions~\eqref{equ_inclusions_increasing_r} are immediate from the definitions, since we took the union over all $1\le k \le r$.
However, we will see in Proposition~\ref{prop_enough_to_take_k_equal_r} that it is in fact enough to consider just one relative linear span, typically, for $k=r$.

In particular, both secant and cactus varieties are reduced, which justifies the \emph{variety} part of the name. 
As already mentioned in Section~\ref{sec_relative_linear_span}, in order to give some nontrivial scheme structure to $\cactus{r}{X}$,
  one needs to sensibly define the relative linear span of a family of schemes parametrised by a nonreduced scheme, which does not fit in the scope of this article.
Both inclusions $\sigma_r(\reduced{X}) \subset \cactus{r}{\reduced{X}} \subset \cactus{r}{X}$  can be strict as illustrated by an elementary Example~\ref{ex_second_and_cactus_for_union_of_3_P1_and_double_pt}.
\begin{example}
   Suppose $r=1$.
   Then $\sigma_1(X) = \cactus{1}{X} = \reduced{X}$. 
\end{example}

\begin{example}\label{ex_second_and_cactus_for_union_of_3_P1_and_double_pt}
   Suppose $r=2$ and $X\subset \PP^4_{\kk}$ is the union of three lines
   through $P:=(0,0,0,0,1)$ and a degree two scheme supported at $P$, so that 
   \[
     I(X) =(\alpha^2, \alpha\beta, \alpha\gamma, \beta\gamma, \alpha\delta, \beta\delta, \gamma\delta).
   \]
   That is, the three lines are given by equations $\alpha=\beta=\gamma=0$, $\alpha=\beta=\delta=0$, and  $\alpha=\gamma=\delta=0$, respectively, 
     and the degree $2$ scheme is defined by the ideal $(\alpha^2, \beta, \gamma, \delta)$.
   Then $\sigma_2(X) = \sigma_2(\reduced{X}) = \PP^2_{\kk} \cup \PP^2_{\kk} \cup \PP^2_{\kk}$,
     where the three planes are given by equations $\alpha=\beta=0$, $\alpha=\gamma=0$ $\alpha=\delta=0$, respectively.
   Instead, $\cactus{2}{X} \ne \cactus{2}{\reduced{X}}$ and
   \begin{align*}
     I(\cactus{2}{X}) &= (\alpha\beta, \alpha\gamma, \alpha\delta) = \PP^3_{\kk} \cup \PP^1_{\kk}\\
     I(\cactus{2}{\reduced{X}}) &= (\alpha) = \PP^3_{\kk}
   \end{align*}
\end{example}

We note that our definition of secant variety agrees with the usual one if the
base field is algebraically closed.

\begin{prop}\label{prop_secant_variety_is_the_closure_of_secants}
   Suppose $X \subset \PP_{\kk}^N$ is an embedded projective variety such that its $\kk$-rational points are Zariski-dense in $X$.
   (For instance, this is always true if $\kk$ is algebraically closed.)
   Then the $r$-th secant variety is equal to the Zariski closure of the union
   of all (pure) $k$-th secants, for all $k \le r$:
   \[
      \sigma_r(X) = \overline{\bigcup \set{\sspan{\fromto{x_1}{x_k}}  \mid x_i \text{ is a $\kk$-rational point of $X$}, k\leq r}}.
   \]
\end{prop}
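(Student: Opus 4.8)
The plan is to show two inclusions. For the inclusion $\supseteq$, I would first observe that every pure $k$-th secant $\sspan{x_1,\dots,x_k}$ arises as a relative linear span of a $\kk$-rational point of $\Hilb_k^{sm}(X)$: given distinct $\kk$-rational points $x_1,\dots,x_k$ of $X$, their disjoint union $R$ is a finite smooth (hence smoothable) $\kk$-subscheme of $X$ of degree $k$, so $[R]\in\Hilb_k^{sm}(X)$ is a $\kk$-rational point, and by the remark following Definition~\ref{defn_cactus_of_point} we have $\cactus{k}{X,[R]}=\sspan{R}=\sspan{x_1,\dots,x_k}$. By Proposition~\ref{prop_relative_span_of_a_point_contained} (or just the definition of $\cactus{k}{X,\Hilb_k^{sm}(X)}$ as a relative linear span of a family through its generic points, combined with Proposition~\ref{prop_relative_span_of_a_point_contained}), this span is contained in $\cactus{k}{X,\Hilb_k^{sm}(X)}\subseteq\sigma_r(X)$. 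Since $\sigma_r(X)$ is closed, it contains the Zariski closure of the union of all such secants.

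For the inclusion $\subseteq$, the key point is that the density of $\kk$-rational points propagates to $\Hilb_k^{sm}(X)$. First, if $x_0\in X$ is a $\kk$-rational point in the smooth locus $X^{sm}$, the smooth locus is open and dense (since $X$ is reduced with $X(\kk)$ dense, a generic point of each component has a $\kk$-rational point in a smooth affine chart), and the $\kk$-rational points of $X^{sm}$ are still dense. Taking $k$-tuples of distinct such points, one gets $\kk$-rational points of $\Hilb_k^{sm}(X)$; I claim these are Zariski-dense in $\Hilb_k^{sm}(X)$. Indeed $\Hilb_k^{\circ}(X)$ contains an open dense subset of $\Hilb_k^{sm}(X)$ that maps (via an \'etale or at least dominant correspondence — the incidence variety of $k$ labelled points in $X^{sm}$) onto it, and applying Lemma~\ref{lem_product_of_dense_is_dense} $k$ times shows that tuples of distinct $\kk$-rational points of $X^{sm}$ are dense in the symmetric product, hence their images are dense in $\Hilb_k^{\circ}(X)\cap\Hilb_k^{sm}(X)$, hence dense in $\Hilb_k^{sm}(X)$. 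Now apply Corollary~\ref{cor_cactus_equal_to_closure_over_points} with $T=\Hilb_k^{sm}(X)$ (reduced) and $S$ the dense subset of these $\kk$-rational points: $\cactus{k}{X,\Hilb_k^{sm}(X)}=\overline{\bigcup_{t\in S}\cactus{k}{X,t}}$, and each $\cactus{k}{X,t}$ for $t=[R]$ with $R$ a union of $k$ distinct $\kk$-rational points equals $\sspan{x_1,\dots,x_k}$ as above. Taking the union over $k\le r$ gives exactly $\sigma_r(X)$ contained in the right-hand side.

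The main obstacle I anticipate is the density claim: carefully justifying that $k$-tuples of distinct $\kk$-rational points lying in $X^{sm}$ form a Zariski-dense subset of $\Hilb_k^{sm}(X)$, rather than merely of the ``principal'' component $\overline{\Hilb_k^{\circ}(X)}$. By construction $\Hilb_k^{sm}(X)=\overline{\Hilb_k^{\circ}(X)}$ with reduced structure, so it suffices to show density in $\Hilb_k^{\circ}(X)$; and since the open subset of $\Hilb_k^{\circ}(X)$ parametrising $k$ distinct reduced points is dense in $\Hilb_k^{\circ}(X)$ (this uses that $X$ has components of positive dimension, the degenerate zero-dimensional case being handled separately or trivially), one reduces to showing density of distinct-$\kk$-rational-point tuples inside this open set. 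That open set is the quotient of an open subset of $(X^{sm})^{\times k}$ by the symmetric group, and Lemma~\ref{lem_product_of_dense_is_dense} together with density of $X^{sm}(\kk)$ in $X^{sm}$ (and openness of the quotient map, or just that the image of a dense set under a dominant morphism is dense) finishes it. One should also note the trivial case where some component of $X$ is a single $\kk$-rational point, which only contributes pure secants and causes no difficulty. With these pieces in place the proof is a short assembly of Corollary~\ref{cor_cactus_equal_to_closure_over_points}, Lemma~\ref{lem_product_of_dense_is_dense}, and the identification $\cactus{k}{X,[R]}=\sspan{R}$ for reduced $R$.
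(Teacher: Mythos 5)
Your proposal is correct and follows essentially the same route as the paper: both directions rest on identifying pure secants with relative linear spans of $\kk$-rational points of $\Hilb_k^{sm}(X)$, proving density of tuples of distinct $\kk$-rational points via the complement of the diagonals in $X^{\times k}$ together with Lemma~\ref{lem_product_of_dense_is_dense}, and then invoking the ``span of a dense set of points'' statement (your citation of Corollary~\ref{cor_cactus_equal_to_closure_over_points} is, if anything, the cleaner reference since the Hilbert function need not be constant on $\Hilb_k^{sm}(X)$). The one superfluous step is the detour through the smooth locus $X^{sm}$: a disjoint union of distinct reduced $\kk$-rational points is a smooth \emph{finite scheme} no matter where it sits in $X$, so $[R]$ lies in $\Hilb_k^{\circ}(X)$ without any hypothesis on the smoothness of $X$ at the support, and the paper accordingly works with tuples in $X^{\times k}$ directly.
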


\begin{proof}
   If $\dim X =0$, then the claim is clear, as the assumptions imply that $X$ is a union of finitely many the $\kk$-rational points.
      Otherwise, on the right hand side of the equality claimed in the proposition, we may assume that all points $x_i$ are pairwise distinct.
   For any positive integer $k$ the set $R:=\setfromto{x_1}{x_k}$, where the points $x_i\in X$ are pairwise distinct and $\kk$-rational,
     is a reduced finite smooth degree $k$ subscheme of $X$.
   Consider the subset $S_k \subset \Hilb_k(X)$ consisting of such $[R]$ for all choices of $x_i$.
   We claim $S_k$ is Zariski dense in $\Hilb_k^{sm}(X)$.
   
   Consider $X^{k} = \underbrace{X\times X \times \dotsb\times X}_{k \text{ times}}$.
   Let $U \subset X^{k}$ be the dense open subset, 
      which is the complement of the union of all diagonals $\Delta_{ij}$,
      i.e.~$\Delta_{ij}\simeq X^{k-2} \times \Delta$, where $\Delta \subset X \times X$ 
      is the diagonal embedded on the $i$-th and $j$-th copy of $X$ in the product.
   Intuitively, $U$ is the set of all distinct ordered $k$-tuples of points,
      but one needs to remember that the product has different points, than the topological product 
      (see Section~\ref{sec_product_and_base_change}).
   Consider the natural family $\ccU \to U$, where $\ccU \subset U \times X$ is the union of respective diagonals
      (so that, over a $\kk$-rational $p =(\fromto{x_1}{x_k}) \in U$ we have
      $\setfromto{x_1}{x_k} \subset \set{p}\times X$).

   The family $\ccU \to U$ is finite, flat and all fibres are smooth,
      and thus induces a map $U \to \Hilb_k^{\circ}(X)$.
      This map is surjective on closed points, as every $\kkbar$-point of $\Hilb_k^{\circ}(X)$ is a
      product of $k$ points of $X$ over $\kkbar$ and every such product
      may be embedded into $U$.
   Therefore the map $U \to \Hilb_k^{sm}(X)$ is dominant.
   
   Denote by $S$ be the set of $\kk$-rational points of $X$. By our assumption $S\subset X$ is dense, 
      hence by Lemma~\ref{lem_product_of_dense_is_dense} the subset $S^k = S\times \dotsb \times S \subset X^k$ is dense.
   Thus $S^k \cap U \subset U$ is dense and its image in $\Hilb_r^{sm}(X)$ is dense, 
      and the claim of the proposition follows from 
      Proposition~\ref{prop_properties_of_relative_linear_span}\ref{item_comparing_relative_spans_dense_subset_point}
      together with the fact that $\cactus{r}{X, [R]} = \sspan{x_1, \ldots,x_k}$ 
      if $R = \bigsqcup\{x_i\}$ is a union of $\kk$-rational points.
\end{proof}

Similarly, the definition of cactus variety agrees with~\cite{nisiabu_jabu_cactus}. Note that we assume $\kk = \kkbar$.
\begin{prop}\label{prop_cactus_variety_is_closure_of_spans_of_schemes}
   Suppose $\kk$ is algebraically closed and $X \subset \PP_{\kk}^N$ is an embedded projective variety. 
   Then the $r$-th cactus variety is the Zariski closure of the union of the
   spans of all finite subschemes of $X$ of degree at most $r$:
   \begin{equation}\label{eq:cactus}
      \cactus{r}{X} = \overline{  \bigcup \set{\sspan{R}  \mid R  \subset X
      \text{ is a finite subscheme of degree at most $r$}}}.
   \end{equation}
\end{prop}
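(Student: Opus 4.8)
The plan is to reduce the statement to Corollary~\ref{cor_cactus_equal_to_closure_over_points}, which already expresses a relative linear span of a family as the Zariski closure of the union of relative linear spans over a dense set of points, combined with the observation (recorded in the remark following Definition~\ref{defn_cactus_of_point}) that over a $\kk$-rational point the relative linear span is just the ordinary linear span. So the proof is mostly bookkeeping; the only place the hypothesis $\kk = \kkbar$ genuinely enters is the density of $\kk$-rational points.

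First I would unwind $\cactus{r}{X} = \bigcup_{k=1}^{r} \cactus{k}{X, \reduced{(\Hilb_{k}(X))}}$ and treat each summand separately. Fix $k$ with $1 \le k \le r$ and write $\ccH_k := \reduced{(\Hilb_k(X))}$, viewed as a reduced scheme with the tautological morphism $\phi_k\colon \ccH_k \to \Hilb_k(X)$. Since $\kk$ is algebraically closed and $\Hilb_k(X)$ is of finite type (indeed projective, as recalled in Section~\ref{sec_Hilbert}), its closed points coincide with its $\kk$-rational points and form a dense subset $S_k \subseteq \ccH_k$ (recall $\Hilb_k(X)(\kk) = \ccH_k(\kk)$ by Section~\ref{sect_varieties}); moreover a $\kk$-rational point of $\Hilb_k(X)$ is precisely a class $[R]$ of a finite subscheme $R \subset X$ of degree $k$. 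Applying Corollary~\ref{cor_cactus_equal_to_closure_over_points} to the dense subset $S_k$ gives
\[
   \cactus{k}{X, \ccH_k} = \overline{\bigcup_{[R]\in S_k} \cactus{k}{X, [R]}} = \overline{\bigcup\set{\sspan{R} \mid R \subset X \text{ finite of degree } k}},
\]
where the last equality uses $\cactus{k}{X, [R]} = \sspan{R}$ for $\kk$-rational $[R]$. In the degenerate case $\Hilb_k(X) = \emptyset$ (e.g.\ $X$ a finite set of fewer than $k$ points) both the left-hand side and the corresponding part of the right-hand side are empty, consistently with Definition~\ref{defn_cactus_of_family} for the empty family.

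Finally I would assemble the pieces: taking the union over $k = 1, \ldots, r$ and using that a finite union of Zariski closures is the closure of the union, $\bigcup_k \overline{A_k} = \overline{\bigcup_k A_k}$, we obtain
\[
   \cactus{r}{X} = \bigcup_{k=1}^r \overline{\bigcup_{\deg R = k} \sspan{R}} = \overline{\bigcup_{k=1}^r \bigcup_{\deg R = k} \sspan{R}} = \overline{\bigcup\set{\sspan{R} \mid R \subset X \text{ finite of degree} \le r}},
\]
which is \eqref{eq:cactus}. (One may additionally note that since $\kk=\kkbar$ every finite subscheme of $X$ of degree $<r$ that leaves room for it can be enlarged by reduced points to one of degree exactly $r$ with a larger span, so when $X$ has at least $r$ points the union on the right may equivalently be taken over degree exactly $r$; this is not needed for the statement.) There is no serious obstacle in this argument: the real content has been placed upstream in Corollary~\ref{cor_cactus_equal_to_closure_over_points} and ultimately in Proposition~\ref{prop_relative_span_of_a_point_contained}, and the two remaining ingredients — density of $\kk$-rational points and the identity $\cactus{k}{X,[R]}=\sspan{R}$ — are immediate.
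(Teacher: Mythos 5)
Your proof is correct and follows essentially the same route as the paper's: both rest on the density of $\kk$-rational (closed) points in $\Hilb_k(X)$, the identity $\cactus{k}{X,[R]}=\sspan{R}$ for rational points, and the fact that a relative linear span of a family equals the closure of the union of spans over a dense set of points. The only cosmetic difference is that the paper first reduces to subschemes of degree exactly $r$ and invokes Proposition~\ref{prop_properties_of_relative_linear_span}\ref{item_comparing_relative_spans_dense_subset_point} componentwise, whereas you keep the sum over $k\le r$ and cite Corollary~\ref{cor_cactus_equal_to_closure_over_points} — which is, if anything, the more careful citation since the Hilbert function need not be constant on a whole component.
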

\begin{proof}
    First, if $\Hilb_r(X)\neq \emptyset$, then each subscheme of degree at
    most $r$ is embedded into a subscheme of degree $r$. So we may assume
    that the union on the right hand side of~\eqref{eq:cactus} is over $R$ of
    degree $r$.

    The field is algebraically closed, so $R \subset X$ correspond bijectively
    to closed points of $\Hilb_r(X)$. The set of closed points is Zariski-dense in
    each component $\mathcal{H}$ of $\Hilb_{r}(X)$, so
    by
    Proposition~\ref{prop_properties_of_relative_linear_span}\ref{item_comparing_relative_spans_dense_subset_point}
    we have $\cactus{r}{X, \reduced{\mathcal{H}}} = \overline{\bigcup
        \set{\sspan{R}  \mid [R]\in \mathcal{H}}}$. Summing these equalities
        over all (finitely many) components $\mathcal{H}$ we get the required
        equality.
\end{proof}

%
%

\subsection{Properties of secant and cactus varieties}

As usually, let $X \subset \PP V \simeq \PP^N_{\kk}$ be an embedded projective scheme.

We first conclude from analogous property of Hilbert scheme and relative linear span that the secant and cactus varieties behave nicely with respect to the change of base field.
If $\kk \subset \KK$ is any field extension, then denote by $X_{\KK} = X \times \Spec \KK \subset \PP (V \otimes \KK)$, 
  and by $\sigma_{r, \KK}(X_{\KK})$ the $r$-th secant variety over $\KK$.
Analogously, $\cactus{r, \KK}{X_{\KK}}$ is the $r$-th cactus variety over $\KK$, i.e.~in the discussion in Section~\ref{sect_defining_secant_and_cactus} 
  we replace the role of $\kk$ with $\KK$.

\begin{prop}\label{prop_base_change_for_secant_and_cactus}
   In the notation as above, 
     \[
        \sigma_{r, \KK}(X_{\KK}) = \reduced{(\sigma_{r}(X) \times \Spec \KK)} \quad \text{ and } \quad 
        \cactus{r, \KK}{X_{\KK}} = \reduced{(\cactus{r}{X} \times \Spec \KK)}.
     \]
\end{prop}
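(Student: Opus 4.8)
The plan is to reduce the statement directly to the base-change compatibilities already proved for the three ingredients appearing in Definition~\ref{def_secant_and_cactus_varieties}: the Hilbert scheme, its smoothable locus, and the relative linear span. First I would observe that both $\sigma_r(X)$ and $\cactus{r}{X}$ are finite unions, over $k = 1, \ldots, r$, of relative linear spans $\cactus{k}{X, \phi_k(T_k)}$ of families $\phi_k\colon T_k \to \Hilb_k(X)$ with $T_k$ reduced: for the secant variety $T_k = \Hilb_k^{sm}(X)$ with $\phi_k$ the canonical closed immersion, and for the cactus variety $T_k = \reduced{(\Hilb_k(X))}$ with $\phi_k$ the canonical closed immersion. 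Since $\reduced{(\cdot)}$ commutes with finite unions of closed subschemes (a union of reduced closed subschemes is defined by the intersection of their radical ideal sheaves, hence is again reduced and has the set-theoretic union as underlying space) and since $\sigma_r(X)$, $\cactus{r}{X}$ are already reduced, it suffices to prove, for each $k$, that $\reduced{(\cactus{k}{X,\phi_k(T_k)} \times \Spec\KK)} = \cactus{k,\KK}{X_\KK, (\phi_k)_\KK((T_k)_\KK)}$, where $(T_k)_\KK := \reduced{(T_k \times \Spec\KK)}$, and then to identify $(T_k)_\KK$ together with $(\phi_k)_\KK$ as exactly the family defining the corresponding $k$-th piece of $\sigma_{r,\KK}(X_\KK)$, respectively $\cactus{r,\KK}{X_\KK}$.

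The first equality is precisely Proposition~\ref{prop_base_change_for_relative_linear_span} applied with $i = 1$ (so $\nu_i$ is the identity embedding). For the identification of the families, in the cactus case I would invoke Proposition~\ref{prop_base_change_for_Hilb_r}: it gives $\Hilb_k(X_\KK/\KK) = \Hilb_k(X) \times \Spec\KK$ compatibly with universal families, whence $\reduced{(\reduced{(\Hilb_k(X))} \times \Spec\KK)} = \reduced{(\Hilb_k(X) \times \Spec\KK)} = \reduced{(\Hilb_k(X_\KK/\KK))}$, and the canonical closed immersion base-changes to the canonical closed immersion. In the secant case I would instead use Equation~\eqref{eq:basechangeforsmoothablecomponent}, which states exactly $\Hilb_k^{sm}(X_\KK/\KK) = \reduced{(\Hilb_k^{sm}(X) \times \Spec\KK)}$; here one also recalls that $\Hilb_k^{\circ}$ base-changes faithfully because smoothness of fibres is detected on geometric fibres, so the closures and then the reduced structures match, and the inclusion morphisms correspond under the compatibility clause of Proposition~\ref{prop_base_change_for_Hilb_r}.

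Putting this together I would write $\sigma_r(X) \times \Spec\KK = \bigcup_{k \le r} \bigl(\cactus{k}{X, \Hilb_k^{sm}(X)} \times \Spec\KK\bigr)$, pass to reduced subschemes, apply the two compatibility facts term by term, and recognise the outcome as $\bigcup_{k \le r} \cactus{k,\KK}{X_\KK, \Hilb_k^{sm}(X_\KK/\KK)} = \sigma_{r,\KK}(X_\KK)$; the argument for $\cactus{r}{X}$ is identical with $\Hilb_k^{sm}$ replaced throughout by $\reduced{(\Hilb_k)}$. I do not expect any real obstacle: the proposition is a formal consequence of results established earlier in the article. The only point requiring care is the bookkeeping of reduced structures — each $T_k$ is reduced, but $T_k \times \Spec\KK$ need not be over an imperfect $\kk$, so one must consistently carry the $\reduced{(\cdot)}$ as in Proposition~\ref{prop_base_change_for_relative_linear_span} and verify that it passes through the finite unions in Definition~\ref{def_secant_and_cactus_varieties}, which is the routine observation recorded at the end of the first paragraph.
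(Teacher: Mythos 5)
Your proof is correct and follows essentially the same route as the paper, whose own proof is precisely the one-line combination of Proposition~\ref{prop_base_change_for_Hilb_r}, Equation~\eqref{eq:basechangeforsmoothablecomponent}, and Proposition~\ref{prop_base_change_for_relative_linear_span}; you have simply spelled out the term-by-term bookkeeping of the reduced structures through the finite union in Definition~\ref{def_secant_and_cactus_varieties}, which is the routine part the paper leaves implicit.
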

\begin{proof} 
   This follows from the base change properties for Hilbert scheme (Proposition~\ref{prop_base_change_for_Hilb_r}), 
     for the smoothable components of the Hilbert scheme (Equation~\eqref{eq:basechangeforsmoothablecomponent}), 
     and for the relative linear span (Proposition~\ref{prop_base_change_for_relative_linear_span}).
\end{proof}

As a consequence, one may axiomatically define the secant variety of $X$, as the minimal subvariety of $\PP V$, 
  which contains all pure secants of $X$, even after base change.

\begin{defn}\label{def_contains_all_secants}
   Suppose $X\subset \PP V$ is an embedded projective scheme, and $r>0$ is an integer.
   Let $\sigma\subset \PP V$ be a subscheme.
   For a field extension $\kk\subset \KK$, denote by $X_{\KK} = X\times \Spec \KK$ and by $\sigma_{\KK} = \sigma \times \Spec \KK$, 
     both subschemes of $\PP (V \otimes \KK)$.
   We say that $\sigma$ \emph{contains all $r$-secants of $X$ even after a field extension},
     if for all field extensions $\kk\subset \KK$, and for all choices of $\KK$-rational points $\fromto{x_1}{x_r} \in X_{\KK}$,
     the $\KK$-linear span $ \langle\fromto{x_1}{x_r}\rangle$ is contained in $\sigma_{\KK}$.
\end{defn}

\begin{prop}\label{prop_axiomatic_def_of_secant}
   Suppose $X\subset \PP V$ is an embedded projective scheme, and $r>0$ is an integer.
   Let $\sigma\subset \PP V$ be a subscheme which contains all $r$-secants of $X$ even after a field extension.
   Then $\sigma_r(X) \subset \sigma$ and $\sigma_r(X)$ contains all $r$-secants of $X$ even after a field extension.
\end{prop}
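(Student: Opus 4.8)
The plan is to establish the two assertions of the proposition separately: the inclusion $\sigma_r(X)\subseteq\sigma$ by reducing to an algebraically closed base field, and the claim that $\sigma_r(X)$ itself contains all $r$-secants after a field extension by a direct argument with relative linear spans.

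For the inclusion, I would first observe that by Proposition~\ref{prop_base_change_for_secant_and_cactus} the underlying set of $\sigma_{r,\kkbar}(X\times\Spec\kkbar)$ equals that of $\sigma_r(X)\times\Spec\kkbar$, that $\sigma\times\Spec\kkbar$ is closed, and that the base change morphism $\pi\colon\PP V\times\Spec\kkbar\to\PP V$ is surjective with $\pi^{-1}(W)=W\times\Spec\kkbar$ for every closed $W\subseteq\PP V$. Hence it would suffice to prove $\sigma_{r,\kkbar}(X\times\Spec\kkbar)\subseteq\sigma\times\Spec\kkbar$ as sets: then $\pi^{-1}(\sigma_r(X))\subseteq\pi^{-1}(\sigma)$, and applying $\pi$ gives $\sigma_r(X)\subseteq\sigma$ as sets, whence the scheme-theoretic containment since $\sigma_r(X)$ is reduced. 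To get the inclusion over $\kkbar$: the scheme $\reduced{(X\times\Spec\kkbar)}$ is a projective variety over $\kkbar$ whose $\kkbar$-rational points are dense, and $\sigma_{r,\kkbar}(X\times\Spec\kkbar)=\sigma_{r,\kkbar}(\reduced{(X\times\Spec\kkbar)})$ because $\sigma_r(-)$ depends only on the reduction (see~\eqref{equ_inclusions_increasing_r}); so Proposition~\ref{prop_secant_variety_is_the_closure_of_secants} writes $\sigma_{r,\kkbar}(X\times\Spec\kkbar)$ as the Zariski closure of the union of all pure $k$-secants $\langle x_1,\ldots,x_k\rangle_{\kkbar}$ with $k\le r$ and $x_i\in(X\times\Spec\kkbar)(\kkbar)$. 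Each such $k$-secant is the $\kkbar$-span of the $r$-tuple $(x_1,\ldots,x_k,x_k,\ldots,x_k)$, so the hypothesis on $\sigma$ (Definition~\ref{def_contains_all_secants}, taken with $\KK=\kkbar$) puts it inside the closed set $\sigma\times\Spec\kkbar$; therefore so is the closure of their union.

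For the second assertion, I would fix a field extension $\kk\subseteq\KK$ and $\KK$-rational points $x_1,\ldots,x_r$ of $X_{\KK}:=X\times\Spec\KK$, let $k\le r$ be the number of distinct ones among them, and let $R\subseteq X_{\KK}$ be their reduced union, so that $R\simeq(\Spec\KK)^{\sqcup k}$ is smooth over $\KK$ of degree $k$ and $[R]$ is a $\KK$-rational point of $\Hilb_k^{\circ}(X_{\KK}/\KK)\subseteq\Hilb_k^{sm}(X_{\KK}/\KK)$. Then, invoking Proposition~\ref{prop_relative_span_of_a_point_contained} (with $T=\Hilb_k^{sm}(X_{\KK}/\KK)$, the inclusion into $\Hilb_k(X_{\KK}/\KK)$, and $i=1$) and the remark after Definition~\ref{defn_cactus_of_point}, one gets
\[
   \langle x_1,\ldots,x_r\rangle_{\KK}=\cactus{k,\KK}{X_{\KK},[R]}\subseteq\cactus{k,\KK}{X_{\KK},\Hilb_k^{sm}(X_{\KK}/\KK)}\subseteq\sigma_{r,\KK}(X_{\KK}),
\]
the last inclusion by Definition~\ref{def_secant_and_cactus_varieties} since $k\le r$. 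As Proposition~\ref{prop_base_change_for_secant_and_cactus} identifies the underlying set of $\sigma_{r,\KK}(X_{\KK})$ with that of $\sigma_r(X)\times\Spec\KK$, this yields $\langle x_1,\ldots,x_r\rangle_{\KK}\subseteq\sigma_r(X)\times\Spec\KK$, which is precisely the condition in Definition~\ref{def_contains_all_secants}.

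I do not expect a serious obstacle: every substantive ingredient --- base change for secant varieties, the description of $\sigma_r$ over an algebraically closed field, and the containment of the relative span of a point in the relative span of a family through it --- is already available. The only care needed is in the bookkeeping of reduced structures under base change, and in the elementary observation that a $k$-secant with $k<r$ still counts as an $r$-secant in the sense of Definition~\ref{def_contains_all_secants} (pad the tuple with repeated entries), which is exactly what lets the hypothesis on $\sigma$ absorb every term produced by Proposition~\ref{prop_secant_variety_is_the_closure_of_secants}.
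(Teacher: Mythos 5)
Your proof is correct and follows essentially the same route as the paper's: for the inclusion, base change to $\kkbar$, the description of $\sigma_{r,\kkbar}$ as the closure of the union of pure secants (Proposition~\ref{prop_secant_variety_is_the_closure_of_secants}), and the hypothesis on $\sigma$, followed by projecting back; for the second claim, realising $\sspan{\fromto{x_1}{x_r}}_{\KK}$ as the relative linear span of the $\KK$-rational point $[R]\in\Hilb_k^{sm}(X_{\KK}/\KK)$ and using the containment of the span of a point in the span of the ambient family. Your bookkeeping is if anything slightly more careful than the paper's --- explicitly reducing $X_{\kkbar}$ before invoking Proposition~\ref{prop_secant_variety_is_the_closure_of_secants}, padding $k$-tuples to $r$-tuples, and citing Proposition~\ref{prop_relative_span_of_a_point_contained} (which needs no constant-Hilbert-function hypothesis) where the paper cites Proposition~\ref{prop_properties_of_relative_linear_span}.
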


\begin{proof}
   Suppose $\KK$ is algebraically closed. 
   Then by Propositions~\ref{prop_secant_variety_is_the_closure_of_secants} and \ref{prop_base_change_for_secant_and_cactus}:
   \begin{multline*}
      \reduced{(\sigma_{r}(X) \times \Spec \KK)} \ \stackrel{\text{\ref{prop_base_change_for_secant_and_cactus}}}{=} \ \sigma_{r, \KK}(X_{\KK}) \\
      \ \stackrel{\text{\ref{prop_secant_variety_is_the_closure_of_secants}}}{=}\ \overline{  \bigcup \set{\sspan{\fromto{x_1}{x_r}}_{\KK}  \mid x_i \text{ is a $\KK$-rational point of $X_{\KK}$}}}
      \ \stackrel{\text{\ref{def_contains_all_secants}}}{\subset}  \ \sigma_{\KK}.
   \end{multline*}
   Projecting the resulting inclusion $\reduced{(\sigma_{r}(X) \times \Spec \KK)} \subset \sigma \times \Spec \KK$  on the first factor, we obtain:
   \[
      \sigma_r(X) = \reduced{\sigma_{r}(X)} \subset \sigma.
   \]
   
   To see the second claim, let $\kk\subset \KK$ be any field extension.
   Suppose $\fromto{x_1}{x_r}$ are $\KK$-rational points of $X_{\KK}$, and set $R: =\setfromto{x_1}{x_r}$ as a finite (over $\KK$) subscheme of $X_{\KK}$.
   Let $k$ be the degree of $R$ (which might be less than $r$ if there are repetitions among $x_i$).
   Then $[R]\in \Hilb^{sm}_k(X_{\KK}/\KK)$, and suppose $\ccH \subset \Hilb^{sm}_k(X_{\KK}/\KK)$ is an irreducible component containing $[R]$.
   By Proposition~\ref{prop_properties_of_relative_linear_span}:
   \[
      \cactus{r,\KK}{X_{\KK}, [R]} \subset \cactus{r,\KK}{X_{\KK}, \ccH}
      \subset \sigma_{r, \KK}(X_{\KK}).\qedhere
   \]
\end{proof}

Also an analogous statement holds for cactus varieties. 

\begin{prop}\label{prop_axiomatic_def_of_cactus}
   Suppose $X\subset \PP V$ is an embedded projective scheme, and $r>0$ is an integer.
   Let $\gotK\subset \PP V$ be a subscheme satisfying the property:
   \begin{itemize} 
    \item For every field extension $\kk \subset \KK$ and any finite subscheme
        $R \subset X_{\KK}$ of degree (over $\KK$) at most $r$,  
            the $\KK$-linear span $\sspan{R}_{\KK}$ is contained in $\gotK_{\KK} = \gotK \times \Spec \KK$.
   \end{itemize}
   Then $\cactus{r}{X} \subset \gotK$ and $\cactus{r}{X}$ satisfies the above property.
\end{prop}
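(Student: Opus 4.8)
This is the cactus-variety analogue of Proposition~\ref{prop_axiomatic_def_of_secant}, and I would prove it by mimicking that proof almost verbatim, substituting the cactus ingredients for the secant ones. The two facts I would lean on are Proposition~\ref{prop_base_change_for_secant_and_cactus} (base change for $\cactus{r}{X}$) and Proposition~\ref{prop_cactus_variety_is_closure_of_spans_of_schemes} (over an algebraically closed field, $\cactus{r}{X}$ is the Zariski closure of the union of spans of finite subschemes of degree $\le r$).

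\begin{proof}
   First we show $\cactus{r}{X} \subset \gotK$.
   Let $\KK$ be an algebraic closure of $\kk$.
   By Propositions~\ref{prop_base_change_for_secant_and_cactus} and \ref{prop_cactus_variety_is_closure_of_spans_of_schemes}:
   \begin{multline*}
      \reduced{(\cactus{r}{X} \times \Spec \KK)} \ \stackrel{\text{\ref{prop_base_change_for_secant_and_cactus}}}{=} \ \cactus{r, \KK}{X_{\KK}} \\
      \ \stackrel{\text{\ref{prop_cactus_variety_is_closure_of_spans_of_schemes}}}{=}\ \overline{  \bigcup \set{\sspan{R}_{\KK}  \mid R \subset X_{\KK} \text{ a finite subscheme of degree at most $r$}}}
      \ \subset  \ \gotK_{\KK},
   \end{multline*}
   where the final inclusion is the assumed property applied to the extension $\kk \subset \KK$ (and it is a closed subscheme, so it absorbs the closure).
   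Projecting the resulting inclusion $\reduced{(\cactus{r}{X} \times \Spec \KK)} \subset \gotK \times \Spec \KK$ onto the first factor gives $\cactus{r}{X} = \reduced{(\cactus{r}{X})} \subset \gotK$.

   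It remains to check that $\cactus{r}{X}$ itself has the stated property.
   Let $\kk\subset \KK$ be any field extension and $R \subset X_{\KK}$ a finite subscheme of degree $k\le r$ over $\KK$.
   Then $[R] \in \reduced{(\Hilb_k(X_{\KK}/\KK))}$; let $\ccH \subset \reduced{(\Hilb_k(X_{\KK}/\KK))}$ be an irreducible component containing $[R]$.
   By Corollary~\ref{prop_relative_span_in_the_limit} (or directly Proposition~\ref{prop_relative_span_of_a_point_contained}) applied to the inclusion of the point $[R]$ into $\ccH$, together with Definition~\ref{def_secant_and_cactus_varieties}:
   \[
      \sspan{R}_{\KK} = \cactus{k,\KK}{X_{\KK}, [R]} \subset \cactus{k,\KK}{X_{\KK}, \ccH} \subset \cactus{r, \KK}{X_{\KK}} = \reduced{(\cactus{r}{X} \times \Spec \KK)},
   \]
   where the last equality is Proposition~\ref{prop_base_change_for_secant_and_cactus}.
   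Hence $\sspan{R}_{\KK}$ is contained in $\cactus{r}{X} \times \Spec \KK$, as required.
\end{proof}

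The only real subtlety, and the step I would double-check, is the identification $\sspan{R}_{\KK} = \cactus{k,\KK}{X_{\KK},[R]}$ in the last display: when $[R]$ is a $\KK$-rational point of the Hilbert scheme over $\KK$, the relative linear span over $\KK$ is by the Remark after Definition~\ref{defn_cactus_of_point} just the $\KK$-linear span $\sspan{R}_{\KK}$, with no projection or closure needed, so this is fine. Everything else is a routine transcription of the secant argument, and since $\cactus{r}{X}$ is constructed over $\reduced{(\Hilb_k(X))}$ rather than over $\Hilb_k^{sm}(X)$, there are in fact fewer things to verify (no smoothability enters). So I do not expect a genuine obstacle; the main point is simply to invoke the cactus versions of the base-change and "span is closure of spans" results in place of their secant counterparts.
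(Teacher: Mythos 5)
Your proof is correct and follows exactly the route the paper intends: the paper does not write this proof out but states that it is the secant argument with Proposition~\ref{prop_secant_variety_is_the_closure_of_secants} replaced by Proposition~\ref{prop_cactus_variety_is_closure_of_spans_of_schemes} and $\Hilb_k^{sm}$ replaced by $\Hilb_k$, which is precisely the transcription you carried out. Your use of Proposition~\ref{prop_relative_span_of_a_point_contained} in the second half is a sound (and if anything safer) substitute for the citation of Proposition~\ref{prop_properties_of_relative_linear_span} in the secant version.
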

The proof is analogous to the proof of Proposition~\ref{prop_axiomatic_def_of_secant}, 
  with the role of Proposition~\ref{prop_secant_variety_is_the_closure_of_secants} replaced by Proposition~\ref{prop_cactus_variety_is_closure_of_spans_of_schemes}, 
  and $\Hilb_k^{sm}(X)$ replaced by $\Hilb_k(X)$.

In Definition~\ref{def_secant_and_cactus_varieties} we insist on taking the union over all lower values of $k \le r$. 
Typically, this is redundant, except when $X$ is small and $\Hilb_r^{sm}(X)$ or $\Hilb_r(X)$ is empty.
For this purpose we make a more precise notion of a not so small scheme.

\begin{defn}
   As above let $X \subset \PP V$ be an embedded projective scheme, and $r$ a positive integer. Assume $\bar{\kk}$ is an algebraic closure of $\kk$.
   \begin{itemize}
    \item  We say that \emph{$X$ has at least $r$ points over $\bar{\kk}$} if either $\dim X >0$ or $\dim X=0$ and $\reduced{(X\times \Spec \bar{\kk})}$ consists of at least $r$ distinct points.
           Equivalently, $\Hilb_r^{sm}(X)$ is nonempty.
    \item  We say that \emph{$X$ is longer than $r$} if either $\dim X >0$ or $\dim X=0$ and $\deg X \ge r$.
           Equivalently, $\Hilb_r(X)$ is nonempty.
   \end{itemize}
\end{defn}

\begin{prop}\label{prop_enough_to_take_k_equal_r}
   Suppose $X$, and $r$ are as above.
   \begin{itemize}
    \item  If $X$ has at least $r$ points over $\bar{\kk}$, then 
           \[
             \sigma_r(X) = \cactus{r}{X, \Hilb_{r}^{sm}(X)}.
           \]
    \item  If $X$ is longer than $r$
           \[
              \cactus{r}{X}:= \cactus{r}{X, \reduced{ (\Hilb_{r}(X))}}.
           \]
   \end{itemize}
   Moreover, if either of the assumptions of the items above is not satisfied, 
     then $\sigma_r(X) = \sigma_{r-1}(X)$, or respectively, $\cactus{r}{X}=\cactus{r-1}{X}$.
\end{prop}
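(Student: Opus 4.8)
The plan is to prove Proposition~\ref{prop_enough_to_take_k_equal_r} in two parts: first establishing the ``moreover'' clause (which provides the key containment in the degenerate case), and then using it together with the definitions to dispose of the main statements. The central observation throughout is that relative linear spans of points in $\Hilb_k$ can be realized inside relative linear spans of points in $\Hilb_{k+1}$ by adding a point, provided there is a point to add; and when there is no point to add (the degenerate case), the relevant Hilbert schemes simply coincide so that passing from $r-1$ to $r$ changes nothing.

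First I would handle the ``moreover'' clause for secants. Suppose $X$ does not have at least $r$ points over $\bar\kk$; then by definition $\dim X = 0$ and $\reduced{(X_{\bar\kk})}$ has fewer than $r$ points, so $\Hilb_r^{sm}(X) = \emptyset$ by the stated equivalence, and likewise $\Hilb_k^{sm}(X) = \emptyset$ for every $k \ge r$ by the same reasoning (fewer than $r$ points means fewer than $k$ points). Hence in the union $\sigma_r(X) = \bigcup_{k=1}^r \cactus{k}{X, \Hilb_k^{sm}(X)}$ the term $k = r$ is empty, so $\sigma_r(X) = \bigcup_{k=1}^{r-1}\cactus{k}{X,\Hilb_k^{sm}(X)} = \sigma_{r-1}(X)$. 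The cactus case is identical with $\Hilb^{sm}$ replaced by $\Hilb$ and ``at least $r$ points'' replaced by ``longer than $r$'' and $\deg$ in place of point count.

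Next I would prove the first displayed equality, assuming $X$ has at least $r$ points over $\bar\kk$. The inclusion $\cactus{r}{X,\Hilb_r^{sm}(X)} \subset \sigma_r(X)$ is immediate from the definition. For the reverse inclusion it suffices to show $\cactus{k}{X,\Hilb_k^{sm}(X)} \subset \cactus{r}{X,\Hilb_r^{sm}(X)}$ for each $k < r$; by induction it is enough to treat $k$ and $k+1$ with $k+1 \le r$, i.e.\ to show $\sigma_k(X)$-type spans embed into $\sigma_{k+1}(X)$-type spans whenever $\Hilb_{k+1}^{sm}(X) \ne \emptyset$. Since relative linear span is compatible with base change (Proposition~\ref{prop_base_change_for_relative_linear_span}) and both sides of the desired inclusion base-change correctly by Proposition~\ref{prop_base_change_for_secant_and_cactus}, I may assume $\kk = \kkbar$; then by Corollary~\ref{cor_cactus_equal_to_closure_over_points} it suffices to show that for each $\kkbar$-point $[R] \in \Hilb_k^{sm}(X)$ one has $\cactus{k}{X, [R]} = \sspan{R} \subset \cactus{r}{X, \Hilb_r^{sm}(X)}$. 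Since $\Hilb_{k+1}^{sm}(X) \ne \emptyset$, the scheme $X$ has more than $k$ points over $\kkbar$, so we may pick a closed point $x \in X$ outside the support of $R$; then $R' := R \sqcup \{x\}$ is a smooth degree $k+1$ subscheme with $[R'] \in \Hilb_{k+1}^{sm}(X)$ and $R \subset R'$, hence $\sspan{R} \subset \sspan{R'} = \cactus{k+1}{X,[R']} \subset \cactus{r}{X, \Hilb_r^{sm}(X)}$, where the last inclusion iterates the same argument from $k+1$ up to $r$ (at each stage $\Hilb_{j}^{sm}(X) \ne \emptyset$ for $j \le r$ because $X$ has at least $r$ points over $\kkbar$) and uses Proposition~\ref{prop_relative_span_of_a_point_contained} to pass from a single point $[R']$ to the component of $\Hilb_{k+1}^{sm}(X)$ through it, then Proposition~\ref{prop_properties_of_relative_linear_span}\ref{item_comparing_relative_spans_inclusion}. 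The cactus statement is proved the same way, now dropping all smoothness/$^{sm}$ decorations: for $[R] \in \Hilb_k(X)$ a $\kkbar$-point with $X$ longer than $k+1$, choose any closed point $x \notin \Supp R$, form $R' = R \sqcup \{x\}$ of degree $k+1$, and invoke Proposition~\ref{prop_cactus_variety_is_closure_of_spans_of_schemes} together with Proposition~\ref{prop_relative_span_of_a_point_contained} in place of Proposition~\ref{prop_secant_variety_is_the_closure_of_secants}.

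I expect the main obstacle to be purely bookkeeping: making sure the reductions to $\kk = \kkbar$ are legitimate (they are, by the cited base-change propositions, but one must check the ``at least $r$ points'' / ``longer than $r$'' hypotheses are themselves insensitive to algebraic extension, which follows from $\Hilb_r^{sm}(X_{\KK}/\KK) = \reduced{(\Hilb_r^{sm}(X) \times \Spec\KK)}$ and the analogue for $\Hilb_r$) and that when we add a closed point $x$ to $R$ we genuinely land in $\Hilb_{k+1}$ and not in its boundary — but disjointness of $x$ from $\Supp R$ guarantees $R \sqcup \{x\}$ is a subscheme of degree exactly $k+1$, and smoothness of $R$ plus reducedness of $\{x\}$ over $\kkbar$ gives smoothness of $R'$. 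No hard geometry is involved beyond what Corollaries~\ref{cor_cactus_equal_to_closure_over_points} and \ref{ref:smoothingcomponentsresult:cor} and Propositions~\ref{prop_relative_span_of_a_point_contained}, \ref{prop_properties_of_relative_linear_span} already supply.
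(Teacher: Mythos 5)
Your treatment of the secant half and of the ``moreover'' clause is correct and follows essentially the same route as the paper: reduce to $\kk=\kkbar$ by the base-change propositions, use density of the $\kkbar$-points of $\Hilb_k^{sm}(X)$ to reduce to a single $[R]$, adjoin a reduced point off $\Supp R$ to climb from degree $k$ to $k+1$, and chain the resulting inclusions up to $r$; in the degenerate case, observe that $\Hilb_r^{sm}(X)=\emptyset$ simply deletes the $k=r$ term from the defining union. (One cosmetic slip: $R$ is only smoothable, not smooth, so $R'=R\sqcup\{x\}$ need not be smooth; what you actually need is $[R']\in\Hilb_{k+1}^{sm}(X)$, which follows from Corollary~\ref{ref:smoothingcomponentsresult:cor} together with Proposition~\ref{ref:embeddedvsHilbert:prop}.)

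The cactus half, however, has a genuine gap. You extend $[R]\in\Hilb_k(X)(\kkbar)$ to degree $k+1$ by choosing ``any closed point $x\notin\Supp R$'' and setting $R'=R\sqcup\{x\}$. But the hypothesis in the cactus statement is only that $X$ is \emph{longer than} $r$, i.e.\ $\dim X>0$ or $\deg X\ge r$; this does not guarantee that $X_{\kkbar}$ has more than $k$ distinct closed points. For instance, take $X=\Spec\kkbar[\alpha,\beta]/(\alpha^2,\alpha\beta,\beta^2)\subset\PP^2_{\kkbar}$ and $r=2$: then $X$ is longer than $2$ and $\Hilb_2(X)\neq\emptyset$, yet $X$ has a single closed point, so for $R$ the reduced point there is no $x\notin\Supp R$ to adjoin. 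The correct move, which is what the paper does, is to take \emph{any} subscheme $R'\subset X$ of degree $k+1$ containing $R$ --- in general a nonreduced thickening of $R$ rather than a disjoint union. Such an $R'$ exists: when $\dim X>0$ your argument applies, and when $\dim X=0$ the inequality $\deg R=k<k+1\le\deg X$ forces $I_R/I_X\neq 0$ in the Artinian coordinate ring of $X_{\kkbar}$, and a maximal proper submodule of $I_R/I_X$ yields an intermediate ideal of colength one, hence a degree-$(k+1)$ subscheme $R'\supset R$. With that substitution, the rest of your argument ($\sspan{R}\subset\sspan{R'}$ together with Propositions~\ref{prop_relative_span_of_a_point_contained} and \ref{prop_properties_of_relative_linear_span}) goes through unchanged.
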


\begin{proof}
   By Proposition~\ref{prop_base_change_for_secant_and_cactus} it is enough to prove the statement for an algebraically closed field $\kk$.
   For every $k$, let $\ccH_k$ be either $\Hilb_{k}^{sm}(X)$ (to prove the first item about the secant variety) or $\reduced{(\Hilb_{k}(X))}$ (to prove the second item about the cactus variety).
   We must show that for any $1 \le  k <r$ we have the inclusion $\cactus{k}{X,\ccH_k} \subset \cactus{k+1}{X,\ccH_{k+1}}$.
   Let $S$ be the set of $\kk$-rational points of $\ccH_k$.
   Since $S$ is a dense subset, by Proposition~\ref{prop_properties_of_relative_linear_span}\ref{item_comparing_relative_spans_dense_subset_point}, 
     it is enough to prove that for every $[R] \in S$ the linear span of $R$ is contained in $\cactus{k+1}{X,\ccH_{k+1}}$.

   In the setting of the first item, if $X$ has at least $r$ points over $\bar{\kk}= \kk$, since $k < r$, there is a $\kk$-rational point $x \in X$ not contained in the support of $R$. 
   Let $R' = R \sqcup \set{x}$, then $[R'] \in \ccH_{k+1} = \Hilb_{k+1}^{sm}(X)$ and $\sspan{R} \subset \sspan{R'} \subset \cactus{k+1}{X, \ccH_{k+1}}$. 
   Thus the first item is proved.
   
   In the setting of the second item, if $X$ is longer than $r$, then there is
   a subscheme $R' \subset X$  of degree $k+1$ containing $R$.
   Having such $R'$, the corresponding point in the Hilbert scheme $[R'] \in \ccH_{k+1} = \Hilb_{k+1}(X)$ satisfies $\sspan{R} \subset \sspan{R'} \subset \cactus{k+1}{X, \ccH_{k+1}}$,
      and the second item is proved.
   
   The ``moreover'' part is straightforward from the definitions, since $\ccH_r$ is empty and so is its relative linear span.
\end{proof}

The conclusion of the proposition is that  in Definition~\ref{def_secant_and_cactus_varieties} it is not necessary to consider the union over $1\le  k \le r-1$:
   either by applying the itemised part of the proposition, or by applying its final part and reducing $r$ to a smaller value. 

The next two propositions says that in sufficiently nice situations we can get rid
of the closure in the definitions and statements above, in the sense that
every point of the secant or cactus variety is contained in the relative
linear span of some point of the Hilbert scheme.
\begin{prop}\label{prop_closure_not_needed_secant}
   Assume $X \subset \PP^N_{\kk}$ is an embedded projective scheme and $r$ and
   $d$ are positive integers, such that $X$ has at least $r$ points over
   $\bar{\kk}$.
   Moreover, suppose that the Hilbert function $H_{\bullet}(d)$ is constant on
   $\Hilb_r^{sm}(X)$
     (for example, this holds whenever $d \ge r-1$, see Corollaries~\ref{cor_regular_implies_independent_embedding} and~\ref{cor_finite_schemes_r_regular}).
   Then:
   \begin{enumerate}
      \item \label{item_closure_no_needed_any_field}
             For every point $p$ in $\sigma_r(\nu_d(X))$, there is a point
              $z \in \Hilb_r^{sm}(X)$ with residue field $\kappa(z)$, such
              that the image of the $\kappa(z)$-linear span of $z$ under the
              projection to $\PP (\DPV{d})$ contains $p$.
      \item \label{item_closure_no_needed_alg_closed_field}
            If in addition $\kk$ is algebraically closed, then
             \[
                \sigma_r(\nu_d(X))(\kk) = \bigcup_{[R] \in \ccH(\kk)} \sspan{R}(\kk),
             \]
             i.e.~every $\kk$-rational point of the secant variety
             is in a span of a finite smoothable subscheme of $X$ of degree $r$.
   \end{enumerate}
\end{prop}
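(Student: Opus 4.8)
The plan is to present $\sigma_r(\nu_d(X))$ explicitly as the image of a projective bundle and then read off both assertions. First, since $X$ has at least $r$ points over $\kkbar$, Proposition~\ref{prop_enough_to_take_k_equal_r} gives
\[
   \sigma_r(\nu_d(X)) = \cactus{r}{\nu_d(X), \Hilb_r^{sm}(X)}.
\]
I would set $T = \Hilb_r^{sm}(X)$ with its reduced structure; it is a reduced closed subscheme of the projective scheme $\Hilb_r(X)$, so $T$ is reduced and projective and the inclusion $\phi\colon T \to \Hilb_r(X)$ is proper. By hypothesis $H_{\bullet}(d)$ is constant on $T$, so, as in Section~\ref{sect_universal_ideal_and_graded_pieces}, the sheaf $\ccE := \phi^*(\ccI_{\ccU_r})_d$ is locally free on $T$ and cuts out a sub-vector-bundle $\ccE^{\perp} \subset T \times \DPV{d}$. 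Combining Proposition~\ref{prop_relative_lin_span_for_locally_constant_Hf_bundle}\ref{item_rel_lin_span_for_locally_constant_Hf_closure_of_the_image} with Proposition~\ref{prop_properties_of_relative_linear_span}\ref{item_comparing_relative_spans_closed} (whose hypotheses hold, as $\phi$ is proper and $\phi(T)$ lies in the constant-Hilbert-function locus), I conclude that the projection $\pi\colon \PP(\ccE^{\perp}) \to \PP(\DPV{d})$ is \emph{surjective} onto $\sigma_r(\nu_d(X))$, with no closure needed.

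For part~\ref{item_closure_no_needed_any_field}, given $p \in \sigma_r(\nu_d(X))$ I would pick $w \in \PP(\ccE^{\perp})$ with $\pi(w) = p$, let $z \in T$ be its image in $T$, and let $Z \subset X_{\kappa(z)}$ be the finite subscheme corresponding to $z$. The key point is that the fibre $\PP(\ccE^{\perp}|_z) \subset \{z\} \times \PP(\DPV{d})$ equals $\sspan{\nu_d(Z)}_{\kappa(z)}$; this is exactly the identification carried out inside the proof of Proposition~\ref{prop_relative_lin_span_for_locally_constant_Hf_bundle}, and it is where local freeness of $\ccE$ --- equivalently, constancy of $H_{\bullet}(d)$ --- is used. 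Since $w$ lies in that fibre and maps to $p$, it follows that $p$ lies in the image of $\sspan{\nu_d(Z)}_{\kappa(z)}$ under the projection $\PP(\DPV{d} \otimes_{\kk} \kappa(z)) \to \PP(\DPV{d})$, i.e.~in $\cactus{r}{\nu_d(X), z}$ (Definition~\ref{defn_cactus_of_point}), which is the assertion.

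For part~\ref{item_closure_no_needed_alg_closed_field}, assume $\kk = \kkbar$. Given a $\kk$-rational point $p$ of $\sigma_r(\nu_d(X))$, the fibre $\pi^{-1}(p)$ is nonempty (by the surjectivity just established) and of finite type over the algebraically closed field $\kk$, hence has a $\kk$-rational point $w$; its image $z \in T$ is then a $\kk$-rational point, say $z = [R]$ for a finite degree-$r$ subscheme $R \subset X$, which is smoothable by Proposition~\ref{ref:embeddedvsHilbert:prop}. Since $\kappa(z) = \kk$, the fibre over $z$ is the linear subspace $\sspan{\nu_d(R)} \subset \PP(\DPV{d})$, so $p = \pi(w) \in \sspan{\nu_d(R)}(\kk)$. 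For the reverse inclusion, for any $[R] \in \Hilb_r^{sm}(X)(\kk)$ the remark after Definition~\ref{defn_cactus_of_point} and Proposition~\ref{prop_relative_span_of_a_point_contained} give $\sspan{\nu_d(R)} = \cactus{r}{\nu_d(X), [R]} \subseteq \cactus{r}{\nu_d(X), \Hilb_r^{sm}(X)} = \sigma_r(\nu_d(X))$, hence $\sspan{\nu_d(R)}(\kk) \subseteq \sigma_r(\nu_d(X))(\kk)$. Combining the two inclusions gives the displayed equality.

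I expect the only real obstacle to be the fibrewise identification $\PP(\ccE^{\perp}|_z) = \sspan{\nu_d(Z)}_{\kappa(z)}$: it genuinely fails without the constant-Hilbert-function hypothesis --- Example~\ref{ex_cacti_for_families_beware} being the warning case --- and it is precisely the ingredient that legitimises presenting $\sigma_r(\nu_d(X))$ as the image of $\PP(\ccE^{\perp})$. Once it is available, everything else is formal.
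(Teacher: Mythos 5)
Your proposal is correct and takes essentially the same route as the paper's proof: reduce to $\sigma_r(\nu_d(X)) = \cactus{r}{\nu_d(X), \Hilb_r^{sm}(X)}$ via Proposition~\ref{prop_enough_to_take_k_equal_r}, use properness of $\Hilb_r^{sm}(X)$ together with the constant-Hilbert-function hypothesis to present this as the closed image of the bundle $\PP(\ccE^{\perp})$, and extract both claims from the fibrewise identification of $\PP(\ccE^{\perp}|_z)$ with $\sspan{\nu_d(Z)}_{\kappa(z)}$. You are in fact slightly more explicit than the paper about where local freeness of $\ccE$ is used and why a $\kk$-rational point of the fibre exists when $\kk=\kkbar$, but the argument is the same.
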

\begin{prop}\label{prop_closure_not_needed_cactus}
   Assume $X \subset \PP^N_{\kk}$ is an embedded projective scheme, and $r$
   and $d$ are positive integers such that $X$ is longer than $r$. Moreover,
   suppose that  the Hilbert function $H_{\bullet}(d)$ is constant on
   $\Hilb_r(X)$ (for example, this holds whenever $d \ge r-1$, see
   Corollaries~\ref{cor_regular_implies_independent_embedding}
   and~\ref{cor_finite_schemes_r_regular}).
   Then:
   \begin{enumerate}
      \item
             For every point $p$ in $\cactus{r}{\nu_d(X)}$, 
               there is a point $z \in \Hilb_r(X)$ with residue field $\kappa(z)$, 
               such that the image of the $\kappa(z)$-linear span of $z$ under the projection to $\PP (\DPV{d})$ contains $p$.
      \item
            If $\kk$ is in addition algebraically closed, then 
             \[
                \cactus{r}{\nu_d(X)}(\kk) = \bigcup_{[R] \in \Hilb_r(X)(\kk)} \sspan{R}(\kk), 
             \]
             i.e.~every $\kk$-rational point of the cactus variety
             is in a span of a finite subscheme of $X$ of degree $r$.
   \end{enumerate}

\end{prop}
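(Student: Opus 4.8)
The plan is to prove Propositions~\ref{prop_closure_not_needed_secant} and~\ref{prop_closure_not_needed_cactus} simultaneously, since the arguments are identical once we abstract away whether $\ccH$ denotes $\Hilb_r^{sm}(X)$ or $\reduced{(\Hilb_r(X))}$; the only inputs we need about $\ccH$ are that it is reduced, projective (hence proper over $\kk$), and nonempty (guaranteed by the assumption that $X$ has at least $r$ points over $\bar\kk$, resp.\ is longer than $r$). By Proposition~\ref{prop_enough_to_take_k_equal_r} we have $\sigma_r(\nu_d(X)) = \cactus{r}{\nu_d(X), \ccH}$ (resp.\ the cactus analogue), so it suffices to analyse a single relative linear span over the proper reduced base $\ccH$.

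First I would set up the bundle picture from Section~\ref{sect_universal_ideal_and_graded_pieces}. Apply the Veronese reembedding $\nu_d$ and pull back the $d$-th graded piece of the universal ideal sheaf along $\phi\colon \ccH \to \Hilb_r(X)$, obtaining $\ccE := \phi^*(\ccI_{\ccU_r})_d \subset \ccO_{\ccH} \otimes S^d V^*$. The key hypothesis is that the Hilbert function $H_{\bullet}(d)$ is constant on $\ccH$; by the semicontinuity discussion preceding~\eqref{eq:openness_for_max_Hf} and the reducedness of $\ccH$, this forces $\ccE$ to be locally free, so $\ccE^{\perp} \subset \ccH \times \DPV{d}$ is a genuine sub-bundle and $\PP(\ccE^{\perp}) \subset \ccH \times \PP(\DPV{d})$ is a projective bundle over $\ccH$. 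Since $\ccH$ is proper over $\kk$, so is $\PP(\ccE^{\perp})$, hence the projection $\PP(\ccE^{\perp}) \to \PP(\DPV{d})$ is a closed (proper) map; this is exactly the situation of Proposition~\ref{prop_properties_of_relative_linear_span}\ref{item_comparing_relative_spans_closed}. Therefore $\cactus{r}{\nu_d(X), \ccH}$ equals the honest \emph{image} of $\PP(\ccE^{\perp})$ under the projection, with no closure needed, by Proposition~\ref{prop_relative_lin_span_for_locally_constant_Hf_bundle}\ref{item_rel_lin_span_for_locally_constant_Hf_closure_of_the_image}.

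Next I would unpack what a point of the image means. Given $p \in \sigma_r(\nu_d(X)) = \cactus{r}{\nu_d(X),\ccH}$, properness gives a point $q \in \PP(\ccE^{\perp})$ mapping to $p$; let $z \in \ccH$ be its image in the base and $\kappa(z)$ its residue field. The fibre $\PP(\ccE^{\perp}_z)$ is, by the identification in Section~\ref{sect_universal_ideal_and_graded_pieces} of the graded pieces of the universal ideal with the $\kappa(z)$-linear span, precisely $\sspan{\nu_d(P)}_{\kappa(z)}$ where $P \subset \ccU_r$ is the fibre over $z$ (one must note here that $\ccE_z$, the fibre of the locally free sheaf, may be strictly smaller than the degree-$d$ part of the \emph{saturated} ideal of $P$, but it still cuts out the same $\kappa(z)$-linear span because a linear form vanishing on $P$ lies in the saturation; in the constant-Hilbert-function case these in fact agree since $H_z(d)$ is already the full Hilbert-function value). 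Hence $p$ lies in the image of $\sspan{\nu_d(P)}_{\kappa(z)}$ under the projection $\PP(V \otimes \kappa(z)) \to \PP V$ followed by $\nu_d$, which is the first assertion in both propositions. For the second assertion, when $\kk = \kkbar$ the $\kk$-rational points of $\ccH$ are Zariski dense in each component and correspond to honest finite subschemes $R \subset X$ of degree $r$ (smoothable in the $\sigma_r$ case), so $\cactus{r}{\nu_d(X),[R]} = \sspan{\nu_d(R)}$; taking $\kk$-points and using that the image of a $\kk$-point is computed fibrewise, together with density via Proposition~\ref{prop_properties_of_relative_linear_span}\ref{item_comparing_relative_spans_dense_subset_point}, yields $\sigma_r(\nu_d(X))(\kk) = \bigcup_{[R]} \sspan{R}(\kk)$, and likewise for the cactus variety.

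The main obstacle I anticipate is the bookkeeping around \emph{which} ideal sheaf $\ccE$ computes the linear span: $\ccE = \phi^*(\ccI_{\ccU_r})_d$ is the pulled-back degree-$d$ piece of the universal ideal, not a priori the degree-$d$ piece of the saturated ideal of each fibre, and these can differ on a non-reduced or non-constant locus. The constant-Hilbert-function hypothesis is precisely what tames this: it makes $\ccE$ locally free of the right rank, so its fibre $\ccE_z$ has the expected dimension $\dim S^dV^* - H_z(d)$, and this rank equals that of the degree-$d$ saturated piece, forcing equality of the cut-out linear spaces fibrewise. I would make sure to invoke this carefully (as in the proof of Proposition~\ref{prop_relative_lin_span_for_locally_constant_Hf_bundle}) rather than gloss over it. The parenthetical remark that $d \ge r-1$ suffices for constancy follows immediately from Corollary~\ref{cor_finite_schemes_r_regular} (every degree-$r$ finite scheme has $\ccI_R$ Castelnuovo--Mumford $r$-regular) together with Corollary~\ref{cor_regular_implies_independent_embedding}, giving $H_z(d) = r$ identically; this disposes of the ``for example'' clause in both statements.
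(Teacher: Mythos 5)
Your proof of part (i) is essentially the paper's own argument: reduce to a single relative linear span over the proper reduced base $\ccH$ via Proposition~\ref{prop_enough_to_take_k_equal_r}, use the constant Hilbert function hypothesis to make $\ccE^{\perp}$ a genuine bundle so that Proposition~\ref{prop_properties_of_relative_linear_span}\ref{item_comparing_relative_spans_closed} identifies $\cactus{r}{\nu_d(X),\ccH}$ with the honest image of $\PP(\ccE^{\perp})$, and then take $z\in\ccH$ to be the image of a preimage of $p$. Your caution about $\ccE_z$ versus the degree-$d$ piece of the saturated ideal of the fibre is exactly the point the paper handles through Proposition~\ref{prop_relative_lin_span_for_locally_constant_Hf_bundle}, so part (i) is fine.

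The one place where your argument as written does not deliver the claim is part (ii). Invoking density of the $\kk$-rational points of $\ccH$ together with Proposition~\ref{prop_properties_of_relative_linear_span}\ref{item_comparing_relative_spans_dense_subset_point} only reproduces the description of $\cactus{r}{\nu_d(X)}$ \emph{with} a closure (that is Proposition~\ref{prop_cactus_variety_is_closure_of_spans_of_schemes}), whereas the entire content of part (ii) is that no closure is needed. The correct mechanism is already contained in your properness setup: for $p$ a closed (hence $\kk$-rational, as $\kk=\kkbar$) point, the fibre over $p$ of the proper surjection $\PP(\ccE^{\perp})\to\cactus{r}{\nu_d(X)}$ is a nonempty closed subscheme and therefore contains a closed point $e$; its image $z\in\ccH$ is then a closed, hence $\kk$-rational, point, i.e.\ $z=[R]$ for an honest degree-$r$ subscheme $R\subset X$, and $p\in\sspan{\nu_d(R)}$. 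You should also record the reverse inclusion $\bigcup_{[R]}\sspan{R}(\kk)\subset\cactus{r}{\nu_d(X)}(\kk)$, which follows from Proposition~\ref{prop_relative_span_of_a_point_contained}; your write-up asserts the equality without addressing it.
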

\begin{proof}[Proof of Propositions~\ref{prop_closure_not_needed_secant}
    and~\ref{prop_closure_not_needed_cactus}]
    Both proofs are identical in structure, we will conduct them using common
    notation. In the setting of
    Proposition~\ref{prop_closure_not_needed_secant} denote
    $\ccH:=\Hilb_r^{sm}(X)$ and $\gotK:=\sigma_r(\nu_d(X))$. To prove
    Proposition~\ref{prop_closure_not_needed_cactus}, denote
    $\ccH:=\Hilb_r(X)$ and $\gotK:=\cactus{r}{\nu_d(X)}$.

   By Proposition~\ref{prop_enough_to_take_k_equal_r} we have $\gotK = \cactus{r}{\nu_d(X), \ccH}$.
   By Proposition~\ref{prop_properties_of_relative_linear_span}\ref{item_comparing_relative_spans_closed} the map $\PP(\ccE^{\perp}) \to \gotK$ is proper and surjective.
   Pick a point $e \in  \PP(\ccE^{\perp})$ mapping to $p$. 
   If $p$ is a closed point we may also assume $e$ is a closed point.
   Define $z\in \ccH$ to be the image of $e$ under the projection $\PP(\ccE^{\perp}) \to \ccH$.
   Note that since the projection is proper, if $e$ is a closed point then also $z$ is a closed point. 
   The point of the Hilbert scheme $z$ corresponds to a subscheme $Z\subset X
   \times \Spec \kappa(z)$ finite over the residue field $\kappa(z)$.
   The fibre of $\PP(\ccE^{\perp})\to \ccH$ over $z$ is $\sspan{Z}_{\kappa(z)}$ and contains $e$.
   Thus the image of $\sspan{Z}_{\kappa(z)}$ in $\PP (\DPV{d})$ contains $p$ as claimed in \ref{item_closure_no_needed_any_field}.

%
   Now suppose $\kk$ is algebraically closed to show~\ref{item_closure_no_needed_alg_closed_field}.
   Assume $p \in \gotK(\kk)$ (in particular, $p$ is a closed point), then $z$ as above is can be chosen to be a closed point of $\ccH$, in particular, it is a $\kk$-rational point, i.e.~$z\in \ccH(\kk)$.
   Hence $z=[R]$, i.e.~it corresponds to a finite subscheme $R \subset X$ of degree $r$, and $p \in \sspan{R}$.
   Thus $\gotK(\kk) \subset \bigcup_{[R] \in \ccH(\kk)} \sspan{R}(\kk)$. 
   The other inclusion follows from Proposition~\ref{prop_relative_span_of_a_point_contained}.
\end{proof}

Note that the equalities as in Proposition~\ref{prop_closure_not_needed_secant} need not to hold if the Hilbert function is not constant.
Examples of this type are exhibited in \cite{nisiabu_jabu_smoothable_rank_example}.

\subsection{Cactus varieties and Gorenstein locus of the Hilbert scheme}

In this subsection we relate the cactus varieties with Gorenstein schemes, as discussed, in particular, in \cite[Lemma~2.4]{nisiabu_jabu_cactus}.

\begin{lem}\label{lem_Gorenstein_are_enough_for_cactus}
   Suppose $R \subset \PP^N_{\kk}$ is an embedded finite non-Gorenstein scheme
   of degree $r$.
   Then 
   $\sspan{R} = \cactus{r-1}{R}.$
   If in addition $\kk$ is algebraically closed, then the set 
   $\sspan{R}(\kk)$ of $\kk$-rational points in the linear span of $R$ is equal to 
   $\bigcup_{R' \subsetneqq R, \deg R'=r-1} \sspan{R'}(\kk)$, 
   where the sum is over the subschemes $R'$ of degree $r-1$.
\end{lem}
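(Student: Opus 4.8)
The key structural fact is Lemma~\ref{lem_Gorenstein_is_equivalent_to_finite_Hilb}: a finite scheme $R$ of degree $r$ is Gorenstein if and only if $\dim\Hilb_{r-1}(R)=0$. Since $R$ is \emph{not} Gorenstein, $\Hilb_{r-1}(R)$ has positive dimension, so it has a one-dimensional component. The plan is to exploit this positive-dimensional family of codegree-one subschemes to cover all of $\sspan{R}$ by spans of degree $(r-1)$ subschemes. First I would recall that $\cactus{r-1}{R}\subseteq\sspan{R}$ is automatic from \eqref{equ_inclusions_increasing_r} (applied to $R$ in place of $X$, noting $\sspan{\cactus{r-1}{R}}\subseteq\sspan{R}$), so the content is the reverse inclusion $\sspan{R}\subseteq\cactus{r-1}{R}$.

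For the reverse inclusion, the core linear-algebra input is Example~\ref{exam_Hilbert_function_of_subscheme_drops_by_at_most_r_minus_r_prime}: for a subscheme $R'\subsetneq R$ of degree $r-1$ we have $\dim\sspan{R'}\ge\dim\sspan{R}-1$, i.e.\ $\sspan{R'}$ is a hyperplane in $\sspan{R}$ (or all of $\sspan{R}$) whenever $R'$ is linearly normal inside $\sspan{R}$ --- and by Lemma~\ref{lem_properties_of_lin_indep_emb}\ref{item_R_finite_lin_indep_then_any_subscheme_lin_indep} every subscheme of $R$ is linearly normal relative to $\sspan{R}$ once we replace $\PP V$ by $\sspan{R}$ (where $R$ is trivially linearly normal). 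So each degree $(r-1)$ subscheme spans a hyperplane of $\sspan{R}$. Now I would argue that the hyperplanes $\sspan{R'}$, as $R'$ ranges over (the generic points of components of) $\Hilb_{r-1}(R)$, are \emph{not} all equal: if they were all the same hyperplane $\Lambda\subsetneq\sspan{R}$, then every degree $(r-1)$ subscheme of $R$ would lie in $\Lambda\cap R$, which is a proper subscheme of $R$ of degree $\le r-1$, forcing $\Hilb_{r-1}(R)$ to be a single point, contradicting non-Gorensteinness via Lemma~\ref{lem_Gorenstein_is_equivalent_to_finite_Hilb}. A positive-dimensional family of distinct hyperplanes in the projective space $\sspan{R}\cong\PP_{\kk}^{r-1}$ has union Zariski-dense, hence (being a closed set, as $\cactus{r-1}{R}$ is closed and contains the union of these spans by Proposition~\ref{prop_relative_span_of_a_point_contained} applied to $T=\reduced{\Hilb_{r-1}(R)}$) equal to all of $\sspan{R}$. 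This gives $\sspan{R}\subseteq\cactus{r-1}{R}$.

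More carefully, I would run the density argument via the machinery already set up: take $T=\reduced{(\Hilb_{r-1}(R))}$ with its tautological map $\phi\colon T\to\Hilb_{r-1}(R)$, so that $\cactus{r-1}{R}=\cactus{r-1}{R,\phi(T)}$ by Proposition~\ref{prop_enough_to_take_k_equal_r} (using that $\deg R=r\ge r-1$, so $R$ is longer than $r-1$). By Corollary~\ref{cor_cactus_equal_to_closure_over_points}, $\cactus{r-1}{R,\phi(T)}=\overline{\bigcup_{t\in S}\cactus{r-1}{R,\phi(t)}}$ for any dense $S\subseteq T$; taking $S$ the set of closed points and using that for a subscheme $R'\subseteq R$ one has $\cactus{r-1}{R,[R']}=\sspan{R'}$ when $[R']$ is $\kk$-rational (and in general the relative span of a closed point dominates the relevant $\sspan{R'}_{\kappa}$), the union over $S$ contains a positive-dimensional family of hyperplanes in $\sspan{R}\cong\PP_{\kk}^{r-1}$, whose closure must be $\sspan{R}$ by the irreducibility of $\sspan{R}$ and dimension count. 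The second statement --- that over $\kkbar$ the $\kk$-rational points satisfy $\sspan{R}(\kk)=\bigcup_{R'\subsetneq R,\ \deg R'=r-1}\sspan{R'}(\kk)$ --- then follows from Proposition~\ref{prop_closure_not_needed_cactus}\ref{item_closure_no_needed_alg_closed_field} applied to $\nu_d$ for $d\gg 0$ (or directly, since over $\kkbar$ all points of $\Hilb_{r-1}(R)$ are $\kk$-rational and the Hilbert function $H_\bullet(1)$ is constant on $\Hilb_{r-1}(R)$ --- every degree $r-1$ subscheme spans a hyperplane in $\sspan{R}$, so $H_\bullet(1)\equiv r-1$ --- which lets us invoke Proposition~\ref{prop_properties_of_relative_linear_span}\ref{item_comparing_relative_spans_closed} to get surjectivity of $\PP(\ccE^\perp)\to\cactus{r-1}{R}$ without any closure).

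\textbf{Main obstacle.} The delicate point is verifying that the family of hyperplanes $\{\sspan{R'}\}$ genuinely moves --- i.e.\ converting ``$\dim\Hilb_{r-1}(R)>0$'' into ``the spans $\sspan{R'}$ are not all equal'' --- and then that a positive-dimensional family of hyperplanes in $\PP_{\kk}^{r-1}$ has dense union. The first needs the clean dichotomy that every degree $(r-1)$ subscheme of $R$ is either all of $\sspan{R}$'s hyperplane or contained in a fixed proper subscheme (which would collapse $\Hilb_{r-1}$); the second is elementary but should be stated, perhaps by noting that the incidence variety $\{(x,R'):x\in\sspan{R'}\}\to T$ has fibres of dimension $r-2$ over a base of dimension $\ge 1$, hence total dimension $\ge r-1=\dim\sspan{R}$, so its image (which is closed, being $\cactus{r-1}{R}$) is all of $\sspan{R}$. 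I would make sure the edge cases $r=1,2$ are either excluded (a non-Gorenstein scheme has degree $\ge 4$ by Example~\ref{ex_Gorenstein_schemes} and the classification of low-degree schemes) or handled trivially.
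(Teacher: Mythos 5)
Your proposal follows essentially the same route as the paper's proof: reduce to $\kk=\kkbar$ and to $\kk$-rational points, use Example~\ref{exam_Hilbert_function_of_subscheme_drops_by_at_most_r_minus_r_prime} to see that each $\sspan{R'}$ is either all of $\sspan{R}$ or a hyperplane in it, use Lemma~\ref{lem_Gorenstein_is_equivalent_to_finite_Hilb} to get a positive-dimensional component of $\Hilb_{r-1}(R)$, rule out a constant hyperplane on such a component by the ``$R'=\PP W\cap R$ would pin down $[R']$'' collapse argument, and finish with properness of the family of spans. That is exactly the paper's argument, so the proposal is correct in substance. Three local points need repair, though none is fatal. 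First, $R$ is \emph{not} ``trivially linearly normal'' in $\sspan{R}$ (four general points in $\PP^2$ span only a $\PP^2$), so you cannot invoke Lemma~\ref{lem_properties_of_lin_indep_emb}\ref{item_R_finite_lin_indep_then_any_subscheme_lin_indep} this way; fortunately Example~\ref{exam_Hilbert_function_of_subscheme_drops_by_at_most_r_minus_r_prime} alone gives the hyperplane dichotomy, which is what the paper uses. Second, and for the same reason, $\sspan{R}$ is a $\PP^q_{\kk}$ with $q=H_R(1)-1\le r-1$, not necessarily $\PP^{r-1}_{\kk}$; your dimension counts should be run with $q$. Third, the inference ``the incidence variety has dimension $\ge\dim\sspan{R}$, hence its image is all of $\sspan{R}$'' is not valid as stated (a source of dimension $q$ can map onto a hyperplane); the correct bridge is the paper's: the relative linear span of an irreducible positive-dimensional component is an irreducible closed subset of $\PP^q_{\kk}$ that strictly contains a hyperplane, hence equals $\PP^q_{\kk}$, and then properness upgrades dominance to surjectivity so that every $\kk$-rational point actually lies on some $\sspan{R'}$.
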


\begin{proof}
   By invariance under the base change, it suffices to prove the statement for
   algebraically closed base field $\kk$, see
   Propositions~\ref{prop_Gorenstein_base_change} and~\ref{prop_base_change_for_relative_linear_span}.
   Then the set of $\kk$-rational ($=$closed) points of the form $[R']$ is dense in $\Hilb_{r-1}(R)$,
      so it is enough to show the second version of the statement: $\sspan{R}(\kk) = \bigcup_{R' \subset R} \sspan{R'}(\kk)$.
   By Example~\ref{exam_Hilbert_function_of_subscheme_drops_by_at_most_r_minus_r_prime} every $\sspan{R'} \subset \sspan{R}$ 
   is either equal to $\sspan{R}$ or a hyperplane inside it.
   If for any choice of $R'$ we have an equality, the claim of the lemma is satisfied.
   Thus we may asume that for all $R'$ the linear span $\sspan{R'}$ is a hyperplane in $\sspan{R} \simeq \PP^q_{\kk}$, 
       i.e.~the Hilbert function $H_{\bullet}(1)$ is constant and equal to $q$ on $\Hilb_{r-1}(R)$.   
   
   The Hilbert scheme $\Hilb_{r-1}(R)$ is projective and $\dim \Hilb_{r-1}(R)>0$ by Lemma~\ref{lem_Gorenstein_is_equivalent_to_finite_Hilb}.
   Pick an irreducible component $\ccH$ of $\Hilb_{r-1}(R)$ such that $\dim \ccH >0$.
   The relative linear span is an irreducible reduced subscheme of $\PP^q_{\kk}$ containing at least one hyperplane $ \PP W\simeq \PP^{q-1}_{\kk} \subset \PP^q_{\kk}$.
   Thus it is either equal to the hyperplane $\PP W$, or to whole $\PP^q_{\kk}$.
   Let us first exclude the case of the hyperplane. 
   This would imply that the linear span of each $R'$ (for $[R']\in \ccH$) is equal to $\PP W$.
   But then $R' \subset \PP W \cap R \subsetneqq R$ and thus $R' =  \PP W \cap R$.
   Therefore $[R']$ is uniquely determined by $\ccH$ and $[R']$ is the only closed point of $\ccH$, 
      which is a contradiction with the assumptions that $\kk$ is algebraically closed and $\dim \ccH >0$. 
   
   Thus the relative linear span of $\ccH$ is $\PP^q_{\kk}$. 
   Since the Hilbert function is $H_{\bullet}(1)$ is constant on $\ccH$, 
     the relative linear span $\cactus{r-1}{R, \ccH}$ 
     is dominated by a family of hyperplanes $\PP^{q-1}_{\kk}$ parametrised by $\ccH$.
   Since everything is projective, it follows that the dominant map is projective and surjective.
   In particular, for every $\kk$-rational point $x\in \PP^{q}_{\kk}$ the fibre 
      (consisting of hyperplanes in the family passing through $x$) 
      is nonempty and closed, hence contains a $\kk$-rational point $([R'], x)$.
   That is, $x\in \sspan {R'}$, proving the claim of the lemma.
\end{proof}

\begin{cor}\label{cor_Gorenstein_loci_enough_for_cactus}
   Suppose $X \subset \PP^N_{\kk}$ is an embedded projective scheme.
   Then 
   \[
             \cactus{r}{X}=\bigcup_{k=1}^r \cactus{k}{X, \reduced{\Hilb^{Gor}_{k}(X)}}.
   \]
   If $\kk$ is algebraically closed, then
   \[
        \cactus{r}{X}=\overline{\bigcup \set{\sspan{R} :  [R] \in \Hilb^{Gor}_{k}(X)(\kk), 1\le k \le r}}.
   \]
   If in addition $X$ has at least $r$ points over $\bar{\kk}$, then in the equalities above it is enough to consider $k=r$, i.e., respectively:
   \[
             \cactus{r}{X}= \cactus{r}{X, \reduced{\Hilb^{Gor}_{r}(X)}} \text{ or } \cactus{r}{X}=\overline{\bigcup \set{\sspan{R} :  [R] \in \Hilb^{Gor}_{r}(X)(\kk)}}.
   \]
\end{cor}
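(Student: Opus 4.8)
The plan is to reduce, via Proposition~\ref{prop_base_change_for_secant_and_cactus}, to the case when $\kk$ is algebraically closed, so that all relevant points of the Hilbert scheme are $\kk$-rational and the statements about $\cactus{r}{X}$ can be checked on $\kk$-rational points. Under this assumption, the two displayed equalities in the algebraically closed case become the single task of showing
\[
  \overline{\bigcup \set{\sspan{R} : [R]\in \Hilb_k(X)(\kk),\, 1\le k\le r}}
  = \overline{\bigcup \set{\sspan{R} : [R]\in \Hilb^{Gor}_k(X)(\kk),\, 1\le k\le r}},
\]
and the general-field statement then follows by combining this with Proposition~\ref{prop_properties_of_relative_linear_span}\ref{item_comparing_relative_spans_dense_subset_point}, exactly as in the proof of Proposition~\ref{prop_cactus_variety_is_closure_of_spans_of_schemes}, replacing $\Hilb_k(X)$ by $\reduced{\Hilb^{Gor}_k(X)}$ (whose $\kk$-rational points are dense in each of its components because $\Hilb^{Gor}_k(X)$ is an open reduced subscheme by Lemma~\ref{ref:GorensteinLocus:lem}).

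The key step is the inclusion $\subseteq$ between the two unions, and this is where Lemma~\ref{lem_Gorenstein_are_enough_for_cactus} does the work. Given any $[R]\in\Hilb_k(X)(\kk)$ with $1\le k\le r$, I would argue by descending induction on $k$: if $R$ is Gorenstein we are done; if $R$ is \emph{not} Gorenstein, then by Lemma~\ref{lem_Gorenstein_are_enough_for_cactus} (applied with $\kk$ algebraically closed, which is our situation) the set of $\kk$-rational points $\sspan{R}(\kk)$ equals $\bigcup_{R'\subsetneq R,\ \deg R' = k-1}\sspan{R'}(\kk)$, so every $\kk$-rational point of $\sspan{R}$ lies in $\sspan{R'}$ for some degree-$(k-1)$ subscheme $R'\subset R\subset X$; applying the inductive hypothesis to each such $R'$ shows $\sspan{R'}(\kk)$ is contained in the right-hand union. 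Since $\sspan{R}(\kk)$ is dense in $\sspan{R}$ (the base field being algebraically closed), this gives $\sspan{R}\subset\overline{\bigcup\{\sspan{R'}:[R']\in\Hilb^{Gor}_{k'}(X)(\kk),\,k'\le r\}}$. The base case $k=1$ is trivial since a single reduced point is Gorenstein (Example~\ref{ex_Gorenstein_schemes}). The reverse inclusion $\supseteq$ is immediate because $\Hilb^{Gor}_k(X)(\kk)\subset\Hilb_k(X)(\kk)$.

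For the final "moreover" clause — that when $X$ has at least $r$ points over $\bar\kk$ it suffices to take $k=r$ — I would mimic the proof of Proposition~\ref{prop_enough_to_take_k_equal_r}. After base change to $\kk=\kkbar$, for any Gorenstein $[R]\in\Hilb^{Gor}_k(X)$ with $k<r$, pick a $\kk$-rational point $x\in X$ outside the support of $R$ and set $R' = R\sqcup\set{x}$; then $R'$ is still Gorenstein (adding a disjoint reduced point preserves Gorensteinness, by Lemma~\ref{ref:Gorchar:eis:lem}\ref{it:tmpfirst}, as already noted in the proof of Corollary~\ref{ref:persistence_of_smoothability:cor}), has degree $k+1$, and satisfies $\sspan{R}\subset\sspan{R'}$. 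Since $\Hilb^{Gor}_k(X)(\kk)$ is dense in $\reduced{\Hilb^{Gor}_k(X)}$, Proposition~\ref{prop_properties_of_relative_linear_span}\ref{item_comparing_relative_spans_dense_subset_point} upgrades this pointwise containment to $\cactus{k}{X,\reduced{\Hilb^{Gor}_k(X)}}\subset\cactus{k+1}{X,\reduced{\Hilb^{Gor}_{k+1}(X)}}$, so the union collapses to the $k=r$ term.

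The main obstacle I anticipate is purely bookkeeping rather than conceptual: one must be careful that in the non-Gorenstein induction step the subschemes $R'$ produced by Lemma~\ref{lem_Gorenstein_are_enough_for_cactus} genuinely lie inside $X$ (they do, as $R'\subset R\subset X$) and that taking closures commutes correctly with the finite unions involved, i.e.\ that $\overline{\bigcup_i \overline{S_i}} = \overline{\bigcup_i S_i}$ for finitely many sets $S_i$ — which holds since $\Hilb_k(X)$ and $\Hilb^{Gor}_k(X)$ have finitely many irreducible components, $X$ being projective. A secondary subtlety is that Lemma~\ref{lem_Gorenstein_are_enough_for_cactus} is stated over an algebraically closed field, so it is essential that the whole argument for the equalities in the displayed formulas is carried out after the base change $\kk\rightsquigarrow\kkbar$, and only the base-change compatibility (Proposition~\ref{prop_base_change_for_secant_and_cactus}) is invoked to descend back to general $\kk$.
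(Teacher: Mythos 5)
Your proposal is correct and follows essentially the same route as the paper: base change to $\kkbar$, identify $\cactus{r}{X}$ with the closure of the union of spans of $\kk$-rational points of the Hilbert scheme, invoke Lemma~\ref{lem_Gorenstein_are_enough_for_cactus} to replace non-Gorenstein schemes by Gorenstein subschemes, and handle the $k=r$ reduction by adding a disjoint reduced point as in Proposition~\ref{prop_enough_to_take_k_equal_r}. Your explicit induction on the degree is a welcome elaboration of a step the paper leaves implicit (the lemma only produces degree-$(k-1)$ subschemes, which need not themselves be Gorenstein), but it is the same argument.
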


\begin{proof}
   By invariance under a base change, we may assume that $\kk$ is algebraically closed.
   The set of $\kk$-rational points $[R]$ is dense in
   $\Hilb^{Gor}_{k}(X)$, so by
   Corollary~\ref{cor_cactus_equal_to_closure_over_points} the variety
   $\cactus{r}{X}$ is equal to the closure of $\sspan{R}$ over all subschemes $R$.
   Lemma~\ref{lem_Gorenstein_are_enough_for_cactus} implies that spans of all
   non-Gorenstein schemes are covered by spans of their Gorenstein
   subschemes, so we may reduce to Gorenstein subschemes.
   The final claim follows in the same way as the proof of Proposition~\ref{prop_enough_to_take_k_equal_r}, 
     the case of secant varieties, because if $R$ is a finite Gorenstein scheme, then also $R \sqcup \Spec\kk$ is Gorenstein (for instance, by Lemma~\ref{lem_Gorenstein_is_equivalent_to_finite_Hilb}).
\end{proof}

Note that, unlike in Proposition~\ref{prop_enough_to_take_k_equal_r} for cactus varieties, in the last part of Corollary~\ref{cor_Gorenstein_loci_enough_for_cactus} it is not enough to consider $X$ which is longer than $r$.
\begin{example}
  Suppose $A=\kk[\alpha, \beta]/ (\alpha^2, \beta^2)$ and $X = \Spec A$ embedded in some way into $\PP^N_{\kk}$. Assume $r=3$.
  Then $X$ is a scheme of degree $4$ (in particular, it is longer than $3$), its second and third cactus varieties coincide $\cactus{2}{X} = \cactus{3}{X} \simeq \PP^2_{\kk}$ (independent of the embedding), 
     but $\Hilb^{Gor}_{3}(X)$ is empty.
  In particular, $\cactus{3}{X} \ne \overline{\bigcup \set{\sspan{R} :  [R] \in \Hilb^{Gor}_{3}(X)(\kk)}}$.
  
  It is also straightforward to produce analogous examples, where $\cactus{r-1}{X}$ and $\cactus{r}{X}$ are not equal, or where $\Hilb^{Gor}_{3}(X) \ne \emptyset$.
\end{example}

\begin{cor}\label{cor_all_Gor_smoothable_implies_secant_equal_cactus}
   Suppose $X \subset \PP^N_{\kk}$ is an embedded projective scheme,
      such that $X$ has at least $r$ points over $\bar{\kk}$.
   If $\reduced{(\Hilb^{Gor}_{r}(X))} = \Hilb^{Gor, sm}_{r}(X)$,
     then $\cactus{r}{X}= \sigma_r(X)$.
\end{cor}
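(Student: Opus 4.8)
The plan is to reduce everything, via the base-change Proposition~\ref{prop_base_change_for_secant_and_cactus}, to the case when $\kk$ is algebraically closed; all the relevant loci ($\Hilb_r(X)$, $\Hilb_r^{sm}(X)$, $\Hilb^{Gor}_r(X)$, $\Hilb^{Gor,sm}_r(X)$, and the two varieties $\sigma_r(X)$, $\cactus{r}{X}$) are compatible with this reduction by Proposition~\ref{prop_base_change_for_Hilb_r}, Lemma~\ref{ref:GorensteinLocus:lem}\ref{it:Gorlocthird}, Equation~\eqref{eq:basechangeforsmoothablecomponent}, and Proposition~\ref{prop_base_change_for_secant_and_cactus}. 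So assume $\kk=\kkbar$. Since $X$ has at least $r$ points over $\bar\kk$, Corollary~\ref{cor_Gorenstein_loci_enough_for_cactus} gives $\cactus{r}{X}=\cactus{r}{X,\reduced{\Hilb^{Gor}_r(X)}}$ and Proposition~\ref{prop_enough_to_take_k_equal_r} gives $\sigma_r(X)=\cactus{r}{X,\Hilb^{sm}_r(X)}$. The inclusion $\sigma_r(X)\subseteq \cactus{r}{X}$ is already recorded in~\eqref{equ_inclusions_increasing_r}, so the whole content is the reverse inclusion.

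For the reverse inclusion I would argue as follows. The hypothesis $\reduced{(\Hilb^{Gor}_r(X))}=\Hilb^{Gor,sm}_r(X)$ means precisely that every $\kk$-rational point $[R]\in\Hilb^{Gor}_r(X)$ lies in $\Hilb^{sm}_r(X)$, i.e.\ every finite Gorenstein subscheme of $X$ of degree $r$ is smoothable. Now take any $\kk$-rational point $[R]$ of $\reduced{\Hilb^{Gor}_r(X)}$; it corresponds to a finite Gorenstein $R\subseteq X$ of degree $r$, hence $R$ is smoothable, hence $[R]\in\Hilb^{sm}_r(X)$, and by Proposition~\ref{prop_relative_span_of_a_point_contained} (applied with $T$ a component of $\Hilb^{sm}_r(X)$ through $[R]$) we get $\sspan{R}=\cactus{r}{X,[R]}\subseteq \sigma_r(X)$. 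Since $\kk$ is algebraically closed, the $\kk$-rational points are dense in each component of $\reduced{\Hilb^{Gor}_r(X)}$; applying Corollary~\ref{cor_cactus_equal_to_closure_over_points} (with $S$ the set of $\kk$-rational points) yields
\[
  \cactus{r}{X,\reduced{\Hilb^{Gor}_r(X)}}=\overline{\bigcup_{[R]\in\reduced{\Hilb^{Gor}_r(X)}(\kk)}\sspan{R}}\subseteq \sigma_r(X),
\]
because $\sigma_r(X)$ is closed. Combined with $\cactus{r}{X}=\cactus{r}{X,\reduced{\Hilb^{Gor}_r(X)}}$ this gives $\cactus{r}{X}\subseteq\sigma_r(X)$, and together with the opposite inclusion we conclude $\cactus{r}{X}=\sigma_r(X)$.

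I do not expect a serious obstacle here; the statement is essentially a bookkeeping corollary that packages together the earlier machinery. The one point requiring a little care is making sure the chain of base-change identifications is legitimate: the loci $\Hilb^{sm}$ and $\Hilb^{Gor}$ may acquire a nonreduced structure after base change over an imperfect field, so I should phrase the reduction so that it only compares \emph{underlying reduced} schemes / sets of closed points, which is all that $\sigma_r$ and $\cactus{r}{\cdot}$ see anyway (both are reduced by~\eqref{equ_inclusions_increasing_r}). The other small thing to verify is that the hypothesis $\reduced{(\Hilb^{Gor}_r(X))}=\Hilb^{Gor,sm}_r(X)$ is itself stable under the reduction to $\kk=\kkbar$ — this follows from Lemma~\ref{ref:GorensteinLocus:lem}\ref{it:Gorlocthird} and~\eqref{eq:basechangeforsmoothablecomponent} since over an algebraically closed field both sides become reduced. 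Once those are in place the argument is immediate.
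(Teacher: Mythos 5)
Your proof is correct, and it rests on the same pillar as the paper's: Corollary~\ref{cor_Gorenstein_loci_enough_for_cactus} reduces the cactus variety to spans of Gorenstein subschemes, and the hypothesis pushes those into the smoothable locus, hence into $\sigma_r(X)$. The execution differs in two ways worth noting. First, the paper never base-changes to $\kkbar$ and never argues pointwise: it works directly with the relative linear spans of the loci over $\kk$, writing $\cactus{r}{X}=\bigcup_{k=1}^{r}\cactus{k}{X,\reduced{\Hilb^{Gor}_{k}(X)}}=\bigcup_{k=1}^{r}\cactus{k}{X,\Hilb^{Gor,sm}_{k}(X)}\subset\bigcup_{k=1}^{r}\cactus{k}{X,\Hilb^{sm}_{k}(X)}=\sigma_r(X)$, which makes the proof a three-line computation and sidesteps the (harmless but fiddly) reduced-structure issues over imperfect fields that you rightly flag. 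Second, because the paper keeps the union over all $k\le r$ rather than collapsing to $k=r$, it must propagate the hypothesis downward, which it does via Corollary~\ref{ref:persistence_of_smoothability:cor} giving $\reduced{(\Hilb^{Gor}_{k}(X))}=\Hilb^{Gor,sm}_{k}(X)$ for all $k\le r$; you avoid that step by invoking the ``at least $r$ points'' hypothesis through the last part of Corollary~\ref{cor_Gorenstein_loci_enough_for_cactus} and Proposition~\ref{prop_enough_to_take_k_equal_r}, at the price of the base change and the density-of-rational-points argument via Corollary~\ref{cor_cactus_equal_to_closure_over_points}. Both routes are sound; the paper's buys brevity and uniformity over the base field, yours makes the role of the nonemptiness hypothesis more visible.
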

\begin{proof}
    By Corollary~\ref{ref:persistence_of_smoothability:cor} we have
    $\reduced{(\Hilb^{Gor}_{k}(X))} = \Hilb^{Gor, sm}_{k}(X)$ for all $1 \le k \le r$.
   We always have  $\sigma_r(X) \subset \cactus{r}{X}$. 
   On the other hand:
   \begin{multline*}
      \cactus{r}{X}=\bigcup_{k=1}^{r} \cactus{k}{X, \reduced{\Hilb^{Gor}_{k}(X)}} 
                   =\bigcup_{k=1}^{r} \cactus{k}{X, \Hilb^{Gor, sm}_{k}(X)}\\
                   \subset \bigcup_{k=1}^{r} \cactus{k}{X, \Hilb^{sm}_{k}(X)} = \sigma_r(X).
   \end{multline*}
\end{proof}

\begin{cor}\label{cor_closure_not_needed_Gorenstein}
   Assume $X \subset \PP^N_{\kk}$ is an embedded projective scheme, $r$ and $d$ are positive integers,
      and $X$ has at least $r$ points over $\bar{\kk}$. 
   Moreover, suppose the Hilbert function $H_{\bullet}(d)$ is constant on $\reduced{\Hilb_r(X)}$
     (for example, this holds whenever $d \ge r-1$, see Corollaries~\ref{cor_regular_implies_independent_embedding} and~\ref{cor_finite_schemes_r_regular}).
   Then:
   \begin{enumerate}
      \item \label{item_closure_no_needed_Gorenstein_any_field}
             For every point $p$ in $\cactus{r}{\nu_d(X)}$, 
               there is a point $z \in \reduced{\Hilb^{Gor}_r(X)}$ with the
               residue field $\kappa(z)$, 
               such that the image of the $\kappa(z)$-linear span of $z$ under the projection to $\PP (\DPV{d})$ contains $p$.
      \item \label{item_closure_no_needed_Gorenstein_alg_closed_field}
            If $\kk$ is in addition algebraically closed, then 
             \[
                \cactus{r}{\nu_d(X)}(\kk) = \bigcup_{[R] \in \reduced{\Hilb^{Gor}_r(X)}(\kk)} \sspan{R}(\kk), 
             \]
             i.e.~every $\kk$-rational point of the cactus variety is in the span of a finite Gorenstein subscheme of $X$ of degree $r$ 
               (in addition, in the situation of the secant variety, the scheme is smoothable in $X$).
   \end{enumerate}
\end{cor}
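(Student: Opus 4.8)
The plan is to imitate the proof of Proposition~\ref{prop_closure_not_needed_cactus} (and, for the final parenthetical, the secant-variety part of Proposition~\ref{prop_closure_not_needed_secant}), feeding in the Gorenstein reductions of Corollary~\ref{cor_Gorenstein_loci_enough_for_cactus} and Lemma~\ref{lem_Gorenstein_are_enough_for_cactus}. First I would record that $X\simeq\nu_d(X)$ as abstract schemes, so $\Hilb_r(X)\simeq\Hilb_r(\nu_d(X))$ and the Gorenstein locus, the smoothable locus and the Hilbert function $H_{\bullet}(d)$ are intrinsic (the last being constant on $\reduced{\Hilb_r(X)}$ by hypothesis). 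Since $X$ has at least $r$ points over $\kkbar$ it is in particular longer than $r$, so Proposition~\ref{prop_closure_not_needed_cactus} applies and produces a point $z_0\in\Hilb_r(X)$ whose fibre $Z_0\subset X\times\Spec\kappa(z_0)$ satisfies that $p$ lies in the image of the $\kappa(z_0)$-linear span $\sspan{\nu_d(Z_0)}_{\kappa(z_0)}$ under the projection to $\PP(\DPV{d})$. If $Z_0$ happens to be Gorenstein we are done, so assume it is not.

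To replace $Z_0$ by a Gorenstein scheme of the same degree, I would pass to $L:=\overline{\kappa(z_0)}$ and choose a closed, hence $L$-rational, point $q$ of $\sspan{\nu_d(Z_0)}_{\kappa(z_0)}\times_{\kappa(z_0)}L$ mapping to $p$. The scheme $(Z_0)_L$ is still non-Gorenstein (Proposition~\ref{prop_Gorenstein_base_change}), so Lemma~\ref{lem_Gorenstein_are_enough_for_cactus}, applied over the algebraically closed field $L$, gives a subscheme of $(Z_0)_L$ of degree $r-1$ whose Veronese span still contains $q$; iterating, and using that a degree-one subscheme of $X_L$ is a reduced point and therefore Gorenstein, the procedure stops at a finite Gorenstein $V\subseteq (Z_0)_L\subseteq X_L$ of some degree $k\le r$ with $q\in\sspan{\nu_d(V)}_L(L)$. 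Now I would adjoin to $V$ a disjoint union of $r-k$ reduced $L$-points of $X_L$ chosen off the support of $V$; these exist because $X$ has at least $r$ points over $\kkbar\subseteq L$, and by Lemma~\ref{ref:Gorchar:eis:lem}\ref{it:tmpfirst} the enlarged scheme $W$ is Gorenstein of degree $r$, with $q\in\sspan{\nu_d(V)}_L\subseteq\sspan{\nu_d(W)}_L$. Finally $[W]$ is an $L$-point of $\Hilb^{Gor}_r(X_L/L)=\Hilb^{Gor}_r(X)\times\Spec L$, hence maps to a point $z\in\reduced{\Hilb^{Gor}_r(X)}$; writing $W=W_z\times_{\kappa(z)}L$ for the fibre $W_z$ over $z$ and chasing $q\mapsto p$ through $\PP(\DPV{d}\otimes L)\to\PP(\DPV{d}\otimes\kappa(z))\to\PP(\DPV{d})$ shows $p$ lies in the image of the $\kappa(z)$-linear span of $z$. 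This proves~\ref{item_closure_no_needed_Gorenstein_any_field}.

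For~\ref{item_closure_no_needed_Gorenstein_alg_closed_field} one runs the same argument over $\kk=\kkbar$, starting from the algebraically closed form of Proposition~\ref{prop_closure_not_needed_cactus} (so that $z_0=[R_0]$ with $R_0\subseteq X$ and $p\in\sspan{\nu_d(R_0)}(\kk)$): no field extension is needed, the adjoined reduced points are $\kk$-rational, and the output is a genuine finite Gorenstein subscheme $R\subset X$ of degree $r$ with $p\in\sspan{\nu_d(R)}(\kk)$, i.e.~$[R]\in\reduced{\Hilb^{Gor}_r(X)}(\kk)$. The reverse inclusion $\bigcup_{[R]}\sspan{\nu_d(R)}(\kk)\subseteq\cactus{r}{\nu_d(X)}(\kk)$ is Proposition~\ref{prop_relative_span_of_a_point_contained}. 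If moreover $\reduced{\Hilb^{Gor}_r(X)}=\Hilb^{Gor,sm}_r(X)$ (so that $\cactus{r}{\nu_d(X)}=\sigma_r(\nu_d(X))$ by Corollary~\ref{cor_all_Gor_smoothable_implies_secant_equal_cactus}), then $[R]\in\Hilb^{sm}_r(X)$, hence $R$ is smoothable in $X$ by Proposition~\ref{ref:embeddedvsHilbert:prop}; this is the parenthetical claim.

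The main obstacle is that the Gorenstein locus $\Hilb^{Gor}_r(X)$ is only \emph{open} in $\Hilb_r(X)$, hence not proper over $\kk$, so the proper-surjectivity statement Proposition~\ref{prop_properties_of_relative_linear_span}\ref{item_comparing_relative_spans_closed}, which drives Proposition~\ref{prop_closure_not_needed_cactus}, cannot be applied to it directly: one cannot simply read a Gorenstein point $z$ off a preimage of $p$ in a projective bundle over $\reduced{\Hilb^{Gor}_r(X)}$, nor over its projective closure, whose boundary fibres may fail to be Gorenstein. The substitute is the two-step degree manoeuvre above, lowering the degree via Lemma~\ref{lem_Gorenstein_are_enough_for_cactus} until a Gorenstein scheme is reached and then raising it back by adjoining reduced points, and the only delicate bookkeeping is keeping the base field, which enlarges to $\overline{\kappa(z_0)}$ and then descends to $\kappa(z)$, consistent through the base-change compatibilities of the Hilbert scheme, the Gorenstein locus and the relative linear span (Propositions~\ref{prop_base_change_for_Hilb_r} and~\ref{prop_base_change_for_relative_linear_span}, Lemma~\ref{ref:GorensteinLocus:lem}). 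Everything else is parallel to Propositions~\ref{prop_closure_not_needed_secant} and~\ref{prop_closure_not_needed_cactus}.
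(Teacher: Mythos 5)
Your proposal is correct and follows essentially the same route as the paper: combine Proposition~\ref{prop_closure_not_needed_cactus} with the degree-lowering Lemma~\ref{lem_Gorenstein_are_enough_for_cactus} to reach a Gorenstein subscheme, pad it back to degree $r$ with reduced points (Gorenstein by Lemma~\ref{ref:Gorchar:eis:lem}), and handle non-closed points by base change to an algebraic closure. The only difference is organisational — the paper proves the algebraically closed case~\ref{item_closure_no_needed_Gorenstein_alg_closed_field} first and deduces~\ref{item_closure_no_needed_Gorenstein_any_field} by base-changing to $\overline{\kappa(p)}$, while you base-change to $\overline{\kappa(z_0)}$ inside the proof of~\ref{item_closure_no_needed_Gorenstein_any_field} — which is immaterial.
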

\begin{proof}
    First we prove the claim of
    Point~\ref{item_closure_no_needed_Gorenstein_alg_closed_field}.
    From Proposition~\ref{prop_closure_not_needed_cactus} and
    Lemma~\ref{lem_Gorenstein_are_enough_for_cactus} it follows that every
    $\kk$-point is in the span of a Gorenstein subscheme of degree at most
    $r$. Since $X$ has at least $r$ points over $\kk$, every Gorenstein
    subscheme may be extended to degree $r$ subscheme.
    To prove Point~\ref{item_closure_no_needed_Gorenstein_any_field}, we pick
    a point $p$ with residue field $\KK$ and make a base change to
    $\overline{\KK}$ to reduce to setting of
    Point~\ref{item_closure_no_needed_Gorenstein_alg_closed_field}.
\end{proof}


\section{Secant and cactus varieties of Veronese reembeddings}\label{sect_secants_of_Veronese}

In this section we generalise results obtained over
    complex numbers $\mathbb{C}$ in \cite{nisiabu_jabu_cactus} and
    \cite{jabu_ginensky_landsberg_Eisenbuds_conjecture}, i.e.~we prove Theorems~\ref{thm_BGL} and~\ref{thm_BB}.
As before, throughout this section we assume that the base field $\kk$ is an arbitrary field of characteristic $p\ge 0$.
In the proofs we typically reduce to the case $\kk$ is algebraically closed using the base change theorems from previous sections.

\subsection{High degree Veronese reembeddings preserve secant varieties to smooth subvarieties}

    In this subsection, in Theorem~\ref{ref:bglfirstthm:thm}, we generalise \cite[Thm~1.1]{jabu_ginensky_landsberg_Eisenbuds_conjecture} to smooth projective varieties over an arbitrary field $\kk$.
    It is stated in a simplier and slightly weaker form in the introduction as Theorem~\ref{thm_BGL}.
    \begin{thm}\label{ref:bglfirstthm:thm}
            Let $X\subseteq \PP_{\kk}^n=\PP V$ be a smooth subvariety and $r\in \mathbb{N}$.
            Suppose that the ideal sheaf $\ccI_X$ of $X$ is Castelnuovo-Mumford $\delta$-regular for an integer $\delta \ge 0$. 
            Then for all $d\geq r - 1 + \delta$, one has the equality of reduced subschemes (subvarieties) of $\PP(\DPV{d})$:
            \[
                \sigma_r(\nu_d(X)) = \reduced{\left(\sigma_r(\nu_d(\PP_{\kk}^n))\cap \sspan{\nu_d(X)}\right)}.
            \]
    \end{thm}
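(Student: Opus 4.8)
The plan is to reduce to $\kk=\kkbar$ and then verify the equality on $\kk$-rational points. Castelnuovo--Mumford regularity is insensitive to field extension (being defined through cohomology), and by Proposition~\ref{prop_base_change_for_secant_and_cactus} secant varieties and linear spans base-change compatibly, so I would first pass to the algebraic closure. The inclusion ``$\subseteq$'' is then formal: $\sigma_r(\nu_d(X))\subseteq\sspan{\nu_d(X)}$ holds by definition of the secant variety, while $\sigma_r(\nu_d(\PP^n_\kk))$ is a closed subscheme containing every $r$-secant of $\nu_d(X)$ even after a field extension, so Proposition~\ref{prop_axiomatic_def_of_secant} applied to $\nu_d(X)$ gives $\sigma_r(\nu_d(X))\subseteq\sigma_r(\nu_d(\PP^n_\kk))$; since $\sigma_r(\nu_d(X))$ is reduced, it lands in $\reduced{(\sigma_r(\nu_d(\PP^n_\kk))\cap\sspan{\nu_d(X)})}$.

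For the reverse inclusion, both sides being reduced varieties over $\kkbar$, it suffices to take a $\kk$-rational point $p$ of $\sigma_r(\nu_d(\PP^n_\kk))\cap\sspan{\nu_d(X)}$ and exhibit a degree-$r$ subscheme of $X$, smoothable in $X$, whose span contains $p$ (the case $n=0$ is trivial, so assume $n\ge 1$, whence $\PP^n_\kk$ has at least $r$ points over $\kkbar$). Since $d\ge r-1$, the Hilbert function $H_{\bullet}(d)$ is constant on $\Hilb_r(\PP^n_\kk)$, so Proposition~\ref{prop_closure_not_needed_secant}\ref{item_closure_no_needed_alg_closed_field} provides a finite subscheme $R\subset\PP^n_\kk$ of degree $r$, smoothable in $\PP^n_\kk$, with $p\in\sspan{\nu_d(R)}$. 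As $\ccI_X$ is Castelnuovo--Mumford $\delta$-regular and $d\ge\delta+\deg R-1$, Lemma~\ref{ref:bglmainlemma:lem} yields
\[
   \sspan{\nu_d(R)}\cap\sspan{\nu_d(X)}=\sspan{\nu_d(X\cap R)},
\]
and since $p$ lies in both spans on the left, $p\in\sspan{\nu_d(X\cap R)}$.

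The scheme $X\cap R$ need not be smoothable, and this is the single step requiring a genuinely non-formal input; over $\CC$ one invokes the analytic topology, whereas here I would use Theorem~\ref{thm_R_cap_X_is_contained_in_smoothable_Q}. Concretely, write $R=R_X\sqcup R'$, where $R_X$ is the union of the connected components of $R$ whose support lies on $X$; then $X\cap R=X\cap R_X$, the scheme $R_X$ is smoothable by Corollary~\ref{ref:smoothingcomponentsresult:cor}, it is supported on $X$, and $\deg R_X\le r$. Since $X$ is a smooth subvariety of the smooth variety $\PP^n_\kk$, Proposition~\ref{ref:smooth_subvar_and_pushing:prop} gives a finite scheme $Q\subseteq X$, smoothable in $X$, with $X\cap R\subseteq Q$ and $\deg Q\le\deg R_X\le r$. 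Then
\[
   p\in\sspan{\nu_d(X\cap R)}\subseteq\sspan{\nu_d(Q)}=\cactus{\deg Q}{\nu_d(X),[Q]}\subseteq\sigma_{\deg Q}(\nu_d(X))\subseteq\sigma_r(\nu_d(X)),
\]
using Proposition~\ref{ref:embeddedvsHilbert:prop} (so that $[Q]\in\Hilb^{sm}_{\deg Q}(\nu_d(X))$), Proposition~\ref{prop_relative_span_of_a_point_contained} together with Definition~\ref{def_secant_and_cactus_varieties} for the last two inclusions, and $\deg Q\le r$. This establishes ``$\supseteq$'' on $\kk$-points, hence the equality over $\kkbar$, and undoing the base change finishes the proof.

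The main obstacle is exactly the non-smoothability of $X\cap R$: everything else is bookkeeping with base change, the span-intersection Lemma~\ref{ref:bglmainlemma:lem}, and the identification of points of the secant variety with spans of smoothable schemes via the Hilbert scheme, all of which is already in place; it is Theorem~\ref{thm_R_cap_X_is_contained_in_smoothable_Q} (Proposition~\ref{ref:smooth_subvar_and_pushing:prop}) that carries the weight and makes the argument valid in arbitrary characteristic.
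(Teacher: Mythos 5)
Your proof is correct and follows essentially the same route as the paper's: base change to $\kkbar$, reduce to $\kk$-rational points, produce a smoothable $R\subset\PP^n_\kk$ via Proposition~\ref{prop_closure_not_needed_secant}, intersect spans via Lemma~\ref{ref:bglmainlemma:lem}, and replace the possibly non-smoothable $X\cap R$ by a smoothable $Q\subseteq X$ via Proposition~\ref{ref:smooth_subvar_and_pushing:prop}. Your extra step of splitting off the components of $R$ supported away from $X$ before invoking Proposition~\ref{ref:smooth_subvar_and_pushing:prop} (whose hypothesis requires support in $X$) is a point of care that the paper's proof leaves implicit, and is welcome.
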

    So in this theorem, $X$ and $r$ are fixed, and then after sufficiently high degree of Veronese reembeddings the secant varieties of $X$ and $\PP V$ 
       are related to each other via a simple intersectionformula. 
    The crucial step is a generalisation of \cite[Prop.~2.5]{jabu_ginensky_landsberg_Eisenbuds_conjecture}
        by~Corollary~\ref{ref:smoothingeverytwhere:cor} and
        \cite[Lemma~2.8]{jabu_ginensky_landsberg_Eisenbuds_conjecture}
        by~Proposition~\ref{ref:smooth_subvar_and_pushing:prop}.
        The only other possibly field-dependent results in this
        section are the proof ``Main Lemma'' \cite[Lem.~1.2]{jabu_ginensky_landsberg_Eisenbuds_conjecture}, which is
        slightly generalised by Lemma~\ref{ref:bglmainlemma:lem}.

    \begin{proof}[Proof of Theorem~\ref{ref:bglfirstthm:thm}]
        By invariance under a base change
        (Proposition~\ref{prop_base_change_for_secant_and_cactus}) it is enough to prove the statement for algebraically closed $\kk$.
        The inclusion  $\sigma_r(\nu_d(X)) \subset  \reduced{\left(\sigma_r(\nu_d(\PP_{\kk}^n))\cap \sspan{\nu_d(X)}\right)}$ is straightforward:
           if $Y \subset Z$, then $\sigma_r(Y) \subset \sigma_r(Z)$, and $\sigma_r(Y) \subset \sspan{Y}$, and $\sigma_r(Y)$ is always reduced.
        To prove the reverse inclusion it is enough to prove it on
        $\kk$-rational points, since they are dense in both varieties.
        
        Let $z \in \reduced{\left(\sigma_r(\nu_d(\PP_{\kk}^n))\cap \sspan{\nu_d(X)}\right)}(\kk)$.
        By Proposition~\ref{prop_closure_not_needed_secant} there exists a finite scheme $R\subset\PP V$ of degree $r$ smoothable in $\PP V$
          such that $z \in \sspan{\nu_d(R)}$.
        Thus $z \in \sspan{\nu_d(X)}\cap \sspan{\nu_d(R)}$ and by Lemma~\ref{ref:bglmainlemma:lem} we have
          $z \in \sspan{\nu_d(X \cap R)}$.
        Although $X\cap R$ needs not to be smoothable itself (Corollary~\ref{cor_nonsmoothable_intersection_of_smoothable_and_smooth}),
          by Proposition~\ref{ref:smooth_subvar_and_pushing:prop} there exists a finite scheme $Q \subset X$ of degree $r$ 
          smoothable in $X$ such that $X\cap R \subset Q$.
          By Proposition~\ref{prop_relative_span_of_a_point_contained}
          the claim of the theorem follows:
        \[
           z \in \in \sspan{\nu_d(X \cap R)} \subset  \sspan{\nu_d(Q)} \subset
           \cactus{r}{X, \Hilb^{sm}_r(X)} \subset \sigma_r(\nu_d(X)).\qedhere
        \]
    \end{proof}

    The necessity of assuming that $X$ is smooth in Theorem~\ref{ref:bglfirstthm:thm} 
      is discussed in \cite[\S3]{jabu_ginensky_landsberg_Eisenbuds_conjecture}.
    Without assuming smoothness we have the analogous equality of cactus varieties.
    \begin{prop}\label{prop_BGL_for_cactus}
            Let $X\subseteq \PP_{\kk}^n=\PP V$ be any subvariety and $r\in \mathbb{N}$.
            Suppose that the ideal sheaf $\ccI_X$ of $X$ is Castelnuovo-Mumford $\delta$-regular for an integer $\delta \ge 0$. 
            Then for all $d\geq r - 1 + \delta$, one has the equality of reduced subschemes of $\PP(\DPV{d})$:
            \[
                \cactus{r}{\nu_d(X)} = \reduced{\left(\cactus{r}{\nu_d(\PP_{\kk}^n)}\cap \sspan{\nu_d(X)}\right)}.
            \]
    \end{prop}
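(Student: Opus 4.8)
The plan is to mimic the proof of Theorem~\ref{ref:bglfirstthm:thm}, but using cactus varieties and the Hilbert scheme $\Hilb_r(X)$ in place of secant varieties and the smoothable locus $\Hilb^{sm}_r(X)$, so that the smoothness hypothesis on $X$ becomes unnecessary. As before, by the base change property (Proposition~\ref{prop_base_change_for_secant_and_cactus}) we may assume $\kk$ is algebraically closed, and then it suffices to check equality on $\kk$-rational points, which are dense on both sides (both cactus varieties are reduced by~\eqref{equ_inclusions_increasing_r}, and the right-hand side is reduced by construction).

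The easy inclusion $\cactus{r}{\nu_d(X)} \subset \reduced{\left(\cactus{r}{\nu_d(\PP_{\kk}^n)}\cap \sspan{\nu_d(X)}\right)}$ follows formally: if $Y \subset Z$ then $\cactus{r}{Y}\subset\cactus{r}{Z}$ (monotonicity of cactus varieties in the subscheme, which is immediate from Definition~\ref{def_secant_and_cactus_varieties} since $\Hilb_k(Y)\subset\Hilb_k(Z)$), and always $\cactus{r}{Y}\subset\sspan{Y}$, and $\cactus{r}{Y}$ is reduced. For the reverse inclusion, take a $\kk$-rational point $z \in \reduced{\left(\cactus{r}{\nu_d(\PP_{\kk}^n)}\cap \sspan{\nu_d(X)}\right)}(\kk)$. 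Since $d \ge r-1+\delta \ge r-1$, the Hilbert function $H_{\bullet}(d)$ is constant on $\Hilb_r(\PP V)$ (Corollaries~\ref{cor_regular_implies_independent_embedding} and~\ref{cor_finite_schemes_r_regular}), so Proposition~\ref{prop_closure_not_needed_cactus}\ref{item_closure_no_needed_any_field} applies: there is a finite subscheme $R \subset \PP V$ of degree $r$ with $z \in \sspan{\nu_d(R)}$. Hence $z \in \sspan{\nu_d(X)}\cap\sspan{\nu_d(R)}$, and since $\ccI_X$ is Castelnuovo-Mumford $\delta$-regular and $d \ge \delta + r - 1$, Lemma~\ref{ref:bglmainlemma:lem} gives $\sspan{\nu_d(X)}\cap\sspan{\nu_d(R)} = \sspan{\nu_d(X\cap R)}$, so $z \in \sspan{\nu_d(X\cap R)}$.

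Now $X \cap R$ is a finite subscheme of $X$ of degree at most $r$ (by Example~\ref{exam_Hilbert_function_of_subscheme_drops_by_at_most_r_minus_r_prime} its length is at most $\deg R = r$), so $[X\cap R]$ is a $\kk$-rational point of $\Hilb_k(X)$ for some $k \le r$, and $\sspan{\nu_d(X\cap R)} = \cactus{k}{\nu_d(X), [X\cap R]}$. By Proposition~\ref{prop_relative_span_of_a_point_contained} (applied to the inclusion of the point into $\Hilb_k(\nu_d(X))$, or directly from the definitions) this is contained in $\cactus{k}{\nu_d(X)} \subset \cactus{r}{\nu_d(X)}$. Therefore $z \in \cactus{r}{\nu_d(X)}$, completing the reverse inclusion. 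Note that, in contrast with Theorem~\ref{ref:bglfirstthm:thm}, no smoothability statement such as Proposition~\ref{ref:smooth_subvar_and_pushing:prop} is needed here: since we work with the full Hilbert scheme rather than its smoothable locus, the subscheme $X \cap R$ already represents a point of $\Hilb(X)$, so there is no obstruction to overcome. The only subtle point is the correct bookkeeping of degrees when $\deg(X\cap R) < r$, handled by the union over all $k \le r$ built into the definition of the cactus variety (or, equivalently, by Proposition~\ref{prop_enough_to_take_k_equal_r} once $X$ is longer than $r$ — and if $X$ is not longer than $r$, then $X$ is finite of degree $<r$ and the statement is degenerate, reducing to a smaller $r$).
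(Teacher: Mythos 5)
Your proposal is correct and follows exactly the route the paper intends: the paper's own proof of Proposition~\ref{prop_BGL_for_cactus} consists precisely of the remark that one repeats the argument of Theorem~\ref{ref:bglfirstthm:thm} with $\Hilb_r$ in place of $\Hilb_r^{sm}$, observing that $X\cap R$ already gives a point of the Hilbert scheme of $X$ so that Proposition~\ref{ref:smooth_subvar_and_pushing:prop} (the only place smoothness is used) becomes unnecessary. Your write-up fills in the same steps — base change, Proposition~\ref{prop_closure_not_needed_cactus}, Lemma~\ref{ref:bglmainlemma:lem}, and the degree bookkeeping via the union over $k\le r$ in Definition~\ref{def_secant_and_cactus_varieties} — and is a faithful expansion of that argument.
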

    The proof is the same as for Theorem~\ref{ref:bglfirstthm:thm}, but without the smoothability discussion.
    In particular, there is no need to use  Proposition~\ref{ref:smooth_subvar_and_pushing:prop}, which is the only part where we use smoothness.

\subsection{Condition on smoothability of Gorenstein schemes}\label{sec_on_condition_star}
  
\begin{table}[htb]  
\begin{center}
Case of smooth $X$ and  $\cchar \kk = 0$.

\begin{tabular}{|l||c|c|c|c||cc}
\hline
  $\dim X$                  &  $1$, $2$, or $3$ & $4$ & $5$ & $6$ or more \\
\hline
\hline
$r = \fromto{\phantom{14}1}{13}$    &  \colorzielony{$(\star)$} &  \colorzielony{$(\star)$} & \colorzielony{$(\star)$} &  \colorzielony{$(\star)$}\\
\hline
$r = \phantom{1}14$                  &  \colorzielony{$(\star)$} &  \colorzielony{$(\star)$} & \colorzielony{$(\star)$} &  \colordynia{$\cancel{(\star)}$}\\
\hline  
$r = \fromto{\phantom{1}15}{41}$     &  \colorzielony{$(\star)$} &  ??                       &       ??                 &  \colordynia{$\cancel{(\star)}$}\\
\hline
$r = \fromto{\phantom{1}42}{139}$    &  \colorzielony{$(\star)$} &  ??                       & \colordynia{$\cancel{(\star)}$} &  \colordynia{$\cancel{(\star)}$}\\
\hline
$r = 140,\dotsc$   &  \colorzielony{$(\star)$} &  \colordynia{$\cancel{(\star)}$} &     \colordynia{$\cancel{(\star)}$} &  \colordynia{$\cancel{(\star)}$}\\
\hline                           
\end{tabular}
\end{center}
\caption{The condition~\ref{item_condition_on_smoothability_of_all_Gorenstein} for smooth varieties over fields of characteristic $0$.}\label{table_star_for_smooth_char_0}
\end{table}

\begin{table}[htb]  
\begin{center}
Case of smooth $X$ and  $\cchar \kk = p \ge 5$.

\begin{tabular}{|l||c|c|c|c||cc}
\hline
  $\dim X$                  &  $1$, $2$, or $3$ & $4$ & $5$ & $6$ or more \\
\hline
\hline
$r = \fromto{\phantom{14}1}{13}$    &  \colorzielony{$(\star)$} &  \colorzielony{$(\star)$} & \colorzielony{$(\star)$} &  \colorzielony{$(\star)$}\\
\hline
$r = \phantom{1}14$                  &  \colorzielony{$(\star)$} &  \colorzielony{$(\star)$} & \colorzielony{$(\star)$} &  ??\\
\hline  
$r = \fromto{\phantom{1}15}{41}$     &  \colorzielony{$(\star)$} &  ??                       &       ??                 & ??\\
\hline
$r = \fromto{\phantom{1}42}{139}$    &  \colorzielony{$(\star)$} &  ??                       & \colordynia{$\cancel{(\star)}$} &  \colordynia{$\cancel{(\star)}$}\\
\hline
$r = 140,\dotsc$   &  \colorzielony{$(\star)$} &  \colordynia{$\cancel{(\star)}$} &     \colordynia{$\cancel{(\star)}$} &  \colordynia{$\cancel{(\star)}$}\\
\hline                           
\end{tabular}
\end{center}
\caption{The condition~\ref{item_condition_on_smoothability_of_all_Gorenstein} for smooth varieties over fields of positive characteristic not equal to $2$ or $3$.}\label{table_star_for_smooth_char_p}
\end{table}

\begin{table}[htb]  
\begin{center}
Case of smooth $X$ and  $\cchar \kk = 2$ or $3$.

\begin{tabular}{|l||c|c|c|c||cc}
\hline
  $\dim X$                  &  $1$, $2$, or $3$ & $4$ & $5$ & $6$ or more \\
\hline
\hline
$r = \phantom{14}1,2$                        &  \colorzielony{$(\star)$} &  \colorzielony{$(\star)$} & \colorzielony{$(\star)$} &  \colorzielony{$(\star)$}\\
\hline
$r = \fromto{\phantom{14}3}{13}$    &  \colorzielony{$(\star)$} &  ?? & ?? &  ??\\
\hline
$r = \phantom{1}14$                  &  \colorzielony{$(\star)$} &  ?? & ?? &  \colordynia{$\cancel{(\star)}$}\\
\hline  
$r = \fromto{\phantom{1}15}{41}$     &  \colorzielony{$(\star)$} &  ??                       &       ??                 & \colordynia{$\cancel{(\star)}$}\\
\hline
$r = \fromto{\phantom{1}42}{139}$    &  \colorzielony{$(\star)$} &  ??                       & \colordynia{$\cancel{(\star)}$} &  \colordynia{$\cancel{(\star)}$}\\
\hline
$r = 140,\dotsc$   &  \colorzielony{$(\star)$} &  \colordynia{$\cancel{(\star)}$} &     \colordynia{$\cancel{(\star)}$} &  \colordynia{$\cancel{(\star)}$}\\
\hline                           
\end{tabular}
\end{center}
\caption{The condition~\ref{item_condition_on_smoothability_of_all_Gorenstein} for smooth varieties over fields of positive characteristic not equal to $2$ or $3$.}\label{table_star_for_smooth_char_2_3}
\end{table}

\begin{table}[htb]  
\begin{center}
Case of singular curve $X$ and arbitrary $\cchar \kk$.

\begin{tabular}{|l||c|c|}
\hline
  Singularities of $X$                 &  plannar singularities & nonplannar singularities \\
\hline
\hline
$r = 1$                        &  \colorzielony{$(\star)$} &  \colorzielony{$(\star)$}\\
\hline
$r = 2,\dotsc$    &  \colorzielony{$(\star)$} &   \colordynia{$\cancel{(\star)}$}\\
\hline
\end{tabular}
\end{center}
\caption{The condition~\ref{item_condition_on_smoothability_of_all_Gorenstein} for singular curves over any field.}\label{table_star_for_singular_curve}
\end{table}

Suppose $X\subset \PP V$ is an embedded projective variety and $r$ is an integer.
Recall the condition \ref{item_condition_on_smoothability_of_all_Gorenstein} from the introduction:
   \begin{itemize}
    \item[\ref{item_condition_on_smoothability_of_all_Gorenstein}]
           every zero-dimensional Gorenstein $\kkbar$-subscheme of
         $X_{\kkbar}:=X\times_{\Spec \kk}\Spec \kkbar$ 
         of degree at most $r$ is smoothable in $X_{\kkbar}$.
   \end{itemize}
\begin{prop}
   The condition \ref{item_condition_on_smoothability_of_all_Gorenstein} is equivalent to $\reduced{(\Hilb^{Gor}_r(X))} = \Hilb^{Gor, sm}_r(X)$.
\end{prop}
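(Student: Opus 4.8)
The statement is an unwinding of definitions, so the proof should be short. Recall that $\Hilb^{Gor, sm}_r(X) = \Hilb^{Gor}_r(X) \cap \Hilb^{sm}_r(X)$, that $\Hilb^{sm}_r(X) = \overline{\Hilb^\circ_r(X)}$ is reduced by construction, and that $\Hilb^{Gor}_r(X)$ is an open subscheme of $\Hilb_r(X)$ (Lemma~\ref{ref:GorensteinLocus:lem}). Both conditions are moreover insensitive to base change: $\Hilb^{Gor}_r(X_{\kkbar}/\kkbar) = \Hilb^{Gor}_r(X) \times \Spec \kkbar$ by Lemma~\ref{ref:GorensteinLocus:lem}\ref{it:Gorlocthird}, and $\Hilb^{sm}_r(X_{\kkbar}/\kkbar) = \reduced{(\Hilb^{sm}_r(X) \times \Spec \kkbar)}$ by Equation~\eqref{eq:basechangeforsmoothablecomponent}. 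Since $\Hilb^{sm}_r(X)$ is reduced, $\Hilb^{sm}_r(X) \times \Spec \kkbar$ has the same underlying reduced scheme over the perfect field $\kkbar$, and similarly $\reduced{(\Hilb^{Gor}_r(X))} \times \Spec \kkbar$ is reduced; hence the equality $\reduced{(\Hilb^{Gor}_r(X))} = \Hilb^{Gor, sm}_r(X)$ holds over $\kk$ if and only if $\reduced{(\Hilb^{Gor}_r(X_{\kkbar}))} = \Hilb^{Gor, sm}_r(X_{\kkbar})$ holds over $\kkbar$. So I would first reduce to the case $\kk = \kkbar$.

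Once $\kk$ is algebraically closed, both $\reduced{(\Hilb^{Gor}_r(X))}$ and $\Hilb^{Gor, sm}_r(X)$ are reduced schemes of finite type over $\kk$, and $\Hilb^{Gor, sm}_r(X) \subseteq \reduced{(\Hilb^{Gor}_r(X))}$ is a closed subscheme cut out inside the open subscheme $\Hilb^{Gor}_r(X)$ of $\Hilb_r(X)$ (it is the intersection with the closed subset $\Hilb^{sm}_r(X)$). Two such schemes are equal if and only if they have the same $\kk$-rational points. Now a $\kk$-rational point $[R]$ of $\Hilb^{Gor}_r(X)$ corresponds precisely to a zero-dimensional Gorenstein subscheme $R \subset X$ of degree $r$ (Definition~\ref{ref:Gorenstein:def} and the discussion of $\kk$-rational points of $\Hilb_r(X)$), and by Proposition~\ref{ref:embeddedvsHilbert:prop} such a point lies in $\Hilb^{sm}_r(X)$ — equivalently in $\Hilb^{Gor, sm}_r(X)$ — if and only if $R$ is smoothable in $X$. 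Therefore $\reduced{(\Hilb^{Gor}_r(X))} = \Hilb^{Gor, sm}_r(X)$ as schemes is equivalent to the assertion that every such $R$ is smoothable in $X$, which is exactly \ref{item_condition_on_smoothability_of_all_Gorenstein} for the algebraically closed field $\kk = \kkbar$.

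There is a minor subtlety to address: the statement \ref{item_condition_on_smoothability_of_all_Gorenstein} as phrased allows Gorenstein subschemes of degree \emph{at most} $r$, not exactly $r$, and it does not a priori restrict to connected/irreducible schemes. For the first point, since $\kkbar$ is infinite and $X$ has at least $r$ points (the degenerate zero-dimensional case where $X$ has fewer than $r$ closed points is vacuous, as then there are no degree-$r$ subschemes and both Hilbert loci are empty), any Gorenstein scheme of degree $k < r$ extends to a degree-$r$ Gorenstein scheme by adjoining $r - k$ reduced points, and smoothability of the larger scheme implies smoothability of the subscheme by Corollary~\ref{ref:smoothingcomponentsresult:cor}; conversely a degree-$r$ Gorenstein scheme is itself covered. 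So the "$\le r$" and "$= r$" versions of \ref{item_condition_on_smoothability_of_all_Gorenstein} agree. I do not anticipate any serious obstacle here — the only thing to be careful about is the reducedness bookkeeping in the base-change step over an imperfect $\kk$, which is why one passes to $\kkbar$ first and uses that $\reduced{(\cdot)}$ commutes with base change to a perfect (indeed algebraically closed) field.
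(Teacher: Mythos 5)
Your proof is correct and follows essentially the same route as the paper: reduce to $\kk=\kkbar$ via the base-change statements, compare the two reduced finite-type schemes on their dense sets of $\kk$-rational points, and translate membership in $\Hilb^{Gor,sm}_r(X)$ into smoothability via Proposition~\ref{ref:embeddedvsHilbert:prop}. The only cosmetic difference is that where you handle the ``degree at most $r$'' versus ``degree exactly $r$'' discrepancy by adjoining reduced points and invoking Corollary~\ref{ref:smoothingcomponentsresult:cor} directly, the paper cites the packaged Corollary~\ref{ref:persistence_of_smoothability:cor}, whose proof is exactly that argument.
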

\begin{proof}
   By Proposition~\ref{prop_Gorenstein_base_change}
   and~\eqref{eq:basechangeforsmoothablecomponent} the equation
   $\reduced{(\Hilb^{Gor}_r(X))} = \Hilb^{Gor, sm}_r(X)$ may be checked after
   base change to $X_{\kkbar}$.
   Condition~\ref{item_condition_on_smoothability_of_all_Gorenstein} also
   depends only on $X_{\kkbar}$, we may assume $\kk = \kkbar$.
   Since $X$ is reduced, if $X$ does not have at least $r$ points over $\kk$,
      both \ref{item_condition_on_smoothability_of_all_Gorenstein} and $\reduced{(\Hilb^{Gor}_r(X))} = \Hilb^{Gor, sm}_r(X) = \emptyset$ trivially hold.
   Otherwise, assume $X$ has at least $r$ points over $\kk$.

   We always have $\Hilb^{Gor, sm}_r(X) \subset \reduced{(\Hilb^{Gor}_r(X))}$.
   By Corollary~\ref{ref:persistence_of_smoothability:cor} the other inclusion $\reduced{(\Hilb^{Gor}_r(X))} \subset \Hilb^{Gor, sm}_r(X)$ 
     holds if and only if $\reduced{(\Hilb^{Gor}_k(X))}  \subset \Hilb^{Gor, sm}_k(X)$ for all $k \le r$.
   The set of $\kk$-rational points is dense, hence the latter inclusions hold 
     if and only if $\reduced{(\Hilb^{Gor}_k(X))}(\kk)  \subset \Hilb^{Gor, sm}_k(X)$,
     that is, if and only if for every finite Gorenstein subscheme $R \subset X$ of degree $k\le r$,
     $[R] \in \Hilb^{Gor, sm}_k(X)$.
   By Proposition~\ref{ref:embeddedvsHilbert:prop} this is equivalent to  smoothability of $R$ in $X$, i.e. to~\ref{item_condition_on_smoothability_of_all_Gorenstein}.
\end{proof}

By Corollary~\ref{ref:smoothingeverytwhere:cor} and Proposition~\ref{prop_smoothability_depends_only_on_sing_type} the condition~\ref{item_condition_on_smoothability_of_all_Gorenstein} 
  depends only on the types of singularities of $X$, not on its global geometry. In particular, if $X$ is smooth, then \ref{item_condition_on_smoothability_of_all_Gorenstein} 
  depends only on three integers $\dim X$, $r$, and $\cchar \kk$. The known
  results when the conditions hold are summarised in
  Tables~\ref{table_star_for_smooth_char_0}, \ref{table_star_for_smooth_char_p}, \ref{table_star_for_smooth_char_2_3}.
  First,~\ref{item_condition_on_smoothability_of_all_Gorenstein} holds for all
  characteristics and $\dim X \leq 2$ by results of Fogarty~\cite{fogarty} and
  for $\dim X = 3$ by Kleppe's~\cite{kleppe_smoothability_in_codim_three}. It
  also holds when $r\leq 13$ in all characteristics other than $2$ or $3$ by~\cite[Theorem~A]{casnati_jelisiejew_notari_Hilbert_schemes_via_ray_families}
  and for degree $r = 14$ and $\dim X\leq 5$ by the same theorem. Finally, we
  proved above that algebras of degree at most two are smoothable regardless
  of characteristic. This justifies all cases
  where~\ref{item_condition_on_smoothability_of_all_Gorenstein} holds.
  Emsalem and Iarrobino provided an example of length $14$ algebra embedded in
  $\AA_{\kk}^6$ which is
  non-smoothable~\cite{emsalem_iarrobino_small_tangent_space}. The example
  requires a computer verification, so we know that it exists in
  characteristic two, three or zero (though most probably it exists in all
  characteristics). This example implies 
  that~for $r\geq 14$ and $\dim X\geq 6$ the
  condition~\ref{item_condition_on_smoothability_of_all_Gorenstein} does not
  hold. For $\dim X = 4$, $r\geq 140$ and $\dim X = 5$, $r\geq 42$ the
  examples of nonsmoothable Gorenstein schemes given
  in~\cite[Proposition~6.2]{nisiabu_jabu_cactus} generalise to any field.
  Moreover, Table~\ref{table_star_for_singular_curve} briefly summarises the results of \cite{altman_iarrobino_kleiman_irreducibility_of_compactified_jacobian} on singular curves.

\subsection{Secant, cactus and catalecticants}\label{sec_secant_cactus_catalecticants}

Suppose $X\subset \PP V$ is an embedded projective variety.  
We have seen in Corollary~\ref{cor_Gorenstein_loci_enough_for_cactus} that if $\reduced{(\Hilb^{Gor}_r(X))} = \Hilb^{Gor, sm}_r(X)$, 
  then the secant and cactus varieties coincide. 
Here we claim an inverse of this statement, see also \cite[Thm~1.4(ii)]{nisiabu_jabu_cactus}.
\begin{prop}\label{prop_cactus_equals_secant_then_star_holds}
   Suppose $d \ge 2r-1$. 
   If $\cactus{r}{\nu_d(X)} = \sigma_r(\nu_d(X))$, then  $\reduced{(\Hilb^{Gor}_r(X))} = \Hilb^{Gor, sm}_r(X)$, i.e.~\ref{item_condition_on_smoothability_of_all_Gorenstein} holds.
\end{prop}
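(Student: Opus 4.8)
The plan is to prove the contrapositive: assuming \ref{item_condition_on_smoothability_of_all_Gorenstein} fails, exhibit a point lying in $\cactus{r}{\nu_d(X)}$ but not in $\sigma_r(\nu_d(X))$. Since both varieties base-change well (Proposition~\ref{prop_base_change_for_secant_and_cactus}) and the condition \ref{item_condition_on_smoothability_of_all_Gorenstein} depends only on $X_{\kkbar}$, we may reduce to the case $\kk = \kkbar$ algebraically closed. By the proposition of Section~\ref{sec_on_condition_star}, the failure of \ref{item_condition_on_smoothability_of_all_Gorenstein} means $\reduced{(\Hilb^{Gor}_r(X))} \ne \Hilb^{Gor, sm}_r(X)$; using Corollary~\ref{ref:persistence_of_smoothability:cor} we may choose the \emph{smallest} $k \le r$ such that there is a finite Gorenstein subscheme $R \subset X$ of degree $k$ which is not smoothable in $X$. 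Note $k \ge 8$ in practice, but we do not need that; what matters is that every proper subscheme of $R$, and every Gorenstein scheme of degree $<k$, is smoothable in $X$.

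Next I would produce the separating point. Consider $[R] \in \Hilb_k(X)$ and its span $\sspan{\nu_d(R)} \subset \PP(\DPV{d})$; since $d \ge 2r-1 \ge 2k-1$, by Lemma~\ref{ref:bglmainlemma:lem} (applied with $X$ replaced by $R$ itself, or rather the linear-normality statements in Lemma~\ref{lem_properties_of_lin_indep_emb} and Corollary~\ref{cor_finite_schemes_r_regular}) the embedding $\nu_d(R)$ is linearly normal, so $\dim \sspan{\nu_d(R)} = k-1$. Pick a \emph{general} $\kk$-rational point $p \in \sspan{\nu_d(R)}$. Then $p \in \cactus{k}{\nu_d(X)} \subset \cactus{r}{\nu_d(X)}$ by Proposition~\ref{prop_relative_span_of_a_point_contained} (or directly from Definition~\ref{def_secant_and_cactus_varieties}). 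The heart of the argument is to show $p \notin \sigma_r(\nu_d(X))$. By Proposition~\ref{prop_closure_not_needed_secant}\ref{item_closure_no_needed_alg_closed_field} (valid since $d \ge 2r-1 \ge r-1$ forces the Hilbert function to be constant), if $p \in \sigma_r(\nu_d(X))$ then $p \in \sspan{\nu_d(Q)}$ for some finite scheme $Q \subset X$ of degree $r$ smoothable in $X$. The key claim is then: because $p$ is a general point of $\sspan{\nu_d(R)}$ and $\deg R = k$ is minimal among degrees forcing non-smoothability, $R$ must be the "apolar scheme" of $p$ of minimal degree, and $R \subseteq Q$; this would make $R$ smoothable in $X$ (as a subscheme of a smoothable scheme... but subschemes of smoothable schemes need not be smoothable!). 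So the correct route is different: one shows $R$ is the \emph{unique} Gorenstein subscheme of minimal degree whose span contains a general such $p$, using $d \ge 2r-1$ to guarantee uniqueness of the minimal-length apolar scheme (this is where the catalecticant/apolarity machinery enters — a general $p$ in the span of a Gorenstein $R$ of degree $k \le r$ with $d \ge 2k-1$ has $R$ as its unique scheme of length $\le k$, cf.\ the kernel of the middle catalecticant $\phi_{\ccO_X(r)}(p)$). Then any degree-$r$ smoothable $Q$ with $p \in \sspan{\nu_d(Q)}$ must contain $R$ (by comparing apolar ideals, $I(Q) \subseteq I(R)$ hence $R \subseteq Q$), and the minimal scheme $R$ sits inside $Q$; since $R$ is Gorenstein it is a connected component or is "saturated" inside $Q$ in a way that transfers smoothability — more precisely, by the uniqueness, $R$ equals the intersection of $Q$ with a linear section, and Corollary~\ref{ref:smoothingcomponentsresult:cor} together with $R$ being cut from a smoothable $Q$ forces $R$ smoothable, contradicting the choice of $R$.

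The main obstacle is precisely this transfer of smoothability from $Q$ to $R$: in general a subscheme of a smoothable scheme is not smoothable (indeed Corollary~\ref{cor_nonsmoothable_intersection_of_smoothable_and_smooth} is built to warn against exactly this). The resolution must use the special structure of the situation: $R$ Gorenstein, $d \ge 2r-1$ large, and $p$ \emph{general}. The clean way is to argue that the generality of $p$ forces $Q = R$: if a general point of $\sspan{\nu_d(R)}$ lay in $\sspan{\nu_d(Q)}$ for some fixed $Q \supsetneq R$ of degree $r$, then $\sspan{\nu_d(R)} \subseteq \sspan{\nu_d(Q)}$, and running over an irreducible family of smoothable $Q$'s covering such points would exhibit $R$ (as the common minimal apolar scheme, recovered as an intersection $\bigcap Q$ or as a flat limit) inside the closure $\Hilb^{sm}_r(X)$, i.e.\ $R$ smoothable after extending to degree $r$ by adding reduced points, whence $R$ smoothable by Corollary~\ref{ref:smoothingcomponentsresult:cor} — the desired contradiction. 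So the real work is: (i) the apolarity/catalecticant statement that for $d \ge 2r-1$ a general point of the span of a degree-$k$ Gorenstein scheme has that scheme as its unique minimal apolar scheme, and (ii) packaging the family of putative $Q$'s to land $[R]$ (suitably completed) in the smoothable locus. I expect (i) to be standard apolarity over any field — the catalecticant $\phi_{\ccO_X(i)}$ introduced before Theorem~\ref{thm_BB} is exactly the tool — and (ii) to be the delicate point, handled via Corollary~\ref{cor_cactus_equal_to_closure_over_points} and the properness in Proposition~\ref{prop_properties_of_relative_linear_span}\ref{item_comparing_relative_spans_closed}.
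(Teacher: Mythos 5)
Your overall strategy -- pass to the contrapositive, reduce to $\kk=\kkbar$, take a general point $p$ in the span of a non-smoothable Gorenstein scheme, and use the Grassmann lemma plus the finiteness of codegree-one subschemes of a Gorenstein scheme to pin down the apolar schemes of $p$ -- is exactly the right one (the paper itself only gives a two-line sketch deferring to \cite[Thm~1.4(ii)]{nisiabu_jabu_cactus}, so you are reconstructing the real content). But there is a genuine gap at the decisive step, and you have correctly located it without closing it. By taking $R$ of \emph{minimal} degree $k\le r$ among non-smoothable Gorenstein schemes, the Grassmann argument only yields $R\subseteq Q$ for a smoothable $Q$ of degree $r$, and neither of your proposed patches works: generality of $p$ does \emph{not} force $Q=R$ when $\deg Q>\deg R$ (every superscheme of $R$ of degree $\le r$ is automatically apolar to every point of $\sspan{\nu_d(R)}$, general or not), and the ``family of $Q$'s / flat limit'' argument has no mechanism producing $R$ (plus reduced points) as a limit inside $\Hilb_r^{sm}(X)$ -- it only shows that all the $Q$'s contain $R$, which carries no smoothability information about $R$. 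Indeed, if it happened that some smoothable $Q$ of degree $\le r$ contained $R$, then $\sspan{\nu_d(R)}\subseteq\sspan{\nu_d(Q)}\subseteq\sigma_r(\nu_d(X))$ and \emph{no} point of $\sspan{\nu_d(R)}$ could serve as a witness; so the proof must rule this containment out, and minimality of $k$ alone does not do it.

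The fix is to normalise the degree upwards rather than downwards: replace the minimal non-smoothable Gorenstein scheme $R_0$ of degree $k$ by $R=R_0\sqcup\{x_{k+1},\dots,x_r\}$ for $r-k$ reduced points disjoint from $R_0$ (possible since $\dim X>0$, the case $\dim X=0$ being vacuous). By Lemma~\ref{ref:Gorchar:eis:lem} the scheme $R$ is still Gorenstein, and by Corollary~\ref{ref:smoothingcomponentsresult:cor} it is still non-smoothable, but now $\deg R=r$ exactly. For general $F\in\sspan{\nu_d(R)}$ and any apolar $Q$ of degree $\le r$ one has $\deg(R\cup Q)\le 2r\le d+1$, so Corollary~\ref{cor_finite_schemes_r_regular} and Lemma~\ref{lem_properties_of_lin_indep_emb} give $F\in\sspan{\nu_d(R\cap Q)}$; since $R$ is Gorenstein, Lemma~\ref{lem_Gorenstein_is_equivalent_to_finite_Hilb} shows there are only finitely many codegree-one subschemes, whose spans are proper subspaces, so generality forces $R\cap Q=R$, i.e.\ $R\subseteq Q$, and now the degree count $\deg Q\le r=\deg R$ forces $Q=R$. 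Thus $R$ is the \emph{unique} degree-$\le r$ apolar scheme of $F$; if $F$ lay in $\sigma_r(\nu_d(X))$, Proposition~\ref{prop_closure_not_needed_secant} would produce a smoothable apolar scheme of degree $r$, which would have to equal $R$ -- contradicting non-smoothability. This closes the gap and is, in substance, the argument of \cite[Thm~1.4(ii)]{nisiabu_jabu_cactus} that the paper invokes.
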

\begin{proof}[Sketch of proof]
   We may make a base change to an algebraically closed base field $\kk$
   using  Proposition~\ref{prop_base_change_for_secant_and_cactus},
   Equation~\eqref{eq:basechangeforsmoothablecomponent},
   and Lemma~\ref{ref:GorensteinLocus:lem}.
   Then it is enough to show that $\kk$-rational points coincide for $\Hilb^{Gor}_r(X)$  and $\Hilb^{Gor, sm}_r(X)$.
   This is shown in the same way as the proof of  \cite[Thm~1.4(ii)]{nisiabu_jabu_cactus}.
\end{proof}

Recall the catalecticant equations arising from vector bundles.
Say, $X\subset \PP W$ is an embedded projective variety with $W= H^0(L)^*$ for a line bundle $L$ on $X$.
The main case of interest is when $W = \DPV{d}$, and $L = \ccO_X(d)$ for sufficiently large $d$.
Let $\ccE$ be a vector bundle on $X$.
Then the natural evaluation map of vector bundles:
\[
   \ccE \otimes \ccE^*\otimes L \to L
\]
determines a linear map  $H^0(\ccE) \otimes H^0(\ccE^*\otimes L) \to H^0(L)$, or equivalently,
  a bilinear map $\phi_{\ccE}\colon W \times H^0(\ccE) \to H^0(\ccE^* \otimes L)^*$.
  Choosing bases of $H^0(\ccE)$, $H^0(\ccE \otimes L)^*$ and $W$, 
  we can see this bilinear map as a $\dim H^0(\ccE) \times \dim H^0(\ccE^* \otimes L)$ matrix,
  whose entries linearly depend on the coordinates on $W$.
In particular, for any integer $r$, the $(r+1)\times(r+1)$ minors of this matrix give homogeneous polynomials of degree $r+1$
  in coordinates of $W$, that is a subset of $S^{r+1} W$.
We denote the scheme in $\PP W$ defined by this collection of equations 
  (or by the homogeneous ideal generated by these equations)
  by $(\rk \phi_{\ccE} \le r)$.

We have following theorem of Ga{\l}{\k a}zka \cite{galazka_vb_cactus}:
  \begin{thm}\label{thm_Galazka}
     Suppose $X$, $\ccE$ and $\phi_{\ccE}$ are as above. 
     If $e$ is the vector bundle rank of $\ccE$, then 
     $\cactus{r}{X}\subset (\rk \phi_{\ccE} \le e r)$.
  \end{thm}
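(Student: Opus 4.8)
The plan is to reduce the statement to a pointwise, rank-theoretic observation about the bilinear map $\phi_\ccE$, combined with the machinery of relative linear spans developed in Sections~\ref{sec_relative_linear_span} and~\ref{sect_universal_ideal_and_graded_pieces}. First I would note that, by base change (Proposition~\ref{prop_base_change_for_secant_and_cactus}) and since the locus $(\rk \phi_\ccE \le er)$ is cut out by explicit minors that base-change well, we may assume $\kk = \kkbar$. Then, using Corollary~\ref{cor_Gorenstein_loci_enough_for_cactus} (or just Proposition~\ref{prop_cactus_variety_is_closure_of_spans_of_schemes}), it suffices to prove that for every finite subscheme $R \subset X$ of degree at most $r$, one has $\sspan{R} \subseteq (\rk \phi_\ccE \le er)$; since the target locus is closed and Zariski-closed conditions pass to closures, this handles $\cactus{r}{X}$.

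The heart of the argument is the following pointwise claim: if $[v] \in \sspan{R}$ for a finite subscheme $R \subset X$ of degree $s \le r$, then $\rk \phi_\ccE(v) \le e \cdot s$. To see this, I would factor the evaluation through restriction to $R$: the vector $v \in W = H^0(L)^*$ lying in $\sspan{R}$ means precisely that $v$ annihilates $H^0(\ccI_R \otimes L)$, i.e.\ $v$ factors as $W \onto H^0(\ccO_R \otimes L)^* \hookrightarrow$... more precisely $v$ lies in the image of $H^0(L_{|R})^* \to H^0(L)^*$ under the dual of the surjection $H^0(L) \onto H^0(L_{|R})$ (surjectivity holds because $R$ is finite, so $H^1$ of the twisted ideal sheaf localizes to nothing that matters after a high Veronese — but actually for the statement as phrased we only need that $v$ factors through the restriction, which is exactly the definition of $\sspan{R}$). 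Now the evaluation map $\ccE \otimes \ccE^* \otimes L \to L$ restricted to $R$ gives $H^0(\ccE_{|R}) \otimes H^0((\ccE^* \otimes L)_{|R}) \to H^0(L_{|R})$, and $\phi_\ccE(v)$ factors as
\[
  H^0(\ccE) \to H^0(\ccE_{|R}) \xrightarrow{\ \phi_{\ccE,R}(v)\ } H^0((\ccE^* \otimes L)_{|R})^* \to H^0(\ccE^* \otimes L)^*.
\]
Therefore $\rk \phi_\ccE(v) \le \dim H^0(\ccE_{|R})$. Since $\ccE$ is locally free of rank $e$ and $R$ is finite of degree $s$, we have $\dim H^0(\ccE_{|R}) = e \cdot s \le e \cdot r$, which is exactly what we want.

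The main obstacle I anticipate is handling the non-$\kk$-rational points of the cactus variety rigorously — that is, making the pointwise claim work over a residue field $\KK = \kappa(p)$ and for the relative linear span $\cactus{r}{X, y}$ rather than just honest spans of $\kk$-rational schemes. This is dealt with by Proposition~\ref{prop_relative_span_of_a_point_contained} and Proposition~\ref{prop_properties_of_relative_linear_span}: the relative linear span is the image of a family of $\KK$-linear spans of finite subschemes $Y \subset X_\KK$ of degree $\le r$, and the rank-of-minors condition is a closed condition insensitive to field extension, so it is enough to verify $\sspan{Y}_\KK \subseteq (\rk\phi_{\ccE,\KK} \le er)$ over $\KK$, where the identical cohomological factorization argument applies (using that $\ccE$ pulls back to a locally free sheaf of the same rank on $X_\KK$). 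A secondary routine point is checking that $H^0(\ccE_{|R})$ genuinely has dimension $e\cdot \deg R$: this holds because $R$ is affine (being finite) so $H^1$ vanishes and $\ccE_{|R}$ is locally free of rank $e$ over an Artinian ring of length $\deg R$ over the base field. I would cite \cite{galazka_vb_cactus} for the full details but the above is the complete strategy.
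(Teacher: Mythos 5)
Your proposal is correct and follows the same route as the paper: the paper's own argument is only a sketch that reduces to $\kk=\kkbar$ and to $\kk$-rational points and then cites \cite[Thm~5]{galazka_vb_cactus}, and your reductions match it exactly. Your pointwise claim --- that $v\in\widehat{\sspan{R}}$ forces $\phi_{\ccE}(v)$ to factor through $H^0(\ccE_{|R})$, which has dimension $e\cdot\deg R$ --- is precisely the content of the cited reference, so you have simply supplied the detail the paper outsources; the hedging about $H^1$ and surjectivity of $H^0(L)\to H^0(L_{|R})$ is unnecessary, since membership in the span means exactly that $v$ lies in the image of the dual restriction map, regardless of surjectivity.
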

  \begin{proof}[Sketch of proof]
     As usually, first reduce to the case of algebraically closed $\kk$.
     The cactus variety is invariant under the base field change by
     Proposition~\ref{prop_base_change_for_secant_and_cactus}.
     Also the sections of a vector bundle $\ccE$ are invariant by \cite[Tag~02KH(2)]{stacks_project}
       and the resulting $\dim H^0(\ccE) \times \dim H^0(\ccE^* \otimes L)$ matrix is the same 
       matrix after base field extension. 
    Therefore the equations defining $(\rk \phi_{\ccE} \le e r)$ are invariant and we may assume $\kk$ is algebraically closed.
    
    Further, it is enough to prove the statement on $\kk$-rational points:
    \[
       \cactus{r}{X}(\kk)\subset (\rk \phi_{\ccE} \le e r)(\kk),
    \]
      which is done in \cite[Thm~5]{galazka_vb_cactus}.
  \end{proof}  
  
Finally, we prove a partial inverse: in some sufficiently nice situations the cactus variety (or secant variety) is equal to $ (\rk \phi_{\ccE} \le e r)$.
Many explicit situations of this type are described in the literature, see for instance \cite{landsberg_ottaviani_VB_method_equ_for_secants}, \cite{oeding_ottaviani_eigenvectors_of_tensors}, \cite{michalik_mgr}.
Here we concentrate on the result of \cite[Thm~1.8]{nisiabu_jabu_cactus}, which deals with Veronese reembeddings of projective schemes. 

In the first place we observe the following reduction from the case of projective space, whose proof is based on \cite[Section~7]{nisiabu_jabu_cactus}.
\begin{cor}\label{cor_cactus_reduce_to_PV}
   Let $X \subset \PP V$ be an embedded projective variety, and $r >0$ be an integer.
   Assume $\ccI_X$ is Castelnuovo-Mumford $\delta$-regular and $d \ge \delta+r-1$.
   We consider the Veronese reembeddings $\nu_d(X)\subset \nu_d(\PP V) \subset \PP (\DPV{d})$ and their cactus varieties.
   Suppose $\ccE$ is a vector bundle on $\PP V$ such that the map $\phi_{\ccE} \colon \DPV{d} \times H^0(\ccE) \to H^0(\ccE^*(d))^*$ 
     determines the locus $ (\rk \phi_{\ccE} \le er)$ such that  $\cactus{r}{\nu_d(\PP V)}=\reduced{(\rk \phi_{\ccE} \le e r)} $.
   Then the restricted vector bundle $\ccE|_X$ determines the locus $(\rk \phi_{\ccE|_X} \le e r)$ such that  $\cactus{r}{\nu_d(X)} =\reduced{(\rk \phi_{\ccE|_X} \le er)} $.
   
   Moreover, if in addition $\reduced{(\Hilb_r^{Gor}(X))} = \Hilb_r^{Gor, sm}(X)$, 
      then
      \[
       \sigma_r(\nu_d(X)) =\reduced{(\rk \phi_{\ccE|_X} \le er)}.
      \]
\end{cor}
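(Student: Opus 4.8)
The plan is to deduce Corollary~\ref{cor_cactus_reduce_to_PV} from the ``Main Lemma'' type result Lemma~\ref{ref:bglmainlemma:lem}, Ga{\l}{\k a}zka's Theorem~\ref{thm_Galazka}, Proposition~\ref{prop_BGL_for_cactus}, and Corollary~\ref{cor_all_Gor_smoothable_implies_secant_equal_cactus}. First I would reduce to an algebraically closed base field $\kk$: the cactus variety, the secant variety, the Gorenstein locus and its smoothable sublocus, and the sections of a vector bundle all base-change well (Propositions~\ref{prop_base_change_for_secant_and_cactus} and~\ref{prop_Gorenstein_base_change}, Equation~\eqref{eq:basechangeforsmoothablecomponent}, Lemma~\ref{ref:GorensteinLocus:lem}, and \cite[Tag~02KH]{stacks_project}), and the restricted bundle $\ccE|_X$ on $X_{\kkbar}$ is just $(\ccE|_X)_{\kkbar}$. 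So from now on $\kk=\kkbar$ and it suffices to prove equality on $\kk$-rational points.

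The heart of the argument is the identification of $(\rk \phi_{\ccE|_X}\le er)$ with $(\rk\phi_\ccE\le er)\cap \sspan{\nu_d(X)}$. The key observation, as in \cite[Section~7]{nisiabu_jabu_cactus}, is that the evaluation map for $\ccE|_X$ factors compatibly through the evaluation map for $\ccE$: one has the restriction maps $H^0(\PP V,\ccE)\to H^0(X,\ccE|_X)$ and $H^0(\PP V,\ccE^*(d))\to H^0(X,\ccE^*(d)|_X)$, and the linear inclusion $\sspan{\nu_d(X)}=\PP(I(X)_d^\perp)\hookrightarrow \PP(\DPV{d})$ coming from $(\Sym V^*/I(X))_d = H^0(\ccO_X(d))$. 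Chasing the resulting commutative diagram of bilinear maps shows that for $v$ in the cone over $\sspan{\nu_d(X)}$ the matrix of $\phi_{\ccE|_X}(v)$ is obtained from $\phi_\ccE(v)$ by (co)restriction, so its $(er{+}1)$-minors vanish as soon as those of $\phi_\ccE(v)$ do; conversely, since $H^0(\ccE^*(d))\to H^0(\ccE^*(d)|_X)$ is surjective for $d\gg0$ (here is where the Castelnuovo--Mumford $\delta$-regularity and the bound $d\ge\delta+r-1$ enter, giving $H^1(\ccI_X(d))=0$ and more, via Corollary~\ref{cor_regular_implies_independent_embedding} and its ramifications), the rank does not drop under restriction and one gets the reverse containment. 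Therefore $\reduced{(\rk\phi_{\ccE|_X}\le er)} = \reduced{\bigl((\rk\phi_\ccE\le er)\cap\sspan{\nu_d(X)}\bigr)}$.

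Now combine: by hypothesis $\reduced{(\rk\phi_\ccE\le er)} = \cactus{r}{\nu_d(\PP V)}$, so the right-hand side above equals $\reduced{\bigl(\cactus{r}{\nu_d(\PP V)}\cap\sspan{\nu_d(X)}\bigr)}$, which by Proposition~\ref{prop_BGL_for_cactus} (applicable precisely because $\ccI_X$ is $\delta$-regular and $d\ge\delta+r-1$) equals $\cactus{r}{\nu_d(X)}$. This proves the first assertion. For the ``moreover'' clause, note $\ccI_X$ being $\delta$-regular with $\delta\ge0$ gives $\nu_d(X)$ linearly normal, hence longer-than-$r$ issues aside we may invoke Corollary~\ref{cor_all_Gor_smoothable_implies_secant_equal_cactus}: under $\reduced{(\Hilb_r^{Gor}(X))}=\Hilb_r^{Gor,sm}(X)$, and after noting this hypothesis applies to $\nu_d(X)\simeq X$ and that $X$ has at least $r$ points over $\bar\kk$ in the nondegenerate case (the degenerate zero-dimensional case being handled directly), one gets $\cactus{r}{\nu_d(X)} = \sigma_r(\nu_d(X))$, and combining with the previous paragraph yields $\sigma_r(\nu_d(X)) = \reduced{(\rk\phi_{\ccE|_X}\le er)}$.

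The main obstacle I anticipate is making the diagram-chase that identifies the two determinantal loci fully rigorous, in particular verifying that $\sspan{\nu_d(X)}$ is exactly the locus where the restriction of $\phi_\ccE$ ``sees only $X$'', and checking the surjectivity $H^0(\PP V,\ccE^*(d))\twoheadrightarrow H^0(X,\ccE^*(d)|_X)$ with a clean bound in terms of $\delta$ and $r$ --- this requires controlling $H^1(\PP V,\ccI_X\otimes\ccE^*(d))$, which for a general vector bundle $\ccE^*$ (not just a line bundle) is slightly more delicate than the twisted-ideal-sheaf regularity statements recorded in Section~\ref{sec_Castelnuovo_Mumford_reg}; one should either restrict to $\ccE$ of a controlled form or push the regularity bookkeeping through the tensor product. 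Everything else is assembling results already in the paper.
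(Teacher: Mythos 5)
There is a genuine gap, and it is precisely the one you flag in your last paragraph. Your argument hinges on establishing the two-sided identification $\reduced{(\rk\phi_{\ccE|_X}\le er)} = \reduced{\bigl((\rk\phi_\ccE\le er)\cap\sspan{\nu_d(X)}\bigr)}$, and the containment $(\rk\phi_\ccE\le er)\cap\sspan{\nu_d(X)}\subset(\rk\phi_{\ccE|_X}\le er)$ requires the restriction maps $H^0(\PP V,\ccE)\to H^0(X,\ccE|_X)$ and $H^0(\PP V,\ccE^*(d))\to H^0(X,(\ccE|_X)^*(d))$ to be surjective, i.e.\ the vanishing of $H^1(\ccI_X\otimes\ccE)$ and $H^1(\ccI_X\otimes\ccE^*(d))$. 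The hypothesis that $\ccI_X$ is Castelnuovo--Mumford $\delta$-regular controls only the twists $H^1(\ccI_X(s))$ by line bundles; for the arbitrary vector bundle $\ccE$ that the statement allows, these groups need not vanish and no bound in terms of $\delta$ and $r$ is available. So this step cannot be completed as written, and ``pushing the regularity bookkeeping through the tensor product'' is not an option without extra hypotheses on $\ccE$.

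The repair --- and the paper's actual route --- is to observe that only the easy inclusion is needed. For $v$ in the cone over $\sspan{\nu_d(X)}=\PP W$ the map $\phi_\ccE(v)$ factors as $H^0(\ccE)\to H^0(\ccE|_X)\to H^0((\ccE|_X)^*(d))^*\to H^0(\ccE^*(d))^*$ with $\phi_{\ccE|_X}(v)$ in the middle, so $\rk\phi_\ccE(v)\le\rk\phi_{\ccE|_X}(v)$ with no surjectivity required; hence $(\rk\phi_{\ccE|_X}\le er)\subset(\rk\phi_\ccE\le er)\cap\PP W$. Combining this with Theorem~\ref{thm_Galazka} applied to the bundle $\ccE|_X$ on $X$, which gives $\cactus{r}{\nu_d(X)}\subset\reduced{(\rk\phi_{\ccE|_X}\le er)}$, and with the hypothesis on $\ccE$ together with Proposition~\ref{prop_BGL_for_cactus}, which give $\reduced{\bigl((\rk\phi_\ccE\le er)\cap\PP W\bigr)}=\reduced{\bigl(\cactus{r}{\nu_d(\PP V)}\cap\PP W\bigr)}=\cactus{r}{\nu_d(X)}$, one obtains a cycle of inclusions beginning and ending at $\cactus{r}{\nu_d(X)}$; all of them are therefore equalities, and the reverse containment you were trying to prove comes out as a consequence rather than an input. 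Your reduction to $\kk=\kkbar$ and your handling of the ``moreover'' clause via Corollary~\ref{cor_all_Gor_smoothable_implies_secant_equal_cactus} are correct.
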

\begin{proof}
   Let $W \subset \DPV{d}$ be the linear subspace such that $\sspan{\nu_d(X)} = \PP W$. 
   Note that $W= H^0(\ccO_X(d))^*$ by Lemma~\ref{lem_regular_then_complete_linear_system}.
   We claim we have the following series of inclusions:
   \begin{multline}
     \cactus{r}{\nu_d(X)} \stackrel{\text{Thm~\ref{thm_Galazka}}}{\subset} \reduced{\left(\rk \phi_{\ccE|_X} \le er\right)} \subset \reduced{\left((\rk \phi_{\ccE} \le er) \cap \PP W\right)} \\
     = \reduced{\left(\cactus{r}{\nu_d(\PP V)} \cap \PP W \right)}  \stackrel{\text{Prop.~\ref{prop_BGL_for_cactus}}}{=} \cactus{r}{\nu_d(X)}.\label{equ_inclusions_cactus_reduce_to_PV}
   \end{multline}
   The first one follows from Theorem~\ref{thm_Galazka} since the cactus variety is reduced. 
   By the construction, $(\rk \phi_{\ccE|_X} \le er) \subset \PP( H^0(\ccO_X(d))^*) = \PP W$.
   To justify the inclusion $(\rk \phi_{\ccE|_X} \le er) \subset (\rk \phi_{\ccE} \le er)$,
     consider the composition:
   \[
      W \times H^0(\ccE) \to  W \times H^0(\ccE|_X) \stackrel{\phi_{\ccE|_X}}{\longrightarrow} H^0(\ccE|_X^*(d))^* \to H^0(\ccE^*(d))^*
   \]
   The composition is equal to $\phi_{\ccE}|_{W \times H^0(\ccE)}$, thus the rank of $\phi_{\ccE}$  for points in $\PP W$ is at most the rank of $\phi_{\ccE|_X}$ as claimed.
   Hence the second inclusion in \eqref{equ_inclusions_cactus_reduce_to_PV} holds.
   The third relation (equality)  is a consequence of our assumptions,
      while the final equality is just Proposition~\ref{prop_BGL_for_cactus}.
   Thus, all inclusions there are equalities and in particular $\cactus{r}{\nu_d(X)} = \reduced{\left(\rk \phi_{\ccE|_X} \le er\right)}$.
   
   The ``moreover'' statement follows from Corollary~\ref{cor_all_Gor_smoothable_implies_secant_equal_cactus}.
\end{proof}

Next, we restrict our attention to the case of $X=\nu_d(\PP V)$ and  $e=1$, i.e. $\ccE \simeq \ccO_{\PP V}(i)$ is a line bundle on $\PP V$.
\begin{thm}[{\cite[Thm~1.5]{nisiabu_jabu_cactus}}]\label{thm_BB_for_PV}
    Suppose $d \ge 2r$ and $r \le i \le d-r$. Then $\cactus{r}{\nu_d(\PP V)} = \reduced{(\rk \phi_{\ccO_{\PP V}(i)} \le r)}$.
\end{thm}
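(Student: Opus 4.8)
The plan is to reduce the statement about $\PP V$ to the combinatorics of apolarity, since for a line bundle $\ccE = \ccO_{\PP V}(i)$ the map $\phi_{\ccE}$ is the classical catalecticant. Concretely, identify $\DPV{d} = (S^d V^*)^*$, $H^0(\ccO_{\PP V}(i)) = S^i V^*$ and $H^0(\ccO_{\PP V}(i)^*(d))^* = H^0(\ccO_{\PP V}(d-i))^* = \DPV{d-i}$, so that $\phi_{\ccO_{\PP V}(i)}(v)\colon S^i V^* \to \DPV{d-i}$ is precisely the contraction $\sigma \mapsto \sigma \hook v$ from Section~\ref{sec_apolarity}. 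Thus $(\rk \phi_{\ccO_{\PP V}(i)} \le r)$ is the locus of $[v]$ for which this catalecticant has rank at most $r$. The inclusion $\cactus{r}{\nu_d(\PP V)} \subset \reduced{(\rk \phi_{\ccO_{\PP V}(i)} \le r)}$ is already Theorem~\ref{thm_Galazka} with $e=1$; the content is the reverse inclusion.

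First I would reduce to $\kk$ algebraically closed by the base change properties: $\cactus{r}{-}$ behaves well by Proposition~\ref{prop_base_change_for_secant_and_cactus}, and the rank locus is defined by explicit minors whose formation commutes with field extension, so $\reduced{(\rk \phi_{\ccO_{\PP V}(i)} \le r)}$ base-changes to the reduced rank locus over $\kkbar$. Then it suffices to compare $\kk$-rational points. So take $[v] \in (\rk \phi_{\ccO_{\PP V}(i)} \le r)(\kk)$; the kernel $I_i := \ker(\phi_{\ccO_{\PP V}(i)}(v)) \subset S^i V^*$ has codimension at most $r$. The key apolarity step (this is exactly the argument of \cite[Thm~1.5]{nisiabu_jabu_cactus}, using $d \ge 2r$ and $r \le i \le d-i$) is to produce from $v$ a finite subscheme $R \subset \PP V$ of degree at most $r$ whose apolar ideal in degree $i$ (and in degree $d-i$) recovers $I_i$, hence $v \in \sspan{\nu_d(R)}$. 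One builds $R = \Spec(S^\bullet V^* / I)$ where $I$ is the saturation of the ideal generated by $I_i$; the inequalities $i \ge r$ and $d-i \ge r$ guarantee that the Hilbert function of this ideal stabilises at a value $\le r$ in the relevant range, so $R$ is finite of degree $\le r$, and $v$ lies in its span because $v$ is apolar to $R$ in degree $d$. Then by Proposition~\ref{prop_closure_not_needed_cactus} (or directly Proposition~\ref{prop_cactus_variety_is_closure_of_spans_of_schemes}), $v \in \cactus{r}{\nu_d(\PP V)}$.

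The main obstacle is precisely this apolarity construction: showing that a rank-$\le r$ catalecticant in a single ``middle'' degree forces the existence of an honest degree-$\le r$ finite subscheme, with the correct bookkeeping on the Hilbert function of the saturated ideal so that the degree does not jump above $r$ and so that $v$ is genuinely in the span. This is where the hypotheses $d \ge 2r$ and $r \le i \le d-r$ are consumed — they are exactly what makes the ``symmetric apolarity lemma'' work, i.e. the interval $[i, d-i]$ is long enough to control the ideal. I would import this verbatim from \cite[Section~5]{nisiabu_jabu_cactus}, noting as the paper does in the sketch for Proposition~\ref{prop_cactus_equals_secant_then_star_holds} that the argument is characteristic-free once the coordinate-free contraction action of Section~\ref{sec_apolarity} is in place and the Castelnuovo--Mumford regularity bound of Corollary~\ref{cor_finite_schemes_r_regular} is used to guarantee that the degree-$d$ piece of the ideal of $R$ determines $\sspan{\nu_d(R)}$ for $d \ge r-1$. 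The remaining inclusion being Theorem~\ref{thm_Galazka}, the two together give the claimed equality of reduced schemes.
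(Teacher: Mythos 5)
Your proposal follows essentially the same route as the paper's own (sketched) proof: reduce to $\kk=\kkbar$ and to $\kk$-rational points via the base-change properties, observe that one inclusion is Theorem~\ref{thm_Galazka}, and import the apolarity argument of \cite[Section~5]{nisiabu_jabu_cactus} in the characteristic-free form of Section~\ref{sec_apolarity} for the reverse inclusion. The extra detail you give about constructing $R$ from the saturation of the ideal generated by $\ker\phi_{\ccO_{\PP V}(i)}(v)$ is an accurate description of what that cited argument does, so there is no genuine divergence.
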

\begin{proof}[Sketch of proof]
    As always, we reduce to the case of algebraically closed field $\kk$ and to prove that $\kk$-rational points coincide.
    This is done in the same way as in the sketch of proof of Theorem~\ref{thm_Galazka}.
    Next we follow the proof of \cite[Thm~1.5]{nisiabu_jabu_cactus}, see \cite[Section~5]{nisiabu_jabu_cactus}, using the apolarity in the form of Section~\ref{sec_apolarity}.
\end{proof}

\begin{proof}[Proof of Theorem~\ref{thm_BB}]
    Suppose as in \ref{item_if_star_holds_then_sigma_eq_cactus} that the condition~\ref{item_condition_on_smoothability_of_all_Gorenstein} holds (i.e.~$\reduced{(\Hilb_r^{Gor}(X))} = \Hilb_r^{Gor, sm}(X)$),
      $d \ge \max(2r, r-1 + \delta)$, where $\delta$ is such that $X$ is Castelnuovo-Mumford $\delta$-regular, and $i$ is an integer such that $r\le i \le d-r$.
    Then $\cactus{r}{\nu_d(\PP V)} = \reduced{(\rk \phi_{\ccO_{\PP V}(i)} \le r)}$ by Theorem~\ref{thm_BB_for_PV}.
    By Corollary~\ref{cor_cactus_reduce_to_PV}, this implies $\sigma_r(\nu_d(X)) =\reduced{(\rk \phi_{\ccO_X(i)} \le r)}$, as claimed in  \ref{item_if_star_holds_then_sigma_eq_cactus}.
    
    Now suppose \ref{item_condition_on_smoothability_of_all_Gorenstein} does not hold, i.e.~$\reduced{(\Hilb_r^{Gor}(X))} \supsetneqq \Hilb_r^{Gor, sm}(X)$ and $d \ge 2r-1$.
    Then by Proposition~\ref{prop_cactus_equals_secant_then_star_holds} the inclusion $ \sigma_r(\nu_d(X)) \subsetneqq \cactus{r}{\nu_d(X)}$ is strict.
    But by Theorem~\ref{thm_Galazka}:
    \[
       \cactus{r}{\nu_d(X)}\subset \bigcap_{\ccE} \reduced{(\rk(\phi_{\ccE}) \le e_{\ccE} r)}\subset \PP (H^0(\ccO_X(d))^*).
    \]
    Hence the claim of \ref{item_if_sigma_eq_cactus_then_star_holds} holds. 
\end{proof}

%
    \addcontentsline{toc}{section}{References}
    \bibliography{wygladzalnosc}
    \bibliographystyle{alpha}

    \end{document}